\setlist[enumerate]{leftmargin=*}
\setlist[itemize]{labelindent=\parindent, leftmargin=*}
\numberwithin{equation}{section}
\theoremstyle{plain}
\newtheorem{thm}{Theorem}[section]
\newtheorem{lem}[thm]{Lemma}
\newtheorem{prop}[thm]{Proposition}
\newtheorem{cor}[thm]{Corollary}
\theoremstyle{definition}
\newtheorem{defn}[thm]{Definition}
\theoremstyle{remark}
\newcommand\Hom{\operatorname{Hom}}
\newcommand\disc{\mathrm{disc}}
\newcommand\GL{\mathrm{GL}}
\newcommand\Mp{\mathrm{Mp}}
\newcommand\OO{\mathrm{O}}
\newcommand\PGL{\mathrm{PGL}}
\newcommand\SL{\mathrm{SL}}
\newcommand\SO{\mathrm{SO}}
\newcommand\Spin{\mathrm{Spin}}
\newcommand\Sp{\mathrm{Sp}}
\newcommand\A{\mathbb{A}}
\newcommand\C{\mathbb{C}}
\newcommand\Q{\mathbb{Q}}
\newcommand\R{\mathbb{R}}
\newcommand\Z{\mathbb{Z}}
\newcommand{\BIGOP}[1]{\mathop{\mathchoice%
{\raise-0.22em\hbox{\huge $#1$}}%
{\raise-0.05em\hbox{\Large $#1$}}{\hbox{\large $#1$}}{#1}}}
\newcommand{\BIGboxplus}{\mathop{\mathchoice%
{\raise-0.35em\hbox{\huge $\boxplus$}}%
{\raise-0.15em\hbox{\Large $\boxplus$}}{\hbox{\large $\boxtimes$}}{\boxtimes}}}
\title[Relative Character Identities and Theta Correspondence]{Relative Character Identities \\and Theta Correspondence}
\author{Wee Teck Gan}
\author{Xiaolei Wan}
 \address{Department of Mathematics, National University of Singapore, 10 Lower Kent Ridge Road, Singapore 119076}
\email{matgwt@nus.edu.sg}
\email{xiaoleiwan@u.nus.edu}
\begin{document}

\maketitle

\section{\bf Introduction}
 This paper is inspired by the talk of Yiannis Sakellaridis in the Simons Symposium  held at the Schloss Elmau in April 2018.  Let us begin by describing the relevant context for his talk. 
 \vskip 5pt
 
 The study of periods of automorphic forms has been an important theme in the Langlands program, beginning with the early work of Harder-Langlands-Rapoport and Jacquet. 
 In particular, the nonvanishing of certain periods is known to characterize  the  image of certain Langlands functorial lifting and to be related to the analytic properties of certain automorphic L-functions.  An effective approach for
proving such results is the technique of relative trace formulae developed by Jacquet. Typically, such an approach involves the comparison of the geometric sides of two relative trace formulae, which results in a global spectral identity and an accompanying family of local relative character identities.  
\vskip 5pt

 In \cite{SV}, Sakellaridis and Venkatesh initiated a general framework for treating such period problems in the context of spherical varieties. In particular, to a spherical variety $X = H \backslash G$ over a local field $F$ or a global field $k$, they associated 
 \vskip 5pt
 
 \begin{itemize}
 \item a Langlands dual group $X^{\vee}$ (at least when $G$ is split), together with a canonical (up to conjugacy) map
 \[  \iota:   X^{\vee}  \times \SL_2(\C)  \longrightarrow G^{\vee}. \]
 \item a $\frac{1}{2} \mathbb{Z}$-graded finite-dimensional algebraic representation $V_X = \oplus_d  V_{X}^d$ of $X^{\vee}$, which gives rise to an L-function  
 \[ L_X(s ,\rho) =  \prod_d L(s + d, \rho, V_{X}^d) \]
for each L-parameter $\rho$ valued in $X^{\vee}$
 \end{itemize}
  They then conjectured, among other things,  that representations of $G$ (in the automorphic dual)  which have nonzero $H$-periods are those belonging to A-packets whose associated A-parameters factor through $\iota$. This means roughly that the $H$-distinguished representations of $G$ are those which are Langlands functorial lift via $\iota$ from a (split) group $G_X$ whose dual group $G_X^{\vee}$ is $X^{\vee}$. Experience shows that  it is sometimes more pertinent to regard $H$-distinguished representations of $G$ as lifted from  the {\em Whittaker variety} $(N_X, \psi) \backslash G_X$, as opposed to the group variety $G_X$ itself.
 \vskip 5pt
 
 The conjecture of Sakellaridis$-$Venkatesh can be made on several fronts. We give a brief description of the various incarnations of their conjecture (at least a first approximation), under some simplifying hypotheses and without using the language of A-parameters. Our description is adapted to the needs of this paper. For the conjecture in its most general form (taking into account Vogan L-packets for example),  the reader should consult \cite{SV}.
\vskip 5pt

\begin{itemize}
\item[(a)]   In the context of smooth representation theory of $G(F)$ over a local field $F$, one is interested in determining $\Hom_H(\pi, \C)$ for any $\pi \in {\rm Irr}(G(F))$. 
One expects (in the context of this paper) a map  
\[  \iota_*:  {\rm Irr}(G_X(F))  \longrightarrow {\rm Irr}(G(F)),  \]
 such that for any $\pi \in {\rm Irr}(G(F))$,  there is an isomorphism 
\[  f:  \bigoplus_{\sigma: \iota_*(\sigma) = \pi} \Hom_{N_X}(\sigma, \psi) \cong \Hom_H( \pi, \C). \]
In the smooth setting, the Sakellaridis-Venkatesh conjecture thus gives a precise quantitative formulation of the expectation that $H$-distinguished representations of $G$ are lifted from $G_X$. 
\vskip 5pt

If further $\iota_*$ is injective, there is at most one term on the left hand side,  and all these Hom spaces are at most one-dimensional (by the uniqueness of Whittaker models).
This will be the favourable situation encountered in this paper. 
In such instances, if $\ell \in \Hom_{N_X}(\sigma, \psi)$, with corresponding $f(\ell) \in \Hom_H( \iota_*(\sigma), \C)$, one can define relative characters $\mathcal{B}_{\sigma, \ell}$ and $\mathcal{B}_{\iota_*(\sigma), f(\ell)}$ which are certain equivariant distributions on $(N_X, \psi)\backslash G_X$ and $X$ respectively. In this case, one might expect a relative character identity relating $\mathcal{B}_{\sigma, \ell}$ and $\mathcal{B}_{\iota_*(\sigma), f(\ell)}$.  
 \vskip 10pt
 
 \item[(b)]  In the context of $L^2$-representation theory, one is interested in obtaining the spectral decomposition of the unitary representation $L^2(X)$ of $G$ (relative to a fixed $G$-invariant measure on $X$). 
 By abstract results of functional analysis, one has  a direct integral decomposition
 \[   L^2(X)  \cong \int_{\Omega}  \pi_{\omega}^{\oplus m(\omega)}   \, d\mu_X(\omega)  \]
 where
 \begin{itemize}
 \item $(\Omega, d\mu_X)$ is some measure space; 
 \item $\pi: \omega \mapsto \pi_{\omega}$ is a measurable field of  irreducible unitary representations of $G$ defined on $\Omega$, giving rise to a
 measurable map from  $\Omega$ to the unitary dual  $\widehat{G}$ of $G$;
 \item $m: \Omega \rightarrow \mathbb{N} \cup \{ \infty \}$ is a measurable multiplicity function.
 \end{itemize}
There is some fluidity in this direct integral decomposition; for example, given $\Omega$, only the measure class of $d\mu_X$ is well-defined (without explicating the isomorphism). 
 \vskip 5pt
 
 In this $L^2$-setting, the crux of the Sakellaridis-Venkatesh conjecture is to provide a canonical candidate for $(\Omega,d\mu_X, \pi)$.
 Namely, one expects a map
 \[  \iota_* : \widehat{G}_X \longrightarrow \widehat{G}  \]
 associated to $\iota$ from the unitary dual of $G_X$ to that of $G$, so that one has a (unitary)  isomorphism
 \[
 L^2(X)  \cong \int_{\widehat{G}_X}    \iota_*(\sigma)^{\oplus m(\sigma)} \, d\mu_{G_X}(\sigma), \]
 where $d\mu_{G_X}$ denotes the Plancherel measure of $G_X$ and $m(\sigma)$ is a multiplicity space which is typically isomorphic to  the dual space of 
 $\Hom_{N_X}(\sigma, \psi)$.  In other words, one may take $(\Omega, d\mu_X, \pi)$ to be $(\widehat{G}_X,  d\mu_{G_X}, \iota_*)$.
 One can think of this as saying that the spectral decomposition of $L^2(X)$ is obtained from the Whittaker-Plancherel theorem
 \[  L^2(N_X ,\psi \backslash G_X) \cong \int_{\widehat{G}_X}   \sigma^{\oplus m(\sigma)} \, d\mu_{G_X}(\sigma) \]
 by applying $\iota_*$.   One consequence of such a spectral decomposition is that it provides a canonical element $\ell_{\iota_*(\sigma)} \in \Hom_H(\iota_*(\sigma), \C)$, as we explained in \S \ref{S:bern}, for $m(-) \cdot d\mu_{G_X}$-almost all $\sigma$. Because of the presence of the Plancherel measure $d\mu_{G_X}$ of $G_X$, only tempered representations $\sigma$ in $\widehat{G}_X$ need to be considered (though $\iota_*(\sigma)$ may be nontempered). 
\vskip 5pt

\item[(c)]  Globally,  when $k$ is a global field with ring of adeles $\A$, one considers the global period integral along $H$:
\[  P_H: \mathcal{A}_{cusp}(G)  \longrightarrow \C \]
defined by
\[  P_H(\phi)  = \int_{H(k) \backslash H(\A)} \phi(h) \, dh \]
on the space of cusp forms on $G$. The restriction of $P_H$ to  a cuspidal representation $\Pi = \otimes_v \Pi_v$ of $G$ then defines an element 
$P_{H, \Pi} \in \Hom_{H(\A)} (\Pi, \C)$.  One is interested in two problems in the global setting:
\begin{itemize}
\item[(i)]  characterising those $\Pi$ for which $P_{H,\Pi}$ is nonzero as functorial lifts from $G_X$ via the map $\iota$;
\item[(ii)]  seeing if $P_{H,\Pi}$  can be decomposed as the tensor product of local functionals. 
\end{itemize}
\vskip 5pt

Such a factorization certainly exists in the instances discussed in this paper  since the local Hom spaces $\Hom_{H(F_v)}(\Pi_v, \C)$ are at most 1-dimensional for all places $v$. 
In (a), we have seen that these Hom spaces are nonzero precisely when $\Pi_v = \iota_*(\sigma_v)$ for some $\sigma_v \in {\rm Irr}(G_X(k_v))$.  
  Thus, 
in the context of the first global problem,  one would like to show that, if $P_{H, \Pi} \ne 0$,  there exists  a cuspidal representation $\Sigma$ of $G_X$ such that $\Sigma_v  \cong \sigma_v$ for all $v$, so that $\Pi \cong \iota_*(\Sigma)$. 
 
 \vskip 5pt

On the other hand,  in (b), we have remarked that the spectral decomposition of $L^2(X)$ in the local setting gives rise to a canonical basis element  $\ell_{\Pi_v} \in \Hom_{H(F_v)}(\Pi_v, \C)$.   For the second global problem, it is natural to compare the two elements $P_{H,\Pi}$ and $\prod_v^* \ell_{\Pi_v}$. Here the  asterisk in the product  indicates that  there may be a need to normalize the local functionals $\ell_{\Pi_v}$ appropriately to ensure that the Euler product $\prod_v \ell_{\Pi_v}(\phi_v)$ converges.  More precisely, to see if the Euler product converges, one would need to evaluate 
 $\ell_{\Pi_v}( \phi^0_v)$ where $\phi^0_v$ is a spherical unit vector in $\Pi_v$ for almost all $v$. This evaluation has been carried by Sakellaridis in \cite{S1, S2} and this is where the L-factor $L_X(s, - )$ associated to the $\frac{1}{2} \cdot \Z$-graded representation $V_X$ enters the picture. Namely,  it turns out that with $\Pi_v \cong  \iota_*(\Sigma_v)$ for tempered $\Sigma_v \in \widehat{G}_{X,v}$, one has:
  \[  |\ell_{\Pi_v}(\phi^0_v)|^2 = L^{\#}_{X,v}(1/2,  \Sigma_v)  := \Delta_v(0)  \cdot\frac{ L_{X,v}(1/2,  \Sigma_v) }{L(1, \Sigma_v, Ad)}> 0, \]
  where $\Delta_v(s)$ is itself a product of local L-factors which depends only on  $X$ and not on the representation $\Sigma_v$ and $L(s, \Sigma_v, Ad)$ denotes the adjoint L-factor. 
 This necessitates that one defines a normalization of $\ell_{\Pi_v}$ by:
 \[    \ell_{\Pi_v}^{\flat}  = \frac{1}{|L^{\#}_{X,v}(1/2, \Sigma_v)|^{1/2}} \cdot \ell_{\Pi_v}  \]
 Then  the main issue with the second global problem is to determine the constant $c(\Pi)$ such that
\[  |P_{H,\Pi}(\phi)|^2 =  c(\Pi) \cdot   L^{\#}_X(1/2, \Pi) \cdot \prod_v |\ell^{\flat}_{\Pi_v}(\phi_v)|^2 \quad \quad \text{for $\phi = \otimes_v \phi_v \in \otimes_v \Pi_v$.} \]
 Here the global L-function $L^{\#}_X(s, \Sigma)$ is defined by the Euler product $\prod_v L^{\#}_{X,v}(s, \Sigma_v)$ for ${\rm Re}(s) \gg 0$ and needs to be meromorphically continued so that one can evaluate it at $s = 1/2$. 
\end{itemize}
\vskip 10pt

This concludes our brief and simplified description of the Sakellaridis-Venkatesh conjecture. It is instructive to observe the crucial unifying role played by the typically ignored $L^2$-theory, which supplies the canonical basis elements in the relevant local Hom spaces for use in the factorization of the global periods. 
\vskip 10pt

We can now describe the content of Sakellaridis' lecture at the Simons Symposium.
In a series of recent papers \cite{S4,S5, S6}, Sakellaridis examined aspects of the above program in the context of rank 1 spherical varieties $X$. There is a classification of such rank 1 $X$'s, but a standard example is $X \cong \SO_{n-1} \backslash \SO_n$, i.e. a hyperboloid (or a sphere) in an n-dimensional quadratic space, and a more exotic example is $\Spin_9 \backslash F_4$.  In this rank 1 setting, 
the group $G_X$ is $\SL_2$ or its variants (such as $\PGL_2$ or ${\rm Mp}_2$). For example, for $X = \SO_{n-1} \backslash \SO_{n}$ with $n$ even, $X^{\vee} \cong \PGL_2(\C)$, so that $G_X \cong \SL_2$  and the map $\iota$ is given by:
\[ \begin{CD}
  \iota:  \PGL_2(\C) \times \SL_2(\C) @>{{\rm Sym}^2\times {\rm Sym}^{n-4}}>> \SO_3(\C) \times \SO_{n-3}(\C)  @>>> \SO_n(\C).\end{CD}  \]
On the other hand, if $n$ is odd, then $X^{\vee} \cong \SL_2(\C)$ and we take $G_X \cong \Mp_2$, with the map $\iota$ given by
\[ \begin{CD}
  \iota:  \SL_2(\C) \times \SL_2(\C) @>{{\rm Sym}^1 \times {\rm Sym}^{n-4}}>> \Sp_2(\C) \times \Sp_{n-3}(\C)  @>>> \Sp_{n-1}(\C).\end{CD}  \]
In such rank $1$ setting, Sakellaridis developed a theory of transfer of test functions from $X$ to $(N,\psi)\backslash G_X$ as a first step towards establishing local relative character identities and effecting a global comparison of the relative trace formula of $X$ and the Kuznetsov trace formula for $G_X$.  The formula for the transfer map he discovered was motivated by considering an analogous transfer for the boundary degenerations of $X$ and $(N_X, \psi) \backslash G_X$. For the hyperboloid $\SO_{n-1} \backslash \SO_n$, the boundary degeneration is simply the cone of nonzero null vectors in the underlying quadratic space.   In any case, the transfer map he wrote down differs from the typical  transfer map in the theory of endoscopy in two aspects:
\vskip 5pt

\begin{itemize}
\item the spaces of test functions may be larger than the space of compactly supported smooth functions;
\vskip 5pt

\item the transfer map in endoscopy is carried out via  an orbit-by-orbit comparison,
whereas the transfer map in this relative setting is more global in nature, involving an integral kernel  transformation reminiscent of the Fourier transform.
\end{itemize}
An ongoing work \cite{JK} of D. Johnstone and R. Krishna establishes the fundamental lemma for the basic functions in the space of test functions; this is necessary for the comparison of relative trace formulae.  As an example, in the special case when $n = 4$, one has:
\[  X = \SO_3 \backslash \SO_4 \cong \PGL_2 \backslash (\SL_2 \times \SL_2)/\mu_2^{\Delta}. \]
The relative trace formula for this $X$ is essentially the stable trace formula for $\SL_2$.  Thus, the expected comparison of relative trace formulae is between the stable trace formula for $\SL_2$ and the Kuznetsov trace formula for $\SL_2$.  The local transfer in this case was first investigated in the thesis work of Z. Rudnick. 
The discussion of these results was the content of Sakellaridis's lecture in the Simons Symposium.  
\vskip 5pt

On the other hand, the spectral analysis of $L^2(X)$ when $X = \SO_{n-1} \backslash \SO_n$ or the analysis of the $\SO_{n-1}$-period for representations of $\SO_n$  (both locally and globally) is familiar from   the theory of theta correspondence. The $L^2$-theory was studied in the early work of  Strichartz \cite{St} and Howe \cite{H}.   In a  paper \cite{GG} by the first author and R. Gomez, the $L^2$-theory was treated using theta correspondence for essentially general rank 1 spherical varieties from the viewpoint of the Sakellaridis-Venkatesh conjecture.  For the smooth theory, one can see the recent expository paper \cite{G}. In the case of $X = \SO_{n-1} \backslash \SO_n$, it was known that $\SO_{n-1}$-distinguished representations of $\SO_n$ are theta lifts (of $\psi$-generic representations) from $\SL_2$ or $\Mp_2$ according to whether $n$ is even or odd. Indeed, the theta lifting from 
$\SL_2$ or $\Mp_2$ to $\SO_n$  realises the functorial lifting  (at least at the level of unramified representations)  predicted by the map $\iota: X^{\vee} \times \SL_2(\C) \longrightarrow \SO_n(\C)$.
As such, it is very natural to ask if the results discussed in Sakellaridis' talk can be approached from the viewpoint of the  theta correspondence. 
\vskip 5pt

This paper is the result of this investigation. In short, its main conclusion is that the theory of transfer developed by Sakellaridis can be very efficiently developed using the  theta correspondence. More precisely,
\vskip 5pt

\begin{itemize}
\item  one can give a conceptual   definition of the transfer and the relevant spaces of test functions (Definition \ref{D:trans}), from which the fundamental lemma (for  the basic function and its translate by the spherical Hecke algebra) follows readily (see Lemmas \ref{L:fund} and \ref{L:fund2});
\vskip 5pt

\item one can establish the desired relative character identities highlighted in (a) above, without doing a geometric comparison; (see Theorem \ref{T:main})
\vskip 5pt

\item one can express this conceptually defined transfer in geometric terms, from which one sees that it agrees with Sakellaridis' formula (see Proposition \ref{P:geom});

\vskip 5pt

\item one can address the two global problems highlighted in (c) above (see Theorem \ref{T:global}). 
\end{itemize}
\vskip 5pt
We leave the precise formulation of the results to the main body of the paper. We would like to remark that, as far as we are aware, the paper
\cite{MR} of Mao$-$Rallis is the first instance where one finds a  derivation of relative character identities using the theta correspondence; this approach was followed up by the paper \cite{BLM} of Baruch$-$Lapid$-$Mao. The situation treated in this paper is in fact simpler than those in \cite{MR} and \cite{BLM}. 
 In addition, it has been known to practitioners that the theory of theta correspondence is useful for  addressing  period problems in the smooth local context, the global context, as well as in the local $L^2$-context \cite{G, GG}, with similar computations and parallel treatment in the various settings. One goal of  this paper is to demonstrate how  the treatment of the 3 different threads can be synthesised into a  rather coherent story.
 
\vskip 5pt

Here  is a short summary of the contents of this paper. In \S 2, we recall some foundational results of Bernstein \cite{B} on spectral decomposition of $L^2(X)$. These results provide the mechanism for us to navigate between the $L^2$-setting and the smooth setting. We illustrate Bernstein's general theory in the setting of the Harish-Chandra-Plancherel formula and the Whittaker-Plancherel formula in \S 3 and  further specialize to the group $\SL_2$ in \S \ref{S:SL2}, where we set up some standard conventions and establish some basic results.
 In \S 5, we recall the setup of theta correspondence, especially a recent result of Sakellaridis \cite{S3} on the spectral decomposition of the Weil representation when restricted to a dual pair. Using the theory of theta correspondence, we address in \S 6 the local problems (a) and (b), except for the part involving relative character identities.   After recalling the notion of relative characters in \S 7, we come to the heart of the paper (\S8-9),  where we develop the theory of transfer and establish some of its key properties, culminating in the relative character identity in \S 9.  In \S 10, we place ourselves in the unramified setting and explicitly determine the local L-factor $L_X(s,-)$ using theta correspondence.  We verify that our transfer map is the same as that of Sakellaridis' in \S 11, where we describe the transfer in geometric terms, as an explicit integral transform. 
 The final \S 12 discusses and resolves the global problems.
\vskip 5pt

\noindent{\bf Acknowledgments:} 
The first author thanks Sug Woo Shin, Nicholas Templier and Werner Mueller for their kind invitation to participate in the Simons Symposium and the 
Simons Foundation for providing travel support.  He also thanks Yiannis Sakellaridis for helpful conversations on the various topics discussed in this paper. We thank the referees for their careful work and helpful suggestions, especially the suggestion to be absolutely precise about normalization of measures.
The first author is partially supported  by a Singapore government MOE Tier 2 grant R146-000-233-112, whereas the second author is supported by an MOE Graduate Research Scholarship.

\vskip 15pt

\section{\bf Spectral Decomposition \`{a} la Bernstein}  \label{S:bern}

Let $F$ be a local field and $G$ a reductive group over $F$ acting transitively on a variety $X$.  
 We fix a base point $x_0 \in X(F)$, with stabilizer $H \subset G$, so that $g \mapsto g^{-1}\cdot x_0$ gives an identification $ H \backslash G \cong X$. 
 For simplicity, we  shall write $X = G(F) \cdot x_0  \cong H(F) \backslash G(F)$.   
 
 \vskip 5pt
 
 \subsection{\bf Direct integral decompositions} 
Suppose that there is a $G$-invariant measure $dx$ on $X$, in which case we may consider the unitary representation $L^2(X, dx)$ of $G$, with $G$-invariant inner product
\[  \langle \phi_1, \phi_2 \rangle_X  = \int_X \phi_1(x) \cdot \overline{\phi_2(x)} \, \, dx. \]
Such a unitary representation admits a direct integral decomposition
\begin{equation} \label{E:direct} 
\iota:  L^2(X, dx)  \cong \int_{\Omega}   \sigma(\omega) \, \,  d \mu(\omega).  
 \end{equation}
Here,
\vskip 5pt

\begin{itemize}
\item  $\Omega$ is a measurable space, equipped with a measure $d\mu(\omega)$; 
\vskip 5pt

\item $\sigma: \omega \mapsto \sigma(\omega)$  is a measurable field of irreducible unitary representations of $G$ over $\Omega$, which we may regard 
as a measurable map from $\Omega$ to the unitary dual $\widehat{G}$ of $G$ (equipped with the Fell topology and the corresponding Borel structure).
\end{itemize}
 
\vskip 5pt

In this section, we give an exposition of some results of Bernstein  \cite{B} which provide some useful ways of understanding the above direct integral decomposition.
This viewpoint of Bernstein underpins the results of this paper.
\vskip 5pt

\subsection{\bf Pointwise-defined and fine morphisms.}
Let $S \subset L^2(X)$ be a subspace which is $G$-stable. Following Bernstein \cite[\S 1.3]{B}, one says that the inclusion $S \hookrightarrow L^2(X)$ is {\em pointwise-defined} (relative to $\iota$) if  there exists a family of $G$-equivariant (continuous) morphisms $\alpha_{\sigma(\omega)}:  S \longrightarrow  \sigma(\omega)$ for $\omega\in \Omega$ such that for each $\phi \in S$,  the element $\iota(\phi)$ in the direct integral decomposition in (\ref{E:direct}) is the measurable section
\[  \iota(\phi) (\omega)  = \alpha_{\sigma(\omega)}(\phi). \]
In particular, these sections determine the measurable field structure on the right hand side of  (\ref{E:direct}).
 The family $\{ \alpha_{\sigma(\omega)}: \omega \in {\rm supp}(d\mu)\}$ is essentially unique, in the sense that any two such families differ only on  a subset of $\Omega$ with  measure zero with respect to $d\mu$.   Bernstein calls the  embedding $S \hookrightarrow L^2(X)$ {\em fine} if it is pointwise-defined relative to any such isomorphism $\iota$ to a direct integral decomposition.
\vskip 5pt

\subsection{\bf The maps $\alpha_{\sigma(\omega)}$ and $\beta_{\sigma(\omega)}$}  \label{SS:alpha}
A basic result of Bernstein \cite[Prop. 2.3]{B}, obtained as an application of the Gelfand-Kostyuchenko method \cite[Thm. 1.5]{B}, is that the natural inclusion $C^{\infty}_c(X) \hookrightarrow L^2(X)$ is fine. 
For any isomoprhism $\iota$ as in (\ref{E:direct}), we let $\{\alpha_{\sigma(\omega)}: \omega \in {\rm supp}(d\mu) \}$ be the associated family of $G$-equivariant morphisms as above. 

\vskip 5pt
The elements in $C^{\infty}_c(X)$ are $G$-smooth vectors and so the image of each $\alpha_{\sigma(\omega)}$ is contained in the space $\sigma(\omega)^{\infty}$ of smooth vectors in $\sigma(\omega)$. As the map $\alpha_{\sigma(\omega)}$ is nonzero for $d\mu$-almost all $\omega$, its image is dense in $\sigma(\omega)^{\infty}$, and is in fact equal to $\sigma(\omega)^{\infty}$  when $F$ is $p$-adic (where there is no topology considered on $\sigma(\omega)^{\infty}$). To simplify notation,  we shall sometimes write $\sigma(\omega)$ in place of $\sigma(\omega)^{\infty}$, trusting that the context will make it clear whether one is working with a unitary representation on a Hilbert space or a smooth representation. In particular, $\alpha_{\sigma(\omega)}  \in \Hom_G(C^{\infty}_c(X), \sigma(\omega))$.  
\vskip 5pt

If $\alpha_{\sigma(\omega)}$ is nonzero, then by duality, one obtains a $G$-equivariant embedding
\[  \overline{\beta}_{\sigma(\omega)}: \sigma(\omega)^{\vee} \cong \overline{\sigma(\omega)}  \longrightarrow C^{\infty}(X). \]
 Here,  the isomorphism $\sigma(\omega)^{\vee} \cong \overline{\sigma(\omega)}$ is induced by the fixed inner product $\langle -, - \rangle_{\sigma}$ and the duality between $C^{\infty}_c(X)$ and $C^{\infty}(X)$ is given by the natural pairing induced by integration with respect to the $G$-invariant measure $dx$. Taking complex conjugate on $C^{\infty}(X)$, we obtain a $G$-equivariant linear map
   \[ \beta_{\sigma(\omega)}: \sigma(\omega)^{\infty} \longrightarrow  C^{\infty}(X). \]
The maps $\alpha_{\sigma(\omega)}$ and $\beta_{\sigma(\omega)}$ are thus related by the adjunction formula:
 \begin{equation}  \label{E:alpha}
  \langle \alpha_{\sigma(\omega)}(\phi),  v \rangle_{\sigma(\omega)} =  \langle \phi,  \beta_{\sigma(\omega)}(v) \rangle_X, \quad \text{  for $\phi \in C^{\infty}_c(X)$ and $v \in \sigma(\omega)^{\infty}$.}  \end{equation}
If we compose $\beta_{\sigma(\omega)}$ with the evaluation-at-$x_0$ map $ev_{x_0}$, we obtain
\[  \ell_{\sigma(\omega)} := ev_{x_0} \circ \beta_{\sigma(\omega)}  \in  \Hom_H(\sigma(\omega)^{\infty}, \C). \] 
Thus the direct integral  decomposition gives rise to a family of canonical elements $\ell_{\sigma(\omega)} \in  \Hom_H(\sigma(\omega)^{\infty}, \C)$ for $\omega \in {\rm supp}(d\mu)$. This family  depends on the isomorphism $\iota$ in (\ref{E:direct}); changing $\iota$  will result in another family  which  differs from the original one by  a  measurable function 
$f: {\rm supp}(d\mu) \longrightarrow S^1$.  Thus, the family 
\[  \{ \alpha_{\sigma(\omega)} \otimes \overline{\alpha_{\sigma(\omega)}}: \omega \in {\rm supp}(d\mu) \} \]
 is independent of the choice of the isomorphism $\iota$ in (\ref{E:direct}). Likewise, the family 
 \[  \{ \beta_{\sigma(\omega)} \otimes \overline{\beta_{\sigma(\omega)}}: \omega \in {\rm supp}(d\mu) \} \]
  is independent of  $\iota$.
\vskip 10pt

\subsection{\bf Harish-Chandra$-$Schwartz space of $X$.}
In \cite[Pg. 689]{B}, Bernstein showed that the space $X$ has a naturally associated Harish-Chandra Schwartz space $\mathcal{C}(X)$ which is $G$-stable and which 
contains $C^{\infty}_c(X)$.   Moreover, $\mathcal{C}(X)$  has a natural (complete) topology, such that $C^{\infty}_c(X)$ is a dense subspace.  
Indeed, $\mathcal{C}(X)$ is a Frech\'et space in the archimedean case and is a strict LF space in the non-archimedean case.
More importantly, he showed in \cite[Thm. 3.2]{B}  that the inclusion $\mathcal{C}(X) \hookrightarrow L^2(X)$ is fine. Hence, the maps $\alpha_{\sigma(\omega)}: C^{\infty}_c(X) \rightarrow \sigma(\omega)$ defined above extend continuously to  the larger space $\mathcal{C}(X)$:
\[  \alpha_{\sigma(\omega)}:  \mathcal{C}(X) \longrightarrow \sigma(\omega)^{\infty}. \]
The dual map $\beta_{\sigma(\omega)}$ then takes value in the weak Harish-Chandra Schwartz space $\mathcal{C}^w(X) \subset C^{\infty}(X)$ (see \cite{B} or \cite[\S 2.4]{BP2} for the group case).
The elements $\ell_{\sigma(\omega)}  \in \Hom_H(\sigma(\omega), \C)$ are called $X$-tempered forms and the support of $d\mu$ consists  precisely of those representations with nonzero $X$-tempered forms \cite[Pg. 689]{B}. 
\vskip 5pt

\vskip 10pt

\subsection{\bf Inner Product.}
 The direct integral decomposition (\ref{E:direct}) leads to a spectral decomposition of the inner product $\langle-,-\rangle_X$ of $X$:
 \begin{equation} \label{E:inner}
  \langle \phi_1, \phi_2 \rangle _X= \int_{\Omega} J_{\sigma(\omega)}(\phi_1, \phi_2 ) \, \, d\mu(\omega), \end{equation}
 where $J_{\sigma(\omega)}$ is a $G$-invariant positive-semidefinite Hermitian form on $C^{\infty}_c(X)$ given by:
  \begin{align} \label{E:product}
    J_{\sigma(\omega)} (\phi_1, \phi_2)  &=   \langle \iota(\phi_1)(\omega), \iota(\phi_2)(\omega) \rangle_{\sigma(\omega)} \notag \\
    &=  \langle \alpha_{\sigma(\omega)}(\phi_1), \alpha_{\sigma(\omega)}(\phi_2) \rangle_{\sigma(\omega)} \notag \\
    &=   \langle \beta_{\sigma(\omega)} \alpha_{\sigma(\omega)} (\phi_1), \phi_2 \rangle_X  
    \end{align}
  for $d\mu$-almost all $\omega$. In particular, $J_{\sigma(\omega)}$ factors as:
  \[  \begin{CD}
 J_{\sigma(\omega)}:  C^{\infty}_c(X) \times \overline{C^{\infty}_c(X)} @>\alpha_{\sigma(\omega)} \otimes \overline{\alpha_{\sigma(\omega)}}>> \sigma(\omega) \otimes  \overline{\sigma(\omega)} @>\langle- , -\rangle_{\sigma(\omega)}>> \C. \end{CD} \]
 \vskip 10pt

\subsection{\bf Pointwise spectral decomposition.}
The fact that the morphism $\mathcal{C}(X) \hookrightarrow L^2(X)$ is fine leads to a pointwise spectral decomposition for elements of $\mathcal{C}(X)$. More precisely, for $\phi \in \mathcal{C}(X)$, one has
\begin{equation} \label{E:point}
 \phi(x) = \int_{\Omega} \beta_{\sigma(\omega)} \alpha_{\sigma(\omega)}(\phi) (x) \, d\mu(\omega) \end{equation}
for any $x \in X$. We give a sketch of the derivation of this when $F$ is non-archimedean.  In that case, $\phi \in \mathcal{C}(X)$ is fixed by some open compact subgroup $J \subset G$. The group $J$  also fixes $ \beta_{\sigma(\omega)} \alpha_{\sigma(\omega)}(\phi)$ for any $\sigma$, since $\alpha_{\sigma(\omega)}$ and $\beta_{\sigma(\omega)}$ are $G$-equivariant. If $1_{xJ}$ denotes the characteristic function of the open compact subset $xJ \subset X$, then it follows that 
\[ \langle \phi, 1_{xJ} \rangle_X    = {\rm Vol}(xJ;dx) \cdot \phi(x) \quad \text{and} \quad  \langle   \beta_{\sigma(\omega)} \alpha_{\sigma(\omega)}(\phi),  1_{xJ} \rangle_X = 
 {\rm Vol}(xJ;dx) \cdot   \beta_{\sigma(\omega)} \alpha_{\sigma(\omega)}(\phi) (x) \]
 where ${\rm Vol}(xJ;dx)$ is the volume of  $xJ \subset X$ with respect to the measure $dx$. 
 Now it follows  that
  \begin{align}
   \phi(x)   &= \frac{1}{{\rm Vol}(xJ;dx)} \cdot \langle \phi, 1_{xJ} \rangle_X  \notag \\
   &= \frac{1}{{\rm Vol}(xJ; dx)}   \int_{\Omega} \langle   \beta_{\sigma(\omega)} \alpha_{\sigma(\omega)}(\phi),  1_{xJ} \rangle_X \, \,    d\mu(\omega)  \notag \\
   &=  \int_{\Omega} \beta_{\sigma(\omega)} \alpha_{\sigma(\omega)}(\phi) (x) \, d\mu(\omega), \notag
   \end{align}
 where the second equality is a consequence of (\ref{E:inner}) and  (\ref{E:product}). 
 \vskip 10pt
 
The crux of Bernstein's viewpoint  in \cite{B} is  that to give the isomorphism $\iota$ in the direct integral decomposition (\ref{E:direct}) is equivalent to giving the family $\{ \alpha_{\sigma(\omega)}:  \omega \in \Omega \}$ (satisfying appropriate properties), together with  the measure $d\mu$ on $\Omega$.  In the next section, we shall illustrate this in two basic examples.
\vskip 10pt

\section{\bf Basic Plancherel Theorems}
In this section, we describe two basic Plancherel theorems as an illustration of the abstract theory of Bernstein discussed in the previous section. 
These are the Harish-Chandra-Plancherel theorem and the Whittaker-Plancherel theorem. The latter will play a crucial role in this paper.
\vskip 5pt

We shall continue to work over a local field $F$. However, we will implicitly be assuming that $F$ is non-archimedean. 
In fact, the results of this paper will hold for archimedean local fields as well, but greater care is needed in introducing the various objects (such as various spaces of functions and the topologies they carry) and  in formulating the results. Thus, there are analytic and topological considerations that need to be addressed in the archimedean case. We refer the reader to the papers \cite{BP1, BP2} and the thesis of the second author \cite{Wan}  where these issues are dealt with carefully and content ourselves with treating the nonarchimedean case in the interest of efficiency.  
\vskip 5pt

\subsection{\bf Harish-Chandra-Plancherel Theorem.}  \label{SS:HCP}
The most basic example is the regular representation $L^2(G)$ of a semisimple group $G \times G$ (acting by right and left translation):
\[  (g_1, g_2) f (g) =  f(g_2^{-1} \cdot g \cdot g_1). \] 
Here, we have fixed a Haar measure $dg$ on $G$ which defines the inner product on $L^2(G)$.   
\vskip 5pt

 Now  Harish-Chandra's Plancherel theorem  \cite{W, Wa1} asserts that there is an explicitly constructed $G \times G$-equivariant isomorphism
\begin{equation} \label{E:HCP}     L^2(G)  \cong  \int_{\widehat{G} } \overline{\sigma} \widehat{\boxtimes} \sigma  \, d\mu_G(\sigma) \end{equation}
for a specific measure  $d\mu_G$  on $\widehat{G}$ known as  the Plancherel measure of $G$ (which depends on the Haar measure $dg$). The support of this measure is precisely the subset $\widehat{G}_{temp}$  of irreducible tempered representations of $G$.   Thus, in this case, one may take the measurable space $\Omega$ to be the unitary dual $\widehat{G}$ and the map $\widehat{G} \rightarrow \widehat{G \times G}$ is given by $\sigma \mapsto \overline{\sigma} \hat{\boxtimes}  \sigma$. 
 Implicit in the theorem is the data of a measurable field of unitary representations  over $\widehat{G}$ whose fiber at $\sigma \in \widehat{G}$ is the representation $\overline{\sigma} \widehat{\boxtimes } \sigma$.

\vskip 5pt

One may describe the above direct integral decomposition (including the isomorphism) from Bernstein's viewpoint.
The Hilbert space $\overline{\sigma} \widehat{\boxtimes} \sigma$ is naturally identified with the space ${\rm End}_{HS}(\sigma)$ of Hilbert-Schmidt operators on $\sigma$, equipped with the Hilbert-Schmidt norm, and its space of $G \times G$-smooth
vectors is the space  $\overline{\sigma}^{\infty} \otimes \sigma^{\infty} = {\rm End}_{\rm fin}(\sigma^{\infty})$ of finite rank operators on $\sigma^{\infty}$. 
To describe the direct integral decomposition,  one needs to give  the family of maps:
\[  \alpha_{\overline{\sigma} \boxtimes \sigma} :  C^{\infty}_c(G)  \longrightarrow    {\rm End}_{\rm fin}(\sigma^{\infty}),    \]
 The map   $\alpha_{\overline{\sigma} \boxtimes \sigma}$  is given by
\[   \alpha_{\overline{\sigma} \boxtimes\sigma}(f)  = \sigma(f)  :=  \int_G  f(g)  \cdot \sigma(g)  \, dg \]
and the (conjugate) dual map 
\[  \beta_{\overline{\sigma} \boxtimes\sigma}: \overline{\sigma}^{\infty}  \otimes \sigma^{\infty}  \longrightarrow C^{\infty}(G)  \]
is given by the formation of matrix coefficients.  This data characterizes and explicates the measurable field of unitary representations implicit in the Plancherel theorem: the sections $\sigma \mapsto \sigma(f)$ for $f \in C^{\infty}_c(G)$  generate the family of measurable sections of the Hilbert space bundle  $\{ {\rm End}_{HS}(\sigma) =  \overline{\sigma} \widehat{\boxtimes} \sigma \}_{\sigma \in \widehat{G}}$  over $\widehat{G}$.
\vskip 5pt

The associated inner product $J_{\overline{\sigma} \boxtimes\sigma}$ is given by:
\[   J_{\overline{\sigma} \boxtimes\sigma}(f_1, f_2)    = {\rm Tr}(\sigma(f_1)  \sigma(f_2^{\vee})), \]
where 
\[  f_2^{\vee}(g)  = \overline{f_2(g^{-1})}. \]
 The $\overline{\sigma} \otimes \sigma$-component of $f \in C^{\infty}_c(G)$  in the  pointwise spectral decomposition is the function given by
 \[  (\beta_{\overline{\sigma} \boxtimes\sigma} \circ   \alpha_{\overline{\sigma} \boxtimes\sigma})(f)(g) = {\rm Tr}(\sigma(g)^{-1} \circ \sigma(f)). \] 
 In particular, 
 \[  f  \mapsto  \ell_{\overline{\sigma} \boxtimes\sigma} (\alpha_{\overline{\sigma} \boxtimes\sigma}(f)) = Tr(\sigma(f))\]
 is the Harish-Chandra character distribution of $\sigma$. For tempered $\sigma$, it extends to the (original) Harish-Chandra Schwartz space $\mathcal{C}(G)$. 
 
  \vskip 5pt

  \vskip 5pt
  It is instructive to take note of how the Plancherel measure $d\mu_G$ depends on the Haar measure $dg$. If we replace $dg$ by $\lambda \cdot dg$ for some $\lambda \in \R^{\times}_{>0}$, then we observe that
  \[  J_{\overline{\sigma} \boxtimes\sigma} \mapsto \lambda^2 \cdot J_{\overline{\sigma} \boxtimes\sigma}, \quad  \alpha_{\overline{\sigma} \boxtimes\sigma} \mapsto \lambda \cdot  \alpha_{\overline{\sigma} \boxtimes\sigma} \quad \text{and} \quad   \beta_{\overline{\sigma} \boxtimes\sigma} \mapsto  \beta_{\overline{\sigma} \boxtimes\sigma}. \]
   Hence, we have
  \[ d \mu_G \mapsto \lambda^{-1} \cdot d\mu_G. \]
\vskip 5pt

We have restricted ourselves to semisimple groups in this subsection for simplicity. When $G$ is a reductive algebraic group and $Z \subset G$ the maximal $F$-split torus in the center of $G$, then one may fix a unitary character $\chi$ of $Z$ and consider the unitary representation $L^2_{\chi}(G)$ consisting of $L^2$-functions $f$ which satisfies $f(zg) = \chi(z) \cdot f(g)$ for $z \in Z$ and $g \in G$ and  equipped with the unitary structure determined by a Haar measure $dg/dz$ of $G/Z$. Moreover, one may also consider nonlinear finite central extensions of $G(F)$ by finite cyclic groups. In all these cases, the Harish-Chandra Plancherel theorem continue to hold. 
 \vskip 10pt

\subsection{\bf Whittaker-Plancherel Theorem.}  \label{SS:WP}
Our second example is the Whittaker-Plancherel theorem (see \cite{BP2, D, SV, W}), which is a variant of the setting discussed above. Let $G$ be a quasi-split semisimple group with $N$ the unipotent radical of a Borel subgroup. Fix a nondegenerate unitary character $\psi_N$ of $N$. We consider the {\em Whittaker variety} $(N ,\psi_N) \backslash G$ and its associated unitary representation $L^2(N, \psi_N \backslash G)$ (which depends on fixed Haar measures $dg$ on $G$ and $dn$ on $N$). This extends the setting we discussed above, as one is considering $L^2$-sections of a line bundle on the spherical variety $N \backslash G$ instead of  $L^2$-functions, but it is also covered in \cite{B}.  
\vskip 5pt
It has been shown (see \cite{D, SV, Wa2}) that one has a direct integral decomposition
\begin{equation} \label{E:WPlan}
  L^2(N ,\psi_N \backslash G)  \cong \int_{\widehat{G}}   \dim \Hom_N(\sigma, \psi_N)  \cdot  \sigma \,  \, d\mu_G(\sigma), \end{equation}
where we recall that $d\mu_G$ is the Plancherel measure of $G$ (associated to the fixed Haar measure $dg$). Thus, in this case, we are taking $\Omega$ to be $\widehat{G}$ and   the map $\Omega \rightarrow \widehat{G}$ is the identity map.  The  spectral measure $d\mu_{G,\psi_N}$ is equal to $ \dim \Hom_N(\sigma, \psi_N)  \cdot d\mu_G$, whose
support is the subset $\widehat{G}_{temp,\psi_N}$ of $\psi_N$-generic irreducible tempered representations.
\vskip 5pt

Associated  to this direct integral decomposition is the family of morphisms 
\[  \alpha_{\sigma}:  C^{\infty}_c(N,\psi_N \backslash G)  \longrightarrow \sigma  \]
for all $\sigma \in \widehat{G}_{temp,\psi_N}$.  Moreover, the map $\alpha_{\sigma}$ extends to the Harish-Chandra-Schwarz space $\mathcal{C}(N, \psi_N \backslash G)$.   
 We describe instead the  (conjugate) dual map 
 \[  \beta_{\sigma} \otimes \overline{\beta_{\sigma}}:  \sigma \otimes \overline{\sigma} \longrightarrow C^{\infty}(N \times N, \psi_N  \otimes \overline{\psi_N} \backslash G  \times G) \]
 as follows. Given $v_1, v_2 \in \sigma$, one has
 \begin{equation} \label{E:Whit}
   \beta_{\sigma} \otimes \overline{\beta_{\sigma}} (v_1 \otimes v_2)(g_1, g_2) = \int^*_N  \overline{\psi_N(n)}  \cdot \langle \sigma(n\cdot g_1) (v_1),  \sigma(g_2)(v_2) \rangle_{\sigma} \, dn  \end{equation} 
 where the integral is a regularized one (see \cite[Prop. 2.3]{LM} , \cite[\S 6.3]{SV} and \cite{BP1}).  Here, note that $\beta_{\sigma} \otimes \overline{\beta_{\sigma}}$ depends on $dn$, as it should. The composite of this with the evaluation-at-$1$ map is thus the Whittaker functional
 \begin{equation} \label{E:Whit2}
  \ell_{\sigma} \otimes \overline{\ell_{\sigma}}:  v_1 \otimes v_2 \mapsto  \int^*_N  \overline{\psi_N(n)}  \cdot \langle \sigma(n) (v_1),  v_2 \rangle _{\sigma}\, dn.  \end{equation}
 The associated (positive semidefinite) inner product $J_{\sigma}$ on $C^{\infty}_c(N ,\psi_N \backslash G)$ is then given by
 \begin{align}
   &J_{\sigma} (f_1, f_2)   \notag \\
 = &\sum_{v \in {\rm ONB}(\sigma)}  \langle f_1, \beta_{\sigma}(v) \rangle_{N \backslash G} \cdot \langle \beta_{\sigma}(v),  f_2 \rangle_{N \backslash G} \notag \\
 = &\sum_{v \in {\rm ONB}(\sigma)}  \int_{N \times N \backslash G \times G} f_1(g_1) \cdot \overline{f_2(g_2)} \cdot  \ell_{\sigma}(g_2 v) \cdot \overline{\ell_{\sigma}(g_1v)} \, \, \frac{dg_1}{dn} \, \, \frac{dg_2}{dn}   \notag \\
 =  &\sum_{v \in {\rm ONB}(\sigma)}   \int_{N \times N \backslash G \times G} f_1(g_1) \cdot \overline{f_2(g_2)} \cdot  \left( \int_N^*  \overline{\psi_N(n)}  \cdot \langle ng_2 v , g_1 v \rangle \, dn  \right)\, \, \frac{dg_1}{dn} \, \, \frac{dg_2}{dn}.  \notag
 \end{align}
\vskip 5pt

 The above maps specify on the right hand side of (\ref{E:WPlan})  the structure of a measurable field of unitary representations on $\widehat{G}_{temp,\psi_N}$ whose fiber at $\sigma \in \widehat{G}_{temp,\psi_N}$ is the representation $\sigma$.  We can think of this measurable field of unitary representations as a ``tautological" or ``universal bundle of unitary representations" over the ``moduli space" $\widehat{G}_{temp, \psi_N}$ of irreducible $\psi_N$-generic tempered representations.

 \vskip 5pt
It is again useful to take note of how the various quantities change when one replaces the Haar measure $dg$ of $G$ by $\lambda \cdot dg$ for some 
$\lambda \in \R^{\times}_{>0}$. In the Whittaker-Plancherel case, one sees from the above formula that $\beta_{\sigma}$ and $\ell_{\sigma}$ are unchanged whereas
\[  J_{\sigma} \mapsto \lambda^2 \cdot J_{\sigma} \quad \text{and} \quad \alpha_{\sigma} \mapsto \lambda \cdot \alpha_{\sigma}, \] 
keeping in mind that the Plancherel measure $d\mu_G$ gets replaced by $\lambda^{-1} \cdot d\mu_G$. 
 \vskip 5pt
 
 As in the case of the Harish-Chandra Plancherel theorem, we could have worked with a reductive algebraic group (in which case we fix a central character $\chi$ as before and consider $L^2_{\chi}(N, \psi_N \backslash G)$) or a nonlinear finite cover thereof. One has the Whittaker-Plancherel theorem in these settings as well, though we take note that uniqueness of Whittaker models fails for nonlinear covering groups in general.

\vskip 10pt
\subsection{\bf Continuity properties}
We now consider the issue of continuity  (in $\sigma$) for some of the quantities discussed above. We first need to say a few words about the Fell topology on $\widehat{G}_{temp}$.
\vskip 5pt

The unitary dual $\widehat{G}$ is typically non-Hausdorff even though it is still a T1 space for the groups considered here.  The tempered dual $\widehat{G}_{temp}$ is still not necessarily Hausdorff, but can often be replaced by a substitute which is Hausdorff. Namely, one can work with the space of equivalence classes of induced representations  $\tau = {\rm Ind}_P^G \pi$ where $P$ is a parabolic subgroup of $G$ and $\pi$ a discrete series representation of its Levi factor $M$.  This space was variously denoted by $\mathcal{T}$ in \cite{S3}, ${\rm Temp}_{\rm Ind}(G)$ in \cite{BP2} and $\mathcal{X}_{temp}(G)$ in \cite{BP1, X}, so we are spoilt for choices! To add to this galore, we shall denote this space by $\widehat{G}_{temp}^{ind}$.
Then $\widehat{G}_{temp}^{ind}$ has the structure of an orbifold (given by  twisting $\pi$ by unramified unitary characters of $M$).  There is a natural continuous finite-to-one surjective map
\[  \widehat{G}_{temp} \longrightarrow   \widehat{G}_{temp}^{ind}  \]
sending a tempered irreducible representation $\sigma$ to the unique induced representation ${\rm Ind}_P^G \pi$ containing $\sigma$. 
This map is injective outside a subset of $\widehat{G}_{temp}$ which has measure zero with respect to the Plancherel measure $d\mu_G$. 
\vskip 5pt

In the setting of the Harish-Chandra-Plancherel theorem of \S\ref{SS:HCP}, one could safely replace the integral over $\widehat{G}_{temp}$ in (\ref{E:HCP}) by an integral over $\widehat{G}_{temp}^{ind}$. Moreover, we have the Hermitian form $J_{\overline{\sigma} \boxtimes \sigma}(\phi_1, \phi_2)$ for $\phi_i \in C^{\infty}_c(G)$  (or more generally $\mathcal{C}(G)$) and $\sigma \in \widehat{G}_{temp}$. We can similarly define $J_{\overline{\tau} \boxtimes \tau}(\phi_1, \phi_2)$ for $\tau \in \widehat{G}_{temp}^{ind}$. Then we have \cite[\S2.13]{BP2}:
\vskip 5pt

\begin{lem}   \label{L:cont1}
For fixed $\phi_1$ and $\phi_2$ in $\mathcal{C}(G)$, the maps
\[  \tau  \mapsto J_{\overline{\tau} \boxtimes \tau}(\phi_1, \phi_2)   \]
  is continuous as a $\C$-valued function on $\widehat{G}_{temp}^{ind}$.
In particular, the map $\sigma \mapsto  J_{\overline{\sigma} \boxtimes \sigma}(\phi_1, \phi_2)$ is continuous on the subset of $\widehat{G}_{temp}$ which maps injectively into $\widehat{G}_{temp}^{ind}$. 
\end{lem}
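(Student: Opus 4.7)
The plan is to reduce the continuity statement to the continuity of the trace of an operator depending on a parameter. Recall from \S\ref{SS:HCP} that
\[
J_{\overline{\tau} \boxtimes \tau}(\phi_1, \phi_2) \;=\; {\rm Tr}\bigl( \tau(\phi_1) \cdot \tau(\phi_2^{\vee}) \bigr),
\]
where $\phi_2^{\vee}(g) = \overline{\phi_2(g^{-1})}$ and each $\tau(\phi_i)$ is a trace-class operator for $\tau \in \widehat{G}_{temp}^{ind}$ and $\phi_i \in \mathcal{C}(G)$. So it suffices to show that this trace is continuous in $\tau$.

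The first step is to use the compact picture of parabolic induction to place a neighborhood of a fixed $\tau_0 \in \widehat{G}_{temp}^{ind}$ on a common Hilbert space. Writing $\tau_0 = \Ind_P^G \pi_0$ with $\pi_0$ a discrete series of a Levi $M$, nearby points are of the form $\tau_\lambda = \Ind_P^G(\pi_0 \otimes \chi_\lambda)$, with $\chi_\lambda$ ranging (modulo a finite group) over the compact torus of unramified unitary characters of $M$. All such $\tau_\lambda$ are realized on the common Hilbert space $\mathcal{H}$ supplied by the compact picture, and $\tau_\lambda(g)$ is strongly continuous in $(g,\lambda) \in G \times \Lambda$.

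Next, since $F$ is non-archimedean, each $\phi_i \in \mathcal{C}(G)$ is bi-invariant under some open compact subgroup $J \subset G$, so $\tau_\lambda(\phi_i)$ factors through the finite-dimensional subspace $\mathcal{H}^J$, whose dimension is independent of $\lambda$. For $\phi_i \in C^{\infty}_c(G)^{J \times J}$, the matrix entries of $\tau_\lambda(\phi_i)$ in a fixed orthonormal basis of $\mathcal{H}^J$ are absolutely convergent integrals of $\phi_i$ against matrix coefficients of $\tau_\lambda$, and hence depend continuously on $\lambda$. Continuity of the finite-dimensional trace ${\rm Tr}(\tau_\lambda(\phi_1) \tau_\lambda(\phi_2^{\vee}))$ in $\lambda$ is then immediate.

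The main obstacle is the extension from $C^{\infty}_c(G)$ to all of $\mathcal{C}(G)$, for which one invokes Harish-Chandra's fundamental trace-norm estimate: $\|\tau(\phi)\|_1 \le N(\phi)$ for some continuous Schwartz seminorm $N$ on $\mathcal{C}(G)$, uniformly for $\tau$ in any compact subset of $\widehat{G}_{temp}^{ind}$. This is the principal analytic input underlying the Harish-Chandra-Plancherel theorem and is worked out carefully in \cite{BP1, BP2}. Combined with the density of $C^{\infty}_c(G)$ in $\mathcal{C}(G)$, it lets us approximate the bilinear form $(\phi_1, \phi_2) \mapsto J_{\overline{\tau} \boxtimes \tau}(\phi_1,\phi_2)$ uniformly on compacta in $\widehat{G}_{temp}^{ind}$ by its restriction to compactly supported test functions, and the desired continuity on all of $\widehat{G}_{temp}^{ind}$ follows.
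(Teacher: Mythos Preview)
The paper does not give its own proof of this lemma; it simply records the statement and attributes it to \cite[\S2.13]{BP2}. So there is no in-paper argument to compare against. Your outline is the standard one and is essentially what underlies the cited reference: realize a neighborhood of $\tau_0$ in the compact picture on a fixed Hilbert space, use bi-$J$-invariance (in the nonarchimedean case) to reduce to a fixed finite-dimensional block on which the matrix entries of $\tau_\lambda(\phi)$ vary continuously, and then pass from $C^\infty_c(G)$ to $\mathcal{C}(G)$ by density together with a uniform trace-norm bound. That last step is exactly the nontrivial analytic input, and you correctly flag it and point to \cite{BP1,BP2} for it; one should perhaps be slightly more precise that what is needed is a seminorm $N$ on $\mathcal{C}(G)$ with $\|\tau_\lambda(\phi)\|_1 \le N(\phi)$ uniformly for $\lambda$ near $0$, which is how the uniform approximation on compacta is justified. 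With that understood, your sketch is sound.
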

\vskip 5pt

In the context of the Whittaker-Plancherel theorem, we are working with the subset $\widehat{G}_{temp, \psi_N}$.  When uniqueness of Whitaker models holds (such as for reductive algebraic groups or the metaplectic groups which are two fold covers of symplectic groups), each $\tau = {\rm Ind}_P^G \pi$ in $\widehat{G}_{temp}^{ind}$ can have at most one irreducible constituent which is $\psi_N$-generic. Hence, we see that the composite map
\[  \widehat{G}_{temp,\psi_N} \longrightarrow \widehat{G}_{temp} \longrightarrow \widehat{G}_{temp}^{ind} \]
is injective (and continuous). As a consequence of \cite[\S2.14]{BP2}, we have:

\begin{lem} \label{L:cont2}
In the context of the Whittaker-Plancherel theorem,  for fixed $f_1$ and $f_2$ in $\mathcal{C}(N, \psi_N \backslash G)$, the map
 \[  \sigma \mapsto J_{\sigma}(f_1, f_2) \]
 is a continuous $\C$-valued function on $\widehat{G}_{temp, \psi_N}$.  
  Likewise, for fixed $f \in \mathcal{C}(N, \psi_N \backslash G)$,  the map
  \[ \sigma \mapsto \beta_{\sigma} \alpha_{\sigma} (f)(1) \]
  is continuous. 
 \end{lem}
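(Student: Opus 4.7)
The strategy is to reduce both continuity claims to the Harish-Chandra-Plancherel continuity (Lemma~\ref{L:cont1}) by going through the injective continuous map $\widehat{G}_{temp,\psi_N}\hookrightarrow\widehat{G}_{temp}^{ind}$ mentioned just before the statement. Since $\widehat{G}_{temp}^{ind}$ is an orbifold parametrized by unramified unitary twists of discrete series of Levi subgroups, the content is really to show that the relevant quantities depend continuously on the induction parameter within a single connected component, and then to paste.

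First, I would place $\sigma\in\widehat{G}_{temp,\psi_N}$ inside a flat family of parabolically induced representations $\tau_\nu=\mathrm{Ind}_P^G(\pi\otimes\nu)$, where $\nu$ runs over the unramified unitary characters of the center of the Levi. By uniqueness of the Whittaker model, $\tau_\nu$ has a unique $\psi_N$-generic irreducible constituent $\sigma_\nu$, and a choice of flat section $v_\nu$ together with the Jacquet integral
\[
\ell_{\sigma_\nu}(\tau_\nu(g)v_\nu) = \int_N^* \overline{\psi_N(n)}\,W_\nu(g,n)\,dn
\]
gives a family of Whittaker models depending continuously (in fact real-analytically) on $\nu$. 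Thus, for $f\in\mathcal{C}(N,\psi_N\backslash G)$, the element $\alpha_{\sigma_\nu}(f)\in\sigma_\nu^\infty$ and its image $\beta_{\sigma_\nu}\alpha_{\sigma_\nu}(f)\in\mathcal{C}^w(N,\psi_N\backslash G)$ can be realized as functionals built from $W_\nu$ and depending continuously on $\nu$.

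Second, I would rewrite
\[
J_{\sigma_\nu}(f_1,f_2) = \int_{(N\backslash G)^2} f_1(g_1)\overline{f_2(g_2)}\,K_\nu(g_1,g_2)\,\frac{dg_1}{dn}\frac{dg_2}{dn},
\]
where the reproducing kernel $K_\nu(g_1,g_2)$ is the regularized Whittaker integral displayed in \eqref{E:Whit}. The main technical input is the uniform (in $\nu$) control of this regularized integral, together with Harish-Chandra-type bounds showing that $|K_\nu(g_1,g_2)|$ is dominated by a function that pairs absolutely with $|f_1||f_2|$; these estimates in the non-archimedean case follow from the work of Lapid-Mao \cite{LM} (uniformity of the Jacquet-integral regularization) combined with the uniform bounds of \cite[\S2]{BP2}. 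Given this, dominated convergence yields continuity of $\nu\mapsto J_{\sigma_\nu}(f_1,f_2)$, which pulls back to continuity on $\widehat{G}_{temp,\psi_N}$ as required. For the second assertion, the same estimates show $\sigma\mapsto\beta_\sigma\alpha_\sigma(f)$ is continuous as a map into $\mathcal{C}^w(N,\psi_N\backslash G)$; since evaluation at $1$ is a continuous linear functional on $\mathcal{C}^w$, the composition is continuous.

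The step I expect to be most delicate is the uniformity of the Whittaker regularization in the induction parameter. In the $p$-adic case this can be handled by choosing an open compact $J\subset G$ fixing $f_1,f_2$ and reducing to a continuous family of finite-dimensional operators on $\tau_\nu^J$, which is exactly the maneuver carried out in \cite[\S2.14]{BP2}. In the archimedean case (which the paper chooses to avoid), this step would require the analytic framework of \cite{BP1,Wan}, but the strategy would be identical.
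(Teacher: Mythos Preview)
The paper does not supply its own proof of this lemma; it simply records the statement as a consequence of \cite[\S2.14]{BP2}. Your proposal is a reasonable outline of the argument behind that citation (flat families of induced representations, Jacquet integrals giving holomorphically varying Whittaker functionals, reduction to finite-dimensional $J$-fixed vectors in the $p$-adic case), and you even identify \cite[\S2.14]{BP2} as the key input, so you are aligned with the paper's approach.
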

\vskip 10pt

\section{\bf $\GL_2$ and $\SL_2$}   \label{S:SL2}
In this section, we specialize the discussion of the previous section to the case of $\GL_2$ and $\SL_2$. Since the group $\SL_2$ will feature prominently in the rest of the paper, 
we also take the opportunity to set up some precise conventions which will be used for the rest of the paper.  
\vskip 5pt

\subsection{\bf Measures on $F$ and $F^{\times}$} \label{SS:measures}
Let us first fix a nontrivial additive character 
\[ \psi : F \longrightarrow S^1. \]
Then $\psi$ determines an additive  Haar measure $d_{\psi}x$ on $F$, characterized by the requirement that
$d_{\psi}x$ is self-dual with respect to the Fourier transform relative to the pairing $(x,y) \mapsto \psi(xy)$ on $F$. If $F$ is nonarchimedean with ring of integers $\mathcal{O}_F$ and $\psi$ has conductor $\mathcal{O}_F$, then the Haar measure $d_{\psi} x$ gives $\mathcal{O}_F$ volume $1$. 
 One also obtains a multiplicative Haar measure on $F^{\times}$ given by 
 \[  d^{\times}_{\psi}x = d_{\psi}x / |x|. \] 
 \vskip 5pt

 More generally, for any algebraic group $G$ over $F$, a nonzero element $\omega$ of $\wedge^{\rm top} {\rm Lie}(G)^*$ and the additive Haar measure $d_{\psi}x$ together give rise  to a right-invariant (or left-invariant) Haar measure $|\omega|_{\psi}$. Since $\psi$ is fixed throughout, we will often suppress it from the notation and simply write $|\omega|$.  
\vskip 5pt

\subsection{\bf The group $\GL_2$}   \label{SS:GL2}
We now consider the group $\GL_2$ over $F$. Let $\tilde{B}$ be the upper triangular Borel subgroup  with unipotent radical $N $ (the group of upper triangular unipotent matrices) and consider the diagonal maximal torus. We can write the diagonal maximal torus  as  $Z \cdot S$ where $Z$ is the center of $\GL_2$ (the scalar matrices) and 
\[ S = \{ s(y) = \left( \begin{array}{cc}
y & 0 \\
0 & 1 \end{array} \right) : y \in F^{\times} \}. \]
By \S \ref{SS:measures}, we have Haar measures on $N(F) = F$, $Z(F) = F^{\times}$  and $S(F) = F^{\times}$ and hence a right-invariant measure on $\tilde{B}$.
\vskip 5pt

\vskip 5pt
Regard the fixed additive character $\psi$ of $F$ as a character of $N(F)$. 
For a fixed a unitary character $\chi$ of $Z$, the Whittaker-Plancherel theorem for $L^2_{\chi}( N, \psi \backslash \GL_2)$ gives a family of $\GL_2$-equivariant embeddings
\[  \tilde{\beta}_{\tilde{\sigma}, \psi} : \tilde{\sigma} \longrightarrow C^{\infty}_{\chi}( N, \psi \backslash \GL_2) = {\rm Ind}_{Z \cdot N}^{\GL_2} \chi \otimes \psi \]
for irreducible tempered representations $\tilde{\sigma}$ of $\GL_2$ with central character $\chi$. As we have noted,  $\tilde{\beta}_{\tilde{\sigma},\psi}$ only depends on the Haar measure $d_{\psi}n$  on $N$ (which we have fixed), and does not depend on the choice of the Haar measure on $\GL_2$ which enters into the formulation of the Whittaker-Plancherel theorem.

\vskip 5pt

We may consider the $\tilde{B}$-equivariant map 
\[  {\rm rest}:  C^{\infty}_{\chi}( N,  \psi \backslash \GL_2) \longrightarrow  C^{\infty}_{\chi} (N, \psi \backslash \tilde{B} ) = {\rm Ind}_{Z \cdot N}^{\tilde{B}} \chi \otimes \psi \]
given by the restriction of functions. The Haar measures we have fixed endow the latter space with a unitary structure, whose inner product is given by
\[  \langle f_1, f_2 \rangle_{Z \cdot N \backslash \tilde{B}} = \int_{F^{\times}} f_1  \left( \begin{array}{cc}
y & 0 \\
0 & 1 \end{array} \right) \cdot \overline{ f_2 \left( \begin{array}{cc}
y & 0 \\
0 & 1 \end{array} \right)} \, \, d^{\times}_{\psi} y. \]
\vskip 5pt

We now note the following basic result, which is a reformulation of   \cite[Lemma 4.4]{LM} and \cite[Prop. 2.14.3]{BP2}. In these references, this result was shown in the setting of  
$\GL_n$ (with appropriate formulation). In any case, this result is the reason why we consider the case of $\GL_2$ in this section.  
\vskip 5pt

\begin{prop} \label{P:GL2}
The composite map ${\rm rest} \circ \tilde{\beta}_{\tilde{\sigma},\psi}$ gives an isometric embedding
\[ {\rm rest} \circ  \tilde{\beta}_{\tilde{\sigma},\psi} :  \tilde{\sigma}  \longrightarrow L^2_{\chi}(N, \psi\backslash \tilde{B} ). \]
 \end{prop}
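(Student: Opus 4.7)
My plan is to reduce the isometry claim to Fourier inversion on $N \cong F$, by applying the regularized Whittaker pairing (\ref{E:Whit2}) to the squared norm $|W_v(s(y))|^2$.

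First, I would unwind the definitions: by construction $\tilde{\beta}_{\tilde\sigma,\psi}(v)(g) = W_v(g) := \ell_{\tilde\sigma,\psi}(\tilde\sigma(g)v)$, and since $\tilde{B} = Z \cdot S \cdot N$ with $S$ parametrized by $y \mapsto s(y)$, the assertion boils down to the identity
\[
\int_{F^{\times}} |W_v(s(y))|^2 \, d^{\times}_{\psi} y \;=\; \|v\|^2_{\tilde\sigma}
\]
for every smooth $v \in \tilde\sigma$ (polarization then recovers the bilinear statement, and density of smooth vectors extends it to the Hilbert completion).

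Next, I would apply (\ref{E:Whit2}) with $v_1 = v_2 = \tilde\sigma(s(y))v$, use the unitarity of $\tilde\sigma(s(y))$ together with the conjugation relation $s(y)^{-1} n(x) s(y) = n(x/y)$ in $\GL_2$, and then substitute $x \mapsto yx$ (Jacobian $|y|$) to obtain
\[
|W_v(s(y))|^2 \;=\; |y| \int_F^{*} \overline{\psi(yx)} \, \varphi_v(x) \, d_{\psi} x, \qquad \varphi_v(x) := \langle \tilde\sigma(n(x)) v, v \rangle_{\tilde\sigma}.
\]
Integrating against $d^{\times}_\psi y = d_\psi y/|y|$ cancels the factor $|y|$ and produces a double integral over $F \times F$. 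Exchanging the order of integration and applying Fourier inversion (with the self-dual measure $d_\psi$) to the matrix coefficient $\varphi_v$ evaluates it at $0$, yielding $\varphi_v(0) = \|v\|^2_{\tilde\sigma}$, which is the desired identity.

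The main obstacle is rigour, not ideas. For a merely tempered $\tilde\sigma$ (e.g.\ a unitary principal series), the matrix coefficient $\varphi_v$ is in general not in $L^1(F)$, so the regularized integral $\int_F^*$, the use of Fubini, and the pointwise Fourier inversion step all require justification. This is precisely the content of the regularized Whittaker-integral machinery developed in \cite[Prop.~2.3]{LM} and \cite[Prop.~2.14.3]{BP2}, where the needed control is obtained from the asymptotic expansion of matrix coefficients along $N$. As the authors themselves indicate, the stated proposition is essentially a specialization to $\GL_2$ of the general $\GL_n$ result of those works, so the concrete proof would proceed by invoking those references rather than redoing the analysis.
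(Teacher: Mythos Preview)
Your proposal is correct and matches the paper's approach: the paper does not give an independent proof but simply notes that the proposition is a reformulation of \cite[Lemma~4.4]{LM} and \cite[Prop.~2.14.3]{BP2}, which is exactly where you land after your heuristic Fourier-inversion computation. Your added formal calculation (reducing the isometry to Fourier inversion of the $N$-matrix coefficient) is a helpful explanation of why the cited results yield the statement, but the actual content in both cases is the invocation of those references.
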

\vskip 5pt

 \subsection{\bf The group $\SL_2$} \label{SS:SL2}
Now we turn to the group $\SL_2$.  The goal is to deduce from Proposition \ref{P:GL2} its analog in the setting of $\SL_2$. 
 We first take the opportunity to introduce some conventions for $\SL_2$ which will be enforced throughout the paper. 
\vskip 5pt

We first fix the upper triangular Borel subgroup $B = T \cdot N$, where $T$ is the diagonal torus and $N$ the unipotent radical of $B$, and a maximal compact subgroup $K$ in good relative position with respect to $B$. 
For example, when $F$ is non-archimedean with ring of integers $\mathcal{O}_F$, we can simply take $K = \SL_2(\mathcal{O}_F)$. 
We have natural identifications  $N(F)  \cong F$ and $T(F) \cong F^{\times}$ such that the modulus character of $B$ is given by $\delta_B(t)  = |t|^2$. 
For $a \in F^{\times}$ and $b \in N$, we write 
\[ n(b)  = \left( \begin{array}{cc}
1 & b \\
0 & 1 \end{array} \right) \in N(F) \quad \text{and}  \quad t(a)  = \left( \begin{array}{cc}
a & 0 \\
0 & a^{-1} \end{array} \right) \in T(F).\]  
Further, the groups $N(F) = F^{\times}$ and $T(F) = F^{\times}$ carry the fixed Haar measure $dn = d_{\psi}x$ and $dt = d_{\psi}x/ |x|$. 
 We also fix a Haar measure $dg$ on $\SL_2(F)$, which in turn determines a Plancherel measure $d\mu_{\SL_2}$ on the unitary dual $\widehat{\SL}_2$.

 \vskip 10pt

Now we come to the analog of Proposition \ref{P:GL2} in the $\SL_2$-setting. The main difference between $\GL_2$ and $\SL_2$ is that, whereas  there is a unique equivalence class of Whittaker datum in the case of $\GL_2$, there are $F^{\times}/ F^{\times 2}$-worth of them in the case of $SL_2$.   For any $a \in F^{\times}$, set $\psi_a(x) = \psi(ax)$ so that $\psi_a$ is a nontrivial additive character of $F$. Then the two Whittaker data $(N, \psi_a)$ and $(N, \psi_b)$ of $\SL_2$ are equivalent if and only if $a /b \in F^{\times 2}$.  In formulating an analog of Proposition \ref{P:GL2} in the $\SL_2$-setting, it will be necessary to take all the various  inequivalent Whittaker data into account. 
\vskip 5pt

Henceforth, let us fix a set of representatives $[a]$ for $F^{\times}/F^{\times 2}$, so that
\begin{equation} \label{E:rep}
 F^{\times} = \bigsqcup_{[a] \in F^{\times}/F^{\times 2}} a F^{\times 2}. \end{equation}
We shall assume $a = 1$ is one of the representatives.
\vskip 5pt

For each $a \in F^{\times}$, the Whittaker-Plancherel theorem for $L^2(N,\psi_a \backslash \SL_2)$ (relative to the fixed Haar measures $dg$ on $\SL_2$ and $d_{\psi}x$ on $N$)  then furnishes the maps $\alpha_{\sigma, \psi_a}$, $\beta_{\sigma, \psi_a}$ and $\ell_{\sigma, \psi_a}$ for any irreducible $\psi_a$-generic tempered representation $\sigma$ of $\SL_2$.  In particular, if  $\sigma$ is $\psi_a$-generic, then
\[ \beta_{\sigma, \psi_a}: \sigma  \longrightarrow  C^{\infty}(N, \psi_a \backslash \SL_2) \]
is an $\SL_2$-equivariant embedding.
As in the $\GL_2$ case, we may consider the restriction of functions 
\[  C^{\infty}(N, \psi_a \backslash \SL_2) \longrightarrow  C^{\infty}(N, \psi_a \backslash B). \]
Let us scale this restriction map a little, by setting
\[ {\rm rest}_a: C^{\infty}(N, \psi_a \backslash \SL_2) \longrightarrow  C^{\infty}(N, \psi_a \backslash B) \]
to be
\[ {\rm rest}_a(f) = |a|^{1/2} \cdot  f|_B.  \]
Combining all these maps together gives us an $B$-equivariant map
\begin{equation} \label{E:rest}
j_{\sigma} := \bigoplus_{[a] \in F^{\times}/ F^{\times 2}} {\rm rest}_a \circ \beta_{\sigma, \psi_a}  : \sigma \longrightarrow  \bigoplus_{[a] \in F^{\times}/ F^{\times 2}}  C^{\infty}(N, \psi_a \backslash B) \end{equation}
where here, the map $\beta_{\sigma, \psi_a}$ is interpreted to be $0$ if $\sigma$ is not $\psi_a$-generic. 
Now we have the following analog of Proposition \ref{P:GL2}, which will play a crucial role later on (in the proof of Proposition \ref{P:L2p})..
\vskip 5pt

\begin{prop} \label{P:SL2}
Equip $C^{\infty}(N, \psi_a \backslash B) \cong C^{\infty}(T)$ with the unitary structure
\[  \langle f_1, f_2 \rangle_{N \backslash B} =\frac{1}{2} \cdot |2|_F \cdot  \int_{F^{\times}} f_1(t(b)) \cdot \overline{f_2(t(b))} \, d^{\times}_{\psi}b, \]
which is the natural inner product associated to the Haar measures fixed on $T$ scaled by the factor $|2|_F /2$. Then, for any irreducible tempered  representation $\sigma$ of $\SL_2$, the $B$-equivariant map defined by (\ref{E:rest}) is an isometry 
\[ j_{\sigma}: \sigma \longrightarrow  \bigoplus_{[a] \in F^{\times}/ F^{\times 2}}  L^2(N, \psi_a \backslash B). \]
\end{prop}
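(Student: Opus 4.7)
The plan is to deduce this result from the analogous $\GL_2$ statement (Proposition \ref{P:GL2}) by restricting an appropriate extension from $\GL_2$ to $\SL_2$.

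\textbf{Reduction to $\GL_2$.} I would first choose a tempered irreducible representation $\tilde{\sigma}$ of $\GL_2$ with unitary central character $\chi$ such that $\sigma$ appears in $\tilde{\sigma}|_{\SL_2}$; for instance, by inducing $\sigma$ (suitably extended) from $\SL_2 \cdot Z$ to $\GL_2$. By Proposition \ref{P:GL2} applied to $\tilde{\sigma}$, for any $\tilde{v} \in \tilde{\sigma}$,
\[
\|\tilde{v}\|^2_{\tilde{\sigma}} \;=\; \int_{F^\times} \bigl|\ell_{\tilde{\sigma},\psi}(\tilde{\sigma}(s(y))\tilde{v})\bigr|^2 \, d^{\times}_\psi y.
\]

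\textbf{Partitioning the integral over cosets of $F^{\times 2}$.} I would partition $F^\times = \bigsqcup_{[a]} aF^{\times 2}$ and substitute $y = ac^2$ on each piece. The factorization $s(ac^2) = s(a) \cdot t(c) \cdot z(c)$ with $z(c) = \mathrm{diag}(c,c) \in Z$, together with unitarity of $\chi$, shows that $|\ell_{\tilde{\sigma},\psi}(\tilde{\sigma}(s(ac^2))\tilde{v})|^2 = |\ell^{(a)}(\tilde{\sigma}(t(c))\tilde{v})|^2$, where $\ell^{(a)}(v) := \ell_{\tilde{\sigma},\psi}(\tilde{\sigma}(s(a))v)$. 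The Jacobian $|2c|_F$ of the 2-to-1 squaring map $c \mapsto c^2$ gives the measure identity $\int_{F^{\times 2}} F(y)\, d^\times_\psi y = \tfrac{|2|_F}{2}\int_{F^\times} F(c^2)\, d^\times_\psi c$, leading to
\[
\|\tilde{v}\|^2_{\tilde{\sigma}} \;=\; \sum_{[a]} \frac{|2|_F}{2} \int_{F^\times} \bigl|\ell^{(a)}(\tilde{\sigma}(t(c))\tilde{v})\bigr|^2 \, d^\times_\psi c.
\]
This already matches the shape of the right-hand side of the proposition, with the prefactor $\tfrac{|2|_F}{2}$ accounted for.

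\textbf{Descent to $\SL_2$-constituents.} Using $s(a)n(x)s(a)^{-1} = n(ax)$, the functional $\ell^{(a)}$ is $(N,\psi_a)$-equivariant, so its restriction to each $\SL_2$-constituent of $\tilde{\sigma}|_{\SL_2}$ is either zero or a $\psi_a$-Whittaker functional. Writing the multiplicity-free decomposition $\tilde{\sigma}|_{\SL_2} = \bigoplus_i \sigma_i$ with $\sigma_1 = \sigma$, uniqueness of $\SL_2$-Whittaker models gives $\ell^{(a)}|_{\sigma_i} = \lambda_i(a) \cdot \ell_{\sigma_i,\psi_a}$ for some scalar $\lambda_i(a)$ (vanishing when $\sigma_i$ is not $\psi_a$-generic). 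Taking $\tilde{v} \in \sigma \subset \tilde{\sigma}|_{\SL_2}$, the element $\tilde{\sigma}(t(c))\tilde{v}$ stays in $\sigma$ since $t(c) \in \SL_2$, so only the $\sigma_1$-projection contributes, and the left-hand side reduces to $\|\tilde{v}\|^2_{\sigma}$ via compatibility of inner products on the isotypic component.

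\textbf{Main obstacle: normalization.} The heart of the argument is to verify that $|\lambda_1(a)|^2 = |a|$, so that the factor $|a|^{1/2}$ built into $\mathrm{rest}_a$ is exactly what equates the $\GL_2$-twisted functional $\ell^{(a)}|_\sigma$ with the canonical $\SL_2$-Whittaker functional $\ell_{\sigma,\psi_a}$ from the Whittaker--Plancherel theorem for $\SL_2$. I expect this to be the delicate step: it requires carefully tracking how the Plancherel measures for $\GL_2$ and $\SL_2$ relate under the chosen Haar measures of \S\ref{SS:GL2}--\S\ref{SS:SL2}, and how the unitary structures on the respective Whittaker--Plancherel decompositions transform under conjugation by $s(a)$. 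Once this normalization is established, substituting back yields the desired isometry.
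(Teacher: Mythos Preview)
Your overall strategy---extend to $\tilde{\sigma}$ on $\GL_2$, apply Proposition~\ref{P:GL2}, break the integral over $F^\times$ into square classes via $y=ac^2$, and descend to $\sigma$---is exactly the paper's approach, and your bookkeeping of the factor $|2|_F/2$ is correct.

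Where you diverge is in the ``main obstacle.'' You propose to establish $|\lambda_1(a)|^2=|a|$ by comparing the $\GL_2$ and $\SL_2$ Plancherel measures and tracking how unitary structures transform under $s(a)$. This is not how the paper proceeds, and it is unnecessarily indirect. The point is that the Whittaker functionals $\tilde{\ell}_{\tilde{\sigma},\psi}$ and $\ell_{\sigma,\psi_a}$ are \emph{both} defined by the same explicit regularized integral \eqref{E:Whit2} over $N$ with the \emph{same} measure $dn=d_\psi x$; in particular $\tilde{\ell}_{\tilde{\sigma},\psi}|_\sigma=\ell_{\sigma,\psi}$ on the nose. The normalization is then a two-line computation: for $w_1,w_2\in\sigma$, write $\tilde{\ell}_{\tilde{\sigma}}(s(a)w_1)\overline{\tilde{\ell}_{\tilde{\sigma}}(s(a)w_2)}$ as the integral $\int^*_N\langle n(x)s(a)w_1,s(a)w_2\rangle\overline{\psi(x)}\,d_\psi x$, use $n(x)s(a)=s(a)n(a^{-1}x)$ together with unitarity of $\tilde{\sigma}$, and change variables $x\mapsto ax$ to pick up the Jacobian $|a|$. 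The resulting integral is exactly $|a|\cdot\ell_{\sigma,\psi_a}(w_1)\overline{\ell_{\sigma,\psi_a}(w_2)}$ by \eqref{E:Whit2} again. No Plancherel comparison is needed; the entire content is that the canonical Whittaker functionals depend only on the Haar measure $dn$ on $N$, which is shared between $\GL_2$ and $\SL_2$.
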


 \vskip 5pt
\begin{proof}
To deduce this proposition from  Proposition \ref{P:GL2},  we naturally regard $\SL_2$ as a subgroup of $\GL_2$.
Given an irreducible tempered representation $\sigma$ of $\SL_2$, we pick a unitary representation $\tilde{\sigma}$ of $\GL_2$ with unitary central character $\chi$ such that $\sigma \subset \tilde{\sigma}$. We may assume that the inner product $\langle -, - \rangle_{\tilde{\sigma}}$  restricts to the inner product $\langle-, -\rangle_{\sigma}$ on $\sigma$.
\vskip 5pt

Consider the $\psi$-Whittaker functional $\tilde{\ell}_{\tilde{\sigma}, \psi}$ and the associated map $\tilde{\beta}_{\tilde{\sigma},\psi}$ for $\tilde{\sigma}$.
 Observe that for any $a \in F^{\times}$, $\tilde{\ell}_{\tilde{\sigma},\psi} \circ s(a) \in \Hom_N(\tilde{\sigma}, \psi_a)$ and the following statements are equivalent:
 \vskip 5pt
 \begin{itemize}
 \item[-]  the restriction of $\tilde{\ell}_{\tilde{\sigma},\psi} \circ s(a) $ to $\sigma$ is nonzero;
 \item[-]  $\sigma$ is $\psi_a$-generic;
 \item[-]  $s(a)(\sigma) = \sigma$ as a subspace of $\tilde{\sigma}$.
 \end{itemize}
Further, if $\sigma$ is $\psi$-generic, then from the formula (\ref{E:Whit2}), one sees that  the restriction of  $\tilde{\ell}_{\tilde{\sigma},\psi} $ to $\sigma$ is equal to $\ell_{\sigma, \psi}$.
\vskip 5pt

Now take $w_1, w_2 \in \sigma \subset \tilde{\sigma}$. By Proposition \ref{P:GL2}, one has
\[
  \langle w_1, w_2 \rangle_{\sigma} = \langle w_1, w_2 \rangle_{\tilde{\sigma}} = \int_{F^{\times}} \tilde{\beta}_{\tilde{\sigma}} (w_1)(s(x)) \cdot \overline{\tilde{\beta}_{\tilde{\sigma}} (w_2)(s(x))} \, d^{\times}_{\psi}(x), \]
where we have written $\tilde{\beta}_{\tilde{\sigma}}$ in place of $\tilde{\beta}_{\tilde{\sigma},\psi}$ to simplify notation.
To evaluate the latter integral, we decompose the domain of integration into square classes as in (\ref{E:rep}) and uniformize each square class $aF^{\times 2}$ by $F^{\times}$, using the map 
$b \mapsto a b^2$ (which is a $2$-to-$1$ map). Performing the corresponding change of variables in  the integral (i.e. replacing  $x$ by $ab^2$ on $aF^{\times 2}$), we obtain:
\begin{align}
  \langle w_1, w_2 \rangle_{\sigma}  &= \sum_{[a] \in F^{\times}/F^{\times 2}} 
\frac{1}{2} \cdot |2|_F \cdot  \int_{F^{\times}} \tilde{\beta}_{\tilde{\sigma}}(w_1) (s(ab^2)) \cdot \overline{\tilde{\beta}_{\tilde{\sigma}}( w_2) (s(ab^2))}
 \, d^{\times}_{\psi}b \notag \\
 &= \sum_{[a] \in F^{\times}/F^{\times 2}} 
\frac{1}{2} \cdot |2|_F \cdot  \int_{F^{\times}} \tilde{\beta}_{\tilde{\sigma}}(w_1) (s(a) \cdot t(b)) \cdot \overline{\tilde{\beta}_{\tilde{\sigma}}( w_2) (s(a) \cdot t(b))}
 \, d^{\times}_{\psi}b \notag 
 \end{align}
 This computation is the source of the factor $|2|_F/2$ appearing in the proposition. 
Let us define a map
\[  j_{\sigma, a} : \sigma \mapsto   C^{\infty}(N, \psi_a \backslash B) \]
by
\[    j_{\sigma, a} (w) (t(b)) =   \tilde{\beta}_{\tilde{\sigma}, \psi} (w)(s(a) \cdot t(b)), \]
noting that $j_{\sigma, a}$ is nonzero if and only if $\sigma$ is $\psi_a$-generic. 
Then  we have shown that for $w_1, w_2 \in \sigma$,  
\[  \langle w_1, w_2 \rangle_{\sigma}  = \sum_{[a] \in F^{\times}/F^{\times 2}}  \langle  j_{\sigma, a}(w_1), j_{\sigma, a}(w_2) \rangle_{N \backslash B}. \]
In other words, one has an isometry 
\[  \bigoplus_{[a] \in F^{\times}/F^{\times 2}} j_{\sigma, a} :  \sigma \longrightarrow \bigoplus_{[a] \in F^{\times}/F^{\times 2}}   L^2(N,\psi_a \backslash B) \]
where the unitary structure of the latter spaces are as given in the proposition. 
\vskip 5pt

It remains then to show that 
\[ j_{\sigma, a}  ={\rm rest}_a \circ  \beta_{\sigma, \psi_a}  \quad \text{for each $a \in F^{\times}$,}  \]
or equivalently
\[ \tilde{\ell}_{\tilde{\sigma}}  \circ s(a)  = |a|^{1/2} \cdot \ell_{\sigma, \psi_a} \quad \text{ on $\sigma \subset \tilde{\sigma}$.} \]
To see this, for $w_1, w_2 \in \sigma \subset \tilde{\sigma}$, we apply (\ref{E:Whit2}) to get
\begin{align}
  &\tilde{\ell}_{\tilde{\sigma}} (s(a) w_1) \cdot  \overline{ \tilde{\ell}_{\tilde{\sigma}} (s(a) w_2) } \notag \\
  = &\int^*_F   \langle  n(x) \cdot s(a) w_1,  s(a) w_2 \rangle_{\tilde{\sigma}} \cdot \overline{\psi(x)} \, d_{\psi}x \notag \\
  =&\int^*_N \langle s(a) \cdot n(a^{-1}x) w_1, s(a) w_2 \rangle{\tilde{\sigma}} \cdot \overline{\psi(x)} \, d_{\psi}x \notag \\
  =&\int^*_N \langle  n(x) w_1, w_2 \rangle_{\tilde{\sigma}} \cdot \overline{\psi(ax)} \cdot |a|  \, d_{\psi}x  \notag\\
 = &|a| \cdot \ell_{\sigma, \psi_a}(w_1) \cdot \overline{\ell_{\sigma, \psi_a}}(w_2). \notag
  \end{align}
  Here in the penultimate equality, we have applied a change of variables, replacing $x$ by $ax$ and used the unitarity of $\tilde{\sigma}$.
  We have thus shown that
  \[  \tilde{\ell}_{\tilde{\sigma}}  \circ s(a) =   |a|^{1/2} \cdot \ell_{\sigma, \psi_a} \]
  at least up to a root of unity (which we may ignore, by absorbing it into $\tilde{\ell}_{\tilde{\sigma}}$).  This completes the proof of Proposition \ref{P:SL2}.
 \end{proof}
 \vskip 5pt
 
 We can describe the image of the isometry $j_{\sigma}$ precisely. Observe that the center $Z_{\SL_2} = \mu_2$ of $\SL_2$ induces a decomposition
 \[  L^2(N, \psi_a, \backslash B) =  L^2(N, \psi_a, \backslash B)^+  \oplus   L^2(N, \psi_a, \backslash B)^- \]
 into two irreducible  $B$-subrepresentations which are the $\pm$-eigenspaces of $\mu_2$. Then one has:
 \vskip 5pt
 
 \begin{cor} \label{C:SL2}
 Let $z_{\sigma} = \pm$ denote the central character of the irreducible tempered representation $\sigma$ of $\SL_2$. 
 The map $j_{\sigma}$ in (\ref{E:rest}) defines a $B$-equivariant  isometric isomorphism
 \[  j_{\sigma} : \sigma = \bigoplus_{[a] \in F^{\times}/ F^{\times 2}}  \dim \Hom_N(\sigma, \psi_a) \cdot L^2(N, \psi_a \backslash B)^{z_{\sigma}} \]
 where the unitary structure of the right hand side is as defined in Proposition \ref{P:SL2}.
 \end{cor}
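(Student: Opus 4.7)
The plan is to leverage Proposition \ref{P:SL2}, which already provides an isometric $B$-equivariant embedding
\[
j_\sigma : \sigma \longrightarrow \bigoplus_{[a] \in F^\times / F^{\times 2}} L^2(N, \psi_a \backslash B),
\]
so only the identification of the image remains. First I would observe that the image lands in the $z_\sigma$-eigenspace of $Z_{\SL_2}$: since $Z_{\SL_2} = \mu_2$ is central in $\SL_2$ and acts on $\sigma$ by the scalar $z_\sigma$, the $B$-equivariance of each component $\mathrm{rest}_a \circ \beta_{\sigma, \psi_a}$ forces its image into $L^2(N, \psi_a \backslash B)^{z_\sigma}$. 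Moreover, by the definition of $j_\sigma$, the $[a]$-th component is identically zero whenever $\sigma$ fails to be $\psi_a$-generic.

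The paragraph preceding the corollary already asserts that each $L^2(N, \psi_a \backslash B)^{\epsilon}$ is irreducible as a $B$-representation, so the remaining structural input is that, for distinct classes $[a] \ne [a']$ in $F^\times / F^{\times 2}$, the irreducibles $L^2(N, \psi_a \backslash B)^{\epsilon}$ and $L^2(N, \psi_{a'} \backslash B)^{\epsilon}$ are pairwise non-isomorphic as $B$-representations. I would verify this by a short $N$-spectrum computation: identifying $L^2(N, \psi_a \backslash B)$ with $L^2(T)$ via restriction, the right translation action of $n(x) \in N$ at the point $t(y) \in T$ produces the pointwise scalar $\psi_{a y^2}(x)$, so the $N$-spectrum of $L^2(N, \psi_a \backslash B)$ is exactly the square class $a F^{\times 2}$. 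These spectra are disjoint across different square classes.

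Putting these observations together, the image of the isometry $j_\sigma$ is a closed $B$-invariant subspace of a direct sum of pairwise inequivalent irreducible $B$-modules, and hence must coincide with the direct sum of those summands onto which it projects nontrivially. By the discussion above, the $[a]$-th summand appears in the image exactly when $\mathrm{rest}_a \circ \beta_{\sigma, \psi_a} \ne 0$, which is precisely the condition that $\sigma$ be $\psi_a$-generic, equivalently $\dim \Hom_N(\sigma, \psi_a) = 1$. Combined with the isometry from Proposition \ref{P:SL2}, this gives the identification asserted in the corollary.

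The principal obstacle is ruling out the possibility that the image of $j_\sigma$ could be a ``graph-type'' closed invariant subspace inside the direct sum (in which case the image would not be a subsum of the summands). This is exactly what the pairwise non-isomorphism of the summands $L^2(N, \psi_a \backslash B)^{z_\sigma}$ rules out, and the $N$-spectrum comparison described above is the concrete ingredient establishing that non-isomorphism.
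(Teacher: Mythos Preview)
Your proposal is correct and follows the same line of reasoning the paper has in mind: the paper states the corollary with no proof, relying on the isometry of Proposition~\ref{P:SL2}, the observation (made just before the corollary) that each $L^2(N,\psi_a\backslash B)^{\pm}$ is an irreducible $B$-module, and the central character constraint. Your $N$-spectrum computation making the pairwise non-isomorphism of the summands explicit is a helpful detail the paper leaves implicit; one small point you might add for completeness is that $\mathrm{rest}_a\circ\beta_{\sigma,\psi_a}\ne 0$ whenever $\sigma$ is $\psi_a$-generic, which follows simply by evaluating at $1\in T$ (this recovers $\ell_{\sigma,\psi_a}$).
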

 
 \vskip 10pt
 
 \subsection{\bf Harish-Chandra-Schwartz space}
Finally, we explicate when a function $f \in C^{\infty}(N,\psi \backslash \SL_2)$ lies in the Harish-Chandra Schwartz space.
 The measures $dg$ and $dn$  determine an $\SL_2(F)$-invariant measure
on  $N(F) \backslash \SL_2(F)$, which can be described as follows. An element $f \in C^{\infty}(N,\psi \backslash \SL_2)$ is determined by its restriction to $TK$, by the Iwasawa decomposition. Then the integral of $f$ with respect to $dg/dn$ is  given by 
\[  f \mapsto       \int_T \int_K  f(tk)  \cdot \delta_B(t)^{-1}  \, dt \, dk \] 
for some Haar measure $dk$ of $K$. 
\vskip 5pt

Given a function $f \in C^{\infty}(N,\psi \backslash \SL_2)$,  
 the smoothness of $f$ implies that the function $t \to f(tk)$ on $T \cong F^{\times}$ is necessarily rapidly decreasing at $|t| \to \infty$ (indeed, it vanishes on some domain $|t| > C$ in the p-adic case).  
Thus the analytic properties of $f$ depend on its asymptotics as $|t| \to 0$. 
We have the following lemma:
\vskip 5pt

\begin{lem}  \label{L:SL2}
Let $f \in C^{\infty}(N,\psi \backslash \SL_2)$ and suppose that there exists $C>0$ and $d > 0$ such that
\[  \sup_{k \in K} |f(tk)|  \leq  C  \cdot |t|^d      \quad \text{as $|t| \to 0$}.  \]

\vskip 5pt

\begin{itemize}
\item[(i)] If $d > 1$, then $f \in \mathcal{C}(N, \psi\backslash \SL_2)$.
\vskip 5pt
\item[(ii)] If $d > 2$, then $f  \in L^1(N ,\psi \backslash \SL_2)$.
\end{itemize}
\end{lem}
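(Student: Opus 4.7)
My approach is to use the Iwasawa decomposition $\SL_2 = TK$ (modulo $N$) to reduce both statements to elementary integral tests on the single variable $a \in F^{\times}$, uniformly in $k \in K$. The point is that the two ends of $T$ are of entirely different natures: as $|a| \to \infty$, smoothness together with the left $(N,\psi)$-equivariance produces compactly supported behaviour for free, whereas at $|a| \to 0$ the hypothesis of the lemma is the only source of control. For the first reduction, I would pick a compact open subgroup $K' \subset \SL_2$ fixing $f$ on the right; using the identity $t(a) n(b) = n(a^2 b) t(a)$ combined with the $\psi$-equivariance on the left, right-$K'$-invariance of $f$ forces $\psi(a^2 b) = 1$ for every $b$ in some fixed neighbourhood of $0 \in N$, which fails once $|a|$ is sufficiently large. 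Hence there exists $M$ with $f(t(a) k) \equiv 0$ for $|a| \ge M$.

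For part (ii), I would then appeal to the Iwasawa formula for the invariant measure $dg/dn$ recalled just above the lemma:
\[
\|f\|_{L^1} = \int_K \int_{F^{\times}} |f(t(a) k)| \cdot |a|^{-2} \, d^{\times}_{\psi} a \, dk,
\]
using $\delta_B(t(a)) = |a|^2$. After discarding $|a| \ge M$ by the first step and bounding $f$ by a constant on the compact annulus $1 \le |a| \le M$, this reduces to a multiple of $\int_{|a|<1} |a|^{d-2} \, d^{\times}_{\psi} a$, which converges precisely when $d - 2 > 0$, i.e.\ $d > 2$.

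For part (i), I would unwind the definition of the Harish-Chandra Schwartz space $\mathcal{C}((N,\psi) \backslash \SL_2)$ from Bernstein \cite{B} (cf.\ \cite{BP2, D}): in Iwasawa coordinates it asks that $f$ be smooth and that
\[
\sup_{a,\, k} |f(t(a) k)| \cdot \delta_B^{1/2}(t(a))^{-1} \cdot (1 + |\log|a||)^N < \infty \quad \text{for every } N \ge 0,
\]
reflecting the fact that $\delta_B^{1/2}$ is the ``ground state'' on the Whittaker quotient. Since $\delta_B^{1/2}(t(a)) = |a|$, the hypothesis gives the bound $C |a|^{d-1} (1 + |\log|a||)^N$ on $|a| < 1$, and with $d > 1$ the polynomial factor $|a|^{d-1}$ beats the polylogarithmic factor as $|a| \to 0$. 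Combined with the vanishing on $|a| \ge M$ from the first step, this verifies the Schwartz bound.

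The main non-mechanical step is to pin down the precise form of the Schwartz-space condition on $(N,\psi) \backslash \SL_2$ in the conventions of the paper (and to verify that the measure normalisations fixed in \S\ref{SS:SL2} match those used implicitly in Bernstein's definition); once this is in hand, both statements collapse to a single integral test which cleanly separates the critical exponents $d = 1$ and $d = 2$.
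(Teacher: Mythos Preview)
Your proposal is correct and follows exactly the approach implicit in the paper's setup: the paper itself omits the proof, but the remarks immediately preceding the lemma (vanishing of $f(tk)$ for $|t|$ large via smoothness and $(N,\psi)$-equivariance, and the Iwasawa integration formula with $\delta_B^{-1}$) are precisely the ingredients you use. Your identification of the Schwartz-space condition as the $\delta_B^{1/2}$-weighted sup bound with polylogarithmic corrections is the standard one, and your verification of the two thresholds $d>1$ and $d>2$ is clean and accurate.
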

\vskip 5pt
 \vskip 15pt

\section{\bf Theta Correspondence}
In this section, we recall the setup of the theta correspondence and recall some results of Sakellaridis \cite{S3} on the spectral decomposition of the Weil representation for a dual pair. 

\subsection{\bf Weil representation.}
If $W$ is a symplectic vector space and $(V,q)$ a quadratic space over a local field $F$, then one has a dual reductive pair 
\[   \Sp(W) \times \OO(V) \longrightarrow  \Sp(V \otimes W). \]
In this paper, we shall only consider the case where $W = F \cdot e  \oplus F \cdot f$ is 2-dimensional with $\langle e,f \rangle_W =1$.  With the Witt basis $\{e, f\}$, we may identify $\Sp(W)$ with  $\SL_2(F)$, and we let $B =T \cdot N$ be the Borel subgroup which stabilises the line $F \cdot e$ (so that $B$ is upper triangular). In particular, the conventions we have set up in \S \ref{SS:SL2} for $\SL_2$ apply to $\Sp(W)$.
\vskip 5pt

Attached to  a fixed nontrivial additive character $\psi$ of $F$ and other auxiliary data, this dual pair has a distinguished representation $\Omega_{\psi}$ known as the Weil representation. To be precise, if $\dim V$ is odd, we need to work with the metaplectic double cover $\Mp_2(F)$ of $\SL_2(F)$. To simplify notation, we shall ignore this issue; the reader may assume $\dim V$ is even. We refer the reader to \cite{GQT, GS} for the metaplectic cases. 
\vskip 5pt

To describe the Weil representation $\Omega_{\psi}$, we first need to endow the vector space $V$ with a Haar measure. Let $\langle-, -\rangle$  be the symmetric bilinear form associated to the quadratic form $q$ on $V$, so that $\langle v_1, v_2 \rangle = q(v_1+ v_2) -q(v_1)- q(v_2)$. Then one has an $S^1$-valued nondegenerate pairing $\psi( \langle- , - \rangle)$ on $V$. We then equip $V$ with the Haar measure $d_{\psi}v$ which is self-dual with respect to the Fourier transform defined by this pairing and observe that $d_{\psi}v$ is
  $\OO(V)$-invariant.  The unitary representation $\Omega_{\psi}$ can be realised on $L^2(f \otimes V) = L^2(V)$, where the inner product is defined using the Haar measure $d_{\psi}v$.  The action of various elements of $\SL_2(F) \times \OO(V)$ via $\Omega_{\psi}$ is given as follows:

\[  \begin{cases}
h \cdot \Phi (v)  = \Phi (h^{-1} \cdot v), \text{  for $h \in \OO(V)$;} \\
n(b) \cdot \Phi (v)  = \psi(b\cdot  q(v)) \cdot \Phi(v), \text{  for $n(b) = \left( \begin{array}{cc}
1 & b \\
0 & 1 \end{array}  \right) \in N$;} \\
t(a) \cdot \Phi(v)  = |a|^{\frac{1}{2}  \dim V} \chi_{\disc(V)}(a) \cdot \Phi( a^2 v) , \text{  for $t(a) =  \left( \begin{array}{cc}
a & 0 \\
0 & a^{-1} \end{array}  \right) \in T$.}  
\end{cases} \]
Here $\disc(V) \in F^{\times}/ F^{\times 2}$ is the discriminant of $(V,q)$ and $\chi_{\disc(V)}$ is the associated quadratic character of $F^{\times}$.
This describes $\Omega_{\psi}$ as a representation of $B \times \OO(V)$. To describe the full action of $\SL_2(F)$, one needs to give the action of a nontrivial Weyl group element 
\[ w = \left( \begin{array}{cc}
0 & 1 \\
 -1 & 0 \end{array}  \right). \]
Its action is given  by a normalized Fourier transform $\mathcal{F}$:
\begin{equation} \label{E:weyl}
 w \cdot \Phi (v) = \mathcal{F} (\Phi)(v) :=  \gamma_{\psi, q} \cdot \int  \Phi(v') \cdot \psi( \langle v, v' \rangle) \, d_{\psi}v' \end{equation}
where $\gamma_{\psi, q}$ is a root of unity (a Weil index) whose precise value need not concern us here. 
\vskip 5pt

One may consider the underlying smooth representation $\Omega_{\psi}^{\infty}$ which is realized on the subspace $S(V)$ of Schwartz-Bruhat functions on $V$. 
Following our convention, we shall use $\Omega_{\psi}$ to denote the Weil representation in both the smooth and $L^2$-setting when there is no cause for confusion. 

\vskip 10pt

\subsection{\bf Smooth Theta correspondence.}. \label{SS:smooth}
The theory of theta correspondence concerns the understanding of the representation $\Omega_{\psi}$ of $\SL_2(F) \times \OO(V)$. One can consider this question on the level of smooth representation theory or $L^2$-representation theory. In this subsection, we recall the setup of the smooth theory. Henceforth, we shall assume that $\dim V \geq 3$ (and sometimes $\dim V \geq 4$). 
\vskip 5pt

For $\sigma \in {\rm Irr}(\SL_2)$, the (smooth) big theta lift of $\sigma$ to $\OO(V)$ is:
\[  \Theta_{\psi}(\sigma) := (\Omega_{\psi}^{\infty} \otimes \sigma^{\vee})_{\SL_2} \]
where we are considering the space of $\SL_2$-coinvariants.
With this definition, we have the natural  $\SL_2$-invariant and $\OO(V)$-equivariant projection map
\[
 A_{\sigma} : \Omega_{\psi}^{\infty}\otimes \sigma^{\vee} \longrightarrow \Theta_{\psi}(\sigma) \]
which gives by duality a canonical $\SL_2 \times \OO(V)$-equivariant map
\[
  \theta_{\sigma} : \Omega_{\psi}^{\infty} \longrightarrow \sigma \boxtimes \Theta_{\psi}(\sigma). \]
Likewise, for $\pi \in {\rm Irr}(\OO(V))$, the (smooth) big theta lift of $\pi$ to $\SL_2$ is:
\[  \Theta_{\psi}(\pi) :=  (\Omega_{\psi}^{\infty} \otimes \pi^{\vee})_{\OO(V)} \]
where we are considering the space of $\OO(V)$-coinvariants.   

\vskip 5pt

By the Howe duality principle \cite{GT}, one knows that:
\vskip 5pt

\begin{itemize}
\item[-] the representations $\Theta_{\psi}(\sigma)$ and $\Theta_{\psi}(\pi)$  are finite length representations which (if nonzero) have unique irreducible quotients $\theta_{\psi}(\sigma)$ and $\theta_{\psi}(\pi)$ respectively (known as the small theta lifts);
\item[-] for any $\sigma_!, \sigma_2 \in {\rm Irr}(\SL_2)$, 
\[  \theta_{\psi}(\sigma_1) \cong \theta_{\psi}(\sigma_2) \ne 0 \Longrightarrow  \sigma_1 \cong \sigma_2. \]
\end{itemize}
As a consequence, we see that if $\pi := \theta_{\psi}(\sigma)$, then $\sigma \cong \theta_{\psi}(\pi)$.
\vskip 5pt

Composing  $A_{\sigma}$ and $\theta_{\sigma}$ with the natural projection $\Theta_{\psi}(\sigma) \twoheadrightarrow \theta_{\psi}(\sigma)$, we have canonical equivariant maps (still denoted by the same symbols)
\begin{equation} \label{E:A-can}
 A_{\sigma}: \Omega \otimes \sigma^{\vee} \longrightarrow \theta_{\psi}(\sigma) \end{equation}
and
\begin{equation} \label{E:theta}
  \theta_{\sigma} : \Omega_{\psi}^{\infty} \longrightarrow \sigma \boxtimes \theta_{\psi}(\sigma). \end{equation}
\vskip 5pt

The theta correspondence  for $\SL_2 \times \OO(V)$ (when $\dim V$ is even) and $\Mp_2 \times \OO(V)$ (when $\dim V$ is odd) was studied in great detail by Rallis \cite{R3}. 
His results were supplemented by later results of J.S. Li \cite{Li}. We may summarize their results by:
\vskip 5pt

\begin{prop}\label{P:the}
(i) Assume that $\dim V \geq 4$ is even and the Witt index ${\rm Witt}(V)$ of $V$ is $\geq 2$ (so that one is in the stable range). If $\sigma \in {\rm Irr}(\SL_2)$ is unitary, then $\theta(\sigma)$ is nonzero unitary, so that one has an injective map
\[  \theta_{\psi}: \widehat{\SL_2} \longrightarrow \widehat{\OO(V)}. \]
In general, the theta correspondence gives a map
\[  \theta_{\psi}: \widehat{\SL_2}_{temp} \longrightarrow \widehat{\OO(V)}  \cup \{0\}  \]
which is injective on that part of the domain outside the preimage of $0$. 

\vskip 5pt

(ii) Assume that $\dim V \geq  3$ is odd and ${\rm Witt}(V)$ is $\geq 2$. If $\sigma \in {\rm Irr}({\rm Mp}_2)$ is a unitary genuine representation, then
$\theta_{\psi}(\sigma)$ is nonzero unitary, so that one has an injective map
\[ \theta_{\psi}:  \widehat{{\rm Mp}_2}\longrightarrow \widehat{\OO(V)} \]
where $ \widehat{{\rm Mp}_2}$ denotes the $\psi$-generic genuine unitary dual of $\Mp_2(F)$.  In general, the theta correspondence gives a map
\[  \theta_{\psi} : \widehat{\Mp_2}_{temp} \longrightarrow \widehat{\OO(V)} \cup \{0\}  \]
which is  injective on that part of the domain outside the preimage of $0$. 
 \end{prop}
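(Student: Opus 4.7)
The proposition compiles results of Rallis \cite{R3} and J.-S. Li \cite{Li} on non-vanishing, unitarity, and injectivity, and I would prove it by addressing these three ingredients in turn.

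For non-vanishing under the stable range hypothesis, I would use that ${\rm Witt}(V) \geq 2$ to pick a $2$-dimensional isotropic subspace $U \subset V$, with stabilizing parabolic $P_U \subset \OO(V)$ whose Levi quotient is $\GL(U) \times \OO(V_0)$ for $V_0 = U^\perp/U$. A Kudla-style computation of the Jacquet module of $\Omega_\psi$ along the unipotent radical of $P_U$ exhibits the Weil representation of the smaller pair $\SL_2 \times \OO(V_0)$ (twisted by a character of $\GL(U)$) as a quotient. Frobenius reciprocity together with an induction on $\dim V$ then shows that for any $\sigma \in \Irr(\SL_2)$ there is a non-zero $\SL_2 \times \OO(V)$-equivariant map $\Omega_\psi \to \sigma \boxtimes \tau$ for some $\tau$, so that $\Theta_\psi(\sigma)$, and hence $\theta_\psi(\sigma)$, is non-zero.

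For unitarity of $\theta_\psi(\sigma)$ when $\sigma$ is unitary, the key mechanism is the Rallis inner product formula. One defines a Hermitian form on $\Theta_\psi(\sigma)$ by the doubling integral
\[
\bigl\langle A_\sigma(\phi_1 \otimes v_1^\vee),\, A_\sigma(\phi_2 \otimes v_2^\vee)\bigr\rangle
:= \int_{\SL_2(F)} \bigl\langle \Omega_\psi(g)\phi_1,\, \phi_2 \bigr\rangle \cdot \overline{\bigl\langle \sigma(g) v_1,\, v_2\bigr\rangle}\, dg,
\]
and verifies absolute convergence under the stable range hypothesis (from the decay of Weil representation matrix coefficients along the diagonal torus, which dominates the at-most-polynomial growth of matrix coefficients of the unitary $\sigma$). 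The form is manifestly $\OO(V)$-invariant (via the $\OO(V)$-invariance of the Weil matrix coefficient) and Hermitian; a Schur-type argument shows it is positive semi-definite and that its radical coincides with the kernel of the projection $\Theta_\psi(\sigma) \twoheadrightarrow \theta_\psi(\sigma)$, so the form descends to an $\OO(V)$-invariant inner product on $\theta_\psi(\sigma)$.

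The injectivity of the resulting maps of unitary duals then follows immediately from the Howe duality principle recalled before the proposition: $\theta_\psi(\sigma_1) \cong \theta_\psi(\sigma_2) \neq 0$ forces $\sigma_1 \cong \sigma_2$. The tempered version requires only a mild extension---one replaces absolute convergence by a Kudla--Rallis regularization of the doubling integral, noting that temperedness of $\sigma$ is sufficient, though non-vanishing is no longer automatic. Case (ii) is handled by the same strategy applied to the metaplectic pair $\Mp_2 \times \OO(V)$, where the genuineness assumption on $\sigma$ matches the non-trivial splitting behavior of the Weil representation over $\Mp_2$. The principal analytic obstacle throughout is the convergence of the doubling integral in the stable range; once secured, positive semi-definiteness and the identification of the radical with the kernel of the projection to the small theta lift follow by relatively formal arguments.
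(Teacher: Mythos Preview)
The paper gives no proof of this proposition; it is stated as a summary of results of Rallis \cite{R3} and Li \cite{Li}, to which the reader is referred. Your sketch correctly identifies the three ingredients---non-vanishing, unitarity via the doubling integral, and injectivity via Howe duality---and is in the spirit of those references.

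There is, however, a genuine gap in your unitarity argument. You write that ``a Schur-type argument shows [the Hermitian form] is positive semi-definite.'' Schur's lemma tells you only that any two $\OO(V)$-invariant Hermitian forms on the irreducible quotient $\theta_\psi(\sigma)$ differ by a \emph{real} scalar; it says nothing about the sign of that scalar. Establishing that the doubling integral yields a \emph{positive} (semi-)definite form is precisely the nontrivial content of Li's theorem in the stable range, and it requires an independent argument (for instance, realizing the form as a genuine $L^2$-inner product via an explicit model of the theta lift). Without this step you have not shown that $\theta_\psi(\sigma)$ is unitarizable, which is the heart of the proposition.

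A smaller point: for tempered $\sigma$ and $\dim V \geq 3$, the doubling zeta integral already converges absolutely (as the paper notes in \S\ref{SS:doubling}), so no Kudla--Rallis regularization is needed for the ``in general'' part of the statement. The issue there is not convergence but rather non-vanishing and positivity when nonzero; the paper addresses this separately in Proposition~\ref{P:zeta}, again deferring the stable-range positivity to \cite{Li} and invoking \cite{R3} and \cite{He} for the remaining cases.
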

\vskip 5pt

One can in fact describe the map  $\theta_{\psi}$ very explicitly but we will not need this description here. 

\vskip 5pt
\subsection{\bf Doubling zeta integral}  \label{SS:doubling}
Using the unitary structures of $\Omega_{\psi}$ and $\sigma$ (and the Haar measure $dg$ on $\SL_2$), the representation $\theta_{\psi}(\sigma)$ of $\OO(V)$ can be given a unitary structure by the local doubling zeta integral. More precisely, for $\Phi_1, \Phi_2 \in S(V)$ and $v_1, v_2 \in \sigma$, the local doubling zeta integral is
given by:
\begin{equation} \label{E:double}
   Z_{\sigma}( \Phi_1, \Phi_2, v_1,v_2) 
=  \int_{\SL_2}  \langle g \cdot  \Phi_1, \Phi_2 \rangle_{\Omega}  \cdot\overline{ \langle \sigma(g) \cdot v_1, v_2 \rangle_{\sigma} }\, \, dg, \end{equation}
which converges for tempered $\sigma$ when $\dim V \geq 3$. It defines a $(\SL_2 \times \SL_2)$-invariant and $\OO(V)^{\Delta}$-invariant map
\[ Z_{\sigma}: \Omega_{\psi} \otimes \overline{\Omega_{\psi}} \otimes \overline{\sigma} \otimes \sigma  \longrightarrow  \C. \]
The inner product $\langle-, -\rangle_{\sigma}$ on $\sigma$ gives an isomorphism $\overline{\sigma} \cong \sigma^{\vee}$. 
Hence, $Z_{\sigma}$ factors through the canonical projection map 
\[ A_{\sigma} \otimes \overline{A_{\sigma}} : \Omega_{\psi} \otimes \overline{\Omega_{\psi}} \otimes \sigma^{\vee} \otimes \overline{\sigma^{\vee}}  \longrightarrow  \Theta_{\psi}(\sigma) \otimes \overline{\Theta_{\psi}(\sigma)} \]
so that 
\begin{equation}  \label{E:zeta}
  Z_{\sigma}(\Phi_1, \Phi_2, v_1, v_2) =  \langle A_{\sigma}(\Phi_1,v_1) , A_{\sigma}(\Phi_2, v_2) \rangle_{\theta(\sigma)}. \end{equation}
for some  Hermitian form $\langle-, - \rangle_{\theta(\sigma)}$ on $\Theta_{\psi}(\sigma)$. We have:
\vskip 5pt

\begin{prop} \label{P:zeta}
Suppose that $\sigma$ is an irreducible tempered representation of $\SL_2$ (or ${\rm Mp}_2$) such that $\theta_{\psi}(\sigma) \ne 0$.  Then the Hermitian form $\langle-, - \rangle_{\theta(\sigma)}$ on $\Theta_{\psi}(\sigma)$
descends to a nonzero inner product on $\theta_{\psi}(\sigma)$. 
\end{prop}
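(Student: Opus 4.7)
The form $\langle-,-\rangle_{\theta(\sigma)}$ on $\Theta_\psi(\sigma)$ is $\OO(V)$-invariant (by the $\OO(V)^\Delta$-invariance of $Z_\sigma$) and Hermitian (via the change of variables $g\mapsto g^{-1}$ together with the unitarity of $\Omega_\psi$ and $\sigma$), so its radical $R\subset\Theta_\psi(\sigma)$ is an $\OO(V)$-subrepresentation. Thus the proposition reduces to showing that $\langle-,-\rangle_{\theta(\sigma)}$ is (i) nonzero and (ii) positive semi-definite. Indeed, given both, $\Theta_\psi(\sigma)/R$ is a nonzero pre-Hilbert $\OO(V)$-representation, and since Howe duality (\S\ref{SS:smooth}) exhibits $\theta_\psi(\sigma)$ as the unique irreducible quotient of the finite-length representation $\Theta_\psi(\sigma)$, the semisimplicity of unitary representations forces $\Theta_\psi(\sigma)/R\cong\theta_\psi(\sigma)$, and the form descends to the desired nondegenerate inner product.

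\textbf{Main step (positivity).} Endow $\Omega_\psi\otimes\bar\sigma$ with its tensor-product inner product and the diagonal $\SL_2$-action, which makes it a unitary representation. For $X=\sum_i\Phi_i\otimes\bar v_i\in\Omega_\psi\otimes\bar\sigma$, a direct computation from (\ref{E:double}) gives
\[
\sum_{i,j}Z_\sigma(\Phi_i,\Phi_j,v_i,v_j)=\int_{\SL_2(F)}\langle g\cdot X,\,X\rangle_{\Omega_\psi\otimes\bar\sigma}\,dg,
\]
the integrand being the diagonal matrix coefficient of $X$ in a unitary representation, hence a positive-definite function on $\SL_2(F)$. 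Absolute convergence for tempered $\sigma$ and $\dim V\geq 3$ follows from the classical decay estimates for matrix coefficients of the Weil representation (as used in establishing Proposition~\ref{P:the}). To see nonnegativity of the integral, I would invoke the doubling see-saw: writing $V^-$ for $V$ equipped with the negated quadratic form, the Weil representation of $(\SL_2,\OO(V\oplus V^-))$ is isomorphic to $\Omega_\psi^V\,\widehat\otimes\,\overline{\Omega_\psi^V}$ as a representation of $\SL_2\times\OO(V)\times\OO(V)$. Interpreting $Z_\sigma(X,X)$ as a matrix coefficient of this larger Weil representation paired against a suitable vector in $\sigma\otimes\bar\sigma$, and identifying $\theta_\psi(\sigma)$ with the $L^2$-summand of $\Omega_\psi^V$ corresponding to $\sigma$ (which is unitary by Proposition~\ref{P:the}), one realizes $Z_\sigma(X,X)$ as a squared norm in this unitary $\theta_\psi(\sigma)$, from which positivity is immediate.

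\textbf{Nonvanishing.} One exhibits a single pair $(\Phi_0,v_0)$ with $Z_\sigma(\Phi_0,\Phi_0,v_0,v_0)>0$. Take $\Phi_0\in S(V)$ supported in a sufficiently small neighborhood of an anisotropic vector in $V$ and $v_0$ a unit vector in $\sigma$: on a small neighborhood of $g=1$, the integrand in (\ref{E:double}) is close to the positive constant $\|\Phi_0\|^2\cdot\|v_0\|^2$, while the tail contribution is bounded by the absolute-convergence estimate above; shrinking the support of $\Phi_0$ as needed, the neighborhood contribution dominates.

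\textbf{Main obstacle.} The principal technical point is the see-saw identification used in the positivity step, namely rigorously relating the $\SL_2$-integral of the diagonal matrix coefficient of $\Omega_\psi\otimes\bar\sigma$ to a squared norm in $\theta_\psi(\sigma)$ equipped with its unitary structure from the $L^2$-theory. Once the spectral decomposition of $\Omega_\psi\otimes\bar\sigma$ (implicit in Sakellaridis' results recalled in this section) is used to isolate the copy of $\theta_\psi(\sigma)$ corresponding to the $\sigma$-component, positivity becomes transparent.
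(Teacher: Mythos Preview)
Your reduction in the \textbf{Strategy} paragraph is sound: given nonvanishing and positive semi-definiteness, the radical $R$ is a proper $\OO(V)$-submodule, $\Theta_\psi(\sigma)/R$ is a nonzero finite-length representation with a nondegenerate invariant inner product (hence semisimple), and Howe duality forces it to be $\theta_\psi(\sigma)$. This is also how the paper implicitly frames the problem.

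The gap is in your positivity argument. You invoke Sakellaridis' spectral decomposition of $\Omega_\psi$ (equation~(\ref{E:ome}) and the formula for $J^\theta_\sigma$ in \S5.4) to ``isolate the copy of $\theta_\psi(\sigma)$'' and thereby realize $Z_\sigma(X,X)$ as a squared norm. But look at how the paper sets this up: the very identification $J^\theta_\sigma(\Phi_1,\Phi_2)=\langle\theta_\sigma(\Phi_1),\theta_\sigma(\Phi_2)\rangle_{\sigma\boxtimes\theta(\sigma)}$ in (\ref{E:yia2}) uses the inner product on $\theta_\psi(\sigma)$ furnished by Proposition~\ref{P:zeta}, and the text says so explicitly (``where the unitary structure on $\Theta(\sigma)$ is that defined in the previous subsection''). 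So appealing to this spectral decomposition to prove positivity of the doubling form is circular. You also lean on Proposition~\ref{P:the} to say $\theta_\psi(\sigma)$ is unitary, but that proposition only asserts unitarity in the stable range ${\rm Witt}(V)\ge 2$; outside that range it gives you nothing about unitarity.

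There is a non-circular route to positivity --- this is essentially Li's argument in \cite{Li}, using orthogonality of matrix coefficients against the Harish-Chandra Plancherel measure of $\SL_2$ directly --- and it works cleanly in the stable range. But outside the stable range (the cases ${\rm Witt}(V)\le 1$), one needs something else. The paper handles this by a case analysis: Li~\cite{Li} for the stable range, Rallis~\cite{R3} for nonvanishing in general, He~\cite{He} for the descent to $\theta_\psi(\sigma)$ in the archimedean case, and a direct verification that $\Theta_\psi(\sigma)$ is already irreducible for tempered $\sigma$ in the few remaining low-rank nonarchimedean cases. Your uniform approach would need to replace that last step with a genuine argument, and you have not supplied one.

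Separately, your nonvanishing argument does not work as stated. Shrinking the support of $\Phi_0$ does not localize the integral to a neighborhood of $1\in\SL_2$: the Weil representation action of the Weyl element $w$ is a Fourier transform (see (\ref{E:weyl})), so $g\cdot\Phi_0$ for $g$ in the big cell is spread out regardless of how small ${\rm supp}(\Phi_0)$ is, and the ``tail contribution'' over $NwB$ is not controlled by the support of $\Phi_0$. Rallis' argument in \cite[Prop.~6.1]{R3} (that $\theta_\psi(\sigma)\neq 0\Leftrightarrow\langle-,-\rangle_{\theta(\sigma)}\neq 0$) proceeds differently, via an analysis of the doubling zeta integral as a function of the complex parameter.
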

\begin{proof}
We note:
\vskip 5pt
\begin{itemize}
\item[-] If the Witt index of $V$ is $\geq 2$ (so that one is in the stable range), this is due to \cite{Li}.
\item[-] In general, it was shown by Rallis \cite[Prop. 6.1]{R3} that 
\[  \theta_{\psi}(\sigma) \ne 0 \Longleftrightarrow \langle-, - \rangle_{\theta(\sigma)} \ne 0.\]
\item[-]   In the archimedean case, it was shown in  \cite{He} that $\langle-, - \rangle_{\theta(\sigma)}$ descends to $\theta_{\psi}(\sigma)$. 
\item[-]  Consider the nonarchimedean  case with ${\rm Witt}(V) \leq 1$.    There are only a few cases of such, all in low rank.  In these small number of low rank cases,  one can verify that $\Theta_{\psi}(\sigma)$ is irreducible  when $\sigma$ is tempered. 
\end{itemize}
Taken together, the proposition is proved.
\end{proof}

Henceforth, we shall equip $\theta_{\psi}(\sigma)$ with this unitary structure; it depends on $dg$, $\langle-, - \rangle_{\sigma}$ and $d_{\psi}v$. 
By completion, we may regard $\theta_{\psi}(\sigma)$ as an irreducible  unitary representation of $\OO(V)$. Observe that 
the  identity (\ref{E:zeta}) may be considered as the local analog of the Rallis inner product formula \cite{GQT}. A reformulation, using the map $\theta_{\sigma}$ instead of $A_{\sigma}$ is:
\begin{equation} \label{E:yia}
 \langle \theta_{\sigma}(\Phi_1) , \theta_{\sigma}(\Phi_2) \rangle_{\sigma \boxtimes \theta(\sigma)} 
 = \sum_{v \in {\rm ONB}(\sigma)}  Z_{\sigma}(  \Phi_1, \Phi_2, v,v)  \end{equation} 
 where ${\rm ONB}(\sigma)$ denotes an orthonormal basis of $\sigma$.
 \vskip 10pt

\subsection{\bf $L^2$-theta correspondence.}

Now we consider the theta correspondence in the $L^2$-setting. 
Though we are not exactly in the setting discussed in \S \ref{S:bern}, Bernstein's theory continues to apply here (see \cite{S3}). 
When $\dim V \geq 3$, it was shown in \cite{GG, S3} that one has  a direct integral decomposition of $\SL_2(F) \times \OO(V)$-representations:
\begin{equation} \label{E:ome}
  \Omega_{\psi} = L^2(V)  \cong \int_{\widehat{\SL_2}}   \sigma \boxtimes \theta_{\psi}(\sigma)  \, d\mu_{\SL_2}(\sigma),  \end{equation}
where $d\mu_{\SL_2}$ is the Plancherel measure of $\SL_2(F)$ (associated to the fixed Haar measure of $\SL_2$). Hence the spectral measure of $\Omega_{\psi}$ as an $\SL_2$-module is absolutely continuous with respect to the Plancherel measure. Indeed, by Propositions \ref{P:the} and \ref{P:zeta},  when ${\rm Witt}(V) \geq 2$,  the support of $\Omega_{\psi}$ as an $\SL_2$-module is precisely $\widehat{\SL_2}_{temp}$. 
\vskip 5pt

We need to explicate the spectral decomposition (\ref{E:ome}) here. 
By the theory of spectral decomposition \`{a} la Bernstein, one way to do this is to give a spectral decomposition of the inner product $\langle-, - \rangle_{\Omega}$. In \cite{S3}, Sakellaridis showed that for $\Phi_1, \Phi_2\ \in S(V)$, 
\[ \langle \Phi_1, \Phi_2 \rangle_{\Omega} = \int_{\widehat{\SL_2}}  J^{\theta}_{\sigma} (\Phi_1, \Phi_2)  \, d\mu_{\SL_2}(\sigma)  \]
where
\begin{equation} \label{E:yia2}
 J^{\theta}_{\sigma} (\Phi_1, \Phi_2) = \langle \theta_{\sigma}(\Phi_1), \theta_{\sigma}(\Phi_2) \rangle_{\sigma \boxtimes \theta(\sigma)} 
 =   \sum_{v \in {\rm ONB}(\sigma)}  Z_{\sigma}(  \Phi_1, \Phi_2, v,v)  \end{equation}
where the unitary structure on $\Theta(\sigma)$ is that defined in the previous subsection.
 What this says is that  the family of canonical maps $\theta_{\sigma}$ defined in (\ref{E:theta}) is precisely the family of maps associated to the direct integral decomposition (\ref{E:ome})  for $\sigma \in \widehat{\SL_2}_{temp}$.

 \vskip 5pt

Now for $\tau \in \widehat{\SL_2}_{temp}^{ind}$, one can define $Z_{\tau}(\Phi_1, \Phi_2, v_1, v_2)$ by the same formula as in (\ref{E:double}) and then define $J^{\theta}_{\tau}$ by the formula (\ref{E:yia2}). Then it is useful to note \cite[Lemma 3.3]{X}: 
\vskip 5pt

\begin{lem}  \label{L:cont3}
For fixed $\Phi_i$   the $\C$-valued function  
 \[  \tau \mapsto  J^{\theta}_{\tau} (\Phi_1, \Phi_2) \]
  is continuous in $\tau \in  \widehat{\SL_2}_{temp}^{ind}$.
\end{lem}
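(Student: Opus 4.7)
The plan is to reduce the continuity of $\tau \mapsto J^\theta_\tau(\Phi_1,\Phi_2)$ to a dominated-convergence argument applied to a finite sum. Setting $F(g) := \langle g\cdot \Phi_1, \Phi_2\rangle_\Omega$ (a smooth matrix coefficient of the Weil representation), the definition unwinds as
\[
  J^\theta_\tau(\Phi_1, \Phi_2) \;=\; \sum_{v \in {\rm ONB}(\tau)} \int_{\SL_2} F(g) \cdot \overline{\langle \tau(g) v, v\rangle_\tau}\, dg.
\]

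First, I would cut the a priori infinite sum down to a finite one. Choose an open compact subgroup $K_0 \subset \SL_2$ fixing both $\Phi_1$ and $\Phi_2$; then $F$ is bi-$K_0$-invariant, so only $v \in \tau^{K_0}$ contribute. Around any fixed $\tau_0 \in \widehat{\SL_2}_{temp}^{ind}$, I realize nearby $\tau = \Ind_P^{\SL_2}(\pi \otimes \chi)$ on the single underlying compact-picture space; the subspaces $\tau^{K_0}$ then share a common finite dimension $r$, and one may choose a continuously varying orthonormal family $\{v_1(\tau), \dots, v_r(\tau)\}$. The claim thereby reduces to the continuity in $\tau$ of each scalar integral
\[
  Z_\tau(\Phi_1, \Phi_2, v_i(\tau), v_i(\tau)) \;=\; \int_{\SL_2} F(g)\cdot \overline{\langle \tau(g) v_i(\tau), v_i(\tau)\rangle}\, dg.
\]

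For each fixed $g$, the integrand is continuous in $\tau$ by continuity of the matrix coefficients in the compact model. To pass continuity across the integral I would invoke the dominated convergence theorem, using the Harish-Chandra weak inequality $|\langle \tau(g) v, v\rangle_\tau| \leq \|v\|_\tau^2 \cdot \Xi(g)$ (valid for tempered $\tau$); since the norms $\|v_i(\tau)\|_\tau$ are locally uniformly bounded, this furnishes the locally uniform majorant $C \cdot |F(g)| \cdot \Xi(g)$. The essential input, and the one step that requires genuine work, is the integrability $F \cdot \Xi \in L^1(\SL_2, dg)$ under the hypothesis $\dim V \geq 3$. I would establish this from the explicit formulas of \S 5.1 for the Weil action on the Borel: a direct computation on the split torus produces polynomial decay $|F(t(a))| = O(|a|^{-\dim V/2})$ as $|a| \to \infty$, which combined with the asymptotics $\Xi(t(a)) \asymp |a|^{-1}(1+|\log|a||)$ and the $\SL_2$ Weyl-integration factor $\asymp |a|$ for $|a|\geq 1$ yields the claim. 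Summing the finitely many continuous functions $Z_\tau(\Phi_1,\Phi_2, v_i(\tau), v_i(\tau))$ then delivers the lemma.
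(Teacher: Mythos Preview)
Your argument is essentially correct and follows the standard route (which is also what the cited reference \cite[Lemma~3.3]{X} does): reduce to a finite sum via $K_0$-invariance, realize the family in a fixed compact model so that matrix coefficients vary continuously, and dominate by $|F|\cdot \Xi$ to push continuity through the integral. The paper itself does not give a proof but simply cites \cite{X}, so there is nothing to compare against beyond noting that you have reproduced the expected argument.

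One correction: your stated Cartan--decomposition density for $\SL_2$ is off by a power. With $A^+=\{t(a):|a|\ge 1\}$ and multiplicative Haar measure $d^\times a$, the $KA^+K$ Jacobian is $\asymp |a|^{2}$, not $|a|$ (this is $\delta_B(t(a))=|a|^2$, reflecting that the positive root sends $t(a)$ to $a^2$). With the correct factor the integrability check reads
\[
\int_{|a|\ge 1} |a|^{-\dim V/2}\cdot |a|^{-1}(1+\log|a|)\cdot |a|^{2}\, d^\times a
\;=\;\int_{|a|\ge 1} |a|^{\,1-\dim V/2}(1+\log|a|)\, d^\times a,
\]
which converges precisely when $\dim V>2$, i.e.\ under the standing hypothesis $\dim V\ge 3$. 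So the conclusion is unchanged, but the margin is tighter than your write-up suggests. You should also make explicit that the bound $|F(k_1 t(a)k_2)|\le C|a|^{-\dim V/2}$ is uniform in $k_1,k_2\in K$; this follows because $\Phi_1,\Phi_2$ lie in a fixed finite-dimensional $K$-stable subspace of $S(V)$ (they are $K_0$-invariant), so the constant depends only on that subspace.
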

\vskip 5pt

\subsection{\bf  The maps $A_{\sigma}$ and $B_{\theta(\sigma)}$}  \label{SS:AB}
We have seen the canonical maps $A_{\sigma}$ and $\theta_{\sigma}$ in (\ref{E:A-can}) and (\ref{E:theta}) which intervene in the spectral decomposition (\ref{E:ome}).
Identifying $\sigma^{\vee}$ with $\overline{\sigma}$ using $\langle-,-\rangle_{\sigma}$, we may regard $A_{\sigma}$ as a map $\Omega_{\psi} \otimes \overline{\sigma} \longrightarrow \theta_{\psi}(\sigma)$. Then $A_{\sigma}$ and $\theta_{\sigma}$ are related by:
 \[    A_{\sigma}(\Phi, v)  = \langle \theta_{\sigma}(\Phi), v \rangle_{\sigma}. \]
  Likewise, we have a $\OO(V)$-invariant and $\SL_2$-equivariant map
\[  B_{\theta(\sigma)} : \Omega_{\psi} \otimes \overline{\theta_{\psi}(\sigma)} \longrightarrow \sigma \]
characterized by 
\[  B_{\theta(\sigma)} (\Phi, w)  = \langle \theta_{\sigma}(\Phi),  w \rangle_{\theta(\sigma)}. \]
The two maps are related by:
\begin{equation} \label{E:AB}
 \langle A_{\sigma}(\Phi, v) , w \rangle_{\theta(\sigma)} = \langle \theta_{\sigma}(\Phi), v \otimes w \rangle_{\sigma \otimes \theta(\sigma)} = \langle B_{\theta(\sigma)}(\phi, w), v \rangle_{\sigma}\end{equation}
for $\Phi \in \Omega_{\psi}$, $v \in \sigma$ and $w \in \theta(\sigma)$. Moreover, the inner product $J^{\theta}_{\sigma}$ can be expressed in terms of $A_{\sigma}$ and 
$B_{\theta(\sigma)}$ as follows:
\[  J^{\theta}_{\sigma}(\Phi_1, \Phi_2) = \langle \theta_{\sigma}(\Phi_1), \theta_{\sigma}(\Phi_2) \rangle_{\sigma \otimes \theta(\sigma)}
 = \sum_{v \in {\rm ONB}(\sigma)}  \langle A_{\sigma}(\Phi_1, v), A_{\sigma}(\Phi_2, v) \rangle_{\theta(\sigma)}.   \]
and
\[   J^{\theta}_{\sigma}(\Phi_1, \Phi_2) =  \sum_{w \in {\rm ONB}(\theta(\sigma))}  \langle B_{\theta(\sigma)}(\Phi_1, w) ,  B_{\theta(\sigma)}(\Phi_2, w) \rangle_{\sigma}. \] 
 
The maps $A_{\sigma}$ and $B_{\theta(\sigma)}$ are local versions of global theta lifting  considered in \S \ref{S:global}.
\vskip 10pt

To summarize, this section discusses the smooth theta correspondence and the $L^2$-theta correspondence and the relation between them. In particular,  through the theory of the doubling zeta integral, we equip $\theta_{\psi}(\sigma)$ with a unitary structure so that the there is a strong synergy between the smooth theory and the $L^2$-theory.   
 \vskip 15pt

\section{\bf Periods}
It is a basic principle that theta correspondence frequently allows one to transfer periods on one member of a dual pair to the other member.  For an exposition of this in the setting of smooth theta correspondence, the reader can consult \cite{G}.   On the other hand, in the setting of $L^2$-theta correspondence, this principle has been exploited in \cite{GG} to establish low rank cases of the local conjecture of Sakellaridis-Venkatesh on the unitary spectrum of spherical varieties. 
\vskip 5pt

In this section, we shall consider the dual pair $\SL_2 \times \OO(V)$ and show how the spectral decomposition \`{a} la Bernstein allows one to
refine the results of \cite{G} and \cite{GG}.

\vskip 10pt

\subsection{\bf Transfer of periods.}
We first consider periods in smooth representation theory.  For $a \in F^{\times}$,  fix a vector $v_a \in V$ with $q(v_a)  =a$ (if it exists), so that $V = F \cdot v_a \oplus v_a^{\perp}$. Set 
\[  X_a =  \{  v \in V:  q(v)  =a \} \subset V, \] 
which is a Zariski closed subset of $V$. By Witt's theorem, $\OO(V)$ acts transitively on $X_a$  and the stabilizer of $v_a$ in $\OO(V)$ is $\OO(v_a^{\perp})$. Hence
\[  X_a  \cong  \OO(v_a^{\perp}) \backslash \OO(V) \]
via $h \mapsto h^{-1} \cdot v_a$.  If $v_a$ does not exist, we understand $X_a$ to be empty (i.e. the algebraic variety has no $F$-points). 
To fix ideas, we shall assume that $v_1$ exists; this is not a serious hypothesis.  We also set $\psi_a(x)  = \psi(ax)$, so that $\psi_a$ is a nontrivial additive character of $F$, and write $S(X_a)$ for the space of Schwartz-Bruhat functions on $X_a$, so that $S(X_a) = C^{\infty}_c(X_a)$ if $F$ is nonarchimedean.
\vskip 5pt

The following proposition essentially resolves the local problem (a) in the smooth setting  for the Sakellaridis-Venkatesh conjecture highlighted in the introduction, except for the part about relative character identities.
It is essentially a folklore result and a proof has been written down in \cite{G} in a more general setting. We recount the proof here to explicate the isomorphism $f_a$ in the  proposition. 
 \vskip 5pt

\begin{prop}  \label{P:smoothp}
Let $\pi$ be an irreducible smooth representation of $\OO(V,q)$ and let $\Theta_{\psi}(\pi) =(\Omega_{\psi} \otimes \pi^{\vee})_{\SL_2}$ be its big theta lift to $\SL_2$ (or $\Mp_2$ if $\dim V$ is odd).  
For $a \in F^{\times}$,  there is an explicit  isomorphism (to be described in the proof)
\[  f_a:  \Hom(\Theta_{\psi}(\pi)_{N, \psi_a},\C)   \cong \Hom_{\OO(V)}(S(X_a), \pi) \cong   \Hom_{\OO(v_a^{\perp})}(\pi^{\vee}, \C), \]
where the second isomorphism is by Frobenius reciprocity. Here, the right hand side is understood to  be $0$ if $X_a$ is empty.
  In particular,   when $\pi$ is such that  $\sigma:= \Theta_{\psi}(\pi)$ is irreducible, we see that $\sigma$ is $\psi_a$-generic if and only if $\pi$ is $\OO(v_a^{\perp})$-distinguished, in which case
  \[ \dim  \Hom_{\OO(v_a^{\perp})}(\pi^{\vee}, \C)  = 1. \]
  
\end{prop}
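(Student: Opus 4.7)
The plan is to compute the twisted Jacquet module $\Theta_\psi(\pi)_{N,\psi_a}$ explicitly and then apply Frobenius reciprocity. The starting point is the twisted Jacquet module of the Weil representation itself. From the explicit formula $n(b)\cdot\Phi(v) = \psi(b\cdot q(v))\Phi(v)$ together with the Schwartz model $\Omega_\psi^\infty = S(V)$, the restriction map $\mathrm{res}_a\colon S(V)\twoheadrightarrow S(X_a)$ intertwines the $N$-action on the source with the $\psi_a$-character action on the target, since $q\equiv a$ on $X_a$. A Schwartz-theoretic argument (discussed below) identifies the kernel of $\mathrm{res}_a$ with the span of $n(b)\Phi - \psi(ab)\Phi$, producing an $\OO(V)$-equivariant isomorphism
\[ (\Omega_\psi)_{N,\psi_a} \;\cong\; S(X_a). \]

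Since $N$ and $\OO(V)$ commute inside $\SL_2\times\OO(V)$, one may exchange the order of coinvariants, giving
\[ \Theta_\psi(\pi)_{N,\psi_a} \;=\; \bigl((\Omega_\psi\otimes\pi^\vee)_{\OO(V)}\bigr)_{N,\psi_a} \;=\; \bigl((\Omega_\psi)_{N,\psi_a}\otimes\pi^\vee\bigr)_{\OO(V)} \;\cong\; \bigl(S(X_a)\otimes\pi^\vee\bigr)_{\OO(V)}. \]
Taking the full linear dual and applying Hom--tensor adjunction identifies $\Hom(\Theta_\psi(\pi)_{N,\psi_a},\C)$ with $\Hom_{\OO(V)}(S(X_a),(\pi^\vee)^\ast)$; but any $\OO(V)$-equivariant map out of $S(X_a)$ sends a compactly supported function to a vector fixed by its open stabilizer, hence to a smooth vector lying in $\pi\subset(\pi^\vee)^\ast$. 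This yields the first isomorphism
\[ f_a\colon \Hom(\Theta_\psi(\pi)_{N,\psi_a},\C) \;\xrightarrow{\sim}\; \Hom_{\OO(V)}(S(X_a),\pi), \]
which concretely sends $\ell$ to the map whose composition with $\mathrm{res}_a\otimes\mathrm{id}$ is the image of $\ell$ under the natural pairing on $\Omega_\psi\otimes\pi^\vee$. The second isomorphism $\Hom_{\OO(V)}(S(X_a),\pi)\cong \Hom_{\OO(v_a^\perp)}(\pi^\vee,\C)$ is standard Frobenius reciprocity for $X_a\cong \OO(v_a^\perp)\backslash\OO(V)$, with no modulus correction since both groups are reductive and unimodular.

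The final assertion --- that for $\sigma=\Theta_\psi(\pi)$ irreducible, $\sigma$ is $\psi_a$-generic iff $\pi$ is $\OO(v_a^\perp)$-distinguished, the corresponding Hom space being one-dimensional --- is then immediate: $\Hom(\sigma_{N,\psi_a},\C) = \Hom_N(\sigma,\psi_a)$ is at most one-dimensional by uniqueness of Whittaker models for $\SL_2$ (or $\Mp_2$), and the above chain transports this bound and its nonvanishing criterion to the orthogonal side. The main technical point is the kernel description in the first step: one must show that a Schwartz function on $V$ vanishing on $X_a$ lies in the span of elements $n(b)\Phi - \psi(ab)\Phi$. Starting from the identity $n(b)\Phi - \psi(ab)\Phi = \psi(ab)\bigl(\psi(b(q-a))-1\bigr)\Phi$ and integrating against suitable Schwartz weights in the $b$-parameter, one reduces the question to the statement that a Schwartz function on $V$ vanishing on the smooth subvariety $X_a$ is divisible by $q-a$ within $S(V)$, a standard local computation using smoothness of $X_a$ for $a\neq 0$.
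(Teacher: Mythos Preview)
Your proof is correct and follows essentially the same route as the paper's. Both arguments rest on the same key computation, namely that the restriction map $S(V)\to S(X_a)$ induces an $\OO(V)$-equivariant isomorphism $(\Omega_\psi)_{N,\psi_a}\cong S(X_a)$, followed by Frobenius reciprocity. The only cosmetic difference is that the paper computes the single Hom space $\Hom_{N\times\OO(V)}(\Omega_\psi,\psi_a\boxtimes\pi)$ in two ways (once via a chosen surjection $\theta:\Omega_\psi\twoheadrightarrow\Theta_\psi(\pi)\boxtimes\pi$, once via the restriction map), whereas you compute the coinvariant space $\Theta_\psi(\pi)_{N,\psi_a}$ directly by commuting $(N,\psi_a)$- and $\OO(V)$-coinvariants and then dualize. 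These are dual formulations of the same argument; the paper's phrasing has the minor advantage of making the dependence on the choice of $\theta$ explicit, which becomes relevant later when the canonical map $\theta_\sigma$ is used.
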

\vskip 5pt

\begin{proof}
We describe the proof when $F$ is nonarchimedean. The archimedean case is based on the same ideas, and the reader can consult \cite{GZ, Z} for a careful treatment.
\vskip 5pt

We prove the proposition by computing the space
\[  \Hom_{N \times \OO(V)} ( \Omega_{\psi}, \psi_a \boxtimes \pi) \]
in two different ways.  
\vskip 5pt

On one hand,  let us fix any equivariant surjective map 
\[  \theta:  \Omega_{\psi} \longrightarrow   \Theta_{\psi}(\pi) \boxtimes  \pi. \]
Then $\theta$ induces an isomorphism
\[ 
\theta^*:     \Hom_N(\Theta_{\psi}(\pi), \psi_a)  \cong  \Hom_{N \times \OO(V)} ( \Omega_{\psi} , \psi_a \otimes \pi).  \] 
On the other hand,  for $a \in F^{\times}$,  consider the surjective restriction map 
\[  {\rm rest}:  \Omega_{\psi}= S(V)  \longrightarrow   S(X_a).  \]
This map induces an equivariant isomorphism
\[  {\rm rest}: \Omega_{N, \psi_a}  \cong  S(X_a). \]
Hence, we have an induced isomorphism
 \[  {\rm rest}^*:   \Hom_{\OO(V)}(C^{\infty}_c(X_a), \pi)  \cong  \Hom_{\OO(V)}( \Omega_{N, \psi_a}, \pi)  \cong  \Hom_{N \times \OO(V)} ( \Omega_{\psi}, \psi_a \otimes \pi).     \]
 Since
 \[  S(X_a) \cong {\rm ind}_{\OO(v_a^{\perp})}^{\OO(V)} \C, \]
it follows by Frobenius reciprocity that one has the desired isomorphism:
\[ \begin{CD}
f_a^{-1} :   \Hom_{\OO(v_a^{\perp})}(\pi^{\vee}, \C) @>{\rm Frob}>>  \Hom_{\OO(V)}(S(X_a), \pi) @> (\theta^*)^{-1} \circ {\rm rest}^*>>  \Hom_N(\Theta_{\psi}(\pi), \psi_a). \end{CD} \]
This proves the proposition.
\end{proof}

The purpose of recounting the proof of the proposition is to bring forth the point that the isomorphism
\[  f_a: \Hom(\Theta_{\psi}(\pi)_{N, \psi_a},\C)   \cong   \Hom_{\OO(v_a^{\perp})}(\pi^{\vee}, \C) \]
essentially depends only on the choice of the equivariant projection map
\[  \theta:  \Omega_{\psi} \longrightarrow   \Theta_{\psi}(\pi) \boxtimes  \pi. \]
On the other hand, when $\sigma$ is an irreducible tempered representation of $\SL_2$ with $\theta_{\psi}(\sigma) \ne 0$, we have seen in (\ref{E:ome})  that there is a canonical map
\[  \theta_{\sigma}:  \Omega_{\psi}  \longrightarrow   \sigma \boxtimes  \theta_{\psi}(\sigma).  \]
 Repeating the proof of the proposition using this map $\theta_{\sigma}$, we obtain an injective map
 \begin{equation} \label{E:fa}
  f_a: Hom_N(\sigma, \psi_a) \longrightarrow \Hom_{\OO(V)}(S(X_a), \theta_{\psi}(\sigma)). \end{equation} 
  It is an isomorphism if $\Theta_{\psi}(\theta_{\psi}(\sigma)) \cong \sigma$ (by Proposition \ref{P:smoothp}) or if  $\sigma$ is $\psi_a$-generic (since the target space has dimension at most $1$ by \cite{AGRS})
\vskip 10pt

\vskip 10pt

\subsection{\bf Unitary structure on $L^2(X_a)$.}
We may begin our investigation of the local problem (b) in the Sakellaridis-Venkatesh conjecture, i.e. in the $L^2$-setting.  
With $a \in F^{\times}$,  we have seen that
\[  X_a = \{ v \in V: q(v)  =a\}  \cong \OO(v_a^{\perp}) \backslash \OO(V), \]
under $h^{-1} v_a \longleftrightarrow h$ (assuming $X_a(F)$ has a point $v_a$). 
We may equip $X_a$ with an $\OO(V)$-invariant measure and consider the space $L^2(X_a)$; of course the space $L^2(X_a)$ does not depend on the choice of the $\OO(V)$-invariant measure but the unitary structure does.
In \cite{GG}, using the spectral decomposition (\ref{E:ome}) of $\Omega_{\psi}$, one obtains a spectral decomposition of $L^2(X_a)$. However, we wish to refine the results of \cite{GG} by being more precise about  the unitary structures and invariant measures used here.
\vskip 5pt

The hyperboloids $X_a$ are precisely the fibers of the $\OO(V)$-invariant map given by the quadratic form $q: V \longrightarrow F$. This map is submersive at all points of $V$ outside the zero vector.  In particular, if we ignore the null cone $X_0$ and consider the map $q$ over the Zariski open subset $F^{\times} \subset F$, the Haar measures $d_{\psi}v$ and $d_{\psi}x$ we have already fixed for $V$ and $F$ induces an $\OO(V)$-invariant measure $|\omega_a|$ for each fiber $X_a$ (with $a \in F^{\times}$), characterized by:
 for any compactly-supported smooth functions $f$ on $V \setminus X_0$, 
\[ \int_V f(v) \, d_{\psi}(v)  = \int_{F^{\times}} \left(   \int_{X_a}  f \cdot |\omega_a| \right) \, d_{\psi}a,\]
where the function of $a \in F^{\times}$ defined by the inner integral on the right-hand-side is smooth and compactly supported.
It is these measures $|\omega_a|$ that we shall use on $X_a$. Hence, we shall be considering $L^2(X_a, |\omega_a|)$.
\vskip 5pt

The map $q: V \longrightarrow F$ is $F^{\times}$-equivariant where $t \in F^{\times}$ acts by scaling on $V$ and via $x  \mapsto t^2x$ on $F$.  
The measure $d_{\psi}v$ on $V$ is $\OO(V)$-invariant and satisfies: for $b \in F^{\times}$, 
\[ d_{\psi}(bv) = |b|^{dim V} \cdot d_{\psi}v. \]
 This homogeneity property implies the following property of the family of measures $|\omega_a|$.  For $b \in F^{\times}$, scalar multiplication-by-$b$ gives an isomorphism of varieties  $\lambda_b: X_a 
 \longrightarrow X_{ab^2}$ and one may consider the pushforward measure $ (\lambda_b)_*(|\omega_a|) $ on $X_{ab^2}$. 
 \vskip 5pt
 
 \begin{lem} \label{L:homo}
 In the above context, one has:
 \[   (\lambda_b)_*(|\omega_a|)  = |b|^{2-\dim V} \cdot |\omega_{ab^2}| \]
 for any $a,b \in F^{\times}$.
 \end{lem}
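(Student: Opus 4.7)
The plan is to derive the lemma directly from the defining property of the family $\{|\omega_a|\}_{a \in F^\times}$ as a disintegration of $d_\psi v$ along the quadratic map $q: V \longrightarrow F$, combined with the homogeneity of the additive Haar measures $d_\psi v$ and $d_\psi a$.

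First, I would pick an arbitrary test function $f \in C^\infty_c(V \setminus X_0)$ and compute $\int_V f(w) \, d_\psi w$ in two different ways. On one hand, applying the defining disintegration identity directly gives
\[ \int_V f(w) \, d_\psi w = \int_{F^\times} \left( \int_{X_c} f \cdot |\omega_c| \right) d_\psi c. \]
On the other hand, the change of variables $w = bv$ combined with the homogeneity $d_\psi(bv) = |b|^{\dim V} d_\psi v$ gives
\[ \int_V f(w) \, d_\psi w = |b|^{\dim V} \int_V f(bv) \, d_\psi v = |b|^{\dim V} \int_{F^\times} \left( \int_{X_a} f(bv) \cdot |\omega_a|(v) \right) d_\psi a, \]
where I have once again applied the disintegration identity in the inner variable.

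Second, I would substitute $c = ab^2$ in the first expression. Since $d_\psi c$ is an additive Haar measure on $F$ and $c = b^2 a$ is multiplication by the constant $b^2$, this substitution produces a Jacobian factor $|b^2| = |b|^2$, giving
\[ \int_V f(w) \, d_\psi w = \int_{F^\times} \left( \int_{X_{ab^2}} f \cdot |\omega_{ab^2}| \right) |b|^2 \, d_\psi a. \]
Comparing with the second expression and recognizing that by definition of the pushforward,
\[ \int_{X_a} f(bv) \, |\omega_a|(v) = \int_{X_{ab^2}} f(w) \, (\lambda_b)_*|\omega_a|(w), \]
I obtain the identity
\[ \int_{F^\times} \left( \int_{X_{ab^2}} f \cdot |\omega_{ab^2}| \right) |b|^2 d_\psi a = |b|^{\dim V} \int_{F^\times} \left( \int_{X_{ab^2}} f \cdot (\lambda_b)_* |\omega_a| \right) d_\psi a. \]

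Finally, since $f$ was an arbitrary element of $C^\infty_c(V \setminus X_0)$ and in particular can be localized near any single fiber $X_{ab^2}$ (by choosing the support of $f$ in a small conical neighborhood in $V \setminus X_0$), the two inner fiber integrals must agree pointwise in $a$, yielding $(\lambda_b)_* |\omega_a| = |b|^{2 - \dim V} \cdot |\omega_{ab^2}|$, as desired. The only subtle point in the argument — and the step most likely to produce sign or exponent errors — is the correct bookkeeping of the two scaling factors: the factor $|b|^{\dim V}$ coming from the dilation on $V$ and the factor $|b|^2$ coming from the substitution $c = ab^2$ on the base $F$; their difference $\dim V - 2$ appears with the expected sign in the final formula.
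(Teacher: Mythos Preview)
Your proof is correct and follows essentially the same approach as the paper's: both arguments compare the disintegration of $d_\psi v$ along $q$ before and after the dilation $\lambda_b$, using the homogeneity of the Haar measures on $V$ and $F$ and the definition of the pushforward measure, and both conclude by letting $f$ vary to identify the fiber measures. The only cosmetic difference is that the paper organizes the computation starting from $\int_V f(bv)\,dv = |b|^{-\dim V}\int_V f(v)\,dv$ rather than from $\int_V f(w)\,d_\psi w$, but the two are the same identity read from opposite sides.
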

 \vskip 5pt
 
 \begin{proof}
For $f \in C^{\infty}_c(V \setminus X_0)$ and fixed $b \in F^{\times}$, we have
\begin{equation} \label{E:homo}
 \int_V f(bv) \, dv = |b|^{-\dim V} \cdot \int_V f(v) \, dv. \end{equation}
The left-hand-side of (\ref{E:homo}) is given by
\begin{align}
 \int_V f(bv) \, dv &= \int_{F^{\times}}  \int_{X_a} f(bx)  \cdot |\omega_a(x)|  \,  d_{\psi}a  \notag \\
 &= \int_{a \in F^{\times}} \int_{X_a} \lambda_b^*(f)(x) \cdot  \cdot |\omega_a(x)|  \,  d_{\psi}a  \notag \\
 &= \int_{a \in F^{\times}}  \int_{X_{ab^2}} f(x) \cdot |(\lambda_b)_* (\omega_a|)(x)|  \, d_{\psi}a \notag \\
 &= \int_{c \in F^{\times} }\int_{X_c}   f(x) \cdot |(\lambda_b)_*(\omega_{cb^{-2}})|  \cdot |b|^{-2} \, d_{\psi}c \notag 
 \end{align}
 where in the last equality, we have made a change of variables by setting $c = ab^2$, so that $d_{\psi}c = |b|^2 \cdot d_{\psi}a$. 
 \vskip 5pt
 
 On the other hand, the right hand side of (\ref{E:homo}) is given by
 \[
  |b|^{-\dim V} \cdot \int_V f(v) \, dv = |b|^{-\dim V} \cdot \int_{c \in F^{\times}} \int_{X_c}  f(x)  \cdot |\omega_c(x)| \, d_{\psi}c. \]
  Comparing the two sides, one obtains:
  \[   |(\lambda_b)_*(\omega_{cb^{-2}})|  = |b|^{2- \dim V} \cdot |\omega_c|  \]
  which is the desired assertion.
\end{proof} 
 \vskip 5pt
 Now  observe that (on the level of $F$-valued points),
\[  V \setminus X_0 = \bigcup_{[a] \in F^{\times 2} \backslash F^{\times}} F^{\times} \cdot X_a  \subset V  \]
is open dense with complement of measure $0$. Hence the measure $d_{\psi}v$ induces $\OO(V)$-invariant measures on each of the open sets $Y_a:= F^{\times} \cdot X_a$ and we have
 \[  L^2(V)  =  \bigoplus_{[a] \in F^{\times 2} \backslash F^{\times}} L^2(Y_a)  \]
We would like a more direct description of the unitary structures on the Hilbert spaces $L^2(Y_a)$ in terms of    appropriate invariant measures on $F^{\times} \times X_a$.
\vskip 5pt

Consider the natural surjective map
\[  m :  F^{\times} \times X_a \longrightarrow    Y_a= F^{\times} \cdot X_a, \]
defined by $m(t, x). \mapsto t \cdot x$. This map  $m$ induces an isomorphism $\mu_2 \backslash (F^{\times} \times X_a) \cong Y_a$ where $\mu_2 = \{ \pm 1 \}$ acts diagonally on $F^{\times} \times X_a$ by scaling on each factor.  In terms of the identification $X_A = \OO(v_a^{\perp}) \backslash \OO(V)$, this action of $\mu_2$ on $X_a$ is 
 the left-translation action of $\OO(v_a) = \mu_2$ (where $\OO(v_a)$  is the orthogonal group of the 1-dimensional quadratic space $F \cdot v_a$) which commutes with the right-translation action of  $\OO(V)$. In any case, 
 via $f \mapsto m^*(f)$,  we may identify functions on $Y_a$ with functions on $F^{\times} \times X_a$ invariant under the action of $\mu_2$. 
Now we note:
\vskip 5pt

\begin{lem} \label{L:measure}
For a smooth compactly supported function $f$ on $Y_a$, one has: 
\[  \int_{Y_a} f(v) \, d_{\psi}v  = \frac{1}{2} \cdot |2a|_F \cdot  \int_{b \in F^{\times}}   \left( \int_{x \in X_a}  f(b \cdot x) \cdot |\omega_a(x)|  \right)   \cdot |b|^{\dim V}  \, \frac{d_{\psi}b}{|b|}.\]
Hence, if we define 
\[  \phi_f : F^{\times} \times X_a \longrightarrow \C \]
by 
\[  \phi_f(b,x)=  \sqrt{|a|} \cdot |b|^{\frac{\dim V}{2}}  \cdot f(b \cdot x), \]
then one has
\[   \langle  f , f \rangle_{\Omega}   = \frac{1}{2} \cdot |2|_F\cdot  \int_{F^{\times}} \int_{X_a}   |\phi_f (b,x)|^2 \, |\omega_a(x)|  \, d_{\psi}^{\times} b. \]
In particular, the map $f \mapsto \phi_f$ defines an isometric isomorphism
\[   L^2(Y_a, d_{\psi}v) \longrightarrow L^2( F^{\times} \times X_a)^{\Delta \mu_2} \]
where the unitary structure on the left is defined by $d_{\psi}v$ and that on the right is defined by the $\OO(V)$-invariant measure $|\omega_a|$ on $X_a$ and the measure
$ |2|_F /2 \cdot d^{\times}_{\psi} t$ of $F^{\times}$ (defined in  \S \ref{SS:SL2}). Further, for $\epsilon = \pm$, if we let $L^2(Y_a)^{\epsilon}$ and $L^2(F^{\times})^{\epsilon}$ denote the $\epsilon$-eigenspace of the $\mu_2$-action, then 
\[ L^2(Y_a) \cong  \bigoplus_{\epsilon = \pm} L^2(F^{\times})^{\epsilon} \widehat{\otimes}  L^2(X_a)^{\epsilon}. \]   
\end{lem}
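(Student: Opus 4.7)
The plan is to derive the displayed integration formula from the definition of the fiber measures $|\omega_a|$ via the disintegration of $d_\psi v$ along the quadratic form $q\colon V \to F$, and then combine it with the homogeneity relation proven in Lemma \ref{L:homo}. All remaining assertions (the inner product identity, the isometry, the $\mu_2$-decomposition) will follow formally.

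First I would restrict attention to the open $\OO(V)$-orbit $Y_a = F^\times \cdot X_a \subset V$. By definition of the family $\{|\omega_c|\}$ and the fact that $Y_a$ is the preimage under $q$ of the single square class $a F^{\times 2}$, one has
\[ \int_{Y_a} f(v)\, d_\psi v \;=\; \int_{c \in a F^{\times 2}} \left(\int_{X_c} f(x)\, |\omega_c(x)|\right) d_\psi c . \]
Next I would parametrize the square class by $b \mapsto ab^2$, a two-to-one map $F^\times \twoheadrightarrow a F^{\times 2}$ with Jacobian $2ab$, so that $d_\psi c = |2ab|_F\, d_\psi b$ and the factor $\tfrac{1}{2}$ appears to account for the degree. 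This converts the outer integral into $\tfrac{1}{2}|2a|_F \int_{F^\times} (\cdots)\, |b|\, d_\psi b = \tfrac{1}{2}|2a|_F \int_{F^\times} (\cdots)\, |b|^2 \, d^\times_\psi b$. Now the key ingredient: by Lemma \ref{L:homo}, pushing forward along the scalar map $\lambda_b\colon X_a \to X_{ab^2}$ gives
\[ \int_{X_{ab^2}} f(x)\,|\omega_{ab^2}(x)| \;=\; |b|^{\dim V - 2} \int_{X_a} f(bx)\,|\omega_a(x)|. \]
Substituting this back produces exactly the claimed formula with the weight $|b|^{\dim V}\, d_\psi b / |b|$.

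Having the master formula, the inner product identity follows by applying it to $|f|^2$ in place of $f$ and unfolding the definition $|\phi_f(b,x)|^2 = |a|\cdot |b|^{\dim V}\cdot |f(bx)|^2$; the prefactor $\tfrac{1}{2}|2a|_F$ splits as $|a|_F \cdot \tfrac{1}{2}|2|_F$, with the $|a|_F$ absorbed into $|\phi_f|^2$. To obtain the isometric isomorphism, I would observe that the map $m\colon F^\times \times X_a \to Y_a$, $(b,x)\mapsto bx$, is the quotient by the diagonal $\mu_2$-action (left translation by $\OO(Fv_a) = \mu_2$ on $X_a$, scaling on $F^\times$), so pullback $m^*$ identifies functions on $Y_a$ with $\mu_2$-invariant functions on $F^\times \times X_a$; the normalizing factors in the definition of $\phi_f$ are exactly those dictated by the integration formula, making $f \mapsto \phi_f$ an isometry onto $L^2(F^\times \times X_a)^{\Delta \mu_2}$. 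The final $\mu_2$-isotypic decomposition then results from the Hilbert tensor product decomposition $L^2(F^\times \times X_a) = L^2(F^\times) \widehat{\otimes} L^2(X_a)$ together with the obvious $\pm$-eigenspace decomposition of each factor under $\mu_2$, intersected with the diagonal invariants.

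The only delicate point is the bookkeeping in the change of variables (tracking the $\tfrac{1}{2}$ from the two-to-one cover, the Jacobian $|2ab|_F$, and the passage between $d_\psi b$ and $d^\times_\psi b$) so that the factor of $|2|_F/2$ matches the convention on $F^\times$ fixed in \S\ref{SS:SL2}; this is what ensures compatibility with the normalization in Proposition \ref{P:SL2} and will be essential later when comparing with the $\SL_2$-side of the spectral decomposition.
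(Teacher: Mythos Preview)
Your proof is correct and follows essentially the same route as the paper: disintegrate $d_\psi v$ along $q$ over the square class $aF^{\times 2}$, uniformize by $b\mapsto ab^2$ (picking up the $\tfrac12|2ab|_F$), and then invoke Lemma~\ref{L:homo} to rewrite the fiber integral over $X_{ab^2}$ as $|b|^{\dim V - 2}$ times an integral over $X_a$. The remaining deductions (the inner product formula, the isometry onto $\mu_2$-invariants, and the $\pm$-eigenspace decomposition) are handled exactly as you indicate.
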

\vskip 5pt

\begin{proof}
We consider $f \in C^{\infty}_c(Y_a)$. Then
\begin{align}
  \int_{Y_a} f(y) \, dy &= \int_{t \in aF^{\times 2}} \int_{X_t} f(x) \cdot |\omega_t(x)| \, d_{\psi}t  \notag \\
&= \frac{1}{2} \int_{b \in F^{\times}}   \int_{X_{ab^2}}  f(x) \cdot |\omega_{ab^2}(x)| \cdot |2ab|_F \,  d_{\psi}b \notag \\
&= \frac{1}{2} \cdot |2a|_F \cdot  \int_{b \in F^{\times}} \int_{X_{ab^2}} f(x) \cdot | (\lambda_b)_*(|\omega_a|)  \cdot  |b|^{\dim V} \,  \frac{d_{\psi}b}{|b|} \notag \\
&= \frac{1}{2} \cdot |2a|_F \cdot  \int_{b \in F^{\times}} \int_{X_a}    \lambda_b^*(f)(x)  \cdot |\omega_a(x)|  \cdot |b|^{\dim V} \, d^{\times}_{\psi}b
 \notag \\
 &= \frac{1}{2} \cdot |2a|_F \cdot  \int_{b \in F^{\times}} \int_{X_a}   f(b \cdot x)  \cdot |\omega_a(x)|  \cdot |b|^{\dim V} \, d^{\times}_{\psi}b. 
 \end{align}
 Here, in the second equality, we have made a change of variables, replacing $t$ by $ab^2$, so that $d_{\psi}t = |2ab|_F \cdot d_{\psi}b$, whereas in the third equality, we have applied Lemma \ref{L:homo}.  This establishes the lemma.
\end{proof}

\subsection{\bf Spectral decomposition of $L^2(X_a)$.}
 We are now ready to show the direct integral decomposition of the unitary representation $L^2(X_a, |\omega_a|)$ of $\OO(V)$, where the unitary structure is determined by the $\OO(V)$-invariant measures $|\omega_a|$. Observe that 
  $L^2(X_a, |\omega_a|)$ is in fact  a representation of $\OO(v_a) \times \OO(V)$, where $\OO(v_a) \cong \mu_2$ acts by left translation on $\OO(v_a^{\perp}) \backslash \OO(V)$ (this is the action of scaling by $-1$ on $X_a$). This gives a decomposition
\[  L^2(X_a)  = L^2(X_a)^+  \oplus L^2(X_a)^-  \]
into $\OO(V)$-submodules which are  the $\pm$-eigenspaces of the $\OO(v_a)$-action.   
\vskip 5pt

As mentioned before, the spectral decomposition of $L^2(X_a)$ as an $\OO(V)$-module has been  
 obtained in \cite{GG}. The following proposition is a special case of the results in \cite{GG}; we recount the proof here to explicate certain isomorphisms used in the course of the proof.
\vskip 5pt

\begin{prop}  \label{P:L2p}
 We have an explicit isomorphism (to be described in the proof):
\[  L^2(X_a , |\omega_a|)  \cong \int_{\widehat{\SL}_2}  \dim \Hom_N(\sigma, \psi_a) \cdot \theta_{\psi}(\sigma) \, d\mu_{\SL_2}(\sigma). \]

\end{prop}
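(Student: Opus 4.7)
The plan is to combine the spectral decomposition (\ref{E:ome}) of the Weil representation with the Whittaker--Plancherel decomposition of $\SL_2$ from Corollary \ref{C:SL2}, and then match the result against the geometric decomposition of Lemma \ref{L:measure}.

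Starting from (\ref{E:ome}) and applying in each fiber the $B$-equivariant isometric decomposition of $\sigma$ given by Corollary \ref{C:SL2}, one obtains, as a $B \times \OO(V)$-module,
$$\Omega_\psi \cong \bigoplus_{[a] \in F^\times/F^{\times 2}} L^2(N, \psi_a \backslash B)^{\epsilon_0} \,\widehat{\otimes}\, H_a,$$
where $H_a := \int_{\widehat{\SL_2}} \dim\Hom_N(\sigma,\psi_a) \cdot \theta_\psi(\sigma)\, d\mu_{\SL_2}(\sigma)$ and $\epsilon_0 := \chi_{\disc(V)}(-1)$ is the common value of the central character $z_\sigma$ on the support of $d\mu_{\SL_2}$ (which follows from the Weil formula $t(-1)\cdot \Phi(v) = \chi_{\disc(V)}(-1)\Phi(v)$). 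On the other hand, the partition $V \setminus X_0 = \bigsqcup_{[a]} Y_a$ yields $\Omega_\psi = \bigoplus_{[a]} L^2(Y_a)$ as $B \times \OO(V)$-modules.

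The next step is to match the $[a]$-summands on the two sides, using Fourier analysis along the abelian group $N \cong F$: the formula $n(c)\cdot \Phi(v) = \psi(c\,q(v))\Phi(v)$ shows that the joint $N$-spectrum on $L^2(Y_a)$ is exactly the square class $\{\psi_c : c \in aF^{\times 2}\} \subset \widehat{N}$, which coincides with the set of $N$-characters appearing in $L^2(N, \psi_a \backslash B)$ under the $T$-twist $\mathrm{Ad}(t(s)) : \psi_c \mapsto \psi_{s^{-2}c}$. This yields a $B \times \OO(V)$-equivariant isometric isomorphism
$$L^2(Y_a) \cong L^2(N, \psi_a \backslash B)^{\epsilon_0} \,\widehat{\otimes}\, H_a.$$
Finally, I would apply Lemma \ref{L:measure} to describe the left-hand side geometrically as a Hilbert tensor product involving $L^2(X_a)$. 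The change of variables $b \mapsto ab^2$, exactly the square-class parametrization used in the proof of Proposition \ref{P:SL2}, identifies $L^2(N, \psi_a \backslash B)^{\epsilon_0}$ with the relevant $F^\times$-factor on the geometric side, and stripping off this common factor delivers the claimed isomorphism $L^2(X_a, |\omega_a|) \cong H_a$.

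The main obstacle will be reconciling the $B$-actions on the two $F^\times$-factors: the $T$-action on the spectral factor $L^2(N, \psi_a \backslash B)$ is right translation $b \mapsto bs$, whereas the $T$-action on the geometric factor is the squared scaling $b \mapsto s^2 b$ with twist $\chi_{\disc(V)}(s)|s|^{-\dim V/2}$ coming from the Weil formula $t(s)\cdot \Phi(v) = |s|^{\dim V/2}\chi_{\disc(V)}(s)\Phi(s^2 v)$. These become compatible precisely under the substitution $b \mapsto ab^2$, and the normalizing factors $|a|^{1/2}$ in $\mathrm{rest}_a$ (Proposition \ref{P:SL2}) and $\sqrt{|a|}|b|^{\dim V/2}$ in the isometry $f \mapsto \phi_f$ (Lemma \ref{L:measure}) are designed to cancel correctly, as they both originate from the self-duality of $d_\psi v$ with respect to the pairing $\psi(\langle-,-\rangle)$.
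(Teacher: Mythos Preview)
Your strategy is essentially the paper's --- compare the spectral decomposition of $\Omega_\psi$ (via (\ref{E:ome}) and Corollary~\ref{C:SL2}) with the geometric one (via $\Omega_\psi = \bigoplus_{[a]} L^2(Y_a)$ and Lemma~\ref{L:measure}) as $B \times \OO(V)$-modules, then strip off the common $L^2(N,\psi_a\backslash B)$-factor. There is, however, a genuine gap: your assertion that $z_\sigma$ is constant on the support, equal to $\epsilon_0 = \chi_{\disc(V)}(-1)$, relies on the formula $t(s)\cdot\Phi(v) = |s|^{\dim V/2}\chi_{\disc(V)}(s)\Phi(s^2 v)$. That formula is incorrect (it is a typo in the paper's display; the right-hand side should read $\Phi(sv)$, as one verifies from the relation $t(s)n(b)t(s)^{-1} = n(s^2 b)$, which the quadratic version fails). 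With the correct formula one has $t(-1)\cdot\Phi(v) = \chi_{\disc(V)}(-1)\,\Phi(-v)$, so the $\SL_2$-center acts as $\chi_{\disc(V)}(-1)$ times the parity operator (i.e.\ the action of $-1 \in \OO(V)$), and both signs of $z_\sigma$ genuinely occur in the support. This also undercuts your final paragraph: the $T$-action on the geometric $F^\times$-coordinate is the linear scaling $b \mapsto sb$, not the squared one, so the obstacle you anticipate does not take that form and the substitution $b \mapsto ab^2$ is not what reconciles the two $B$-actions.

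The fix is exactly what the paper does: retain the $\epsilon = \pm$ grading throughout rather than collapsing to a single $\epsilon_0$. On the geometric side, the $\Delta\mu_2$-invariance in Lemma~\ref{L:measure} pairs $L^2(X_a)^\epsilon$ with $L^2(N,\psi_a\backslash B)^\epsilon$; on the spectral side, Corollary~\ref{C:SL2} lands $\sigma$ in the $z_\sigma$-eigenspace. Matching eigenspace by eigenspace gives
\[
L^2(X_a)^\epsilon \;\cong\; \int_{\widehat{\SL_2}} \dim\Hom_N(\sigma,\psi_a)\cdot\theta_\psi(\sigma)\cdot 1(z_\sigma=\epsilon)\,d\mu_{\SL_2}(\sigma)
\]
for each $\epsilon$ separately, and summing over $\epsilon$ yields the proposition.
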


\begin{proof}  
We shall exploit the spectral decomposition of the unitary Weil representation $\Omega_{\psi}$ of $\SL_2(F) \times \OO(V)$  on $L^2(V)$ given in (\ref{E:ome}). More precisely, we shall consider its restriction to $B \times \OO(V)$.   We have seen  that 
\[   \bigcup_{[a] \in F^{\times 2} \backslash F^{\times}} Y_a = \bigcup_{[a] \in F^{\times 2} \backslash F^{\times}} F^{\times} \cdot X_a \subset V \]
is open dense (with complement of measure $0$), so that
\[   L^2(V)  =  \bigoplus_{a \in F^{\times 2} \backslash F^{\times}} L^2(Y_a)  \]
From the formulae for the action of $B \times \OO(V)$ on $\Omega_{\psi}$, one sees that the subspace $L^2(Y_a)$ is stable under the action of $B \times \OO(V)$ and thus  this is a decomposition of  $B \times \OO(V)$-modules.
Moreover, with $T \cong F^{\times}$ acting on $V$ by scaling, we see that
$T \times \OO(V)$ acts transitively on $Y_a$ and the stabilizer of $v_a \in X_a$  is the subgroup
\[   \mu_2^{\Delta} \times \OO(v_a^{\perp}) \subset Z \times \OO(v_a)  \times \OO(v_a^{\perp}), \]
where $Z = \mu_2$ is the center of $\SL_2(F)$ and $\OO(v_a) = \mu_2$. 
Thus, the isometry $f \mapsto \phi_f$ described in Lemma \ref{L:measure} gives an $B \times \OO(V)$-equivariant isometric map
\[  L^2(Y_a, d_{\psi}v)  \cong \bigoplus_{\epsilon = \pm}  \left( {\rm ind}_{ZN}^B  \epsilon \otimes \psi_a\right)  \widehat{\otimes} L^2(X_a)^{\epsilon} =  \bigoplus_{\epsilon = \pm} L^2(N, \psi_a \backslash B)^{\epsilon} \widehat{\otimes} L^2(X_a)^{\epsilon} = L^2(F^{\times} \times X_a)^{\mu_2} ,\] 
where the unitary structure on  $L^2(N,\psi_a \backslash B)^{\epsilon}$ is given by that defined in Lemma \ref{L:measure} or equivalently in Proposition \ref{P:SL2}.
 Hence, we conclude that  
\begin{equation} \label{E:star}
\Omega_{\psi} \cong  \bigoplus_{a \in F^{\times 2} \backslash F^{\times}}  L^2(F^{\times} \times X_a)^{\Delta \mu_2} \cong \bigoplus_{a \in F^{\times 2} \backslash F^{\times}} \bigoplus_{\epsilon = \pm}  L^2(N, \psi_a \backslash B)^{\epsilon} \widehat{\otimes}  L^2(X_a)^{\epsilon}
\end{equation}
via
\[    f \mapsto  \left(  \phi_{f|_{Y_a}} \right)_{a \in F^{\times 2} \backslash F^{\times}} . \]
 
\vskip 5pt

On the other hand, by (\ref{E:ome}), one has:
\[  \iota:  L^2(V)  \cong \int_{\widehat{\SL}_2}  \sigma \otimes \theta_{\psi}(\sigma) \, d\mu_{\SL_2}(\sigma)  \]
as $\SL_2 \times \OO(V)$-modules. Restricting from $\SL_2$ to $B$, Corollary \ref{C:SL2} gives
an isometric $B$-equivariant isomorphism
\[  
j_{\sigma} = \bigoplus_{a \in F^{\times} \backslash F^{\times 2} } j_{\sigma,a} :  \sigma|_B  \cong \bigoplus_{[a] \in F^{\times 2} \backslash F^{\times}}   \dim \sigma_{N, \psi_a} 
 \cdot L^2(N, \psi_a \backslash B)^{z_{\sigma}} \]
where $z_{\sigma} = \pm$ denotes the central character of $\sigma$ and the unitary structure on the right-hand-side is  as in Lemma \ref{L:measure}. Hence,  via  $j_{\sigma} \circ \iota$ for each $\sigma$, one has a unitary isomorphism:
\begin{equation} \label{E:star2}
  \Omega_{\psi}  \cong  \bigoplus_{a \in F^{\times 2} \backslash F^{\times}} \bigoplus_{\epsilon= \pm}  L^2(N,\psi_a \backslash B)^{\epsilon}  \widehat{\otimes} \int_{\widehat{\SL}_2}    \dim \sigma_{N, \psi_a} \cdot \theta_{\psi}(\sigma) \cdot 1(z_{\sigma} = \epsilon)  \,  d\mu_{\SL_2}(\sigma). \end{equation}

Comparing the two descriptions of $\Omega = L^2(V)$ as a $B \times \OO(V)$-module given in (\ref{E:star}) and (\ref{E:star2}),  one obtains an isomorphism
\[  L^2(X_a)^{\epsilon}   \cong \int_{\widehat{\SL}_2}   \dim \sigma_{N, \psi_a} \cdot \theta_{\psi}(\sigma) \cdot 1(\omega_{\sigma} = \epsilon)  \,  d\mu_{\SL_2}(\sigma). \]
for $\epsilon = \pm$. Summing over $\epsilon$, we obtain  the desired isomorphism in the proposition.
\end{proof}
 \vskip 10pt
 
 \vskip 10pt

\subsection{\bf A commutative diagram.}
Examining the proof of  Proposition \ref{P:L2p}, the unitary isomorphism  there can be explicated as follows. Given $f \in S(X_a)$, we first chooce $\Phi \in S(V)$ such that 
\[   |a|^{1/2} \cdot {\rm rest}(\Phi) = f. \]
Then the image of $f$ under the isomorphism of Proposition \ref{P:L2p} is represented by the measurable section of the direct integral decomposition  given by
\[  \sigma \mapsto    |a|^{1/2}  \cdot \ell_{\sigma, \psi_a}( \theta_{\sigma} ( \Phi) ), \]
where $\theta_{\sigma}$ is as given in (\ref{E:theta}) and $\ell_{\sigma, \psi_a}$ is the $\psi_a$-Whittaker functional arising from the Whittaker-Plancherel theorem for 
$L^2(N, \psi_a \backslash \SL_2)$. In other words, we have:


\vskip 5pt

\begin{prop}  \label{P:com}
For each $\sigma \in \widehat{\SL_2}_{temp,\psi_a}$, there is a commutative diagram: 
    \[
 \xymatrix{ &  S(V)
 \ar[dl]_{\rm rest} \ar[dr]^{\theta_{\sigma}}& \\
 S(X_a)
  \ar[dr]_{\alpha_{\theta(\sigma),a}} & &
 \sigma \boxtimes \theta(\sigma)  
    \ar[dl]^{\ell_{\sigma, \psi_a}}   & \\
 & \C_{\psi_a} \otimes \theta(\sigma) & 
  }
\]
where $\alpha_{\theta(\sigma),a}$ is the morphism associated to the direct integral decomposition of Proposition \ref{P:L2p}.
\end{prop}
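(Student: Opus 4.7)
The plan is to deduce the commutativity by tracing, for any $\Phi \in S(V)$, the two ways in which the proof of Proposition \ref{P:L2p} identifies the spectral components of $\Omega_\psi$ as a $B \times \OO(V)$-module. Since both sides of the diagram are $\C$-linear $\OO(V)$-equivariant maps $S(V) \to \theta_\psi(\sigma)$, it is enough to verify the equality pointwise in $\Phi$.

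First, I would follow $\Phi$ through the orbit-theoretic description. By Lemma \ref{L:measure}, the restriction $\Phi|_{Y_a}$ corresponds to the function $\phi_{\Phi|_{Y_a}}(b,x) = |a|^{1/2}\,|b|^{\dim V/2}\,\Phi(b \cdot x)$ on $F^\times \times X_a$, which lives in $\bigoplus_{\epsilon} L^2(N,\psi_a\backslash B)^\epsilon \widehat\otimes L^2(X_a)^\epsilon$; evaluating the $B$-factor at $b = 1$ picks out the function $|a|^{1/2} \cdot {\rm rest}(\Phi) \in S(X_a)$. Feeding this into the spectral decomposition of $L^2(X_a)$ supplied by Proposition \ref{P:L2p} yields, by definition of $\alpha_{\theta(\sigma),a}$, the element $|a|^{1/2} \cdot \alpha_{\theta(\sigma),a}({\rm rest}(\Phi)) \in \theta_\psi(\sigma)$. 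On the other hand, following $\Phi$ through the $\SL_2 \times \OO(V)$-spectral decomposition \eqref{E:ome} extracts $\theta_\sigma(\Phi) \in \sigma \boxtimes \theta_\psi(\sigma)$; restricting the $\sigma$-factor to $B$ and applying Corollary \ref{C:SL2} produces $j_{\sigma,a}(\theta_\sigma(\Phi))$ in $L^2(N,\psi_a\backslash B)^{z_\sigma} \otimes \theta_\psi(\sigma)$, whose value at $b = 1$ equals $|a|^{1/2} \cdot \ell_{\sigma,\psi_a}(\theta_\sigma(\Phi))$ by the identity $\tilde{\ell}_{\tilde\sigma} \circ s(a) = |a|^{1/2} \cdot \ell_{\sigma,\psi_a}$ derived at the end of the proof of Proposition \ref{P:SL2}.

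Since Proposition \ref{P:L2p} was proved precisely by matching these two $B \times \OO(V)$-decompositions of $\Omega_\psi$, the two outputs must coincide, and cancelling the common factor $|a|^{1/2}$ yields the claimed commutativity. The main subtlety to address is that this argument a priori produces the identity only for $d\mu_{\SL_2}$-almost every $\sigma$, since the $\alpha$-maps of Bernstein's spectral theory are only essentially unique; to upgrade to the pointwise statement, I would combine the continuity of $\sigma \mapsto \ell_{\sigma,\psi_a}(\theta_\sigma(\Phi))$ furnished by Lemmas \ref{L:cont2} and \ref{L:cont3} with the analogous continuity of $\sigma \mapsto \alpha_{\theta(\sigma),a}({\rm rest}(\Phi))$, which follows from the same Bernstein-theoretic principles applied directly to the $L^2(X_a)$-decomposition of Proposition \ref{P:L2p}.
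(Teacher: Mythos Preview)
Your proposal is correct and follows essentially the same approach as the paper: the paper does not give a separate proof of this proposition but simply states (in the paragraph preceding it) that by unwinding the explicit isomorphism constructed in the proof of Proposition~\ref{P:L2p}, the image of $f = |a|^{1/2}\cdot{\rm rest}(\Phi)$ is the section $\sigma \mapsto |a|^{1/2}\cdot \ell_{\sigma,\psi_a}(\theta_\sigma(\Phi))$, which is exactly your argument. The one difference is that the paper sidesteps your ``almost every vs.\ pointwise'' subtlety by observing (immediately after Corollary~\ref{C:com1}) that one may equally well \emph{define} the family $\{\alpha_{\theta(\sigma),a}\}$ by requiring the diagram to commute, and then the content of Proposition~\ref{P:L2p} is that this family realizes the spectral decomposition of $L^2(X_a,|\omega_a|)$; your continuity argument is a legitimate alternative way to handle this point.
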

 This proposition gives a precise relation between the transfer of periods in the smooth setting and the spectral decomposition of $L^2(N, \psi_a \backslash \SL_2)$, $L^2(X_a, |\omega_a|)$ and 
$\Omega_{\psi}$ in the $L^2$-theory.  Indeed, it is fairly clear that one has a commutative diagram as in the proposition up to scalars. 
The point of the proposition is to explicate the scalar. More precisely, specializing to the case $a=1$, one has:

\vskip 5pt

\begin{cor}  \label{C:com1}
Under the isomorphism 
\[  f_{\sigma} : \Hom_N(\sigma, \psi)  \cong \Hom_{\OO(V)}(C^{\infty}_c(X_1),  \theta_{\psi}(\sigma)) \] 
given in  (\ref{E:fa}) for $\sigma \in \widehat{\SL_2}_{temp, \psi}$ (which is induced by the map  $\theta_{\sigma}$ of (\ref{E:theta})), 
one has:
\begin{equation} \label{E:com1}
 f_{\sigma}( \ell_{\sigma})  =  \alpha_{\theta(\sigma)} \end{equation}
where  the Whittaker functional $\ell_{\sigma}$  is the one in (\ref{E:Whit2}) which intervenes in the Whittaker-Plancherel theorem for $(N, \psi) \backslash \SL_2$ and the morphism  $\alpha_{\theta(\sigma)}$ is the one which intervenes in the spectral decomposition of $L^2(X_1, |\omega_1|)$ obtained in Proposition \ref{P:L2p}.
 \end{cor}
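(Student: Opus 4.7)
The corollary is the $a=1$ specialization of Proposition \ref{P:com}, which I would prove by unravelling the chain of isometries constructed in the proof of Proposition \ref{P:L2p}.

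By the definition of $f_\sigma$ in the proof of Proposition \ref{P:smoothp}, applied with the equivariant projection $\theta_\sigma$ of (\ref{E:theta}), we have for any $f \in S(X_1)$ and any lift $\Phi \in S(V)$ with $\mathrm{rest}(\Phi) = f$ the identity
\[
f_\sigma(\ell_\sigma)(f) \;=\; \ell_\sigma\bigl(\theta_\sigma(\Phi)\bigr) \;\in\; \theta_\psi(\sigma),
\]
where $\ell_\sigma$ acts on the $\sigma$-factor of $\theta_\sigma(\Phi) \in \sigma \boxtimes \theta_\psi(\sigma)$. The corollary thus reduces to showing that this expression equals $\alpha_{\theta(\sigma)}(f)$.

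The key is to compare the two $B \times \OO(V)$-equivariant descriptions of $\Omega_\psi$ obtained in the proof of Proposition \ref{P:L2p}. On one hand, the isometry $f' \mapsto \phi_{f'}$ of Lemma \ref{L:measure} sends $\Phi|_{Y_1}$ to the function $\phi_\Phi(b, x) = |b|^{\dim V/2} \Phi(b x)$ on $F^\times \times X_1$, with $\phi_\Phi(1, \cdot) = f$. On the other hand, the spectral decomposition (\ref{E:ome}) composed with $j_\sigma$ from Corollary \ref{C:SL2} sends $\Phi$ to the measurable section $\sigma \mapsto j_\sigma(\theta_\sigma(\Phi))$; the $a' = 1$ component of $j_\sigma$ is $\mathrm{rest}_1 \circ \beta_{\sigma, \psi}$, so this component, applied to the $\sigma$-factor, produces the $\theta_\psi(\sigma)$-valued Whittaker function $t(b) \mapsto \beta_{\sigma, \psi}(\theta_\sigma(\Phi))(t(b))$ on $B$, whose value at $t(1) = 1$ is $\ell_\sigma(\theta_\sigma(\Phi))$. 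Identifying the $a' = 1$, $\epsilon = z_\sigma$ summands of (\ref{E:star}) and (\ref{E:star2}) and extracting the pointwise value at the identity of $B$, one finds that $f \in L^2(X_1)$ corresponds under Proposition \ref{P:L2p} to the section $\sigma \mapsto \ell_\sigma(\theta_\sigma(\Phi))$, which by the definition of $\alpha_{\theta(\sigma)}$ (as the morphism in the direct integral decomposition of $L^2(X_1)$ obtained there) is precisely $\sigma \mapsto \alpha_{\theta(\sigma)}(f)$.

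The main obstacle is to legitimize the pointwise evaluation at the identity of $B$ and to verify that no residual scalar appears. Since elements of $L^2(N, \psi \backslash B)$ are only defined almost everywhere, I would either assume first that $\Phi$ is chosen so that $\beta_{\sigma, \psi}(\theta_\sigma(\Phi))$ lies in the Harish-Chandra--Schwartz space of $(N, \psi) \backslash \SL_2$, where Lemmas \ref{L:SL2} and \ref{L:cont2} make the evaluation well-defined, and then extend by density; or replace pointwise evaluation by pairing against an approximate identity on $T$ concentrated at $1$, using the $T$-equivariance afforded by the inner product of Proposition \ref{P:SL2}. The normalization constants $|a|^{1/2}$ entering $\mathrm{rest}_a$, the $\sqrt{|a|}$ built into $\phi_f$ in Lemma \ref{L:measure}, and the factor $|2|_F/2$ in the unitary structure of Proposition \ref{P:SL2} have been arranged precisely so that no residual scalar appears at $a = 1$; confirming this alignment is the remaining routine verification after which the corollary is immediate.
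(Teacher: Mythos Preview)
Your proposal is correct and follows essentially the same route as the paper: the corollary is stated there as the $a=1$ specialization of Proposition~\ref{P:com}, and that proposition is obtained precisely by unravelling the chain of isometries in the proof of Proposition~\ref{P:L2p}, comparing the concrete description (\ref{E:star}) of $\Omega_\psi|_{B\times\OO(V)}$ with the spectral one (\ref{E:star2}) coming from $\theta_\sigma$ and $j_\sigma$. Your care about the legitimacy of pointwise evaluation and the alignment of the $|a|^{1/2}$ normalizations is warranted and goes a bit beyond what the paper spells out, but the underlying argument is the same.
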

 
 Another way to interpret Corollary \ref{C:com1} is that if one defines the elements $\alpha_{\theta(\sigma)}$ by (\ref{E:com1}) or equivalently by requiring that the diagram in Proposition \ref{P:com} be commutative, then the family $\{ \alpha_{\theta(\sigma)} : \sigma \in \widehat{\SL_2}_{temp,\psi}\}$ induces the spectral decomposition of  $L^2(X_1, |\omega_1|)$ in Proposition \ref{P:L2p}.  
 \vskip 10pt

 We conclude this section with a few formal consequences of Proposition \ref{P:com}.
  The commutative diagram in Proposition \ref{P:com} gives an identity in $\theta(\sigma)$. If we pair both sides of the identity with a vector in $\theta_{\psi}(\sigma)$, using the inner product on $\theta_{\psi}(\sigma)$, we obtain:
 \vskip 5pt
 
\begin{cor}  \label{C:com2}
For any $\Phi \in S(V) = \Omega_{\psi}$ and $w \in \theta(\sigma)$,  one has
\[  \ell_{\sigma} ( B_{\theta(\sigma)}(\Phi, w)) =  \langle \Phi|_X, \beta_{\theta(\sigma)} (w)  \rangle_X,\]
where $B_{\theta(\sigma)}$ was defined in \S \ref{SS:AB}.  
\end{cor}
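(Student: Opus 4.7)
The plan is to derive Corollary \ref{C:com2} as a direct consequence of Proposition \ref{P:com} (specialized to $a=1$) by pairing the two sides of the commutativity statement with a vector $w\in\theta(\sigma)$ and then invoking two formal adjunctions: one characterizing $B_{\theta(\sigma)}$ via $\theta_{\sigma}$, and one characterizing $\beta_{\theta(\sigma)}$ as the adjoint of $\alpha_{\theta(\sigma)}$. No genuine analytic input is needed beyond what has already been established.

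More precisely, specializing the diagram of Proposition \ref{P:com} to $a=1$ (so that the scaling factor $|a|^{1/2}$ is trivial and the restriction $\Phi|_X$ coincides with $\mathrm{rest}(\Phi)$), commutativity is the identity
\[
\alpha_{\theta(\sigma)}(\Phi|_X) \;=\; (\ell_{\sigma}\otimes \mathrm{id}_{\theta(\sigma)})\bigl(\theta_{\sigma}(\Phi)\bigr)
\]
in $\theta(\sigma)$. I would then take the inner product $\langle -, w\rangle_{\theta(\sigma)}$ of both sides with $w\in\theta(\sigma)$.

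For the right-hand side, the defining identity (\ref{E:AB}) of $B_{\theta(\sigma)}$, namely $\langle \theta_{\sigma}(\Phi), v\otimes w\rangle_{\sigma\otimes \theta(\sigma)} = \langle B_{\theta(\sigma)}(\Phi,w), v\rangle_{\sigma}$, expresses $B_{\theta(\sigma)}(\Phi,w)$ as the contraction of $\theta_{\sigma}(\Phi)$ against $w$ on the $\theta(\sigma)$-slot. Applying $\ell_\sigma$ to this contraction gives the same scalar as contracting the $\sigma$-slot with $\ell_\sigma$ first and then pairing with $w$, so
\[
\bigl\langle (\ell_\sigma \otimes \mathrm{id})\theta_\sigma(\Phi),\, w\bigr\rangle_{\theta(\sigma)} \;=\; \ell_\sigma\bigl(B_{\theta(\sigma)}(\Phi, w)\bigr).
\]
For the left-hand side, the adjunction formula (\ref{E:alpha}) defining $\beta_{\theta(\sigma)}$ as the (conjugate) dual of $\alpha_{\theta(\sigma)}$ yields
\[
\bigl\langle \alpha_{\theta(\sigma)}(\Phi|_X),\, w\bigr\rangle_{\theta(\sigma)} \;=\; \bigl\langle \Phi|_X,\, \beta_{\theta(\sigma)}(w)\bigr\rangle_X.
\]
Equating the two evaluations produces the desired identity.

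There is no substantive obstacle: the corollary is a bookkeeping consequence of the commutative diagram in Proposition \ref{P:com} together with the defining adjunctions of $B_{\theta(\sigma)}$ and $\beta_{\theta(\sigma)}$. The only care needed is in tracking which tensor factor is being paired at each step and in noting that when $a=1$ the factor $|a|^{1/2}$ does not intervene, so that $\Phi|_X$ appears unscaled on both sides of the resulting identity.
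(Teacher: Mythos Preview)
Your proposal is correct and follows essentially the same route as the paper: both arguments pair the identity $\alpha_{\theta(\sigma)}(\Phi|_X) = (\ell_\sigma\otimes\mathrm{id})\,\theta_\sigma(\Phi)$ from Proposition \ref{P:com} (at $a=1$) against $w$, invoke the adjunction (\ref{E:alpha}) on one side and the defining relation (\ref{E:AB}) for $B_{\theta(\sigma)}$ on the other. The paper's proof is just this chain written as a single string of equalities.
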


\begin{proof}
We have
\[ 
 \langle \Phi|_X, \beta_{\theta(\sigma)} (w)  \rangle_X  
   = \langle \alpha_{\theta(\sigma)}(\Phi|_X), w \rangle_{\theta(\sigma)}  
=   \langle \ell_{\sigma}(\theta_{\sigma}(\Phi)), w \rangle_{\theta(\sigma)}  
=  \ell_{\sigma} \left( \langle  \theta_{\sigma}(\Phi)), w \rangle_{\theta(\sigma)}  \right)  
=  \ell_{\sigma}(B_{\theta(\sigma)}(\Phi, w)).  
\] 
\end{proof}
\vskip 5pt

We may also ``double-up" the commutative diagram in Proposition \ref{P:com} and contract the resulting doubled identity using the inner product on $\theta_{\psi}(\sigma)$. This gives:
\vskip 5pt

\begin{cor}  \label{C:com3}
For $\Phi_1, \Phi_2 \in S(V) = \Omega_{\psi}$, one has:
\[ J_{\theta(\sigma)} (\Phi_1|_X, \Phi_2|_X) = \int^*_N  \overline{\psi(n)}  \cdot  J_{\sigma}^{\theta}( n \cdot \Phi_1, \Phi_2) \, \, dn.  
\]
For fixed $\Phi_1|_X$ and $\Phi_2|_X$ in $S(X)$, the $\C$-valued function $\sigma \mapsto J_{\theta(\sigma)} (\Phi_1|_X, \Phi_2|_X)$ is  continuous in $\sigma \in \widehat{\SL_2}_{temp, \psi}$.
\end{cor}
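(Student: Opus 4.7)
\textbf{Proof Plan for Corollary \ref{C:com3}.}

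My plan is to derive the identity by doubling the commutative diagram of Proposition \ref{P:com} and then deduce continuity from the continuity statements already established (Lemmas \ref{L:cont2} and \ref{L:cont3}). First, by the general formula (\ref{E:product}) for the Hermitian form coming from a direct integral decomposition, together with the spectral decomposition of $L^2(X_1,|\omega_1|)$ obtained in Proposition \ref{P:L2p}, I have
\[
 J_{\theta(\sigma)}(\Phi_1|_X,\Phi_2|_X)
 = \langle \alpha_{\theta(\sigma)}(\Phi_1|_X),\alpha_{\theta(\sigma)}(\Phi_2|_X)\rangle_{\theta(\sigma)}.
\]
Proposition \ref{P:com} (specialized to $a=1$) lets me replace each $\alpha_{\theta(\sigma)}(\Phi_k|_X)$ by $(\ell_\sigma\otimes\mathrm{id}_{\theta(\sigma)})\circ\theta_\sigma(\Phi_k)$.

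Next, I pick an orthonormal basis $\{w_j\}$ of $\theta(\sigma)$ and write $\theta_\sigma(\Phi_k)=\sum_j v_j^k\otimes w_j$ with $v_j^k\in\sigma$ (the sum converges in the Hilbert-Schmidt sense in $\sigma\,\widehat\boxtimes\,\theta(\sigma)$, since $\Phi_k\in\Omega_\psi$). The inner product then becomes $\sum_j \ell_\sigma(v_j^1)\,\overline{\ell_\sigma(v_j^2)}$. Applying the regularized Whittaker integral representation (\ref{E:Whit2}) to each term and interchanging the sum with the regularized integral gives
\[
\sum_j \ell_\sigma(v_j^1)\,\overline{\ell_\sigma(v_j^2)}
 = \int_N^* \overline{\psi(n)}\,\sum_j \langle \sigma(n)v_j^1,v_j^2\rangle_\sigma\,dn.
\]
Using $\SL_2$-equivariance $n\cdot\theta_\sigma(\Phi_1)=\theta_\sigma(n\cdot\Phi_1)$ and recognising that $\sum_j\langle \sigma(n)v_j^1,v_j^2\rangle_\sigma$ is precisely the inner product $\langle \theta_\sigma(n\cdot\Phi_1),\theta_\sigma(\Phi_2)\rangle_{\sigma\boxtimes\theta(\sigma)}$ expanded in the orthonormal basis $\{w_j\}$, formula (\ref{E:yia2}) identifies this inner product with $J^\theta_\sigma(n\cdot\Phi_1,\Phi_2)$. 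This yields the desired identity.

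For the continuity assertion, I combine two facts. On the one hand, Lemma \ref{L:cont3} asserts that $\tau\mapsto J^\theta_\tau(\Phi_1',\Phi_2')$ is continuous on $\widehat{\SL}_{2,\mathrm{temp}}^{\mathrm{ind}}$ for any fixed Schwartz $\Phi_i'$. On the other hand, the identity just proved exhibits $\sigma\mapsto J_{\theta(\sigma)}(\Phi_1|_X,\Phi_2|_X)$ as a (regularized) $N$-integral of such $J^\theta$-values. Rather than justify the interchange of the regularized $N$-integral with the continuity limit directly, I will argue as follows: the left-hand side $J_{\theta(\sigma)}(\Phi_1|_X,\Phi_2|_X)$ is the Hermitian form appearing in the spectral decomposition of $L^2(X_1,|\omega_1|)$ from Proposition \ref{P:L2p}. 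The isometry of Proposition \ref{P:L2p} is built by transporting, $B$-equivariantly and compatibly with Corollary \ref{C:SL2}, the Whittaker-Plancherel decomposition of $L^2(N,\psi\backslash\SL_2)$; hence the Hermitian form $J_{\theta(\sigma)}$ on $S(X_1)$, viewed as a function of $\sigma\in\widehat{\SL}_{2,\mathrm{temp},\psi}$, corresponds to a Whittaker-Plancherel Hermitian form $J_\sigma(f_1,f_2)$ on $\mathcal{C}(N,\psi\backslash\SL_2)$ in the sense of \S\ref{SS:WP}. Lemma \ref{L:cont2} then directly gives continuity in $\sigma\in\widehat{\SL}_{2,\mathrm{temp},\psi}$.

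The main obstacle will be checking carefully that the orthonormal basis expansion combined with the regularization of the Whittaker integral can be interchanged as above; this is ultimately routine (standard estimates on tempered matrix coefficients and the regularization theory of \cite{LM}, \cite{BP1} suffice), but requires some bookkeeping. The key conceptual point is that Proposition \ref{P:com} is precisely the statement that the transfer of periods via theta lifting is compatible with the $L^2$-level Whittaker--Plancherel structure, so all the analytic inputs are already available from \S\ref{S:bern}--\S\ref{SS:WP}.
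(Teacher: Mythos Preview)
Your proposal is correct and follows essentially the same route as the paper: start from the definition of $J_{\theta(\sigma)}$, apply Proposition~\ref{P:com} to convert $\alpha_{\theta(\sigma)}(\Phi_k|_X)$ into $\ell_\sigma(\theta_\sigma(\Phi_k))$, contract over $\theta(\sigma)$, insert the regularized Whittaker integral (\ref{E:Whit2}), and recognise $J^\theta_\sigma$ via (\ref{E:yia2}). The paper writes the contraction step more compactly as $\ell_\sigma\otimes\overline{\ell_\sigma}\bigl(\langle\theta_\sigma(\Phi_1),\theta_\sigma(\Phi_2)\rangle_{\theta(\sigma)}\bigr)$, whereas you unpack it with an explicit orthonormal basis; the content is the same, and your explicitness about the interchange of the ONB sum with the regularized integral is a point the paper simply marks ``(clear)''.

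For continuity, the paper is terser: it simply cites the identity just established together with Lemmas~\ref{L:cont2} and~\ref{L:cont3}. Your indirect route---arguing that the Hermitian form $J_{\theta(\sigma)}$ inherits continuity from the Whittaker--Plancherel form via the transport in Proposition~\ref{P:L2p}---is morally the same idea, but the claim that $J_{\theta(\sigma)}(\Phi_1|_X,\Phi_2|_X)$ literally equals some $J_\sigma(f_1,f_2)$ for \emph{fixed} $f_i\in\mathcal{C}(N,\psi\backslash\SL_2)$ independent of $\sigma$ is not quite what Proposition~\ref{P:L2p} gives you. The cleaner justification is the one the paper gestures at: the right-hand side of the identity is, for each $\sigma$, the regularized $(N,\psi)$-integral of the tempered function $n\mapsto J^\theta_\sigma(n\cdot\Phi_1,\Phi_2)$, and the joint continuity of this construction in $\sigma$ is exactly what the proofs of Lemmas~\ref{L:cont2} and~\ref{L:cont3} provide (the regularization in (\ref{E:Whit2}) is uniform over compacta in $\widehat{\SL_2}_{temp}^{ind}$, cf.\ \cite[\S2.14]{BP2}).
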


\begin{proof}
We have
 \begin{align}
&J_{\theta(\sigma)} (\Phi_1|_X, \Phi_2|_X) \notag \\
=  &\langle \alpha_{\theta(\sigma)}(\Phi_1|_X),  \alpha_{\theta(\sigma)} (\Phi_2|_X) \rangle_{\theta(\sigma)}  \hskip 55pt  \text{(definition of $J_{\theta(\sigma)}$)}  \notag \\
= & \langle  \ell_{\sigma}( \theta_{\sigma}(\Phi_1)),  \ell_{\sigma}( \theta_{\sigma}(\Phi_2)) \rangle_{\theta(\sigma)}   \hskip 65pt  \text{(by Proposition \ref{P:com})} \notag \\
= &\ell_{\sigma} \otimes \overline{\ell_{\sigma}} \left(  \langle  \theta_{\sigma}(\Phi_1),  \theta_{\sigma}(\Phi_2) \rangle_{\theta(\sigma)}    \right) \hskip 55pt \text{(clear)}  \notag \\
= &   \int^*_N  \overline{\psi(n)} \cdot  \langle n \cdot  \theta_{\sigma}(\Phi_1),  \theta_{\sigma}(\Phi_2) \rangle_{ \sigma \otimes \theta(\sigma)}  \, \, dn  \hskip 10pt \text{(formula for $\ell_{\sigma} \otimes \overline{\ell_{\sigma}}$)} \notag \\
=& \int^*_N  \overline{\psi(n)} \cdot J^{\theta}_{\sigma}(n \cdot \Phi_1, \Phi_2)  \, \, dn  \hskip65pt   \text{(definition of $J^{\theta}_{\sigma}$)} \notag
\end{align}
The continuity of $\sigma \mapsto J_{\theta(\sigma)} (\Phi_1|_X, \Phi_2|_X)$  follows from the above formula, together with Lemma \ref{L:cont2} and Lemma \ref{L:cont3}.
\end{proof}
\vskip 5pt

\noindent The last two corollaries thus give different variants of the identity in Proposition \ref{P:com}.

\vskip 10pt

\section{\bf Relative Characters} \label{S:RC}
 In this section, we briefly recall the notion of the relative character  associated to a period in its various incarnations. 
 \vskip 5pt

\subsection{\bf Relative characters}  \label{SS:RC}
Suppose that, for $i =1$ or $2$, $H_i \subset G$ is a  subgroup of $G$ and $\chi_i: H_i(F) \rightarrow S^1$ a unitary character of $H_i(F)$. We fix also Haar measures $dg$ on $G$ and $dh_i$ on $H_i$.
 For any $\pi \in \widehat{G}$ and  $L_i\in \Hom_{H_i}(\pi, \chi_i)$,  one can associate a distribution on $G$ as follows. 
 Given $(f_1, f_2) \in C^{\infty}_c(G) \times C^{\infty}_c(G)$, one sets:
 \begin{equation}
   \mathcal{B}_{\pi, L_1, L_2}(f_1, f_2)  = \sum_{v \in {\rm ONB}(\pi)}  \overline{L_1(\pi(\overline{f_1})(v))} \cdot L_2(\pi( \overline{f_2})(v)) \end{equation}
 where the sum runs over an orthonormal basis of $\pi$. The sum defining $\mathcal{B}_{\pi, L_1, L_2}(f_1, f_2)$ is independent of the choice of the orthonormal basis.  It gives a  linear  map
 \[  \mathcal{B}_{\pi, L_1, L_2}: C^{\infty}_c(G)  \otimes \overline{C^{\infty}_c(G)} \longrightarrow  \C \]
 which is $G(F)^{\Delta}$-invariant (and which depends on the Haar measure $dg$).
 \vskip 5pt

 The distribution $\mathcal{B}_{\pi, L_1, L_2}$ is called the relative character of $\pi$ with respect to $(L_1, L_2)$.
Note that, in the literature,   it is frequent  to find a different convention in the definition of the relative character, using instead the sum
 \[ \sum_{v \in {\rm ONB}(\pi)}  L_1( \pi(f_1)(v)) \cdot  \overline{L_2(\pi(f_2)(v))}. \]
 The difference between the two conventions is merely one of form rather than substance, and it is easy to convert from one convention to the other using complex conjugation. 
 We choose the normalisation given above so as to avoid the appearance of multiple complex conjugations in later formulae.  
 \vskip 5pt
 
  Now a short computation gives
 \[  \overline{L_i( \pi(\overline{f_i}) v)}  =  \int_{H_i \backslash G}   \overline{L_i(\pi(g_i)(v))}  \cdot (f_i)_{H_i,  \chi_i}(g) \,\,  \frac{dg}{dh_i} \] 
 with
 \[  (f_i)_{H_i, \chi_i}(g)  = \int_{H_i}  f(h_i g)  \cdot  \overline{\chi_i(h_i)} \, dh_i. \]
Hence one deduces that the linear form $\mathcal{B}_{\pi, L_1, L_2}$ factors as
 \[  
 C^{\infty}_c(G)  \otimes \overline{C^{\infty}_c(G)} \twoheadrightarrow C^{\infty}_c(H_1, \chi_1 \backslash G) \otimes \overline{  C^{\infty}_c(H_2, \chi_2 \backslash G) } \longrightarrow  \C. \]
 We may think of $C^{\infty}_c(H_i, \chi_i \backslash G(F))$ as the space of compactly supported smooth sections of the line bundle on $X_i = H_i \backslash G$ determined by 
 $\chi_i$ and denote this space by the alternative notation $C^{\infty}_c(X_i, \chi_i)$. Then we shall think of $\mathcal{B}_{\pi, L_1, L_2}$ as a
 $G^{\Delta}$-invariant  linear form on $C^{\infty}_c(X_1, \chi_1) \otimes \overline{C^{\infty}_c(X_2, \chi_2)}$; this now depends on the Haar measures $dg$ and $dh_i$, or rather on the $G$-invariant measure $dg/dh_i$ on $H_i \backslash G$. 
 \vskip 5pt
 
 Let us write:
 \[  f_{L,v}(g)  = L(\pi(g) v)  \]
for  the matrix coefficient associated to $L\in \pi^*$ and $v \in \pi$.  Then the distribution $\mathcal{B}_{\pi, L_1, L_2}$ is given by the formula
 \begin{equation}  \label{E:RC}
  \mathcal{B}_{\pi, L_1, L_2}( \phi_1, \phi_2)  = \sum_{v \in {\rm ONB}(\pi)}  \langle \phi_1, f_{L_1,v} \rangle_{X_1} \cdot \langle f_{L_2,v},  \phi_2 \rangle_{X_2}, \end{equation}
  where $\langle-, - \rangle_{X_i}$ is the inner product defined by the measure $dg/dh_i$.
  \vskip 10pt
 
  \subsection{\bf Alternative incarnation}  \label{SS:RC2}
 We can also give an alternative formulation of the notion of relative characters. 
 Continuing with the context of \S \ref{SS:RC},  it is not difficult to verify that 
  \[   \mathcal{B}_{\pi, L_1, L_2}(f_1, f_2)  =  \sum_{v \in {\rm ONB}(\pi)}    \overline{L_1( \pi(\overline{f_1 \ast f_2^{\vee}})(v))} \cdot  L_2(v), \]
   where  
 \[    f_2^{\vee}(g)  = \overline{f_2(g^{-1})}, \]
 and 
 \[  (f_1 \ast f_2)(g)  =\int_{G} f_1(g x^{-1}) \cdot f_2(x) \, \, dx  \]
 is the convolution of $f_1$ and $f_2$. 
 \vskip 5pt
 Thus, we may  alternatively define $\mathcal{B}_{\pi, L_1, L_2}$ as a linear form
 \[  \mathcal{B}_{\pi, L_1, L_2}: C^{\infty}_c(G) \longrightarrow   \C  \]
 given by the formula
 \[  \mathcal{B}_{\pi, L_1, L_2}(f)  = \sum_{v \in {\rm ONB}(\pi)} \overline{L_1( \pi(\overline{f})(v)} \cdot L_2(v). \]
 As in \S \ref{SS:RC}, this linear form factors as:
 \[  \mathcal{B}_{\pi, L_1, L_2} : C^{\infty}_c(G) \twoheadrightarrow C^{\infty}_c(X_1, \chi_1) \longrightarrow   \C,  \] 
 so that we may regard it as a linear form on $C^{\infty}_c(X_1, \chi_1)$, given by the formula
 \begin{equation}  \label{E:B}
   \mathcal{B}_{\pi, L_1, L_2} (\phi) =     \sum_{v \in {\rm ONB}(\pi) } \langle \phi, f_{L_1, v} \rangle_{X_1} \cdot f_{L_2, v}(1). \end{equation}
  In fact, it further factors as:
 \[  \mathcal{B}_{\pi, L_1, L_2} :   C^{\infty}_c(X_1, \chi_1) \twoheadrightarrow C^{\infty}_c(X_1, \chi_1)_{H_2, \chi_2}  \longrightarrow \C. \]
From this alternative description of the relative character, we can recover the previous version discussed in the previous subsection by using the fact that any $f \in C^{\infty}_c(G)$ can be expressed as a finite linear combination of $f_1 \ast f_2$. This is clear in the nonarchimedean case and is a result of Dixmier-Malliavin in the archimedean case.

 \vskip 10pt

\subsection{\bf $J_{\sigma}$ as a relative character}
  We shall now relate the notion of relative character with the theory of direct integral decomposition.
\vskip 5pt

  We shall focus on the case when $H_1 = H_2 = H$ and $\chi_1 = \chi_2 =\chi$ and such that $\dim \Hom_H(\pi, \chi) \leq 1$. 
 With $X = H \backslash G$, equipped with a $G$-invariant measure $dx$,  suppose one has a direct integral decomposition: 
 \[  L^2(X, \chi, dx)  := L^2( H, \chi \backslash G) = \int_{\Omega} \sigma(\omega) \, \, d\mu(\omega) \]
 with  associated families of maps $\{ \alpha_{\sigma(\omega)} \}$ and $\{ \beta_{\sigma(\omega)} \}$ (see \S \ref{SS:alpha})  and
   associated decomposition of inner product as in (\ref{E:inner}):
 \[  \langle- , -\rangle_X =  \int_{\Omega} J_{\sigma(\omega)}(-, -)  \, d\mu(\omega). \]
 Observe that the positive semidefinite Hermitian form $J_{\sigma(\omega)}$  is a $G^{\Delta}$-invariant linear form
 \[  J_{\sigma(\omega)}:  C^{\infty}_c(X, \chi) \otimes \overline{C^{\infty}_c(X, \chi)} \longrightarrow \C. \]
 This suggests that $J_{\sigma(\omega)}$ may be regarded as a relative character according to our definition in \S \ref{SS:RC}. Indeed, one has:
 \vskip 5pt
 
 \begin{lem}
 One has $J_{\sigma(\omega)}  = \mathcal{B}_{\sigma(\omega), \ell_{\sigma(\omega)}, \ell_{\sigma(\omega)}}$, 
 where  $\ell_{\sigma(\omega)}  = ev_1 \circ \beta_{\sigma(\omega)}  \in \Hom_H(\sigma(\omega), \chi)$. 
  \end{lem}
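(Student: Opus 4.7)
The plan is to unwind both sides of the claimed equality via the defining formulas recalled earlier in the paper and show they match after an orthonormal basis expansion. The key step is to identify the function $\beta_{\sigma(\omega)}(v) \in C^\infty(X)$ with the matrix coefficient $f_{\ell_{\sigma(\omega)}, v}$ on $G$ (pushed down to $X$).

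First I would verify this identification. Since $\beta_{\sigma(\omega)}$ is $G$-equivariant and $X$ carries the right regular action of $G$, for any $g \in G$ we have
\[
\beta_{\sigma(\omega)}(v)(g \cdot x_0) = \bigl(g^{-1} \cdot \beta_{\sigma(\omega)}(v)\bigr)(x_0) = \beta_{\sigma(\omega)}\!\bigl(\sigma(\omega)(g^{-1})v\bigr)(x_0).
\]
Composing with $ev_{x_0}$ and using $\ell_{\sigma(\omega)} = ev_{x_0} \circ \beta_{\sigma(\omega)}$ then yields
\[
\beta_{\sigma(\omega)}(v)(g \cdot x_0) = \ell_{\sigma(\omega)}\!\bigl(\sigma(\omega)(g^{-1})v\bigr),
\]
which, up to the inversion convention, is precisely the matrix coefficient $f_{\ell_{\sigma(\omega)}, v}$ viewed as a function on $H \backslash G = X$. (Either convention can be used; one just needs consistency between the definition of $f_{L,v}$ in \S\ref{SS:RC} and the formula for the right regular action on $C^\infty(X)$.)

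Next I would expand the Hermitian form $J_{\sigma(\omega)}$. Starting from the expression (\ref{E:product}) and inserting an orthonormal basis of $\sigma(\omega)$,
\[
J_{\sigma(\omega)}(\phi_1, \phi_2) = \langle \alpha_{\sigma(\omega)}(\phi_1), \alpha_{\sigma(\omega)}(\phi_2) \rangle_{\sigma(\omega)} = \sum_{v \in \mathrm{ONB}(\sigma(\omega))} \langle \alpha_{\sigma(\omega)}(\phi_1), v \rangle_{\sigma(\omega)} \cdot \overline{\langle \alpha_{\sigma(\omega)}(\phi_2), v \rangle_{\sigma(\omega)}}.
\]
Apply the adjunction formula (\ref{E:alpha}) to each inner product, so that each factor becomes an integral pairing against $\beta_{\sigma(\omega)}(v)$ over $X$. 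Substituting the identification of $\beta_{\sigma(\omega)}(v)$ as the matrix coefficient $f_{\ell_{\sigma(\omega)}, v}$, one arrives at
\[
J_{\sigma(\omega)}(\phi_1, \phi_2) = \sum_{v \in \mathrm{ONB}(\sigma(\omega))} \langle \phi_1, f_{\ell_{\sigma(\omega)}, v} \rangle_X \cdot \langle f_{\ell_{\sigma(\omega)}, v}, \phi_2 \rangle_X,
\]
which matches the formula (\ref{E:RC}) for $\mathcal{B}_{\sigma(\omega), \ell_{\sigma(\omega)}, \ell_{\sigma(\omega)}}(\phi_1, \phi_2)$.

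The only potential obstacle is bookkeeping: namely, checking that the sign/inverse convention in the definition of $f_{L,v}(g) = L(\pi(g)v)$ matches the $G$-equivariance convention used when defining $\beta_{\sigma(\omega)}$, and ensuring that the $G$-invariant measure $dx$ implicit in $\langle -, - \rangle_X$ is the one compatible with $dg/dh$ as in \S\ref{SS:RC2}. Neither of these is substantive; once conventions are fixed the identification is immediate, and the lemma follows purely formally by combining (\ref{E:alpha}), (\ref{E:product}), and (\ref{E:RC}).
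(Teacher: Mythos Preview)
Your proposal is correct and follows essentially the same route as the paper's proof: expand $J_{\sigma(\omega)}(\phi_1,\phi_2)=\langle\alpha_{\sigma(\omega)}(\phi_1),\alpha_{\sigma(\omega)}(\phi_2)\rangle$ over an orthonormal basis, convert each factor via the adjunction formula (\ref{E:alpha}), and then identify $\beta_{\sigma(\omega)}(v)$ with the matrix coefficient $f_{\ell_{\sigma(\omega)},v}$ to land on (\ref{E:RC}). The only cosmetic difference is ordering (the paper does the ONB expansion before the matrix-coefficient identification) and that the paper's convention gives $\beta_\sigma(v)(g)=\ell_\sigma(\sigma(g)v)$ directly without the inversion you flagged, since the right regular action on $C^\infty(H\backslash G)$ satisfies $(g\cdot f)(1)=f(g)$.
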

  \vskip 5pt
  
  \begin{proof}
  Since $\omega$ is fixed in the proposition, we shall write $\sigma = \sigma(\omega)$ for simplicity. Now
  we have: 

  \begin{align}
  J_{\sigma}(\phi_1, \phi_2) =  \langle \alpha_{\sigma}(\phi_1) , \alpha_{\sigma}(\phi_2) \rangle_{\sigma} 
     &= 
  \sum_{v \in {\rm ONB}(\sigma)} \langle  \alpha_{\sigma}(\phi_1) , v \rangle_{\sigma} \cdot \langle v,  \alpha_{\sigma}(\phi_2) \rangle_{\sigma} \notag \\
  &=  \sum_{v \in {\rm ONB}(\sigma)} \langle  \phi_1, \beta_{\sigma}(v) \rangle_X  \cdot  \langle \beta_{\sigma}(v), \phi_2 \rangle_X. \notag  
  \end{align} 
  Noting that
  \[  \beta_{\sigma}(v) (g)  = ev_1 \circ \beta_{\sigma} (\sigma(g)(v))  = \ell_{\sigma}( \sigma(g) (v)) = f_{\ell_{\sigma}, v}(g),  \]
  we see that the lemma follows by the equation (\ref{E:RC}).
  \end{proof}
   \vskip 10pt

 We can also work with the alternative context of \S \ref{SS:RC2}. In this incarnation, one has:
 \vskip 5pt
 
 \begin{lem}  \label{L:altrel}
 As a linear form on $C^{\infty}_c(X, \chi)$, one has
 \[  \mathcal{B}_{\sigma(\omega), \ell_{\sigma(\omega)}, \ell_{\sigma(\omega)}} (\phi)  =   \ell_{\sigma(\omega)}( \alpha_{\sigma(\omega)}(\phi))  = \beta_{\sigma(\omega)} \alpha_{\sigma(\omega)}(\phi)(1). \]
  \end{lem}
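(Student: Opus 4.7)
The plan is to unwind both sides directly from the definitions and reduce the identity to the adjunction formula \eqref{E:alpha} plus a finite-sum recognition of $\alpha_{\sigma(\omega)}(\phi)$ in the given orthonormal basis. Throughout, write $\sigma = \sigma(\omega)$ for brevity, and recall that by \S\ref{SS:alpha} the embedding $\beta_\sigma : \sigma \to C^\infty(X,\chi)$ is $G$-equivariant and satisfies $\ell_\sigma(v) = \beta_\sigma(v)(x_0)$. This equivariance gives the crucial identification
\[
 f_{\ell_\sigma,v}(g) \;=\; \ell_\sigma(\sigma(g)v) \;=\; \beta_\sigma(v)(x_0 \cdot g),
\]
so that, viewed as a section of the line bundle determined by $\chi$ over $X$, the matrix coefficient $f_{\ell_\sigma,v}$ is nothing other than $\beta_\sigma(v)$.

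Starting from formula \eqref{E:B} with $L_1 = L_2 = \ell_\sigma$, and noting that $f_{\ell_\sigma,v}(1) = \ell_\sigma(v)$, one has
\[
 \mathcal{B}_{\sigma,\ell_\sigma,\ell_\sigma}(\phi) \;=\; \sum_{v \in \mathrm{ONB}(\sigma)} \langle \phi, f_{\ell_\sigma,v}\rangle_X \cdot \ell_\sigma(v) \;=\; \sum_{v \in \mathrm{ONB}(\sigma)} \langle \phi, \beta_\sigma(v)\rangle_X \cdot \ell_\sigma(v).
\]
Now apply the adjunction formula \eqref{E:alpha}, which rewrites $\langle \phi, \beta_\sigma(v)\rangle_X = \langle \alpha_\sigma(\phi), v\rangle_\sigma$, giving
\[
 \mathcal{B}_{\sigma,\ell_\sigma,\ell_\sigma}(\phi) \;=\; \sum_{v \in \mathrm{ONB}(\sigma)} \langle \alpha_\sigma(\phi), v\rangle_\sigma \cdot \ell_\sigma(v).
\]

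The final step is to recognize this sum as $\ell_\sigma$ applied to the ONB-expansion of $\alpha_\sigma(\phi) = \sum_v \langle \alpha_\sigma(\phi), v\rangle_\sigma \cdot v$. The point that requires care is the exchange of the sum with $\ell_\sigma$, since $\ell_\sigma$ is only defined on $\sigma^\infty$ and need not be continuous for the Hilbert norm. In the non-archimedean setting this is easy: $\phi$ is fixed by some open compact $J \subset G$, so $\alpha_\sigma(\phi) \in \sigma^J$, which is finite-dimensional by admissibility, and one may choose the ONB so that only finitely many $v$ lie in $\sigma^J$ while the others are orthogonal to it; hence the sum is in fact finite and the interchange is trivial. (In the archimedean case, which is not our main concern here, one replaces this with the standard smoothness argument showing $\alpha_\sigma(\phi)$ lies in a Sobolev-type subspace on which $\ell_\sigma$ is continuous.) This gives
\[
 \mathcal{B}_{\sigma,\ell_\sigma,\ell_\sigma}(\phi) \;=\; \ell_\sigma(\alpha_\sigma(\phi)),
\]
and the second equality in the lemma is then simply the definition $\ell_\sigma = ev_{x_0} \circ \beta_\sigma$ together with the convention that $x_0$ corresponds to the coset of the identity.

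The only genuine subtlety, as indicated above, is the legitimacy of pulling $\ell_\sigma$ inside the (a priori infinite) orthonormal-basis sum; all other steps are direct substitutions from \S\ref{SS:alpha} and \S\ref{SS:RC2}.
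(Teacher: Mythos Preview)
Your proof is correct and is essentially the paper's own argument run in the opposite direction: the paper starts from the ONB expansion $\alpha_\sigma(\phi) = \sum_v \langle \alpha_\sigma(\phi), v\rangle_\sigma \, v$, applies $\ell_\sigma$, and then identifies the result with \eqref{E:B}, whereas you start from \eqref{E:B} and work back to $\ell_\sigma(\alpha_\sigma(\phi))$. Your explicit discussion of why the interchange of $\ell_\sigma$ with the ONB sum is legitimate (finiteness via admissibility in the non-archimedean case) is a point the paper leaves implicit.
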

  \begin{proof}
  We write $\sigma = \sigma(\omega)$ for simplicity.
  Then we have
  \[  \alpha_{\sigma}(\phi)  = \sum_{v \in {\rm ONB}(\sigma)}  \langle \alpha_{\sigma}(\phi), v \rangle_{\sigma} \cdot v  = \sum_v  \langle \phi, f_{\ell_{\sigma}, v} \rangle_X \cdot v. \]
  Hence
  \[  \ell_{\sigma}(\alpha_{\sigma}(\phi)) =    \sum_v  \langle \phi, f_{\ell_{\sigma}, v} \rangle_X \cdot \ell_{\sigma}( v), \]
\vskip 5pt
\noindent   so that the lemma follows by equation (\ref{E:B}).
     \end{proof}
     \vskip 5pt
     
 \begin{cor}
 Let $\mathcal{C}(X, \chi)$ be the Harish-Chandra-Schwarz space of $X = (H, \chi) \backslash G$. Then  the relative character $\mathcal{B}_{\sigma, \ell_{\sigma}, \ell_{\sigma}}$ extends to $\mathcal{C}(X, \chi)$. 
\end{cor}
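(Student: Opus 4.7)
The plan is to read off the extension directly from the alternative formula for the relative character supplied by Lemma~\ref{L:altrel}, namely
\[
\mathcal{B}_{\sigma,\ell_\sigma,\ell_\sigma}(\phi) \;=\; \ell_\sigma\bigl(\alpha_\sigma(\phi)\bigr),
\]
which is a priori valid for $\phi \in C^\infty_c(X,\chi)$. The strategy is therefore to observe that \emph{both} factors in this composition extend continuously to $\mathcal{C}(X,\chi)$, after which the extension of $\mathcal{B}_{\sigma,\ell_\sigma,\ell_\sigma}$ is automatic.

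First, by the result of Bernstein recalled in \S2.4, the natural inclusion $\mathcal{C}(X,\chi) \hookrightarrow L^2(X,\chi)$ is fine, so the morphism
\[
\alpha_\sigma : C^\infty_c(X,\chi) \longrightarrow \sigma^\infty,
\]
initially defined on compactly supported smooth sections, extends continuously to a $G$-equivariant map $\alpha_\sigma : \mathcal{C}(X,\chi) \to \sigma^\infty$. Second, because $\sigma$ lies in the support of the spectral measure $d\mu$, the linear functional $\ell_\sigma = ev_1 \circ \beta_\sigma \in \Hom_H(\sigma^\infty, \chi)$ is in fact an $X$-tempered form in the sense of \cite{B}: its dual $\beta_\sigma$ takes values in the weak Harish-Chandra Schwartz space $\mathcal{C}^w(X,\chi)$, so $\ell_\sigma$ defines a continuous functional on $\sigma^\infty$.

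Combining these two continuity statements, the composition
\[
\ell_\sigma \circ \alpha_\sigma : \mathcal{C}(X,\chi) \longrightarrow \C
\]
is a well-defined continuous linear form that agrees with $\mathcal{B}_{\sigma,\ell_\sigma,\ell_\sigma}$ on the dense subspace $C^\infty_c(X,\chi)$. This is the desired extension.

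There is no real obstacle here beyond bookkeeping: the only mild subtlety is to confirm that the $\ell_\sigma$ produced by the direct integral decomposition is indeed $X$-tempered (so that $\beta_\sigma$ lands in $\mathcal{C}^w(X,\chi)$ rather than only in $C^\infty(X,\chi)$), but this is precisely the characterization of $\mathrm{supp}(d\mu)$ given in \cite[Pg.~689]{B} and already noted in \S2.4. Everything else is a direct consequence of Lemma~\ref{L:altrel} and the fineness of $\mathcal{C}(X,\chi) \hookrightarrow L^2(X,\chi)$.
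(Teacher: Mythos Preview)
Your proposal is correct and is essentially the argument the paper intends: the Corollary is stated immediately after Lemma~\ref{L:altrel} with no separate proof, precisely because the formula $\mathcal{B}_{\sigma,\ell_\sigma,\ell_\sigma}(\phi)=\ell_\sigma(\alpha_\sigma(\phi))$ together with the extension of $\alpha_\sigma$ to $\mathcal{C}(X,\chi)$ (from \S2.4) gives the result at once. Your additional remark about $\ell_\sigma$ being $X$-tempered is accurate but not strictly needed for the argument, since $\ell_\sigma$ is already given as a linear functional on $\sigma^\infty$ and $\alpha_\sigma$ lands there.
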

 \vskip 10pt
 
\subsection{\bf Space of orbital integrals}  \label{SS:oi}
Set
 \[  \mathcal{I}(X, \chi) :=  C^{\infty}_c(X, \chi)_{H, \chi}. \]
 We think of this as ``the space of  orbital integrals" on $X$. Indeed,   given a $H$-orbit on $X$, the associated orbital integral factors to $\mathcal{I}(X, \chi)$.  As noted above,  the relative character $\mathcal{B}_{\sigma, \ell_{\sigma}, \ell_{\sigma}}$ factors to give a linear form on $\mathcal{I}(X, \chi)$.
 Henceforth, we will write  $\mathcal{B}_{\sigma, \ell_{\sigma}}$ in place of $\mathcal{B}_{\sigma, \ell_{\sigma}, \ell_{\sigma}}$ to simplify notation.

\vskip 15pt

\section{\bf Transfer of Test Functions}
If two periods on the two members of a dual pair  are related by theta correspondence as in Proposition \ref{P:smoothp}, then one might ask if the associated relative characters are related in a precise way. Such a relation is called a relative character identity.   To compare the two relative characters in question, which are distributions on different spaces, we first need to define a correspondence of the relevant spaces of test functions.
\vskip 10pt

    \subsection{\bf A Correspondence of Test Functions.}  \label{SS:corr}
  The considerations of the previous sections suggest that one considers the following maps. Set
  \[  p:  S(V) = \Omega^{\infty} \longrightarrow  C^{\infty}(N,\psi \backslash \SL_2)  \]
  given by
  \[  p(\Phi) (g)  :=  (g \cdot \Phi)(v_1). \]
 This map is $\OO(v_1^{\perp})$-invariant and $\SL_2$-equivariant.  Let us set
 \begin{equation} \label{E:test}
    \mathcal{S}(N,\psi\backslash \SL_2)   := \text{image of $p$}, \end{equation}
 noting that it is a $\SL_2$-submodule. 
  Likewise, consider the $\OO(V) \times (N, \psi)$-equivariant  {\em surjective} restriction map
  \[  q = {\rm rest}:  S(V) \longrightarrow S(X_1) \]
  so that
  \[  q(\Phi)(h) = (h \cdot \Phi)(v_1) = \Phi(h^{-1} \cdot v_1) \quad \text{ for $h \in  \OO(v_1^{\perp}) \backslash \OO(V) \cong X_1$.} \]   
     We have already seen and used the map $q$ in the setting of smooth theta correspondence, seeing that it induces an $\OO(V)$-equivariant isomorphism
     \[  q:  S(V)_{N,\psi} \cong S(X_1)\]
     
Hence we have the diagram:
     \begin{equation}  \label{E:diagram}
 \xymatrix{ &  S(V)
 \ar[dl]_p \ar[dr]^q & \\
 \mathcal{S}(N, \psi \backslash \SL_2)
  & &
 S(X_1)  
  }
\end{equation}
 We now make a definition: 
\vskip 5pt

\begin{defn}  \label{D:trans}
Say that $f \in \mathcal{S}(N, \psi \backslash \SL_2)$ and $ \phi  \in S(X_1) $ are in correspondence (or are transfers of each other) if there exists $\Phi \in S(V)$ such that $p(\Phi)  =f$ and $q(\Phi) = \phi$. 
\end{defn}
   \vskip 5pt
   
 Our  goal in this section is to  establish some  basic properties of the spaces of test functions and the transfer defined above. We start with the following simple observation.
 \begin{prop}
 Every $f \in \mathcal{S}(N, \psi \backslash \SL_2)$ has a transfer $\phi \in S(X_1)$ and vice versa.  
 \end{prop}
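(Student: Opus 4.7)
The proposition is essentially a direct consequence of the definitions together with the surjectivity of the restriction map $q$. My plan is to prove the two directions separately, each amounting to a one-line deduction from the diagram \eqref{E:diagram}.

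For the first direction, suppose $f \in \mathcal{S}(N, \psi \backslash \SL_2)$. By the very definition of $\mathcal{S}(N, \psi \backslash \SL_2)$ as the image of the map $p$, there exists some $\Phi \in S(V)$ such that $p(\Phi) = f$. Setting $\phi := q(\Phi) \in S(X_1)$ exhibits a transfer of $f$, since the pair $(f, \phi)$ satisfies Definition \ref{D:trans} with respect to this common $\Phi$. No further argument is needed.

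For the second direction, let $\phi \in S(X_1)$. The key input here is that the restriction map $q: S(V) \to S(X_1)$ is surjective, which was already recorded in the discussion preceding \eqref{E:diagram} (in the nonarchimedean case, any compactly supported smooth function on the closed subvariety $X_1 \subset V$ extends to a Schwartz--Bruhat function on $V$; in the archimedean case one invokes the corresponding Schwartz extension statement). Thus we may choose $\Phi \in S(V)$ with $q(\Phi) = \phi$, and then $f := p(\Phi) \in \mathcal{S}(N, \psi \backslash \SL_2)$ is automatically a transfer of $\phi$ by Definition \ref{D:trans}.

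Hence the only nontrivial ingredient is the surjectivity of $q$, which holds by general principles for Schwartz--Bruhat functions. Neither direction requires us to control $\Phi$ further: the definitions have been arranged precisely so that the existence of transfers is built into the setup. The genuine analytic content of the paper (continuity properties of the transfer, geometric formulas, fundamental lemma, etc.) appears in the subsequent results, not here.
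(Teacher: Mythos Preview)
Your proof is correct and follows exactly the same approach as the paper, which simply notes that both $p$ and $q$ are surjective. You have merely spelled out in detail what the paper records in a single sentence.
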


\begin{proof}
This is simply because the maps $p$ and $q$ above are surjective. 
\end{proof}
 \vskip 5pt

 We also note:
 \vskip 5pt
 
 \begin{lem}  \label{L:sch}
 The space $\mathcal{S}(N,\psi\backslash \SL_2)$ is contained in the Harish-Chandra-Schwarz space of the Whittaker variety $(N, \psi) \backslash \SL_2$. In particular, for any $\sigma \in \widehat{\SL_2}_{temp,\psi}$, the associated relative character $\mathcal{B}_{\sigma, \ell_{\sigma}}$ extends to a linear form on $\mathcal{S}(N,\psi\backslash \SL_2)$.
 \end{lem}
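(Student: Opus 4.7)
The approach is to verify the hypothesis of Lemma \ref{L:SL2}(i) for every $f = p(\Phi) \in \mathcal{S}(N,\psi\backslash \SL_2)$ with $\Phi \in S(V)$. By the Iwasawa decomposition and the identity $f(ntk)=\psi(n)f(tk)$, matters reduce to bounding $|f(t(a)k)|$ as $|a|\to 0$, uniformly in $k\in K$.

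The key input is the explicit formula from \S 5.1 for the action of $T$ on the Weil representation. Setting $\Phi_k := k\cdot\Phi \in S(V)$ and applying the formula to $\Phi_k$ evaluated at the fixed vector $v_1$, one gets
\[
f(t(a)k) \;=\; (t(a)\cdot \Phi_k)(v_1) \;=\; |a|^{(\dim V)/2}\cdot \chi_{\disc(V)}(a)\cdot \Phi_k(a^2 v_1).
\]
Since $K$ is compact and acts smoothly on $S(V)$, in the non-archimedean case the collection $\{\Phi_k : k\in K\}$ consists of locally constant functions on $V$ with a single common compact support, so $\sup_{k\in K}|\Phi_k(a^2 v_1)|$ is bounded as $a$ varies over $|a|\leq 1$. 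Hence $|f(t(a)k)|\leq C\cdot |a|^{d}$ with $d = (\dim V)/2$. Since we are assuming $\dim V\geq 3$, we have $d \geq 3/2 > 1$, and Lemma \ref{L:SL2}(i) then gives $f\in \mathcal{C}(N,\psi\backslash \SL_2)$, which is the first assertion.

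For the extension of $\mathcal{B}_{\sigma,\ell_\sigma}$, I would invoke the factorization of Lemma \ref{L:altrel},
\[
\mathcal{B}_{\sigma,\ell_\sigma}(\phi)\;=\;\ell_\sigma\bigl(\alpha_\sigma(\phi)\bigr),
\]
together with the fact (recalled in \S 2.4) that for tempered $\sigma$ the map $\alpha_\sigma$ extends continuously from $C^\infty_c(N,\psi\backslash \SL_2)$ to the Harish-Chandra-Schwartz space $\mathcal{C}(N,\psi\backslash \SL_2)$, with image landing in $\sigma^\infty$, on which the Whittaker functional $\ell_\sigma$ is defined. Composition then extends $\mathcal{B}_{\sigma,\ell_\sigma}$ to $\mathcal{C}(N,\psi\backslash \SL_2)$, and a fortiori to the subspace $\mathcal{S}(N,\psi\backslash \SL_2)$ by the first part.

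I do not anticipate any serious obstacle here: the whole content of the lemma is the asymptotic estimate near $|a|=0$, and this is dictated directly by the scaling $|a|^{(\dim V)/2}$ in the Weil-representation formula for $t(a)$. The dimension hypothesis $\dim V \geq 3$ is exactly what pushes the exponent past the threshold $d = 1$ of Lemma \ref{L:SL2}(i).
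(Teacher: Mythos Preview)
Your proposal is correct and follows essentially the same approach as the paper: compute $|f(t(a)k)|$ via the explicit Weil-representation formula for $t(a)$ to extract the factor $|a|^{\dim V/2}$, then invoke Lemma~\ref{L:SL2}(i) with $d=\dim V/2>1$. The paper's proof is terser and leaves the second assertion implicit, whereas you correctly spell it out via Lemma~\ref{L:altrel} and the extension of $\alpha_\sigma$ to $\mathcal{C}(N,\psi\backslash\SL_2)$ recalled in \S\ref{S:bern}.
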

   
    \begin{proof}
   From the formula defining  the Weil representation,  we see that for $f = p(\Phi)$,
   \[       | f(t(a) k) | =   |a|^{ \dim V / 2}  \cdot |(k \cdot \Phi)(a \cdot v_1)|. \] 
   It follows by Lemma \ref{L:SL2}(i) that $f \in \mathcal{C}(N, \psi \backslash \SL_2)$ if $\dim V \geq 3$.    
       \end{proof}
     \vskip 10pt
     
     \subsection{\bf Basic function and fundamental lemmas} \label{SS:basicFL}
     We shall now place ourselves in the unramified situation. Namely, let us assume that:
     \vskip 5pt
     \begin{itemize}
     \item $F$ is a nonarchimedean local field of residual characteristic different from $2$;
     \item the conductor of the additive character $\psi: F \rightarrow S^1$ is the ring of integers $\mathcal{O}_F$ of $F$;
       \item the quadratic space $V$ contains a self-dual lattice $L$ and $v_1 \in L$.
     \end{itemize}
Under these hypotheses, we have: 
\vskip 5pt

\begin{itemize}
\item  the measure $d_{\psi}x$ of $F$ is such that the volume of $\mathcal{O}_F$ is $1$;
\item the measure $d_{\psi}v$ on $V$ is such that the volume of $L$ is $1$;
\item the stabilizer $K' = K'_L$ of $L$ in $\OO(V)$ is a hyperspecial maximal compact subgroup.
 \end{itemize} 
 
 We let $\Phi_0  \in S(V)$ be the characteristic function of $L$, so that $\Phi_0$ is a unit vector in $\Omega_{\psi}$. Here is a basic definition: 
      
    \vskip 5pt
    
    \begin{defn}  \label{D:basic}
    Set 
    \[  f_0 = p(\Phi_0) \quad \quad \text{and} \quad \quad \phi_0 = q(\Phi_0). \]
    We call these the basic functions in the relevant space of test functions.
    \end{defn}
    \vskip 5pt
    
    Observe that $\phi_0 = q(\Phi_0)$ is the characteristic function of $X_1(F) \cap L$. On the other hand, $f_0$ is not compactly supported. Indeed: $f_0$ is determined by its value on $T$ and we have
    \[   f_0(t(a))  =( t(a) \cdot \Phi_0)(v_1)  = \begin{cases}
|a|^{\dim V /2}, \text{ if $|a| \leq 1$;} \\
0, \text{ if $|a| > 1$.} \end{cases} \]
    It is also immediate from definition that one has the following ``fundamental lemma":
    \vskip 5pt
     \begin{lem}  \label{L:fund}
     The basic functions $f_0$ and $\phi_0$ correspond.
     \end{lem}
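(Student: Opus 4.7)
The plan is to observe that the claim is essentially immediate from the definitions, so the only task is to exhibit a suitable witness $\Phi \in S(V)$ in Definition~\ref{D:trans}. The natural candidate is $\Phi_0$ itself, the characteristic function of the self-dual lattice $L$.

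More precisely, recall from Definition~\ref{D:basic} that $f_0$ and $\phi_0$ are defined by
\[
f_0 = p(\Phi_0) \quad \text{and} \quad \phi_0 = q(\Phi_0).
\]
Definition~\ref{D:trans} says that $f \in \mathcal{S}(N,\psi\backslash\SL_2)$ and $\phi \in S(X_1)$ are in correspondence precisely when there exists some $\Phi \in S(V)$ with $p(\Phi) = f$ and $q(\Phi) = \phi$ simultaneously. Taking $\Phi := \Phi_0$, both equalities $p(\Phi) = f_0$ and $q(\Phi) = \phi_0$ hold by construction.

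Hence there is nothing further to verify: the proof is a one-line appeal to the definitions. The only nontrivial input that went into making this work is the consistency of the various choices of measures and normalizations (the self-dual lattice $L$, the conductor of $\psi$, and the self-duality of $d_\psi v$ with respect to the pairing $\psi(\langle -, - \rangle)$), which was already built into the setup of \S\ref{SS:basicFL} so that $\Phi_0$ is a legitimate element of $S(V)$ and so that $\phi_0 = \mathbf{1}_{X_1(F) \cap L}$ is the expected basic function on $X_1$. There is no genuine obstacle here; the content of the ``fundamental lemma'' in this formulation is simply that the conceptual definition of transfer we have adopted makes the matching of basic functions tautological. The real work lies in the companion Lemma~\ref{L:fund2}, which will handle translates by the spherical Hecke algebra.
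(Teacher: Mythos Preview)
Your proof is correct and matches the paper's own treatment: the lemma is immediate from the definitions, since $\Phi_0$ itself witnesses the correspondence via $f_0 = p(\Phi_0)$ and $\phi_0 = q(\Phi_0)$.
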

     
   Let $K = \SL_2(\mathcal{O}_F) \subset \SL_2(F)$ and $K' = K'_L \subset \OO(V)$, so that they are hyperspecial maximal compact subgroups which 
   fix the unramified vector $\Phi_0$.  Endow $\SL_2$ and $\OO(V)$ with Haar measures such that the volumes of $K$ and $K'$ are $1$. Then
 we have the corresponding spherical Hecke algebras $\mathcal{H}(\SL_2, K)$ and $\mathcal{H}(G, K')$. These are commutative unital algebras whose units are the characteristic functions $1_K$ and $1_{K'}$ respectively.  
 \vskip 5pt
 
   It was shown by Howe (see \cite[Chap. 5, Thm. I.4, Pg 103]{MVW} and \cite[Pg 107]{MVW}) that
  \begin{equation} \label{E:howe}
   \Omega_{\psi}^K  = C^{\infty}_c(\OO(V)) \cdot \Phi_0 \quad \text{and} \quad \Omega_{\psi}^{K'}  = C^{\infty}_c(\SL_2) \cdot \Phi_0. \end{equation}
 By  applying $1_{K'}$ and $1_K$ respectively to these two equations, it follows  that
  \[   (\Omega^{\infty})^{K \times K'}  = \mathcal{H}(\SL_2, K) \cdot \Phi_0   =  \mathcal{H}(\OO(V),K') \cdot \Phi_0. \]
   It also follows from (\ref{E:howe})  that if  one has a nonzero equivariant map
  \[  \Omega^{\infty} \twoheadrightarrow \sigma \otimes \pi  \in {\rm Irr}(\SL_2 \times \OO(V)), \]
  then $\sigma$ is $K$-unramified if and only if $\pi = \theta_{\psi}(\sigma)$ is $K'$-unramified. 
   Indeed, it was shown by Rallis \cite[\S 6]{R}  that there is an algebra morphism
  \[  c:  \mathcal{H}(G, K') \longrightarrow \mathcal{H}(\SL_2, K)  \]
  such that for any $f \in \mathcal{H}(G, K')$, one has
  \[      f \cdot \Phi_0   = c(f) \cdot \Phi_0. \]
  From this, one easily deduces the following ``fundamental lemma for spherical Hecke algebras":
  \vskip 5pt
  
  \begin{lem}  \label{L:fund2}
  For any $f \in \mathcal{H}(G, K')$,
  the element $f \cdot \phi_0 \in C^{\infty}_c(X_1)$  corresponds to the element $c(f) \cdot f_0 \in \mathcal{S}(N, \psi \backslash \SL_2)$.
    \end{lem}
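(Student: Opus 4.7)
My plan is to produce a single Schwartz function $\Phi \in S(V)$ which simultaneously witnesses the correspondence (Definition \ref{D:trans}) between $c(f) \cdot f_0$ and $f \cdot \phi_0$. The natural candidate is
\[
\Phi := f \cdot \Phi_0,
\]
where $f \in \mathcal{H}(\OO(V), K')$ acts on $\Phi_0$ through the Weil representation of $\OO(V)$. Since $f$ is compactly supported on $\OO(V)$ and $\Phi_0$ is Schwartz, $\Phi$ lies in $S(V)$; moreover, because the $\SL_2$ and $\OO(V)$ actions on $\Omega_\psi$ commute, $\Phi$ remains fixed by $K$ (though we do not strictly need this for the proof).

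The first thing to verify is that $q(\Phi) = f \cdot \phi_0$. This is immediate from the $\OO(V)$-equivariance of $q = {\rm rest}$: unfolding the definition of the spherical Hecke action and pulling it through $q$ gives
\[
q(f \cdot \Phi_0) = \int_{\OO(V)} f(h) \cdot q(h \cdot \Phi_0) \, dh = f \cdot q(\Phi_0) = f \cdot \phi_0.
\]

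The second identity $p(\Phi) = c(f) \cdot f_0$ is where Rallis's theorem does the real work. By the defining property of the Hecke algebra morphism $c$ recalled just before the lemma, one has the identity $f \cdot \Phi_0 = c(f) \cdot \Phi_0$ in $\Omega_\psi^{\infty}$, where now $c(f) \in \mathcal{H}(\SL_2,K)$ acts through the $\SL_2$-factor of the Weil representation. Substituting this into $\Phi$ and using the $\SL_2$-equivariance of $p$,
\[
p(\Phi) = p(c(f) \cdot \Phi_0) = c(f) \cdot p(\Phi_0) = c(f) \cdot f_0.
\]
Combining the two displayed computations, $\Phi$ witnesses that $c(f) \cdot f_0$ and $f \cdot \phi_0$ correspond, which is the claim.

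The argument is almost entirely formal once one has the right candidate; the only non-trivial ingredient is Rallis's identity $f \cdot \Phi_0 = c(f) \cdot \Phi_0$, which is precisely where the unramified hypotheses and the self-duality of $L$ enter, and this has already been recorded in the excerpt. So there is no real obstacle beyond choosing the lift $\Phi = f \cdot \Phi_0$ and exploiting that it admits two descriptions — one adapted to $q$ (via the $\OO(V)$-action) and one adapted to $p$ (via the $\SL_2$-action through $c$).
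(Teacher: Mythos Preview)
Your proof is correct and is exactly the argument the paper intends: the paper states the lemma as an immediate consequence of Rallis's identity $f \cdot \Phi_0 = c(f) \cdot \Phi_0$ without writing out the details, and your choice of witness $\Phi = f \cdot \Phi_0$ together with the equivariance of $p$ and $q$ is precisely what is implicit there.
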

     
     \vskip 10pt
     
     \subsection{\bf Relation with Adjoint L-factors.}  \label{SS:adjoint}
     
  We shall see that the space $\mathcal{S}(N, \psi \backslash \SL_2)$  is intimately related to the standard (degree $3$) L-factor of irreducible representations of $\SL_2$. 
   Let us recall a certain Rankin-Selberg  local zeta integral for this particular L-factor, due to Gelbart-Jacquet \cite{GJ}.  It requires the following 3 pieces of data:

  \vskip 5pt
  \begin{itemize}
  \item a $\psi$-generic  $\sigma \in {\rm Irr}(\SL_2)$ with $\psi$-Whittaker model $\mathcal{W}_{\sigma}$,  
  \item the Weil representation $\omega_{\psi}$ of $\Mp_2$ acting on the space $C^{\infty}_c(F)$ (regarding $F$ as a 1-dimensional quadratic space equipped with the quadratic form $x \mapsto x^2$);
  \item a principal series representation $I_{\psi}(\chi, s)$ of $\Mp_2$, consisting of genuine functions $\phi_s: N \backslash \Mp_2 \rightarrow \C$ such that $\phi_s(t(a)g) = \chi_{\psi}(a) \cdot \chi(a) \cdot  |a|^{1+s} \cdot \phi_s(g)$ (where $\chi_{\psi}$ is a genuine character of the diagonal torus of $\Mp_2$ defined using the Weil index).  
  \end{itemize}
  Then for $f \in \mathcal{W}_{\sigma}$, $\varphi \in C^{\infty}_c(F)$ and  a section $\phi_s \in I(s)$, one can consider the local zeta integral
  \[  Z(f, \varphi, \phi_s)
  =  \int_{N \backslash \SL_2} \overline{f(g)} \cdot   (g \cdot\varphi)(1) \cdot   \phi_s(g)  \, \, \frac{dg}{dn}.  \]
  This converges when $Re(s) \gg 0$, and when $\sigma$ is tempered, it converges for $Re(s) >0$. Moreover, the GCD of this family of local zeta integrals is used to define the local twisted adjoint L-factor $L(s+\frac{1}{2}, \sigma, Ad \times \chi) = L(s + \frac{1}{2}, \sigma, std \times \chi)$.
  \vskip 5pt
  
 Hence, the (twisted) adjoint L-value $L(s+\frac{1}{2}, \sigma, Ad \times \chi)$ is obtained by considering the integrals of $f \in \mathcal{W}_{\sigma}$ against a space of functions $\mathcal{S}_s(N,\psi \backslash \SL_2)$
of the form
\[  g \mapsto \phi_s(g)  \cdot (g \cdot \varphi)(1). \]  
Moreover, as an $\SL_2$-module, $\mathcal{S}_s(N,\psi \backslash \SL_2)$ is a quotient of $\omega_{\psi} \otimes I_{\psi}(\chi, s)$. 
  \vskip 5pt
  
  Now let us return to our space of test functions  $\mathcal{S}(N, \psi \backslash \SL_2)$.  Let us write
  \[  V = \langle v_1 \rangle \oplus U,  \quad   \quad \text{with $U = v_1^{\perp}$.} \]
  Then $C^{\infty}_c(V)  =  C^{\infty}_c(Fv_1) \otimes C^{\infty}_c(U)$. Here, $C^{\infty}_c(Fv_1)$ affords the Weil representation $\omega_{\psi}$ of $\Mp_2 \times \OO(v_1)$ whereas $C^{\infty}_c(U)$ afford a Weil representation of $\Mp_2 \times \OO(U)$.   If $\Phi \in C^{\infty}_c(V)$ is of the form $\Phi_1 \otimes \Phi'$, then 
  \[  p(\Phi)(g)  =  (g \cdot \Phi_1)(v_1)  \cdot (g \cdot \Phi')(0). \]
 The function
 \[  g \mapsto  \phi_{\Phi'}(g) = (g \cdot \Phi')(0)  \]
 belongs to the principal series $I_{\psi}(\chi_{{\rm disc}(U)}, \frac{1}{2}(\dim V -3))$. Indeed, by a result of Rallis \cite[Thm. II.1.1]{R2}, the map
 \[  \Phi' \mapsto \phi_{\Phi'} \]
 gives an $\OO(U)$-invariant, $\Mp_2$-equivariant injective map
 \[  0 \ne C^{\infty}_c(U)_{\OO(U)} \hookrightarrow I_{\psi}(\chi_{{\rm disc}(U)},  \frac{1}{2}(\dim V -3)). \]
This result of Rallis underlies the theory of the doubling seesaw and the Siegel-Weil formula. When $\dim V \geq 4$, this injective map is surjective as well.
Indeed, when $\dim V > 4$, the relevant principal series $I_{\psi}(\chi_{{\rm disc}(U)}, \frac{1}{2}(\dim V -3))$ is irreducible. If $\dim V = 4$, the principal series $I_{\psi}(\chi_{{\rm disc}(U)}, 1/2)$ has length $2$ with unique irreducible quotient the even Weil representation $\omega_{\psi, \chi_{{\rm disc}(U)}}^+$ and unique submodule a (twisted) Steinberg representation. The above map is nonetheless surjective, as the small theta lift of the trivial representation of $\OO(U)$ is equal to $\omega_{\psi, \chi_{{\rm disc}(U)}}^+$.  
  \vskip 5pt
  
  To summarise, we have more or less shown:

     \begin{prop}  \label{P:adjoint}
When $\dim V \geq 4$, the map $p$ factors as
\[ C^{\infty}_c(V) \twoheadrightarrow C^{\infty}_c(V)_{\OO(U)}   \cong \omega_{\psi} \otimes I_{\psi}(\chi_{{\rm disc}(U)}, \frac{1}{2}(\dim V -3)) \twoheadrightarrow   
\mathcal{S}(N,\psi \backslash \SL_2), \]
so that
\[   \mathcal{S}(N, \psi \backslash \SL_2)  = \mathcal{S}_{\frac{1}{2}(\dim V - 3)}(N,\psi \backslash \SL_2).  \]

    \end{prop}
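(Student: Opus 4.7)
The plan is to chase through the factorization already sketched in the discussion immediately preceding the statement, making each arrow precise, and then to handle the one genuinely delicate point (surjectivity in the low-rank case $\dim V = 4$) separately. I would organize the argument around the orthogonal decomposition $V = \langle v_1 \rangle \oplus U$ with $U = v_1^{\perp}$, which induces a tensor factorization $S(V) = S(Fv_1) \otimes S(U)$ that is compatible with the Weil representation of $\Mp_2 \times (\OO(v_1) \times \OO(U))$.

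First, I would verify that $p$ kills the $\OO(U)$-action: since $\OO(U)$ stabilizes $v_1$ pointwise and commutes with the $\SL_2$-action, the identity $p(h\cdot \Phi)(g) = (gh\Phi)(v_1) = (h g \Phi)(v_1) = (g\Phi)(h^{-1} v_1) = (g\Phi)(v_1)$ for $h \in \OO(U)$ shows $p$ factors through $S(V)_{\OO(U)} = S(Fv_1) \otimes S(U)_{\OO(U)}$. Next, I would apply Rallis's result \cite[Thm. II.1.1]{R2}, quoted in the paragraph above the statement, which gives an $\Mp_2$-equivariant injection $\Phi' \mapsto \phi_{\Phi'}$ of $S(U)_{\OO(U)}$ into the principal series $I_{\psi}(\chi_{\disc(U)}, \tfrac{1}{2}(\dim V - 3))$, and identify the resulting composite map with $\Phi_1 \otimes \Phi' \mapsto \bigl( g \mapsto (g\Phi_1)(v_1) \cdot \phi_{\Phi'}(g) \bigr)$. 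This expression is exactly the definition of an element of $\mathcal{S}_{\frac{1}{2}(\dim V - 3)}(N,\psi\backslash \SL_2)$, which yields the claimed factorization of $p$ and the surjection onto $\mathcal{S}(N,\psi\backslash\SL_2)$ on the right, as well as the equality $\mathcal{S}(N,\psi\backslash\SL_2) = \mathcal{S}_{\frac{1}{2}(\dim V - 3)}(N,\psi\backslash\SL_2)$.

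The main obstacle is verifying that the Rallis map $S(U)_{\OO(U)} \hookrightarrow I_{\psi}(\chi_{\disc(U)}, \tfrac{1}{2}(\dim V - 3))$ is in fact \emph{surjective} under the hypothesis $\dim V \geq 4$; this is what makes the tensor product $\omega_{\psi} \otimes I_{\psi}(\chi_{\disc(U)}, \tfrac{1}{2}(\dim V - 3))$ appear in the statement rather than a proper submodule. When $\dim V > 4$ the target principal series is irreducible, so the nonzero injective map is automatically onto. The borderline case $\dim V = 4$ (so $\dim U = 3$) requires more care, since $I_{\psi}(\chi_{\disc(U)}, 1/2)$ has length two with a Steinberg-type submodule and an even Weil representation $\omega_{\psi,\chi_{\disc(U)}}^+$ as unique irreducible quotient. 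Here I would invoke the Rallis tower / small theta lift computation: the trivial representation of $\OO(U)$ is $\OO(U)$-distinguished and its small theta lift to $\Mp_2$ is precisely $\omega_{\psi,\chi_{\disc(U)}}^+$, so the image of $S(U)_{\OO(U)}$ must surject onto this irreducible quotient; combined with the injectivity, this forces the image to be all of $I_{\psi}(\chi_{\disc(U)}, 1/2)$, completing the proof.
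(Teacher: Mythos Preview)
Your proposal is correct and follows essentially the same approach as the paper: both use the tensor factorization $S(V)_{\OO(U)} = S(Fv_1) \otimes S(U)_{\OO(U)}$, invoke Rallis's injection into the principal series, and dispose of the surjectivity question by irreducibility when $\dim V > 4$ and by the identification $\theta(\mathrm{triv}_{\OO(U)}) = \omega_{\psi,\chi_{\disc(U)}}^+$ when $\dim V = 4$. You spell out the $\OO(U)$-invariance of $p$ and the length-two submodule analysis more explicitly than the paper does, but the substance is the same.
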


\begin{proof}
One has an  $\Mp_2 \times \Mp_2$-equivariant isomorphism
\[  S(V)_{\OO(U)}  =  S(F v_1)  \widehat{\otimes} S(U)_{\OO(U)} \cong \omega_{\psi} \widehat{\otimes} I_{\psi}(\chi_{{\rm disc}(U)} , \frac{1}{2}(\dim V -3)). \]
The rest of the proposition follows from our preceding discussion. 
\end{proof}
     \vskip 10pt

     \noindent{\bf Question:}  Is the surjective map in Proposition \ref{P:adjoint} in fact an isomorphism
     \[   C^{\infty}_c(V)_{\OO(U)}   \cong  \mathcal{S}(N, \psi \backslash \SL_2)? \]
     \vskip 5pt
         \vskip 5pt
     
   \subsection{\bf Orbital Integrals.}
   Let us set
   \[  \mathcal{I}(N,\psi \backslash \SL_2) := \mathcal{S}(N, \psi \backslash \SL_2)_{N, \psi} \]
   so that by definition, a linear functional on $ \mathcal{I}(N,\psi \backslash \SL_2)$ is a $(N,\psi)$-equivariant linear form on $ \mathcal{S}(N, \psi \backslash \SL_2)$.
   One may think of  $ \mathcal{I}(N,\psi \backslash \SL_2)$ as the space of orbital integrals (with respect to the $(N,\psi)$-period) and write $\mathcal{I}(f)$ for the image of $f$ in
    $\mathcal{I}(N,\psi \backslash \SL_2)$.
   Likewise, we set
   \[  \mathcal{I}(X_1) = S(X_1)_{\OO(U)}. \]
   This may be regarded as the space of orbital integrals with respect to the $\OO(U)$-period and we write $\mathcal{I}(\phi)$ for the image of $\phi$ in  $\mathcal{I}(X_1)$.
   \vskip 5pt
   
   The following proposition summaries the properties of the transfer of test functions:
   \vskip 5pt
   
   \begin{prop}
   The composite map
   \[  \begin{CD}
   S(V) @>p>> \mathcal{S}(N, \psi \backslash \SL_2) @>>>  \mathcal{I}(N,\psi \backslash \SL_2) \end{CD} \]
   factors through $q$, i.e.. it induces a linear map
   \[  S(X_1) \longrightarrow \mathcal{I} (N, \psi \backslash \SL_2).  \]
   Hence the transfer correspondence descends to a linear map when one passes to the space of orbital integrals in the target. Indeed, it further descends to give a surjective  linear map
   \[ t_{\psi}:  \mathcal{I}(X_1) \longrightarrow  \mathcal{I} (N, \psi \backslash \SL_2).  \]
\end{prop}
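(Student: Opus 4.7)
The plan is to trace through the definitions and exploit two basic facts already in hand: first, that the restriction map $q$ induces an isomorphism from the $(N,\psi)$-coinvariants $S(V)_{N,\psi}$ onto $S(X_1)$, as recalled in \S\ref{SS:corr}; and second, that $p$ is invariant under the natural action of $\OO(U) = \OO(v_1^{\perp})$ on $S(V)$.

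First I would verify that the composite $S(V) \xrightarrow{p} \mathcal{S}(N,\psi\backslash\SL_2) \twoheadrightarrow \mathcal{I}(N,\psi\backslash\SL_2)$ factors through $q$. Since $p$ is $\SL_2$-equivariant and the target is the $(N,\psi)$-coinvariant quotient of $\mathcal{S}(N,\psi\backslash\SL_2)$, this composite annihilates every element of the form $n\cdot \Phi - \psi(n)\Phi$ with $n \in N$ and $\Phi \in S(V)$. Such elements span the kernel of $S(V) \twoheadrightarrow S(V)_{N,\psi}$, which coincides with $\ker q$ in view of the isomorphism $S(V)_{N,\psi} \cong S(X_1)$. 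Hence the composite factors to yield a well-defined linear map $S(X_1) \to \mathcal{I}(N,\psi\backslash\SL_2)$.

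Next I would show this induced map descends further to $\mathcal{I}(X_1) = S(X_1)_{\OO(U)}$. The essential computation is that for $u \in \OO(U)$ and $\Phi \in S(V)$,
\[ p(u \cdot \Phi)(g) \;=\; (gu \cdot \Phi)(v_1) \;=\; (ug \cdot \Phi)(v_1) \;=\; (g \cdot \Phi)(u^{-1} v_1) \;=\; (g \cdot \Phi)(v_1) \;=\; p(\Phi)(g), \]
using that $\SL_2$ and $\OO(V)$ act by commuting operators in the Weil representation and that $u$ fixes $v_1$. Because $q$ is $\OO(V)$-equivariant, this $\OO(U)$-invariance of $p$ transports to the induced map $S(X_1) \to \mathcal{I}(N,\psi\backslash\SL_2)$, which therefore annihilates the subspace $\langle u\cdot\phi - \phi : u \in \OO(U),\ \phi \in S(X_1) \rangle$ and yields the desired map $t_{\psi}: \mathcal{I}(X_1) \to \mathcal{I}(N,\psi\backslash\SL_2)$.

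Finally, surjectivity of $t_{\psi}$ is automatic from the definition \eqref{E:test} of $\mathcal{S}(N,\psi\backslash\SL_2)$ as the image of $p$: the quotient $\mathcal{I}(N,\psi\backslash\SL_2)$ is the image of the composite $S(V) \to \mathcal{I}(N,\psi\backslash\SL_2)$, which by the two preceding steps factors through $\mathcal{I}(X_1)$. No serious obstacle arises here; the proposition is essentially a formal packaging of the identification $S(V)_{N,\psi} \cong S(X_1)$ (already exploited in the proof of Proposition \ref{P:smoothp}) together with the commutativity of the two members of the dual pair $\SL_2 \times \OO(V)$.
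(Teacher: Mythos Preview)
Your proof is correct and follows essentially the same approach as the paper's own argument: the composite is $(N,\psi)$-invariant (hence factors through $S(V)_{N,\psi}\cong S(X_1)$) and $\OO(U)$-invariant (hence factors further through $\mathcal{I}(X_1)$), with surjectivity immediate from the definition of $\mathcal{S}(N,\psi\backslash\SL_2)$ as the image of $p$. You have simply unpacked the two invariance claims with explicit computations where the paper states them in a single sentence.
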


   \vskip 5pt
  
  \begin{proof}
  The composite map in question is  $(N,\psi)$-invariant, and hence factors through $S(V)_{N, \psi} \cong S(X_1)$. But it is also $\OO(U)$-invariant and  so further factors through
 \[  (S(V)_{N, \psi})_{\OO(U)}  = \mathcal{I}(X_1) \] 
  as desired.
  \end{proof}
  
  \vskip 5pt 
   Likewise, one may consider the composite
   \[  \begin{CD}
   S(V) @>q>> S(X_1) @>>> \mathcal{I}(X_1)  \end{CD} \]
   which as above factors through $S(V)_{\OO(U)}$. But now we do not know if $S(V)_{\OO(U)} \cong \mathcal{S}(N,\psi\backslash \SL_2)$; see the Question at the end of the previous subsection. If the answer to that question is Yes, then we will likewise conclude that the above composite map  induces a linear map
   \[   \mathcal{S}(N, \psi \backslash \SL_2)  \longrightarrow  \mathcal{I}(X_1), \]
   which descends further to 
   \[   \mathcal{I}(N, \psi \backslash \SL_2)  \longrightarrow  \mathcal{I}(X_1), \]
  In that case,  this  linear map will be inverse to the one in the proposition, and hence we will have an isomorphism of vector spaces:
  \[   t_{\psi}:  \mathcal{I}(N,\psi \backslash \SL_2) \cong   \mathcal{I}(X_1). \]
  In other words, the transfer correspondence would give an isomorphism of the space of orbital integrals (for the relevant spaces of test functions).  As it stands, we only have the surjective transfer map 
  \[  t_{\psi}: \mathcal{I}(X_1) \twoheadrightarrow  \mathcal{I} (N, \psi \backslash \SL_2) \]  
given in the above proposition.

\vskip 15pt

\section{\bf Relative Character Identities}
Finally, we are ready to establish the following relative character identity, which is the main local result of this paper. 
\vskip 5pt

\begin{thm} \label{T:main}
Suppose that
\vskip 5pt

\begin{itemize}
\item $f \in \mathcal{S}(N,\psi \backslash \SL_2)$ and $\phi \in S(X_1)$ are in correspondence;
\vskip 5pt
\item $\sigma \in \widehat{\SL_2}_{temp,\psi}$ with (nonzero) theta lift $\theta_{\psi}(\sigma) \in \widehat{\OO(V)}$;
\vskip 5pt
\item $\ell_{\sigma} \in \Hom_N(\sigma,\psi)$ is the canonical element determined in (\ref{E:Whit2}) by the Whittaker-Plancherel theorem;
\vskip 5pt
\item $\ell_{\theta(\sigma)} = f_{\sigma}(\ell_{\sigma}) \in \Hom_{\OO(v_1^{\perp})}(\theta(\sigma), \C)$ is the canonical element determined by the spectral decomposition in Proposition \ref{P:L2p} (which is in turn determined by $\ell_{\sigma}$ and $\theta_{\sigma}$). 
\end{itemize}
Then one has the character identity:
\[  \mathcal{B}_{\sigma, \ell_{\sigma}}(f)  = \mathcal{B}_{\theta(\sigma), \ell_{\theta(\sigma)}}(\phi). \]
More succintly, one has the identity 
\[   \mathcal{B}_{\sigma, \ell_{\sigma}} \circ t_{\psi}  =  \mathcal{B}_{\theta(\sigma), \ell_{\theta(\sigma)}} \]
of linear forms on     $ \mathcal{I}(X_1)$ or equivalently the identity
\[    \mathcal{B}_{\sigma, \ell_{\sigma}} \circ p  =   \mathcal{B}_{\theta(\sigma), \ell_{\theta(\sigma)}} \circ q  \]
of linear forms on $S(V)$. 
\end{thm}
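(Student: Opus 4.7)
The plan is to deduce the theorem from the pointwise identity in $\sigma$
\[
\alpha_\sigma(p(\Phi)) \;=\; \ell_{\theta(\sigma)}\bigl(\theta_\sigma(\Phi)\bigr), \qquad \Phi \in S(V), \qquad (\ast)
\]
where $\ell_{\theta(\sigma)}$ on the right is applied to the $\theta_\psi(\sigma)$-factor of $\theta_\sigma(\Phi)\in\sigma\boxtimes\theta_\psi(\sigma)$; this is the analogue of Proposition~\ref{P:com} with the roles of $\SL_2$ and $\OO(V)$ interchanged. Granting $(\ast)$ and applying $\ell_\sigma$, the left side becomes $\mathcal{B}_{\sigma,\ell_\sigma}(p(\Phi))$ by Lemma~\ref{L:altrel}; the right side equals $\ell_{\theta(\sigma)}(\ell_\sigma(\theta_\sigma(\Phi)))$, which by Proposition~\ref{P:com} and Lemma~\ref{L:altrel} is $\ell_{\theta(\sigma)}(\alpha_{\theta(\sigma)}(q(\Phi))) = \mathcal{B}_{\theta(\sigma),\ell_{\theta(\sigma)}}(q(\Phi))$, yielding the theorem.

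To prove $(\ast)$, note that both sides define $\SL_2\times\OO(v_1^\perp)$-equivariant linear maps $S(V)\to\sigma$ with $\OO(v_1^\perp)$ acting trivially on the target. Since $\theta_\sigma:S(V)\twoheadrightarrow\sigma\boxtimes\theta_\psi(\sigma)$ is a surjective $\SL_2\times\OO(V)$-equivariant map, Schur's lemma applied to the $\sigma$-factor together with the $\OO(v_1^\perp)$-invariance condition identifies
\[
\Hom_{\SL_2\times\OO(v_1^\perp)}\bigl(S(V),\,\sigma\boxtimes\mathbf{1}\bigr) \;\cong\; \Hom_{\OO(v_1^\perp)}\bigl(\theta_\psi(\sigma),\,\C\bigr),
\]
and the right-hand space is one-dimensional by Proposition~\ref{P:smoothp}. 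Hence there exists a scalar $c(\sigma)$, independent of $\Phi$, such that $\alpha_\sigma(p(\Phi))=c(\sigma)\cdot\ell_{\theta(\sigma)}(\theta_\sigma(\Phi))$.

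The main obstacle is the verification that $c(\sigma)=1$. Applying $\ell_\sigma$ to the previous display and using the computation above, the proportionality is transported to the level of relative characters:
\[
\mathcal{B}_{\sigma,\ell_\sigma}(p(\Phi)) \;=\; c(\sigma)\cdot \mathcal{B}_{\theta(\sigma),\ell_{\theta(\sigma)}}(q(\Phi)).
\]
Integrating both sides against the Whittaker-Plancherel measure $d\mu_{\SL_2,\psi}$ and invoking the pointwise spectral expansion (\ref{E:point}) in two different ways — once via the Whittaker-Plancherel theorem applied to $p(\Phi)\in\mathcal{C}(N,\psi\backslash\SL_2)$ at $g=1$, and once via the spectral decomposition of $L^2(X_1,|\omega_1|)$ of Proposition~\ref{P:L2p} applied to $q(\Phi)$ at $x=v_1$ — both integrals evaluate to $\Phi(v_1)$. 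Since $q:S(V)\to S(X_1)$ is surjective, we obtain
\[
\int_{\widehat{\SL_2}}\bigl(c(\sigma)-1\bigr)\,\mathcal{B}_{\theta(\sigma),\ell_{\theta(\sigma)}}(\phi)\,d\mu_{\SL_2,\psi}(\sigma)\;=\;0 \qquad \text{for every } \phi\in S(X_1).
\]

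Finally, we interpret the left-hand side as $(T_F\phi)(v_1)$, where $T_F$ is the $\OO(V)$-equivariant operator on $L^2(X_1,|\omega_1|)$ acting as multiplication by $F(\sigma)=c(\sigma)-1$ on the spectral side. Replacing $\phi$ by $h\cdot\phi$ for arbitrary $h\in\OO(V)$ and using the equivariance of $T_F$, the vanishing at $v_1$ propagates to every point of $X_1$, so $T_F$ vanishes on the dense subspace $S(X_1)\subset L^2(X_1)$, forcing $F(\sigma)=0$ for $d\mu_{\SL_2,\psi}$-a.e.\ $\sigma$. The continuity of $c(\sigma)$ on $\widehat{\SL_2}_{temp,\psi}$ — which follows from Lemmas~\ref{L:cont2}--\ref{L:cont3} applied to the numerator and a nonvanishing choice of denominator in the defining ratio — then upgrades this to $c(\sigma)\equiv 1$ on the full support of $d\mu_{\SL_2,\psi}$, completing the proof.
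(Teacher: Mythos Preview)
Your overall structure is sound and in fact closely intertwined with the paper's: both arguments rest on computing $\Phi(v_1)$ in two ways via the pointwise spectral expansions for $(N,\psi)\backslash\SL_2$ and for $X_1$, arriving at the integral identity
\[
\int_{\widehat{\SL_2}}\mathcal{B}_{\sigma,\ell_\sigma}(p(\Phi))\,d\mu_{\SL_2,\psi}
=\int_{\widehat{\SL_2}}\mathcal{B}_{\theta(\sigma),\ell_{\theta(\sigma)}}(q(\Phi))\,d\mu_{\SL_2,\psi}.
\]
Your reorganization --- first use multiplicity one to write $\alpha_\sigma\circ p=c(\sigma)\cdot\ell_{\theta(\sigma)}\circ\theta_\sigma$, then pin down $c(\sigma)$ --- is legitimate; in fact the paper derives exactly your identity $(\ast)$ as Proposition~\ref{P:othercom}, but \emph{after} Theorem~\ref{T:main}, as a consequence rather than an ingredient.

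The genuine gap is in your final step. You assert that $T_F$, ``the $\OO(V)$-equivariant operator on $L^2(X_1,|\omega_1|)$ acting as multiplication by $F(\sigma)=c(\sigma)-1$ on the spectral side,'' vanishes on the dense subspace $S(X_1)$ and hence $F=0$ a.e. But for a spectral multiplier to define a bounded operator on $L^2$ you need $F$ to be essentially bounded, and continuity of $c(\sigma)$ on the non-compact space $\widehat{\SL_2}_{temp,\psi}$ does not give this. Without boundedness, the section $\sigma\mapsto F(\sigma)\alpha_{\theta(\sigma)}(\phi)$ need not lie in the direct integral, so you cannot conclude that it is the spectral transform of the (pointwise zero) function $T_F\phi$, and the density argument collapses. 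In other words, you have shown that a certain pointwise-defined integral vanishes, but not that this integral realizes an $L^2$-operator whose vanishing forces the multiplier to vanish.

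The paper circumvents this by a Bernstein center argument: instead of varying $\phi$, it replaces $\Phi$ by $z\cdot\Phi$ for $z$ in the (tempered) Bernstein center of $\SL_2$. This inserts a factor $z(\sigma)$ under the integral sign, and since these $z(\sigma)$ range over a dense subspace of Schwartz functions on $\widehat{\SL_2}_{temp,\psi}$, one immediately obtains $\mathcal{B}_{\sigma,\ell_\sigma}(f)=\mathcal{B}_{\theta(\sigma),\ell_{\theta(\sigma)}}(\phi)$ for a.e.\ $\sigma$, and then continuity upgrades this to all $\sigma$. Your argument would be repaired by the same device (applied either on the $\SL_2$-side or, via $\phi\mapsto z'\cdot\phi$, the $\OO(V)$-side), but that is precisely the mechanism you were trying to replace.
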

See \S \ref{S:RC}, especially \S \ref{SS:oi},  for the definition of $ \mathcal{B}_{\sigma, \ell_{\sigma}}$ and $ \mathcal{B}_{\theta(\sigma), \ell_{\theta(\sigma)}}$.

\vskip 15pt
\subsection{\bf Proof of Theorem \ref{T:main}}
This subsection is devoted to the proof of the theorem. 
 With $f$ and $\phi$ as given in the theorem, choose $\Phi \in C^{\infty}_c(V)$ such that $f = p(\Phi)$ and $\phi = q(\Phi)$.
 We shall now find two different expressions for $\Phi(v_1)$. 
 
 \vskip 5pt

On one hand, by the direct integral decomposition given in Proposition \ref{P:L2p}, (\ref{E:point}) gives
\[  \Phi(v_1)  = \phi(v_1)  =  \int_{\widehat{\SL_2}}  \beta_{\theta(\sigma)} \alpha_{\theta(\sigma)}(\phi)(v_1)  \, d\mu_{\SL_2, \psi}(\sigma). \]
On the other hand, by the Whittaker-Plancherel theorem for $(N,\psi) \backslash \SL_2$, (\ref{E:point}) gives
\[  \Phi(v_1) = f(1)  =  \int_{\widehat{\SL_2}} \beta_{\sigma} \alpha_{\sigma}( f)(1) \,  d\mu_{\SL_2, \psi}(\sigma). \]
Comparing the two expressions, we deduce that
\begin{equation}  \label{E:rel}
 \int_{\widehat{\SL_2}}  \mathcal{B}_{\theta(\sigma), \ell_{\theta(\sigma)}} (\phi) \, \,  d\mu_{\SL_2, \psi}(\sigma)  =  \int_{\widehat{\SL_2}}  \mathcal{B}_{\sigma, \ell_{\sigma}}(f)  \,  \,  
 d\mu_{\SL_2, \psi}(\sigma). \end{equation} 
We would like to remove the integral sign in the above identity. For this, we will apply a Bernstein center argument.
\vskip 5pt

Given an arbitrary element $z$ in the Bernstein center (or the ring of Arthur multipliers in the archimedean case) of $\SL_2 \times \OO(V)$, the element $z$ acts on the irreducible representation $\sigma \boxtimes \theta_{\psi}(\sigma)$ by a scalar $z(\sigma \boxtimes \theta_{\psi}(\sigma))$.  This implies that one has a commutative diagram
\begin{equation} \label{E:bern1}
  \begin{CD}
 \Omega_{\psi}  @>z>>  \Omega_{\psi}  \\
 @V\theta_{\sigma}VV  @VV\theta_{\sigma}V  \\
 \sigma \boxtimes \theta_{\psi}(\sigma)  @>z(\sigma \boxtimes \theta_{\psi}(\sigma))>>   \sigma \boxtimes \theta_{\psi}(\sigma)  \\
@V{\lambda}VV   @VV{\lambda}V   \\
\sigma  @>z(\sigma \boxtimes \theta_{\psi}(\sigma))>>  \sigma  \end{CD} \end{equation}
for any linear form $\lambda$ on $\theta_{\psi}(\sigma)$. One has an analogous commutative diagram where one takes $\lambda$ to be any linear form on $\sigma$ (so the last row of the commutative diagram has $\theta_{\psi}(\sigma)$ in place of $\sigma$). 
\vskip 5pt

Now what we would like to show is that there are commutative diagrams
\begin{equation}  \label{E:bern2}
 \begin{CD}
\Omega_{\psi} @>z>> \Omega_{\psi} \\
@VpVV   @VVpV  \\
\mathcal{S}(N,\psi \backslash \SL_2)    @.   \mathcal{S}(N,\psi \backslash \SL_2)  \\
@V\alpha_{\sigma}VV  @VV\alpha_{\sigma}V  \\
\sigma  @>z(\sigma \boxtimes \theta_{\psi}(\sigma))>> \sigma     \end{CD}  
\quad   \quad \text{and} \quad \quad 
 \begin{CD}
\Omega_{\psi} @>z>> \Omega_{\psi} \\
@VqVV   @VVqV  \\
S(X_1)    @.   S(X_1) \\
@V\alpha_{\sigma}VV  @VV\alpha_{\sigma}V  \\
\theta_{\psi}(\sigma)  @>z(\sigma \boxtimes \theta_{\psi}(\sigma))>> \theta_{\psi}(\sigma)     \end{CD} 
 \end{equation}
 We shall explain how the commutativity of the diagram on the left follows from the commutativity of the diagram in (\ref{E:bern1}); a similar argument works for the diagram on the right. 
 \vskip 5pt
 
Since  the map $\alpha_{\sigma} \circ p$ is $\SL_2$-equivariant,  it factors  through $\theta_{\sigma}:  \Omega_{\psi} \longrightarrow \sigma \boxtimes \theta_{\psi}(\sigma)$, i.e. there is a 
$\lambda :  \theta_{\psi}(\sigma) \rightarrow \C$ such that 
\[  \alpha_{\sigma} \circ p  = \lambda \circ \theta_{\sigma}. \]
 Using this, we see that the desired commutativity of the left diagram in (\ref{E:bern2})  is reduced to the commutativity of the diagram in (\ref{E:bern1}).
 \vskip 5pt
 
 Now we shall apply the identity (\ref{E:rel}) to the pair of test functions arising from $z \cdot \Phi$.  Note that
 \[  \mathcal{B}_{\sigma, \ell_{\sigma}}(p(z \cdot \Phi)) = \beta_{\sigma} \alpha_{\sigma} (p(z \Phi))(1)  = z(\sigma \boxtimes \theta_{\psi}(\sigma)) \cdot  \mathcal{B}_{\sigma, \ell_{\sigma}}(f) \]
 and
 \[  \mathcal{B}_{\theta(\sigma), \ell_{\theta(\sigma)}}  = \beta_{\theta(\sigma)} \alpha_{\theta(\sigma)} (q (z \cdot \Phi))(x_1) =  
  z(\sigma \boxtimes \theta_{\psi}(\sigma)) \cdot  \mathcal{B}_{\theta(\sigma), \ell_{\theta(\sigma)}}(\phi). \]
Hence the identity (\ref{E:rel}), when applied to $z \cdot \Phi$, reads:
\begin{equation} \label{E:rel2}
  \int_{\widehat{SL}_2}  z(\sigma \boxtimes \theta_{\psi}(\sigma))  \cdot \left(  \mathcal{B}_{\sigma, \ell_{\sigma}}(f)  -  \mathcal{B}_{\theta(\sigma), \ell_{\theta(\sigma)}}(\phi) \right) 
d\mu_{\SL_2, \psi}(\sigma) = 0.  \end{equation}
\vskip 5pt

Now note that there is a natural homomorphism from the Bernstein center of $\SL_2$ to the Bernstein center for $\SL_2 \times \OO(V)$. Hence we may take $z$ to be an element in the (tempered) Bernstein center of $\SL_2$.   Then $z(\sigma \boxtimes \theta_{\psi}(\sigma))  = z(\sigma)$.  When regarded as  $\C$-valued functions on $\widehat{\SL_2}_{temp, \psi}$, the  elements $z$ of the (tempered) Bernstein center of $\SL_2$,   are dense in the space of all Schwarz functions on $\widehat{\SL_2}_{temp,\psi}$.    Hence, (\ref{E:rel2}) implies that for $d\mu_{\SL_2,\psi}$-almost all $\sigma$, one has
\[   \mathcal{B}_{\sigma, \ell_{\sigma}}(f)  = \mathcal{B}_{\theta(\sigma), \ell_{\theta(\sigma)}}(\phi). \]
To obtain the equality for all $\sigma \in \widehat{\SL_2}_{temp,\psi}$, we note that both sides of the identity are continuous as functions of $\sigma \in \widehat{\SL_2}_{temp,\psi}$ by Lemma \ref{L:cont2} and Corollary \ref{C:com3}.  This completes the proof of Theorem \ref{T:main}.
\vskip 10pt

\subsection{\bf Some consequences}
We shall now give some consequences of the relative character identity shown in Theorem \ref{T:main}. Let us consider the following  diagram:

   \[
  \xymatrix{ &  S(V)
  \ar[dl]_{q}    \ar[d]^{\theta_{\sigma}}     \ar[dr]^{p}&\\ 
   S(X_1)
  \ar[d]_{\alpha_{\theta(\sigma)}} & {\sigma \boxtimes \theta_{\psi}(\sigma)}    \ar[dl]^{\ell_{\sigma}}    \ar[dr]_{\ell_{\theta(\sigma)}} 
    & { \mathcal{S}(N,\psi \backslash \SL_2)} \ar[d]^{\alpha_{\sigma}} \\
{\theta_{\psi}(\sigma)} \ar[dr] _{\ell_{\theta(\sigma)}} &   &     {\sigma}  \ar[dl]^{\ell_{\sigma}}  &\\
& {\C} &  \\
  }
\]
In this diagram, the rhombus at the bottom is clearly commutative. Now the parallelogram at the upper left side is precisely the commutative diagram in Proposition \ref{P:com}.  On the other hand, the parallelogram at the upper right side is   commutative up to a scalar since  
\[  \dim \Hom_{\OO(v_1^{\perp}) \times \SL_2}( \Omega_{\psi},  \C \boxtimes \sigma)  =1  \quad \text{for $\sigma \in \widehat{\SL_2}_{temp,\psi}$} \]
and both $\alpha_{\sigma} \circ p$ and $\ell_{\theta(\sigma)} \circ \theta_{\sigma}$ are nonzero elements of  this space.
We would like to show that it is in fact commutative.
\vskip 5pt

 To deduce this, we observe that the composite of the three maps along the left boundary of the hexagon is simply the relative character $\mathcal{B}_{\theta(\sigma)} \circ q$, whereas the composite of the three maps along the right boundary of the hexagon is the relative character $\mathcal{B}_{\sigma} \circ p$.  The relative character identity of Theorem \ref{T:main} says that the boundary of the diagram is commutative! From this, we deduce the following counterpart of Proposition \ref{P:com}:
 \vskip 5pt
 
 \begin{prop} \label{P:othercom}
  The following diagram is commutative:
    \[
 \xymatrix{ &  S(V)
 \ar[dl]_{\theta_{\sigma}} \ar[dr]^{p}& \\
 \sigma \otimes \theta_{\psi}(\sigma)
  \ar[dr]_{\ell_{\theta(\sigma)}} & &
 { \mathcal{S}(N,\psi \backslash \SL_2)}  
    \ar[dl]^{\alpha_{\sigma}} & \\
 & { \sigma} & 
  }
\]
  \end{prop}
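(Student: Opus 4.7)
The plan is to exploit the multiplicity-one situation together with the relative character identity of Theorem \ref{T:main} to pin down an a priori unknown scalar. Since $\sigma \in \widehat{\SL_2}_{temp,\psi}$ with nonzero theta lift $\theta_\psi(\sigma)$, and since $\theta_\psi(\sigma)$ is $\OO(v_1^\perp)$-distinguished (Proposition \ref{P:smoothp}), the space
\[ \Hom_{\OO(v_1^\perp) \times \SL_2}(\Omega_\psi,\, \mathbb{C} \boxtimes \sigma) \]
is one-dimensional. Both $\alpha_\sigma \circ p$ and $\ell_{\theta(\sigma)} \circ \theta_\sigma$ are visibly nonzero elements of this Hom space, so there exists a scalar $c(\sigma) \in \mathbb{C}^\times$ with
\[ \alpha_\sigma \circ p = c(\sigma) \cdot \ell_{\theta(\sigma)} \circ \theta_\sigma. \]
The task is to prove $c(\sigma) = 1$.

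To extract $c(\sigma)$, I compose both sides with $\ell_\sigma$. On the left, Lemma \ref{L:altrel} identifies $\ell_\sigma \circ \alpha_\sigma$ with the relative character, giving $\mathcal{B}_{\sigma,\ell_\sigma} \circ p$. On the right, one obtains $c(\sigma) \cdot (\ell_\sigma \otimes \ell_{\theta(\sigma)}) \circ \theta_\sigma$. Next I use Proposition \ref{P:com} with $a=1$, which says $\alpha_{\theta(\sigma)} \circ q = \ell_\sigma \circ \theta_\sigma$; composing with $\ell_{\theta(\sigma)}$ and using Lemma \ref{L:altrel} once more yields
\[ (\ell_\sigma \otimes \ell_{\theta(\sigma)}) \circ \theta_\sigma = \ell_{\theta(\sigma)} \circ \alpha_{\theta(\sigma)} \circ q = \mathcal{B}_{\theta(\sigma),\ell_{\theta(\sigma)}} \circ q. \]
Combining these computations produces the scalar equation
\[ \mathcal{B}_{\sigma,\ell_\sigma} \circ p = c(\sigma) \cdot \mathcal{B}_{\theta(\sigma),\ell_{\theta(\sigma)}} \circ q \quad \text{as linear forms on } S(V). \]

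Now Theorem \ref{T:main} furnishes the unnormalized equality
\[ \mathcal{B}_{\sigma,\ell_\sigma} \circ p = \mathcal{B}_{\theta(\sigma),\ell_{\theta(\sigma)}} \circ q, \]
and comparing the two displays forces $\bigl(c(\sigma) - 1\bigr) \cdot \mathcal{B}_{\theta(\sigma),\ell_{\theta(\sigma)}} \circ q = 0$. It remains only to check that this functional does not vanish identically. But $q : S(V) \twoheadrightarrow S(X_1)$ is surjective, and $\mathcal{B}_{\theta(\sigma),\ell_{\theta(\sigma)}}$ is a nonzero relative character on $S(X_1)$ (since $\ell_{\theta(\sigma)} \neq 0$, one can produce a matrix coefficient whose pairing with some $\phi \in S(X_1)$ is nonzero via the formula in \S\ref{SS:RC}). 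Hence $c(\sigma) = 1$, yielding commutativity as claimed.

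The only conceptual step is the nonvanishing check, which is essentially formal; the real content has already been packaged into Theorem \ref{T:main} and Proposition \ref{P:com}, so no further obstacle arises.
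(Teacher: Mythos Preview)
Your proof is correct and follows essentially the same route as the paper's argument (given in the discussion immediately preceding the proposition). Both arguments observe that multiplicity one forces $\alpha_\sigma \circ p$ and $\ell_{\theta(\sigma)} \circ \theta_\sigma$ to agree up to a scalar, then compose with $\ell_\sigma$ on the left and use Proposition~\ref{P:com} on the right to identify the two resulting scalar-valued maps with $\mathcal{B}_{\sigma,\ell_\sigma} \circ p$ and $\mathcal{B}_{\theta(\sigma),\ell_{\theta(\sigma)}} \circ q$ respectively; Theorem~\ref{T:main} then forces the scalar to be $1$. The paper packages this as the commutativity of the outer hexagon, while you spell out the chain of identifications explicitly, but the content is identical.
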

\vskip 5pt

Pairing the above identity with an element $v \in \sigma$, we obtain the following counterpart of  Corollary \ref{C:com2}:
\vskip 5pt

\begin{cor}  \label{C:othercom2}
For any $\Phi  \in \Omega_{\psi}$ and $v \in \sigma$,  one has:
\[  \ell_{\theta(\sigma)} \left( A_{\sigma}(\Phi, v) \right) =  \langle p(\Phi),  \beta_{\sigma}(v) \rangle_{N \backslash \SL_2} \]
where $A_{\sigma}$ is the map defined in \S \ref{SS:AB}.
\end{cor}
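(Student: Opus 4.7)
The plan is to derive this as a formal consequence of Proposition \ref{P:othercom} combined with the adjunction formula (\ref{E:alpha}) for the Whittaker-Plancherel decomposition of $(N,\psi)\backslash \SL_2$. There is no new analytic content: once the commutative diagram in Proposition \ref{P:othercom} is in hand, everything reduces to unwinding pairings.

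First, I would read Proposition \ref{P:othercom} as the identity
\[ \ell_{\theta(\sigma)}\bigl(\theta_{\sigma}(\Phi)\bigr) = \alpha_{\sigma}\bigl(p(\Phi)\bigr) \quad \text{in } \sigma, \]
for every $\Phi \in S(V)$, where on the left side $\ell_{\theta(\sigma)}$ is applied to the $\theta_{\psi}(\sigma)$-tensor factor of $\theta_{\sigma}(\Phi) \in \sigma \boxtimes \theta_{\psi}(\sigma)$, leaving behind an element of $\sigma$. I would then pair both sides against the given vector $v \in \sigma$ under $\langle -,-\rangle_{\sigma}$. Using the definition $A_{\sigma}(\Phi,v) = \langle \theta_{\sigma}(\Phi), v\rangle_{\sigma}$ recalled in \S\ref{SS:AB}, the left-hand pairing becomes $\ell_{\theta(\sigma)}\bigl(A_{\sigma}(\Phi,v)\bigr)$, while the right-hand pairing is $\langle \alpha_{\sigma}(p(\Phi)), v\rangle_{\sigma}$.

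It remains to identify $\langle \alpha_{\sigma}(p(\Phi)), v\rangle_{\sigma}$ with $\langle p(\Phi), \beta_{\sigma}(v)\rangle_{N\backslash \SL_2}$. For this I would invoke Lemma \ref{L:sch}, which places $p(\Phi)$ inside the Harish-Chandra-Schwartz space $\mathcal{C}(N,\psi\backslash \SL_2)$, and then apply the adjunction formula (\ref{E:alpha}) associated to the Whittaker-Plancherel decomposition, in the form extended to Harish-Chandra-Schwartz inputs as discussed after Lemma \ref{L:sch} and in \S\ref{SS:WP}. This yields exactly the claimed identity.

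The only point requiring a small amount of care — which I would not expect to be a serious obstacle — is the verification that the adjunction (\ref{E:alpha}) is valid for $p(\Phi)$, since $p(\Phi)$ is generally not compactly supported (the basic function $f_0$ of Definition \ref{D:basic} already shows this). However, this follows from the density of $C_c^{\infty}$ in $\mathcal{C}$ and the continuity of both sides on $\mathcal{C}(N,\psi\backslash \SL_2)$, which is built into Bernstein's fineness of the inclusion $\mathcal{C}(N,\psi \backslash \SL_2) \hookrightarrow L^2(N,\psi \backslash \SL_2)$.
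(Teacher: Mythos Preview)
Your proposal is correct and follows exactly the approach indicated in the paper, which simply says to pair the identity of Proposition \ref{P:othercom} with an element $v \in \sigma$. You have moreover spelled out the adjunction step $\langle \alpha_{\sigma}(p(\Phi)), v\rangle_{\sigma} = \langle p(\Phi), \beta_{\sigma}(v)\rangle_{N\backslash \SL_2}$ and its justification via Lemma \ref{L:sch}, which the paper leaves implicit.
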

 
\vskip 15pt

\section{\bf Local L-factor} \label{S:L}
In this section, we are going to examine the local L-factor $L_{X}(s, - )$ associated to a spherical variety $X$.  As we explained in the introduction, this local L-factor is associated to a $\frac{1}{2}\Z$-graded representation $V_X = \oplus_d V_X^d$ of the dual group $X^{\vee}$ and its value has been computed by Sakellaridis \cite{S1, S2} in great generality. However, we shall show that  for the particular case treated in this paper, the results developed thus far through theta correspondence can be used to compute $L_{X}(s, -)$ in terms of the analogous local L-factor for the Whitaker variety $(N,\psi) \backslash \SL_2$. 
\vskip 5pt

\subsection{\bf Unramified setting.} 
 We place ourselves in the unramified setting of \S \ref{SS:basicFL}, so that 
 \vskip 5pt
 \begin{itemize}
 \item $F$ is a nonarchimedean local field of residual characteristic not $2$;  
 \item $\psi$ has conductor $\mathcal{O}_F$, so that the associated measure $d_{\psi}x$ of $F$ gives $\mathcal{O}_F$ volume $1$;
 \item  $L \subset V$ is a self-dual lattice with stabilizer $K' = K'_L$, so that  the measure $d_{\psi}v$ on $V$ gives $L$ volume $1$. 
 \item  the lattice $L$  endows $X = X_1$ with the structure of a smooth scheme over $\mathcal{O}_F$; 
  \item the vector $v_1$ lies in the lattice $L$.
 \end{itemize}
 Hence,    $v_1 \in X(\mathcal{O}_F) = X(F) \cap L$ and  we have an orthogonal decomposition
 \[   L = \mathcal{O}_F v_1 \oplus  (v_1^{\perp} \cap L). \]
  The lattices $L$ and $L \cap v_1^{\perp}$ (which are both self-dual) endow $\OO(V)$ and $\OO(v_1^{\perp})$ with $\mathcal{O}_F$-structures so that they become smooth group schemes over $\mathcal{O}_F$; in particular, $K' = \OO(V)(\mathcal{O}_F)$. 
 Then the map $h \mapsto h^{-1} \cdot v_1$ defines an $\OO(V)$-equivariant isomorphism 
 \[ X \longrightarrow \OO(v_1^{\perp}) \backslash \OO(V) \]
 of smooth schemes over $\mathcal{O}_F$.  Moreover, as a consequence of Hensel's lemma, $K'$ acts transitively on $X(\mathcal{O}_F)$, so that 
 \[ X(\mathcal{O}_F) \cong \OO(v_1^{\perp})(\mathcal{O}_F) \backslash \OO(V)(\mathcal{O}_F). \]
  
  \vskip 5pt
  
  Moreover, the Haar measure $|\omega|$ on $X$ is associated to an $\OO(V)$-invariant differential of top degree which has nonzero reduction on the special fiber.  
  Further, if we equip the smooth group schemes $\OO(V)$ and $\OO(v_1^{\perp})$ over $\mathcal{O}_F$ with invariant differentials $\omega_{\OO(V)}$ and $\omega_{\OO(v_1^{\perp})}$ of top degree with nonzero reduction on the speical fibers, then 
 \[  |\omega|  =   \frac{|\omega_{\OO(V)}|}{|\omega_{\OO(v_1^{\perp})}|}. \]
  This means that
   \begin{equation} \label{E:volume}
    {\rm Vol}(X(\mathcal{O}_F); |\omega|) := \int_{X(\mathcal{O})}  |\omega|  =  q^{-\dim X} \cdot  \# X( \kappa_F) = q^{-\dim X} \cdot \frac{\#\OO(V)(\kappa_F)}{\#\OO(v_1^{\perp})(\kappa_F)} \end{equation}
   where $\kappa_F$ is the residue field of $F$ and $q = \# \kappa_F$.

 \vskip 5pt
 \subsection{\bf The L-factor $L^{\#}_X$}
 From the spectral decomposition of $L^2(X, |\omega|)$ obtained in Proposition \ref{P:L2p}, we have a family of $\OO(v_1^{\perp})$-invariant linear functionals
$\ell_{\theta(\sigma)}$ on $\theta_{\psi}(\sigma)$ for $\sigma \in \widehat{\SL_2}_{temp,\psi}$. 
We remind the reader that even in this unramifed setting that we have placed ourselves, the linear functional $\ell_{\theta(\sigma)}$ depends on the Haar measure $dg$ on $\SL_2$.   
We have also specified an inner product $\langle-, -\rangle_{\theta(\sigma)}$ in Proposition \ref{P:zeta} (using the doubling zeta integral) and this depends on the Haar measure $dg$ on $\SL_2$ as well.  
\vskip 5pt

Let us assume that $\sigma \in \widehat{\SL_2}_{temp,\psi}$ is $K$-unramified, where $K = \SL_2(\mathcal{O}_F)$.
Fix $v_0 \in \sigma$ such that
\[ \langle v_0, v_0 \rangle_{\sigma} = 1 \quad \text{and} \quad  K \cdot v_0 = v_0. \]
Then $\theta_{\psi}(\sigma)$ is $K'$-unramified and we fix  $w_0 \in \theta_{\psi}(\sigma)$ with
\[     \langle w_0, w_0 \rangle_{\theta(\sigma)} =1 \quad \text{and} \quad  K' \cdot w_0 = w_0. \]
We then set
\[    L^{\#}_X(\sigma) := |\ell_{\theta(\sigma)}(w_0)|^2 \in \mathbb{R}_{\geq 0}. \]
Our goal is to determine this non-negative valued function defined on the $K$-unramified part of $\widehat{\SL_2}_{temp,\psi}$, where $K = \SL_2(\mathcal{O}_F)$.
\vskip 5pt

According to the conjecture of Sakellaridis and Venkatesh, one should have
\[   L^{\#}_X(\sigma) =  \Delta(0)  \cdot  \frac{L_X(1/2, \sigma)}{ L(1, \sigma, Ad)} \]
where $\Delta(s)$ is a product of local zeta factors (which is independent of the representation $\sigma$), $L(s, \sigma, Ad)$ is the adjoint L-factor of $\sigma$ and 
\[  L_X(s, \sigma) =  \prod_d L(s+d, \sigma, V^d_X) \]
is the L-factor of $\sigma$ associated to a $1/2\cdot  \Z$-graded representation $V_X = \oplus_d V_X^d$ of $X^{\vee}$. This is the essential part of $L^{\#}_X(\sigma)$.  
The computation of $L_X(1/2,\sigma)$ is thus equivalent to the precise determination of $L^{\#}_X(\sigma)$.
\vskip 5pt

\subsection{\bf  Some constants}
We shall determine $L^{\#}_X(\sigma)$ in terms of the analogous quantity for the Whittaker variuety $(N, \psi) \backslash \SL_2$. 
To this end, let $\Phi_0 = 1_L \in S(V)$ be the characteristic function of $L$ which is a unit vector in $\Omega_{\psi}$ and is $K\times K'$-invariant.
 Then under the canonical map
 \[  \theta_{\sigma} : \Omega_{\psi} \longrightarrow \sigma \otimes \theta_{\psi}(\sigma), \]
 we have
  \begin{equation} \label{E:constant2} 
     \theta_{\sigma}(\Phi_0) = c_{\sigma} \cdot v_0 \otimes w_0 \in \sigma \otimes \theta_{\psi}(\sigma). \end{equation}
    for some nonzero constant $c_{\sigma}$. 
    \vskip 5pt
    
    On the other hand, recall that we have the basic functions
\[   p(\Phi_0) = f_0 \in \mathcal{S}(N, \psi \backslash \SL_2) \quad \text{and} \quad q(\Phi_0) = \phi_0 \in S(X). \]
We have observed that
\[   \phi_0 = 1_{X(\mathcal{O}_F)}. \]
Now we define a constant $\lambda_{\theta(\sigma)}$  by:
\begin{equation} \label{E:constant}
   \alpha_{\theta(\sigma)}(\phi_0) = \lambda_{\theta(\sigma)} \cdot w_0. \end{equation}
    \vskip 5pt

\subsection{\bf Key computations}
We now perform the following computations:

\vskip 5pt

\begin{itemize}
\item[(a)]  Taking inner product of both sides of (\ref{E:constant}) with $w_0$ gives:
\[    \lambda_{\theta(\sigma)} = \langle \alpha_{\theta(\sigma)}(\phi_0), w_0 \rangle_{\theta(\sigma)}. \]
Now the right hand side of this identity is equal to 
\[ \langle \phi_0, \beta_{\theta(\sigma)}(w_0)  \rangle_X = \langle 1_{X(\mathcal{O}_F)},  \beta_{\theta(\sigma)}(w_0) \rangle_X
 = \beta_{\theta(\sigma)}(w_0)(v_1) \cdot {\rm Vol}(X(\mathcal{O}_F), |\omega|). \]
 Hence we have
 \[   \lambda_{\theta(\sigma)}  = \ell_{\theta(\sigma)}(w_0) \cdot {\rm Vol}(X(\mathcal{O}_F), |\omega|). \]
\vskip 5pt

\item[(b)] On the other hand, applying the commutative diagram in Proposition \ref{P:com} to the left hand side of (\ref{E:constant})  and using (\ref{E:constant2})  gives:
\[  c_{\sigma} \cdot \ell_{\sigma} (v_0) \cdot w_0 =  \ell_{\sigma}(\theta_{\sigma}(\Phi_0))  =  \lambda_{\theta(\sigma)} \cdot  w_0, \]
so that
\[    \lambda_{\theta(\sigma)} =  c_{\sigma} \cdot \ell_{\sigma} (v_0)  \]
\vskip 5pt

\item[(c)] Finally, taking inner product of both sides of (\ref{E:constant2}) with $v_0$ gives:
\[ A_{\sigma}(\Phi_0, v_0) = \langle \theta_{\sigma}(\Phi_0), v_0 \rangle_{\sigma} = c_{\sigma} \cdot w_0 \]
Computing inner product of both sides gives:
\[   |c_{\sigma}|^2 = \langle A_{\sigma}(\Phi_0, v_0),  A_{\sigma}(\Phi_0, v_0)\rangle_{\theta(\sigma)}  = Z_{\sigma}(\Phi_0, \Phi_0, v_0, v_0), \]
where the last equality is (\ref{E:zeta}) and $Z_{\sigma}(-)$ is the doubling zeta integral. 
\end{itemize}
Combining the last identities resulting from (a), (b) and (c) above, we obtain:
\[   |\ell_{\theta(\sigma)}(w_0)|  = |\ell_{\sigma}(v_0)| \cdot |Z(\Phi_0, \Phi_0, v_0, v_0)|^{1/2}  \cdot {\rm Vol}(X(\mathcal{O}_F), |\omega|)^{-1}. \]
Hence, it remains to determine the 3 quantities on the right hand side. We have already determined the volume of $X(\mathcal{O}_F)$ in (\ref{E:volume}). 
On the other hand, we have:
\vskip 5pt

\begin{lem}. \label{L:unram}
(i) Suppose $\dim V= 2n \geq 4$ is even. Then
\[ |\ell_{\sigma}(v_0)|^2 = \frac{\zeta_F(2)}{L(1, \sigma, Ad)} \]  
and
\[  Z(\Phi_0, \Phi_0, v_0, v_0) = {\rm Vol}(K; dg) \cdot  \frac{L(n-1, \sigma \times \chi_{\disc V}, std)}{\zeta_F(2n-2) \cdot L(n, \chi_{\disc V})}  \]
where ${\rm Ad} = {\rm std}$ is the adjoint $=$ standard L-factor for $\SL_2$.
\vskip 5pt

(ii) Suppose  $\dim V = 2n+1 \geq 3$ is odd, so that one is working with $\Mp_2$ instead of $\SL_2$. Then
\[  |\ell_{\sigma}(v_0)|^2 = \frac{\zeta_F(2) \cdot L_{\psi}(1/2, \sigma, std)}{L_{\psi}(1, \sigma, Ad)} \] 
and
\[    Z(\Phi_0, \Phi_0, v_0, v_0) = {\rm Vol}(K; dg) \cdot  \frac{L_{\psi}(n-\frac{1}{2}, \sigma \times \chi_{\disc V}, std)}{\zeta_F(2n)}  \]
\end{lem}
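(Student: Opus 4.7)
The lemma packages two standard unramified computations, which I would treat independently. Both exploit the fact that the spherical vectors $v_0 \in \sigma$ and $\Phi_0 = 1_L \in \Omega_\psi$ render the relevant matrix coefficients on $\SL_2$ (or $\Mp_2$) $K$-bi-invariant, so that all integrals reduce to explicit geometric series in the Satake parameter of $\sigma$.

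For the Whittaker identity, I would start from the regularized formula (\ref{E:Whit2}),
\[ |\ell_{\sigma}(v_0)|^2 = \int_N^{\ast} \overline{\psi(n)}\,\langle \sigma(n)v_0, v_0\rangle_\sigma \, dn. \]
The matrix coefficient $b \mapsto \langle \sigma(n(b))v_0, v_0\rangle_\sigma$ is computed in the unramified setting via the Iwasawa decomposition and Macdonald's formula for the spherical function of $\sigma$; substituting into the regularized integral and summing yields $\zeta_F(2)/L(1,\sigma,\Ad)$ in the $\SL_2$ case. This is a well-known local calculation that appears, for instance, in the work of Lapid--Mao \cite{LM} and Sakellaridis \cite{S2}. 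The odd case is parallel with $\Mp_2$ in place of $\SL_2$, the principal difference being that the action of $T \subset \Mp_2$ on the Whittaker model involves the genuine character $\chi_\psi$, which modifies the Satake parameter by the local Weil index and accounts for the extra factor $L_\psi(1/2,\sigma,\mathrm{std})$ appearing in the numerator.

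For $Z(\Phi_0,\Phi_0,v_0,v_0)$, I would use the Cartan decomposition $\SL_2 = \bigsqcup_{n\geq 0} K t(\pi^n) K$ and the $K$-bi-invariance of the integrand to write
\[ Z(\Phi_0,\Phi_0,v_0,v_0) = \sum_{n \ge 0} \mathrm{Vol}(K t(\pi^n) K;\, dg) \cdot \langle t(\pi^n)\Phi_0,\Phi_0\rangle_\Omega \cdot \overline{\langle \sigma(t(\pi^n))v_0, v_0\rangle_\sigma}. \]
The explicit Weil representation action from \S 5.1, together with the self-duality of $L$, gives for $n \ge 0$ and $N = \dim V$:
\[ \langle t(\pi^n)\Phi_0,\Phi_0\rangle_\Omega = q^{-nN/2}\,\chi_{\disc(V)}(\pi^n). \]
Inserting Macdonald's formula for $\langle \sigma(t(\pi^n))v_0,v_0\rangle_\sigma$ and summing the resulting geometric series in $\alpha^{\pm 2}$ produces the L-factor ratio claimed. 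The degree-$3$ twisted standard L-factor $L(s, \sigma\times\chi_{\disc(V)},\mathrm{std})$ appears at the point $s = n-1$ (respectively $s = n-\tfrac{1}{2}$), this shift matching the exponent $N/2$ in the Weil representation action, while the denominator assembles the remaining rational function of $q^{-1}$ arising from the summation. This is the standard Piatetski-Shapiro--Rallis unramified doubling computation specialized to our rank-one setting.

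The main technical difficulty is the careful bookkeeping of normalizations, namely the interplay among the Haar measure $dg$ on $\SL_2$ (which controls $\mathrm{Vol}(K;dg)$ and $\mathrm{Vol}(K t(\pi^n) K; dg)$ as well as the Plancherel density), the self-dual measure $d_\psi v$ (which fixes $\mathrm{Vol}(L) = 1$ and the constant in the Weil representation formula), and in the odd-dimensional case, the genuine character $\chi_\psi$ on $\Mp_2$, which must be consistently absorbed into $L_\psi(-, \sigma, \mathrm{std})$ and $L_\psi(-,\sigma,\Ad)$ so that the metaplectic identity takes the stated form.
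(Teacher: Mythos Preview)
Your proposal is correct and takes essentially the same approach as the paper, which simply cites \cite[Prop.~2.14 and \S2.6]{LM} for the Whittaker value and \cite[Prop.~3]{LR}, \cite[Prop.~6.1]{G2} for the doubling zeta integral; the sketch you give (Macdonald's formula for the spherical matrix coefficient, Cartan decomposition plus the explicit Weil-representation action on $\Phi_0$) is precisely the computation carried out in those references.
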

\begin{proof}
The determination of $|\ell_{\sigma}(v_0)|$ was carried out in \cite[Prop. 2.14 and \S2.6]{LM} whereas that of the doubling zeta factor can be found in \cite[Prop. 3]{LR} and \cite[Prop. 6.1]{G2}. 
\end{proof}
\vskip 5pt

From this lemma, one sees that dependence of $\ell_{\theta(\sigma)}$ on the Haar measure $dg$. In the unramified setting, it is customary to take the Haar mesure $dg$ such that ${\rm Vol}(K; dg) =1$. However, in view of the global applications later on, we prefer to take the Haar measure associated to an invariant differential of top degree on $\SL_2$ over 
$\mathbb{Z}$.  In that case, we have
\[  {\rm Vol}(K, dg) =   q^{-3} \cdot \# \SL_2(\kappa_F) = \zeta_F(2)^{-1}. \]
\vskip 5pt

Putting everything together, we have shown:
\vskip 5pt

\begin{prop} \label{P:Lfactor}
(i) When $\dim V = 2n$ is even, one has
\[   L^{\#}_X(\sigma) = |\ell_{\theta(\sigma)}(w_0)| ^2 =  \frac{L(n-1, \sigma \times \chi_{\disc V}, std)}{L(1, \sigma , Ad)} \cdot  \frac{L(n, \chi_{\disc V})}{\zeta_F(2n-2)} , \]
taking note that ${\rm Ad} = {\rm std}$ for $\SL_2$.
\vskip 5pt

(ii) When $\dim V = 2n+1$ is odd, so that one is working with $\Mp_2$, 
\[  L^{\#}_X(\sigma) = |\ell_{\theta(\sigma)}(w_0)| ^2 =  \frac{L_{\psi}(n - \frac{1}{2}, \sigma \times \chi_{\disc V}, std) \cdot L_{\psi}(1/2, \sigma, std)}{L_{\psi}(1, \sigma, Ad)} \cdot  \frac{\zeta_F(2n)}{L(n, \chi_{\disc V})^2 } . \]
\end{prop}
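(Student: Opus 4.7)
The proposition follows by squaring the identity
\[
|\ell_{\theta(\sigma)}(w_0)| = |\ell_{\sigma}(v_0)| \cdot |Z(\Phi_0,\Phi_0,v_0,v_0)|^{1/2} \cdot {\rm Vol}(X(\mathcal{O}_F); |\omega|)^{-1}
\]
that was extracted just above from steps (a), (b), (c), substituting the unramified evaluations of Lemma \ref{L:unram}, and simplifying. The plan is therefore purely computational at this stage: I only need to assemble three pieces of explicit data.

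First, from Lemma \ref{L:unram} I read off $|\ell_{\sigma}(v_0)|^2$ and the doubling zeta value $Z(\Phi_0,\Phi_0,v_0,v_0)$ in each parity of $\dim V$. Second, I use the fact that, with the Haar measure $dg$ on $\SL_2$ (or $\Mp_2$) arising from a top-degree invariant differential over $\Z$, one has ${\rm Vol}(K;dg) = q^{-3}\cdot \#\SL_2(\kappa_F) = \zeta_F(2)^{-1}$; this cancels the factor $\zeta_F(2)$ appearing in $|\ell_\sigma(v_0)|^2$. Third, I need to compute ${\rm Vol}(X(\mathcal{O}_F);|\omega|)$ using (\ref{E:volume}) and the standard mass formula $\# X(\kappa_F) = \# \OO(V)(\kappa_F)/\# \OO(v_1^\perp)(\kappa_F)$.

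The one step requiring actual care is this last volume computation. Using the classical orders
\[
\# \OO^{\epsilon}_{2n}(\kappa_F) = 2\, q^{n(n-1)}(q^n - \epsilon)\prod_{i=1}^{n-1}(q^{2i}-1), \qquad \# \OO_{2m+1}(\kappa_F) = 2\, q^{m^2}\prod_{i=1}^{m}(q^{2i}-1),
\]
where $\epsilon = \chi_{\disc V}(\varpi)$ records the type of the even-dimensional space, I will find after a telescoping cancellation that
\[
{\rm Vol}(X(\mathcal{O}_F);|\omega|) = \begin{cases} 1 - \chi_{\disc V}(\varpi) q^{-n} = L(n, \chi_{\disc V})^{-1}, & \dim V = 2n, \\[2pt] 1 + \chi_{\disc V}(\varpi) q^{-n} = L(n, \chi_{\disc V})/\zeta_F(2n), & \dim V = 2n+1. \end{cases}
\]
The even case is essentially immediate; the odd case uses $\chi_{\disc V}(\varpi)^2 = 1$ to rewrite $1 + \chi_{\disc V}(\varpi) q^{-n}$ as $(1-q^{-2n})/(1-\chi_{\disc V}(\varpi) q^{-n})$. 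The only real bookkeeping obstacle is to ensure that the discriminant of $v_1^\perp$ (the group whose order appears in the denominator) translates correctly into $\chi_{\disc V}$; since $v_1$ has square norm $1 \in \mathcal{O}_F^{\times 2}$, the discriminants of $V$ and $v_1^\perp$ agree up to a square, so the same quadratic character governs both parities.

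Plugging these volume expressions into the squared identity yields exactly the formulas asserted in (i) and (ii): in the even case, $\zeta_F(2)$ and ${\rm Vol}(K;dg)$ cancel and ${\rm Vol}(X(\mathcal{O}_F);|\omega|)^{-2} = L(n,\chi_{\disc V})^2$ compensates one copy of $L(n,\chi_{\disc V})$ in the denominator of the $Z$-factor, leaving $L(n,\chi_{\disc V})/\zeta_F(2n-2)$; in the odd case, ${\rm Vol}(X(\mathcal{O}_F);|\omega|)^{-2} = \zeta_F(2n)^2/L(n,\chi_{\disc V})^2$ combines with the $\zeta_F(2n)^{-1}$ from $Z$ to give the claimed factor $\zeta_F(2n)/L(n,\chi_{\disc V})^2$. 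No step is difficult; the only thing to watch is the sign conventions in the quadratic character and the types of orthogonal groups.
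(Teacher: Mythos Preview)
Your proposal is correct and follows exactly the paper's approach: the proposition is obtained by squaring the identity derived in steps (a), (b), (c), inserting the unramified values from Lemma~\ref{L:unram} and ${\rm Vol}(K;dg)=\zeta_F(2)^{-1}$, and simplifying. The paper itself merely says ``Putting everything together, we have shown'' and does not display the volume computation; your explicit evaluation of ${\rm Vol}(X(\mathcal{O}_F);|\omega|)$ via the orders of $\OO(V)(\kappa_F)$ and $\OO(v_1^\perp)(\kappa_F)$ fills in that implicit step, and your final assembly in both parities checks out line by line.
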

\vskip 5pt

As an illustration, when $\dim V = 3$ (so $n=1$) and $\disc(V) =1$, one gets
\[  L^{\#}_X(\sigma) = \frac{L_{\psi}(1/2,\sigma, std)^2}{L_{\psi}(1,\sigma, Ad)} \cdot \frac{\zeta_F(2)}{\zeta_F(1)^2} \]
whereas when $\dim V = 4$ (so $n=2$) and $\disc (V) =1$, one has
\[ L^{\#}_X(\sigma)  = 1. \]
The reader should compare these values with those given the table (3) in the introduction of \cite{S4}.
\vskip 10pt

\subsection{\bf General case} So far, we have placed ourselves in the unramified setting. We now return to the general setting and define 
\[ L^{\#}_X : \widehat{\SL_2}  \longrightarrow \C \quad \text{or} \quad   \widehat{\Mp_2} \longrightarrow  \C \]
by using the formulae given in Proposition \ref{P:Lfactor}, depending on whether $\dim V$ is even or odd. 
\vskip 15pt

\section{\bf Transfer in Geometric Terms}
We have defined the transfer of test functions and established a relative character identity without making any geometric comparison. This is not so surprising, as the theta correspondence is a means of transferring spectral data from one group to another. Nonetheless, one can ask for an explicit formula for the transfer map 
\[  t_{\psi} : \mathcal{I}(X_1) \longrightarrow  \mathcal{I}(N,\psi\backslash \SL_2). \]
For example, we may wonder if one could describe $t_{\psi}$  as an integral transform.  We shall derive such a formula in this section, assuming that $F$ is nonarchimedean (with ring of integers $\mathcal{O}_F$ and uniformizer $\varpi$). 
We also assume for simplicity that the conductor of the additive character $\psi$ is $\mathcal{O}_F$ and the discriminant of $V$ is $1$. In particular, the measure $d_{\psi}x = dx$ on $F$ gives $\mathcal{O}_F$ volume $1$. 
\vskip 5pt

Recall that we have called  the domain and target of $t_{\psi}$ the spaces of orbital integrals. To describe $t_{\psi}$ geometrically, we shall appeal to incarnations of 
these spaces as concrete spaces of functions. Consider for example the case of $\mathcal{I}(N,\psi\backslash \SL_2) =\mathcal{S}(N,\psi \backslash \SL_2)_{N,\psi}$. 
Given a function $f \in \mathcal{S}(N,\psi \backslash \SL_2)$,  we may consider its literal $(N,\psi)$-orbital integral:
\[  \mathcal{I}(f)(a) = \int_F  f(wt(a)n(b)) \cdot \overline{\psi(n)} \, db. \]
Assuming this converges, it defines a smooth function on  the open Bruhat cell $NwB$ which is $(N,\psi)$-invariant on both sides. Hence it is determined by its value on $wT$ and we 
may regard it as a function on $F^{\times}$. The map $\mathcal{I}$ factors as:
\[ \mathcal{S}(N,\psi \backslash \SL_2) \longrightarrow  \mathcal{I}(N,\psi \backslash \SL_2)   \longrightarrow C^{\infty}(F^{\times}), \] 
and we view it as giving an incarnation of the elements of $ \mathcal{I}(N,\psi \backslash \SL_2)$ as functions on $F^{\times}$.
Likewise, we shall later see an incarnation of the elements of $\mathcal{I}(X_1)$, as functions on a set of generic $\OO(v_1^{\perp})$-orbits on $X_1$. 
\vskip 5pt

Given $\Phi \in S(V)$, we would thus like to compute the $(N,\psi)$-orbital integral of $p(\Phi) = f \in \mathcal{S}(N,\psi\backslash \SL_2)$: 
\[  \mathcal{I}(f)(a) = \int_F  f(wt(a)n(b)) \cdot \overline{\psi(n)} \, db  =  \int_F (wt(a) n(b) \cdot \Phi) (v_1)  \cdot \overline{\psi(b)} \, db. \]
 We should perhaps say a few words about the convergence of this integral.   Let us identify $N \backslash \SL_2$ with $W^* = F^2 \setminus O$  (where $O$ is the origin of $F^2$) via $g \mapsto (0, 1) \cdot g$. Then $|f|$ is a function on $W^*$ which vanishes on a neighbourhood of $O$.    Now the element $N w t(a) n(b) \in N \backslash \SL_2$   corresponds to the element $(-a, -ab) \in W^*$.   For fixed $a \in F^{\times}$, the function 
\[   b \mapsto f(w t(a) n(b))  \]
is thus not necessarily compactly supported on $F$.  However, if we had assumed that $f \in C^{\infty}_c(N , \psi\backslash \SL_2)$ 
(which is a dense subspace of $\mathcal{C}(N, \psi \backslash \SL_2)$), then $|f|$ would in addition vanish outside a compact set of $W$, so that the above function of $b$ is compactly supported on $F$ and the integral defining $\mathcal{I}(f)(a)$ would have been convergent. 
This  suggests that  if we let $U_n = \varpi^{-n} \mathcal{O}_F$ and set
\[   \mathcal{I}_n (f)(a)  =    \int_{U_n}  (wt(a) n(b) \cdot \Phi) (v_1)  \cdot \overline{\psi(b)} \, db, \]
 then the value $\mathcal{I}_n(f)(a)$ should stabilize for sufficiently large $n$ (and this does happen for $f \in C^{\infty}_c(N , \psi\backslash \SL_2)$).  With this motivation, 
 we shall define 
 \[   
 \mathcal{I}(f)(a)  := \lim_{n \to \infty}   \mathcal{I}_n (f)(a)  \]
and shall show below that the right hand side indeed stabilizes. 
\vskip 5pt

For this, we will perform an explicit computation:
\begin{align}
\mathcal{I}_n( f)(a)  &=  \int_{U_n} (wt(a) n(b) \cdot \Phi)(v_1)  \cdot \overline{\psi (b)} \, db   \notag \\
&= \int_{U_n} \mathcal{F}(t(a) n(b) \cdot \Phi)(v_1)  \cdot \overline{\psi (b)} \, db   \notag \\
&= \gamma_{\psi, q} \cdot \int_{U_n}  \int_V  (t(a) n(b) \cdot \Phi)(y)  \cdot \psi(\langle v_1, y \rangle) \cdot  \overline{\psi (b)} \, dy \,  db   \notag \\
&=  \gamma_{\psi, q} \cdot  \int_{U_n} \int_V  |a|^{\frac{1}{2} \dim(V)} \cdot  (n(b) \cdot \Phi)(a y)  \cdot  \psi(\langle v_1, y \rangle) \cdot  \overline{\psi (b)} \, dy \,  db   \notag \\
&=  \gamma_{\psi, q} \cdot  \int_{U_n} \int_{a^{-1} \omega}  |a|^{\frac{1}{2} \dim(V)} \cdot  \Phi(ay) \cdot \psi(a^2b q(y))  \cdot \psi(\langle v_1, y \rangle) \cdot  \overline{\psi (b)} \, dy \,  db   \notag \\
&=  \gamma_{\psi, q} \cdot  \int_{ \omega}  |a|^{-\frac{1}{2} \dim(V)}  \cdot  \Phi(x)  \cdot \psi(\langle v_1, a^{-1}x \rangle) \cdot  \left( \int_{U_n} \psi( b ( q(x) -1)) \, db \right) \, dx \notag
\end{align}
where $\omega = {\rm supp}(\Phi)$ is compact and we have made the substitution $x =ay$ in the last step. Recall also that $\mathcal{F}$ is a normalized Fourier transform giving the action of the standard Weyl group element $w$ on the Weil representation and $\gamma_{\psi, q}$ is a root of unity (a Weil index). 
\vskip 5pt

Now let us consider the inner integral
\[  \int_{U_n} \psi( b ( q(x) -1)) \, db   \]
If $q(x) -1 \notin \varpi^n \mathcal{O}_F$, then the integrand is a nontrivial character of $U_n$ and hence the integral is $0$. On the other hand, if $q(x) -1 \in \varpi^n \mathcal{O}_F$, the integral gives the volume of $U_n = \varpi^{-n} \mathcal{O}_F$. Since $\psi$ is assumed to have conductor $\mathcal{O}_F$, the volume of $U_n$ with respect to the measure $db$  is $q^n$ (where $q$ is the size of the residue field of $F$).
Hence
\[    \int_{U_n} \psi( b ( q(x) -1)) \, db  =  q^n \cdot 1(q(x)  \in 1+ \varpi^n \mathcal{O}_F) \]   
and so
\begin{align}
\gamma_{\psi,q}^{-1} \cdot \mathcal{I}_n(f)(a)  &=q^n \cdot   |a|^{-\frac{1}{2} \dim(V)} \cdot  \int_{V}  \Phi(x)  \cdot \psi(\langle v_1, a^{-1}x \rangle) \cdot  1(q(x) -1 \in \varpi^n \mathcal{O}_F) \, dx
\notag  \\
&=  q^n  \cdot   |a|^{-\frac{1}{2} \dim(V)} \cdot  \int_{ q^{-1}(1 + \varpi^n \mathcal{O}_F)}  \Phi(x)  \cdot \psi(\langle v_1, a^{-1}x \rangle)  \, dx.  \notag 
\end{align}
Now this last expression is a quantity which appears in the theory of local densities in the theory of quadratic forms over local fields.  
Indeed, consider the map
\[  q:  q^{-1}(1 + \varpi^n \mathcal{O}_F)  \longrightarrow  1 + \varpi^n \mathcal{O}_F \]
of $p$-adic manifolds. Since every point in the base is a regular value of the map $q$, or equivalently $q$ is submersive at every point of the domain, the integral  of the compactly supported and locally constant integrand over 
$ q^{-1}(1 + \varpi^n \mathcal{O}_F)$ can be performed by first integrating over the fibers of $q$ followed by integration over the base. 
Indeed, this was how we had defined the measures $|\omega_a|$ on each fiber $X_a$ (for $a \in F^{\times}$).
In other words for  any locally constant compactly supported $\varphi$, 
\[  
\int_{ q^{-1}(1 + \varpi^n \mathcal{O}_F)}  \varphi(x) \, dx = \int_{1 + \varpi^n \mathcal{O}_F }   q_*(\varphi)(z) \, \, dz  \]
where
\[  q_*(\varphi)(z)  = \int_{q ^{-1}(z)}  \varphi(x) \cdot |\omega_z(x)| \]
But $q_*(\varphi)$ is a locally constant function on the base. Hence for $n$ sufficiently large, the above integral is simply equal to
\[   {\rm Vol}(\varpi^n \mathcal{O}_F) \cdot q_*(\varphi)(1)= q^{-n} \cdot  \int_{X_1} \varphi(x) \cdot |\omega_1(x)|. \]
Applying this to the integral of interest, we thus deduce that the sequence $\mathcal{I}_n(f)(a)$ stabilizes for large $n$ and 
 \[    \mathcal{I}(f)(a) =   \gamma_{\psi,q} \cdot  |a|^{-\frac{1}{2} \dim(V)} \cdot  \int_{X_1}   \Phi(x)  \cdot  \psi( \langle v_1, a^{-1} x \rangle) \cdot |\omega_1(x)| \]

\vskip 10pt

Now observe that the map
\[ \gamma:  X_1  = \OO(U) \backslash \OO(V) \longrightarrow F  \]
given by
\[  x = h^{-1} v_1  \mapsto   \langle v_1, x \rangle = \langle v_1, h^{-1} v_1 \rangle \]
is $\OO(U)$-invariant (on the right). Moreover, for $\xi \in F$,  the preimage of $\xi$ is equal to
\[   \gamma^{-1}(\xi) = \{  x = \xi \cdot v_1  + v:   v \in U \quad \text{and} \quad q(v) = 1- \xi^2 \}. \]
Outside $\xi^2 =1$, the map $\gamma$ is submersive at all points  and 
it follows by Witt's theorem that  the fiber $\gamma^{-1}(\xi)$  is a homogeneous space under $\OO(U)$. For $x_{\xi}  \in \gamma^{-1}(\xi)$ (with $\xi^2 \ne 1$), its stabilizer in $\OO(U)$ is $\OO(U \cap x_{\xi}^{\perp})$. 
Thus, if $F^{\heartsuit} = F \smallsetminus \{ \pm 1 \}$ and $X_1^{\heartsuit} = \gamma^{-1}(F^{\heartsuit})$, then 
\[  X_1^{\heartsuit}/\OO(U)  \cong F^{\heartsuit}. \]
Moreover, the measures $|\omega_1|$ on $X_1^{\heartsuit}$ and $d\xi$ on $F^{\heartsuit}$ that we have fixed give rise to measures $|\nu_{\xi}|$ on the fibers $\gamma^{-1}(\xi)$. 
The $\OO(U)$-orbital integrals of functions on $S(X_1)$ are thus obtained via integration on the fibres of $\gamma$ and are smooth  functions on $F^{\heartsuit}$,  
These functions give  an incarnation of the elements of $\mathcal{I}(X_1) = S(X_1)_{\OO(U)}$, so that we have a map
\[  \mathcal{I}: S(X_1) \longrightarrow \mathcal{I}(X_1) \longrightarrow C^{\infty}(F^{\heartsuit}). \]
\vskip 5pt

Hence, continuing with our computation, we have:
\begin{align}
\mathcal{I}(f)(a)  &=   \gamma_{\psi,q} \cdot  |a|^{-\frac{1}{2} \dim(V)}   \cdot  \int_{X_1^{\heartsuit} }     \Phi(x)  \cdot  \psi( a^{-1} \gamma(x) ) \cdot |\omega_1(x)|  \notag \\
&=  \gamma_{\psi,q} \cdot    |a|^{-\frac{1}{2} \dim(V)}    \cdot  \int_{F^{\heartsuit}}  \int_{\gamma^{-1}(x_{\xi})}  \Phi( y)   \cdot \psi(a^{-1}\xi) \cdot  |\nu_{\xi}(y)|  \,\, d\xi \notag \\
&=  \gamma_{\psi,q} \cdot     |a|^{-\frac{1}{2} \dim(V)}  \cdot  \int_{F^{\heartsuit}}   \mathcal{I}(\phi)(\xi)  \cdot \psi(a^{-1}\xi) \,  d\xi \notag 
\end{align}
where $\phi = q(\Phi) \in S(X_1)$ (so that $\phi$ and $f$ are transfers of each other) and  $\mathcal{I}(\phi)$ is the orbital integral of $\phi$ defined by the inner integral over the fibers of $\gamma$ over $F^{\heartsuit}$. 
We have shown:
\vskip 5pt

\begin{prop}  \label{P:geom}
The transfer map $t_{\psi}:  \mathcal{I}(X_1) \longrightarrow \mathcal{I}(N ,\psi \backslash \SL_2)$
is given by the integral transform:
\[  t_{\psi}( \phi)(a) =\gamma_{\psi,q} \cdot   |a|^{-\frac{1}{2} \dim(V)}    \cdot  \int_{F^{\heartsuit}}   \phi(\xi)  \cdot \psi(a^{-1}\xi) \,  d\xi. \]
where we have regarded $ \mathcal{I}(X_1) $ and $\mathcal{I}(N ,\psi \backslash \SL_2)$ as spaces of functions on $F^{\heartsuit}$ and $F^{\times}$ respectively. 

\end{prop}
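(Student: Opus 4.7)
The plan is to compute $t_\psi(\phi)(a)$ by choosing a preimage $\Phi \in S(V)$ with $q(\Phi) = \phi$ and $p(\Phi) = f$, and then evaluating the literal $(N,\psi)$-orbital integral of $f$ on the open Bruhat cell $NwT$. Since $f \in \mathcal{S}(N,\psi\backslash \SL_2)$ is typically not compactly supported, the integral $\int_F f(wt(a)n(b))\overline{\psi(b)}\,db$ does not converge absolutely; one must first truncate to $b \in U_n := \varpi^{-n}\mathcal{O}_F$, obtain $\mathcal{I}_n(f)(a)$, and show that the sequence stabilizes for $n \gg 0$. Granting that, the value $t_\psi(\phi)(a)$ is the stable limit, and the formula in the proposition asserts that this limit is an explicit Fourier-type integral transform of $\phi$.

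The main engine is the explicit formulas for the Weil representation. I would write $(wt(a)n(b)\Phi)(v_1) = \mathcal{F}(t(a)n(b)\Phi)(v_1)$, pick up the Weil index $\gamma_{\psi,q}$, and unwind:
\[
\mathcal{I}_n(f)(a) = \gamma_{\psi,q} \cdot |a|^{-\frac{1}{2}\dim V} \int_\omega \Phi(x)\,\psi(\langle v_1, a^{-1}x\rangle)\left(\int_{U_n}\psi(b(q(x)-1))\,db\right)dx,
\]
where $\omega = \mathrm{supp}(\Phi)$ is compact, obtained by a change of variables $x = ay$ and applying Fubini on $\omega \times U_n$. The inner $b$-integral is the central observation: it equals $\mathrm{Vol}(U_n;db) = q^n$ when $q(x) \in 1 + \varpi^n\mathcal{O}_F$ and vanishes otherwise, so it behaves as a smoothed $\delta$-distribution supported near $X_1 = q^{-1}(1)$.

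The next step, which I expect to be the main technical obstacle, is to convert the integral over the thin tube $q^{-1}(1 + \varpi^n\mathcal{O}_F)$ into an integral against the measure $|\omega_1|$ on $X_1$ itself, uniformly in $n$ so that the stabilization is genuine. The key tool is the disintegration $d_\psi v = |\omega_a|\,d_\psi a$ from \S 6.2, valid since $q$ is submersive away from the null cone; since $\Phi$ is locally constant and compactly supported in $V\setminus X_0$, the fiber-pushforward $q_*(\Phi(x)\psi(\langle v_1, a^{-1}x\rangle))$ is locally constant in $a$, so for $n$ large enough the truncated integral equals $q^{-n}\cdot q^n \cdot \int_{X_1}\Phi(x)\psi(\langle v_1, a^{-1}x\rangle)|\omega_1|$, proving stabilization and giving the value of $\mathcal{I}(f)(a)$ as an integral over $X_1$.

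Finally, I would convert the integral over $X_1$ into an integral over $F^\heartsuit = F \setminus \{\pm 1\}$. The map $\gamma:X_1 \to F$, $x \mapsto \langle v_1, x\rangle$, is $\mathcal{O}(U)$-invariant and (by Witt's theorem together with a submersivity check) identifies $X_1^\heartsuit/\mathcal{O}(U) \cong F^\heartsuit$, with fibers carrying the induced measures $|\nu_\xi|$. Disintegrating $|\omega_1|$ along $\gamma$ expresses the inner fiber integral as $\mathcal{I}(\phi)(\xi)$, after noting that the normalization $\phi = q(\Phi)$ (without the $|a|^{1/2}$ factor in this section's setup) exactly reproduces the orbital integral of $\phi$. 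The remaining outer integral against $\psi(a^{-1}\xi)$ on $F^\heartsuit$, weighted by $|a|^{-\dim V/2}\gamma_{\psi,q}$, is the asserted formula. The delicate point throughout is ensuring that all of these operations are justified on $\mathcal{I}(X_1)$ (not just on $S(X_1)$ images where everything is obviously compactly supported), but since $t_\psi$ was already shown to be well-defined as a map of orbital-integral spaces, it suffices to verify the formula on the dense image of $S(V) \to \mathcal{I}(X_1)$.
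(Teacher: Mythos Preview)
Your proposal is correct and follows essentially the same approach as the paper's proof: truncate the orbital integral to $U_n$, unwind the Weil representation action of $w\,t(a)\,n(b)$ to reduce to a Fourier transform with the Weil index $\gamma_{\psi,q}$, evaluate the inner $b$-integral as $q^n\cdot 1(q(x)\in 1+\varpi^n\mathcal{O}_F)$, use the submersivity of $q$ and the disintegration $d_\psi v=|\omega_a|\,d_\psi a$ to show stabilization and land on $\int_{X_1}\Phi(x)\psi(\langle v_1,a^{-1}x\rangle)|\omega_1|$, and finally disintegrate along $\gamma:X_1\to F$ to convert this into the Fourier-type integral over $F^{\heartsuit}$ of the $\OO(U)$-orbital integral $\mathcal{I}(\phi)$. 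One small inaccuracy: you write that $\Phi$ is ``compactly supported in $V\setminus X_0$'', but in fact $\Phi\in S(V)$ need not avoid the null cone; the point is rather that for $n\geq 1$ the tube $q^{-1}(1+\varpi^n\mathcal{O}_F)$ already lies in $V\setminus X_0$, so the disintegration applies on that domain.
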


Comparing with the formula for the transfer defined in \cite{S4}, we see that our transfer map $t_{\psi}$ essentially agrees with that of \cite{S4}. In particular, our approach gives an alternative proof of the transfer theorem of \cite{S4} in the setting of hyperboloids.  
\vskip 5pt

We close this section with another remark. As mentioned in the introduction, the transfer map in \cite{S4} was first defined and studied on the level of the boundary degenerations of the rank $1$ spherical varieties and then one uses essentially the same formula in the setting of the spherical varieties themselves.  For the case treated in this paper, the boundary degeneration of $X_1$ is simply the  nullcone (with vertex removed) 
\[  X_0 = \{ 0 \ne x \in V:    q(x) = 0 \} \]
of nonzero isotropic vectors.  This is a homogeneous $\OO(V)$-variety and one can carry out essentially all the analysis of the earlier sections with $X_0$ in place
of $X_a$ with $a \ne 0$. One would then be describing the spectrum of $X_0$ in terms of the spectrum of the basic affine space $N \backslash \SL_2$. Indeed, since the map $q : V \setminus \{0 \} \longrightarrow F$ is submersive at all points, the derivation of the formula for the transfer map given in this section can be carried out essentially uniformly for $X_a$
with any $a \in F$. In other words, the Weil representation allows one to construct a coherent family of transfer maps relating $\mathcal{S}(N, \psi_a \backslash \SL_2)$ and $S(X_a)$ varying smoothly with $a \in F$ (though 
 $S(X_0) \ne C^{\infty}_c(X_0)$ in the nonarchimedean case), which explains in some sense why ``the same formula works". We leave the analysis of the transfer map for the boundary degeneration $X_0$ as an exercise for the interested reader. 
\vskip 15pt

\section{\bf Factorization of Global Periods}  \label{S:global}
In this final section, we turn to the global setting, where we examine the question of factorisation of global period integrals, in the context of the periods considered in the earlier sections. We first need to introduce the global analogs of various constructions encountered in the local setting.
\vskip 5pt

\subsection{\bf Tamagawa measures} \label{SS:Tamagawa}
Let $k$ be a number field with ring of ad\`eles $\A$. We fix a nontrivial unitary character 
\[ \psi: k \backslash \A \longrightarrow S^1. \]
This has a factorization $\psi = \prod_v \psi_v$ where $\psi_v$ is a nontrivial character of the local field $k_v$ for each place $v$ of $k$. Then $\psi_v$ determines  
a self-dual Haar measure $d_{\psi_v}x$ on $k_v$ such that for almost all $v$, the volume of the ring of integers $\mathcal{O}_{k_v}$ relative to $d_{\psi_v}x$ is $1$. The product measure $dx:= \prod_v d_{\psi_v}x$ then gives a measure on $\A$. This is the Tamagawa measure of $\A$: it is independent of $\psi$  (by the Artin-Whaples product formula) and satisfies
\[ \int_{F \backslash \A} dx  =1. \]

If $G$ is a (smooth) algebraic group over $k$, we may consider the adelic group $G(\A)$. It is a restricted direct product $\prod'_v G(k_v)$,  taken with respect to a sequence $\{ K_v = G(\mathcal{O}_{k_v})\}$ of open compact subgroups  determined by any $\mathcal{O}_k$-structure on $G$. For almost all $v$, $K_v$ is a hyperspecial maximal compact subgroup of $G(k_v)$.
\vskip 5pt

Now suppose  $\omega_G$ is a nonzero invariant differential of top degree on $G$ over $k$. Then for each place $v$ of $k$, 
the pair $(\omega_G, \psi_v)$ determines a Haar measure $|\omega_{G, \psi_v}|_v$ of $G(k_v)$.  We would like to consider the product measure
$\prod_v |\omega_{G, \psi_v}|_v$ on $G(\A)$. For this, one needs to assume that $\prod_v {\rm Vol}(K_v; |\omega_{G, \psi_v}|_v)$ is finite.
This is the case for unipotent groups or semisimple groups. 
If the infinite product is not convergent (e.g. if $G = \mathbb{G}_m$), one can still deal with this by introducing ``normalization factors"; we will not go into this well-documented story here. In any case, this product measure on $G(\A)$ is independent of $\psi$ and $\omega_G$. It is the so-called Tamagawa measure of $G$. 
When $G  = \mathbb{G}_a$ is the additive group, the Tamagawa measure on $G(\A) = \A$ is precisely the measure $dx = \prod_v d_{\psi_v} x$ defined above (so that the terminology is used consistently). 
\vskip 5pt

More generally, let $X = H \backslash G$ be a homogeneous $G$-variety over $k$ (with $G$ acting on the right). Assume for simplicity that $X(k_v) = H(k_v) \backslash G(k_v)$ for each place $v$ of $k$ and $X(\A) = H(\A) \backslash G(\A)$.  Suppose further that $\omega_X$ is a nonzero $G$-invariant differential form of top degree on $X$ over $k$.
Then for each $v$, one has a $G(k_v)$-invariant measure $|\omega_{X,\psi_v}|_v$ on $X(k_v)$. We shall call the product measure $\prod_v |\omega_{X,\psi_v}|_v$ (when it makes sense) the Tamagawa measure of $X(\A)$. It is independent of $\psi$ and $\omega_X$. Moreover, it is simply the quotient of the Tamagawa measures of $G(\A)$ and $H(\A)$. Indeed, one can construct an invariant differential $\omega_X$ of top degree as a quotient of (right-)invariant differentials $\omega_G$ and $\omega_H$ of top degree on $G$ and $H$.
 \vskip 5pt

\vskip 5pt

In short, when working with adelic groups or the adelic points of homogeneous $G$-varieties, we shall always use such Tamagawa measures. 

\vskip 5pt
\subsection{\bf Automorphic Forms.}
  For a reductive group $G$ defined over $k$, we shall write $[G]$ for the quotient $G(k) \backslash G(\A)$ and equip it with its Tamagawa measure $dg$ (divided by the counting measure on the discrete subgroup $G(k)$).
\vskip 5pt

Let $C^{\infty}_{mod}([G])$ denote the space of smooth functions on $[G]$ which are of (uniform) moderate growth. It is a representation of $G(\A)$ containing the $G(\A)$-submodule $\mathcal{A}(G)$ of (smooth) automorphic forms on $G$, which in turn contains the submodule $\mathcal{A}_{cusp}(G)$  of cusp forms:
\[  \mathcal{A}_{cusp}(G) \subset \mathcal{A}(G) \subset  C^{\infty}_{mod}([G]). \]
When the group $G$ has a nontrivial split torus in its center, we shall fix a unitary automorphic central character $\chi$  and consider the space $\mathcal{A}_{\chi}(G)$ of automorphic forms with central character $\chi$; we shall suppress this technical issue in the following discussion. 
\vskip 5pt

On $\mathcal{A}_{cusp}(G)$, we have the Petersson inner product 
$\langle-, -\rangle_G$ (defined using the Tamagawa measure $dg$).  Indeed, the Petersson inner product defines a pairing between $\mathcal{A}_{cusp}(G)$ and the larger space $C^{\infty}_{mod}([G])$. Hence, we have  a canonical projection map 
\[ C^{\infty}_{mod}([G])   \longrightarrow \mathcal{A}_{cusp}(G). \]
 In particular, for an irreducible cuspidal representation $\Sigma \subset \mathcal{A}_{cusp}(G)$, we have a projection
\[  {\rm pr}_{\Sigma}:  C^{\infty}_{mod}([G])   \longrightarrow  \Sigma.\]
We denote the restriction of the Petersson inner product on $\Sigma$ by $\langle - , - \rangle_{\Sigma}$. 
\vskip 10pt

\subsection{\bf Global periods}
Let $H \subset G$ be a subgroup so that $X= H \backslash G$ is quasi-affine. Fix a unitary Hecke character $\chi$ of $H$. Then we may consider the global $(H, \chi)$-period:
\[ P_{H,\chi}:  \mathcal{A}_{cusp}(G)  \longrightarrow  \C  \]
defined by
\[  P_{H, \chi}(\phi)  = \int_{[H]} \overline{\chi(h)} \cdot  \phi(h) \, dh \]
where $dh$ is the Tamagawa measure of $H(\A)$.
For a cuspidal representation $\Sigma \subset \mathcal{A}_{cusp}(G)$, we may thus consider the restriction of $P_{H, \chi}$ to $\Sigma$, denoting it by $P_{H, \chi, \Sigma}$. 

\vskip 5pt
\subsection{\bf The maps $\alpha_{\rm Aut}$ and $\beta_{\rm Aut}$.}. \label{S:aabb}
We shall now introduce the global analog of the maps $\alpha_{\sigma}$ and $\beta_{\sigma}$ introduced in \S \ref{SS:alpha} in the local setting. Set $X_{\A} =  H(\A) \backslash G(\A)$, equipped with its Tamagawa measure (which is the quotient of the Tamagawa measures of $G(\A)$ and $H(\A)$). 
 We have a $G(\A)$-equivariant map
\[  \theta:  C^{\infty}_c(X_{\A}, \chi) \longrightarrow C_{mod}^{\infty}([G])  \]
defined by
\[  \theta(f)(g)  = \sum_{x \in X_k}  f(  x \cdot g). \]
 The map $\theta$ is called the formation of theta series.  Hence, we may define a composite map
\[  \begin{CD}
\alpha_{\Sigma}^{\rm Aut}:   C^{\infty}_c(X_{\A}, \chi) @>\theta>>  C_{mod}^{\infty}([G])  @>{\rm pr}_{\Sigma}>>  \Sigma.  \end{CD} \]
Concretely, we have:
\[  \alpha_{\Sigma}^{\rm Aut}(f) = \sum_{\phi \in {\rm ONB}(\Sigma)}  \langle \theta(f) ,  \phi \rangle_G \cdot \phi. \]  
\vskip 5pt

On the other hand, we have the $G(\A)$-equivariant map
\[   \beta^{\rm Aut}_{\Sigma}: \Sigma \longrightarrow C^{\infty}(X_{\A}) \]
defined by
\[  \beta_{\Sigma}^{\rm Aut}(\phi)(g)   = P_{H, \chi} (g \cdot \phi). \]
One has the following adjunction formula,  which is the global analog of (\ref{E:alpha}):
\vskip 5pt

\begin{lem}
For $f \in C^{\infty}_c(X_{\A}, \chi)$ and   $\phi \in \Sigma$, one has
\[  \langle \alpha_{\Sigma}^{\rm Aut}(f), \phi \rangle_G  =  \langle f,  \beta_{\Sigma}^{\rm Aut}(\phi) \rangle_X  \]
\end{lem}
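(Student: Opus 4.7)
The proof is essentially an unfolding computation. The plan is to rewrite both sides of the claimed identity as the same double integral over $H(k) \backslash G(\A)$, using the characterizing property of the theta series and the $\chi$-equivariance of $f$.

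First I would observe that since ${\rm pr}_{\Sigma} : C^{\infty}_{\mathrm{mod}}([G]) \to \Sigma$ is the orthogonal projection with respect to the Petersson inner product (extended to the pairing between $\mathcal{A}_{\mathrm{cusp}}(G)$ and $C^{\infty}_{\mathrm{mod}}([G])$), for $\phi \in \Sigma$ one has
\[
\langle \alpha_{\Sigma}^{\rm Aut}(f), \phi \rangle_G = \langle {\rm pr}_{\Sigma}(\theta(f)),\phi\rangle_G = \langle \theta(f), \phi \rangle_G.
\]
Thus the problem is reduced to the identity $\langle \theta(f),\phi\rangle_G=\langle f,\beta_{\Sigma}^{\rm Aut}(\phi)\rangle_X$, which is purely formal and has nothing to do with $\Sigma$ being cuspidal.

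Next I would substitute the definition $\theta(f)(g)=\sum_{x\in X_k} f(x\cdot g)$, interchange sum and integral, and combine the integration over $[G]$ with the sum over $H(k)\backslash G(k)$ to unfold the integral to one over $H(k)\backslash G(\A)$:
\[
\langle\theta(f),\phi\rangle_G = \int_{H(k)\backslash G(\A)} f(g)\,\overline{\phi(g)}\,dg.
\]
I would then further decompose this integral as a double integral over $H(\A)\backslash G(\A) = X_{\A}$ on the outside and $H(k)\backslash H(\A) = [H]$ on the inside, apply the transformation law $f(hg)=\chi(h)f(g)$ to pull $f(g)$ out of the inner integral, and recognize
\[
\int_{[H]} \overline{\chi(h)}\,\phi(hg)\,dh = P_{H,\chi}(g\cdot\phi) = \beta_{\Sigma}^{\rm Aut}(\phi)(g).
\]
Putting the pieces together yields $\int_{X_{\A}} f(g)\,\overline{\beta_{\Sigma}^{\rm Aut}(\phi)(g)}\,dg = \langle f,\beta_{\Sigma}^{\rm Aut}(\phi)\rangle_X$, as required. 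The compatibility of Tamagawa measures on $G(\A)$, $H(\A)$ and $X_{\A}$ fixed in \S\ref{SS:Tamagawa} ensures the Jacobian factor in the unfolding is trivial.

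The main obstacle, which is essentially the only nontrivial point, is the justification of Fubini and the unfolding, i.e.\ the absolute convergence of $\int_{H(k)\backslash G(\A)} |f(g)\,\phi(g)|\,dg$. This is where the cuspidality (or at least rapid decrease) of $\phi$ on $[G]$ and the compact support of $f$ on $X_{\A}$ come in: $f$ being compactly supported modulo $H(\A)$ reduces the estimate to integration of $|\phi|$ over a set of the form $[H]\cdot\Omega$ with $\Omega\subset G(\A)$ compact, on which the cusp form $\phi$ is bounded (and $\overline{\chi(h)}\phi(hg)$ is integrable over $[H]$ since $P_{H,\chi}$ is assumed to converge on cusp forms, cf.\ the standard absolute convergence results for period integrals of cusp forms along $H$). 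Once this is in hand, the chain of equalities above is immediate.
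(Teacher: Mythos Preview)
Your proposal is correct and follows essentially the same unfolding computation as the paper: pass from $\langle \alpha_\Sigma^{\rm Aut}(f),\phi\rangle_G$ to $\langle \theta(f),\phi\rangle_G$, unfold the theta sum into an integral over $H(k)\backslash G(\A)$, decompose as a double integral over $X_\A \times [H]$, use the $\chi$-equivariance of $f$, and recognize $\overline{P_{H,\chi}(g\cdot\phi)}$. The paper's proof is just this chain of equalities without the convergence discussion you provide.
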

\begin{proof}
We have:
\begin{alignat}{2}
 \langle \alpha_{\Sigma}^{\rm Aut}(f), \phi \rangle_G   
&=  \int_{[G]} \theta(f)(g)  \cdot \overline{\phi(g)} \, dg   &&=  \int_{[G]}  \sum_{\gamma \in H(k) \backslash G(k)}  f(\gamma g) \cdot  \overline{\phi(g)} \, dg  \notag \\
&= \int_{ H(k) \backslash G(\A)}  f(g) \cdot  \overline{\phi(g)} \, dg   &&= \int_{X_{\A}}    \int_{[H]} f(hg)  \cdot  \overline{\phi(hg)} \, dh  \, \frac{dg}{dh} \notag \\
&= \int_{X_{\A}} f(x)  \cdot \overline{P_{H,\chi}(\phi)(x)} \, dx   &&= \langle f,  \beta^{\rm Aut}_{\Sigma}(\phi) \rangle_X,     \notag 
\end{alignat}
as desired.
\end{proof}

\subsection{\bf Global Relative Characters} \label{SS:GRC}
We may also introduce the global analog of the inner product $J_{\sigma}$:
\[  J^{\rm Aut}_{\Sigma}(\phi_1, \phi_2)   :=  \langle \alpha^{\rm Aut}_{\Sigma}(\phi_1) , \alpha^{\rm Aut}_{\Sigma} (\phi_2)\rangle_{\Sigma} = \langle \beta^{\rm Aut}_{\Sigma} \alpha^{\rm Aut}_{\Sigma}(\phi_1),  \phi_2 \rangle_X.  \]
Then
\begin{align}
  J^{\rm Aut}_{\Sigma}(\phi_1, \phi_2)    &= \sum_{f \in {\rm ONB(\Sigma)}}  \langle \alpha^{\rm Aut}_{\Sigma}(\phi_1) , f \rangle_{\Sigma} \cdot  \langle f, \alpha^{\rm Aut}_{\Sigma} (\phi_2)\rangle_{\Sigma} \notag \\
  &= \sum_{f \in {\rm ONB(\Sigma)}}  \langle \phi_1,  \beta^{\rm Aut}_{\Sigma}(f) \rangle_{X} \cdot  \langle  \beta^{\rm Aut}_{\Sigma}(f) ,  \phi_2\rangle_{X}. \notag
  \end{align} 
By analog with the local case, we may introduce the global relative character $\mathcal{B}^{\rm  Aut}_{\Sigma}$ as an equivariant distribution on $C^{\infty}_c(X_{\A},\chi)$, defined by
\[  \mathcal{B}^{\rm Aut}_{\Sigma}(f)  =  \beta^{\rm Aut}_{\Sigma} \left( \alpha^{\rm Aut}_{\Sigma}(f) \right)(1)  
=  \sum_{\phi \in {\rm ONB}(\Sigma)}   \langle f, \beta^{\rm Aut}_{\Sigma}(\phi) \rangle_X \cdot   P_{H, \chi}(\phi)\]
for $f \in C^{\infty}_c(X_{\A}, \chi)$. When pulled back to give a distribution on $G(\A)$, one has
\[    
 \mathcal{B}^{\rm Aut}_{\Sigma}(f)    =  \sum_{\phi \in {\rm ONB}(\Sigma)}  \overline{P_{H,\chi}( \Sigma(\overline{f})  \phi) } \cdot   P_{H,\chi}(\phi), \]
 for $f \in C^{\infty}_c(G(\A))$.
\vskip 5pt

\subsection{\bf Quadratic spaces and hyperboloids} \label{SS:hyper}
Suppose now that $(V,q)$ is a quadratic space over $k$. Then as an additive group scheme over $k$, $V \cong \mathbb{G}_a^k$ and so $V(\A)$ has its canonical Tamagawa measure.
We would like to compare this Tamagawa measure with the measures we considered in the local setting.
\vskip 5pt 
If $\langle- , -\rangle$ is the symmetric bilinear form associated to $q$ and $\psi = \prod_v \psi_v$
  is  our fixed additive character of $F \backslash \A$, then the pair $(\langle-, -\rangle, \psi_v)$    determines a Haar measure $d_{\psi_v} v$ on $V_v = V \otimes_k k_v$ (the self-dual measure with respect to the pairing $\psi(\langle -, -\rangle)$). This is the measure on $V_v$ that we have been using in the local setting. 
 If $L \subset V$ is any $\mathcal{O}_k$-lattice, which endows $V$ with an $\mathcal{O}_k$-structure, 
  then  for almost all places $v$, the volume of $L_v = L \otimes_{\mathcal{O}_k} \mathcal{O}_{k_v}$ with respect to $d_{\psi_v}v$ is $1$. 
  We may thus consider  the product measure 
  \[  dv_{\A} = \prod_v d_{\psi_v} v \quad \text{ on $V_{\A}$.} \]
   As the notation suggests, it is independent of the choice of $\psi$. Moreover, $dv_{\A}$ is equal to the Tamagawa measure on $V_{\A}$.
   \vskip 5pt

 Suppose that the quadratic space $(V,q)$ contains a vector $v_1 \in V_k$ with $q(v_1) =1$. By changing $L$ if necessary, we may assume that $v_1$ lies in the lattice $L$.  Let 
\[  X_1 = \{ x \in V: q(x) =1 \} \]
be a hyperboloid.  Then the map $h \mapsto h^{-1} \cdot v_1$ gives an isomorprhism
\[   \OO(v_1^{\perp}) \backslash \OO(V) \cong X_1 \]
of $\OO(V)$-varieties over $k$. 
Moreover, in this case, one has (by Witt's theorem):
\[   \OO(v_1^{\perp})(\A)  \backslash \OO(V)(\A) \cong X_1(\A). \]
Recall that we have equipped both sides with their Tamagawa measures which are respected by this isomorphism. 
Now we would like to relate the Tamagawa measure on $X_1(\A)$ with the measures we have been using in the local case. 
\vskip 5pt

As noted, the additive character $\psi = \prod_v \psi_v$ gives us decompositions of Tamagawa measures 
\[  dv_{\A}  = \prod_v d_{\psi_v} v     \quad  \text{and} \quad dx_{\A} = \prod_v d_{\psi_v} x   \]
on $V_{\A}$ and $\A$ respectively.  Using the submersive  map 
\[ q: V \setminus X_0  \longrightarrow F \setminus \{0 \}, \] 
the local measures $d_{\psi_v}v $ and $d_{\psi_v}x$ determine an $\OO(V_v)$-invariant measure $|\omega_{1,v}|$ on $X_1(k_v)$: this is the measure on $X_1(k_v)$ that we have been using in the local setting. We observe that the product measure 
\[ |\omega_{1,\A}| := \prod_v |\omega_{1,v}| \]
 is equal to the Tamagawa measure of $X_1(\A) = \prod'_v X_1(k_v)$ where the restricted direct product is taken with respect to the family $\{ X_1(\mathcal{O}_{k_v}) = X_1(k_v) \cap L_v \}$ for almost all $v$. 

\vskip 10pt

\subsection{\bf Global Weil representation} \label{SS:globalweil}
 We now consider the dual pair $\SL_2 \times \OO(V)$ and recall its global Weil representation.
We have fixed  an $\mathcal{O}_k$-lattice $L \subset V$.   For almost all $v$, $L_v = L \otimes_{\mathcal{O}_k} \mathcal{O}_{k_v}$ is a self dual lattice of volume $1$ with respect to $d_{\psi_v}v$.  Let
 \[  K'_v = \text{stabilizer of $L$ in $\OO(V_v)$} \quad \text{and}\quad \Phi_{0,v} = 1_{L_v} \in S(V_v).\]
 For each $v$, we have the (smooth) Weil representation $\Omega_{\psi_v}$ of $\SL_2(k_v) \times \OO(V_v)$ realized on the space $S(V_v)$ of Schwarz-Bruhat functions on $V_v = V \otimes_k k_v$. For almost all $v$, $\Phi_{0,v}$ is a unit vector  which is fixed by $K_v \times K'_v = \SL_2(\mathcal{O}_{k_v}) \times K'_v$. 
The restricted tensor product 
\[  \Omega_{\psi} = \otimes_v' \Omega_{\psi_v} \]
 with respect to the family of vectors $\Phi_{0,v}$ is the  Weil representation of the adelic dual pair $\SL_2(\A) \times \OO(V)(\A)$. It is realised on the space 
 \[ S(V_{\A}) = S(V_{\infty} )\otimes \left(\otimes_{v< \infty}' S(V_v)\right) \]
  of  Schwarz-Bruhat functions on $V_{\A}$ (where $V_{\infty} = V \otimes_{\Q} \R$).  
\vskip 5pt

The Weil representation $\Omega_{\psi}$ has  a canonical automorphic realization 
\[  \theta: \Omega_{\psi}  = S(V_{\A})  \longrightarrow C_{mod}^{\infty}([\SL_2 \times \OO(V)]) \]
defined by
\[ \theta(\Phi)(g,h) = \sum_{v \in V_k}  (\Omega_{\psi}(g,h) \Phi) (v). \]

 \vskip 5pt
\subsection{\bf Global theta lifting}  \label{SS:globaltheta}
 For an irreducible  cuspidal representation $\Sigma \subset \mathcal{A}_{cusp}(\SL_2)$ of $\SL_2$, we may consider its global theta lift to $\OO(V)$. More precisely, given $\Phi  \in \Omega_{\psi}$ and $f \in \Sigma$, one defines the $\SL_2(\A)$-invariant and $\OO(V)(\A)$-equivariant map
 \[  A_{\Sigma}^{\rm Aut}:   \Omega_{\psi} \otimes \overline{\Sigma} \longrightarrow \mathcal{A}(\OO(V)) \]
 by
 \[  A_{\Sigma}^{\rm Aut}(\Phi, f)(h)  = \int_{[\SL_2]}  \theta(\Phi)(gh) \cdot \overline{f(g)} \, dg.  \]
The image of $A_{\Sigma}^{\rm Aut}$  is the global theta lift of $\Sigma$, which we denote by 
\[ \Pi = \Theta^{\rm Aut}(\Sigma) \subset \mathcal{A}(\OO(V)). \]
 If $\Pi$ is cuspidal and nonzero, then  it follows by the Howe duality conjecture that $\Pi$ is an irreducible cuspidal representation. 

\vskip 5pt

Conversely, assume that $\Pi \subset \mathcal{A}_{cusp}(\OO(V))$ is irreducible cuspidal. Then  we may consider the global theta lift of $\Pi$  to $\SL_2$.  More precisely, given $\Phi \in \Omega_{\psi}$ and $\phi \in \Pi$, one defines the $\OO(V)(\A)$-invariant and $\SL_2(\A)$-equivariant map
\[  B_{\Pi}^{\rm Aut}:  \Omega_{\psi} \otimes \overline{\Pi} \longrightarrow\Sigma \subset  \mathcal{A}(\SL_2) \]
by
\[  B_{\Pi}^{\rm Aut}(\Phi, \phi) (g) = \int_{[\OO(V)]}\theta(\Phi)(gh) \cdot \overline{\phi(h)} \, dh. \]
By computing constant term, one can show that  the image of $B_{\Pi}^{\rm Aut}$ is necessarily cuspidal.  
\vskip 5pt

\subsection{\bf The maps $A_{\Sigma}^{\rm Aut}$ and $B_{\Theta(\Sigma)}^{\rm Aut}$}
We continue with the setting of the previous subsection. 
If $\Pi = \Theta^{\rm Aut}(\Sigma)$ is nonzero cuspidal, then the image of $B^{\rm Aut}_{\Theta(\Sigma)}$ is $\Sigma$ (because the cuspidal spectrum of $\SL_2$ is multiplicity-free \cite{Rama}). In this case, 
 the maps $A_{\Sigma}^{\rm Aut}$ and $B_{\Theta(\Sigma)}^{\rm Aut}$ are global analogs of the maps $A_{\sigma}$ and $B_{\Theta(\sigma)}$ introduced in \S \ref{SS:AB}. By an exchange of the order of integration, we have the following global analog of (\ref{E:AB}):
\begin{equation} \label{E:gAB}
  \langle  A^{\rm Aut}_{\Sigma}(\Phi, f), \phi \rangle_{\Theta(\Sigma)}  =  \langle   B_{\Theta(\Sigma)}^{\rm Aut}(\Phi, \phi), f \rangle_{\Sigma}  \end{equation}
 for $\Phi \in \Omega_{\psi}$, $f \in \Sigma$ and $\phi \in \Theta^{\rm Aut}(\Sigma)$. 
\vskip 10pt

\vskip 10pt
\subsection{\bf Global transfer of periods.}
For $\Phi \in \Omega_{\psi}$ and $\phi \in \Pi$, we may compute the $\psi$-Whittaker coefficient of $B^{\rm Aut}_{\Pi}(\Phi, \phi)$. One has the following global analog of Corollary \ref{C:com2}:
\vskip 5pt

\begin{prop}  \label{P:gperiod}
For $\Phi \in \Omega_{\psi}$ and $\phi \in \Pi$, 
\[  P_{N, \psi}(B^{\rm Aut}_{\Pi}(\Phi, \phi) ) =\int_{H(\A) \backslash G(\A)}    \Phi( g^{-1} v_1)  \cdot   P_{\OO(v_1^{\perp})} ( \phi) (g)  \, \frac{dg}{dh}  
= \langle \Phi|_X,  \beta^{\rm Aut}_{\Pi}(\phi) \rangle_{X_{\A}}.  \]
In particular, a cuspidal representation $\Pi$ of $\OO(V)$ has nonzero $\OO(v_1^{\perp})$-period if and only if its global theta lift to $\SL_2$ is globally $\psi$-generic.
\end{prop}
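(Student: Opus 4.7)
The plan is to prove Proposition \ref{P:gperiod} by a direct unfolding of the $\psi$-Whittaker period of the theta lift, a computation standard in the seesaw theory of theta correspondence. First I would write out
\[
P_{N,\psi}(B^{\rm Aut}_{\Pi}(\Phi,\phi))
= \int_{[N]} \overline{\psi(n)} \int_{[\OO(V)]} \theta(\Phi)(nh)\, \overline{\phi(h)} \, dh\, dn.
\]
Since $\phi$ is cuspidal, it is rapidly decreasing on $[\OO(V)]$ and Fubini permits swapping the two integrals. Inserting the definition of the theta series and using the formula $n(b)\cdot \Phi(v) = \psi(b \cdot q(v))\cdot \Phi(v)$ for the action of $N$ on the Weil representation (\S 5.1), the inner integral against $\overline{\psi}$ becomes
\[
\int_{[N]} \theta(\Phi)(nh)\,\overline{\psi(n)}\,dn
= \sum_{v \in V(k)} (h\cdot \Phi)(v)\cdot \int_{k\backslash \A}\psi\bigl(b(q(v)-1)\bigr)\,db,
\]
and the fact that the Tamagawa measure gives $k\backslash \A$ volume $1$ (\S\ref{SS:Tamagawa}) forces the inner integral to vanish unless $q(v)=1$, in which case it equals $1$. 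So only $v \in X_1(k)$ contribute.

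Next, by Witt's theorem $\OO(V)(k)$ acts transitively on $X_1(k)$ with stabilizer $H(k) = \OO(v_1^{\perp})(k)$, so using the action formula $(h\cdot \Phi)(v) = \Phi(h^{-1}v)$ the surviving sum becomes
\[
\sum_{v\in X_1(k)} (h\cdot \Phi)(v) = \sum_{\gamma \in H(k)\backslash \OO(V)(k)} \Phi\bigl((\gamma h)^{-1} v_1\bigr).
\]
Combining this with the $H(k)$-invariance of $\phi$, I would unfold the $[\OO(V)]$-integral to $H(k)\backslash \OO(V)(\A)$ and then split it as a double integral over $[H]$ and $H(\A)\backslash \OO(V)(\A)\cong X_1(\A)$, using the compatibility of Tamagawa measures from \S\ref{SS:hyper}. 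The left $H(\A)$-invariance of $h\mapsto \Phi(h^{-1}v_1)$ pulls this function outside the $[H]$-integral, which collapses to the $H$-period of $\phi$. This yields the first displayed equality of the proposition; the second is then immediate from the definition of $\beta^{\rm Aut}_{\Pi}$ in \S\ref{S:aabb} and the identification of $X_1(\A)$ with $H(\A)\backslash \OO(V)(\A)$ equipped with Tamagawa measure.

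For the final equivalence, the forward implication is immediate: if $P_{H}$ vanishes identically on $\Pi$, then the right-hand side vanishes for all $\Phi$ and $\phi$, so $\Theta^{\rm Aut}(\Pi)$ is not globally $\psi$-generic. Conversely, if $P_H \not\equiv 0$ on $\Pi$, then for some $\phi$ the function $g\mapsto P_H(g\cdot\phi)$ is a nonzero smooth function on $X_1(\A)$; by the surjectivity of the restriction $\mathrm{rest}\colon S(V_{\A}) \twoheadrightarrow C^{\infty}_c(X_1(\A))$ (a global version of the local surjectivity of $q$ used throughout the paper), I can choose $\Phi$ so that the pairing on the right is nonzero. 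The main obstacle will be the rigorous justification of the measure-theoretic manipulations in the adelic setting: the use of Fubini between $[N]$ and $[\OO(V)]$, the interchange of $\int_{[N]}$ with the infinite sum defining $\theta(\Phi)$, and the unfolding step. Each is by now standard but should be verified by appealing to the absolute convergence of $\theta(\Phi)$ on $[\SL_2\times \OO(V)]$ together with the rapid decay of $\phi$.
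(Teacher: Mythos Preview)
Your proposal is correct and is precisely the standard unfolding computation the paper has in mind; indeed, the paper omits the proof entirely with the remark that it is ``based on a standard computation.'' Your outline (Fubini, collapse of the $[N]$-integral via the orthogonality of additive characters to pick out $X_1(k)$, Witt transitivity, unfolding to $H(k)\backslash \OO(V)(\A)$, then splitting off the $[H]$-integral) is exactly this computation, and your treatment of the ``if and only if'' at the end is fine.
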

\vskip 5pt

We omit the proof as it is based on a standard computation.
 \vskip 10pt

\subsection{\bf Decompositions of unitary representations} \label{SS:dec}
Suppose now that $\Sigma \subset \mathcal{A}_{cusp}(\SL_2)$ is an irreducible  {\em tempered} $\psi$-generic cuspidal representation of $\SL_2$ and $\Theta^{\rm Aut}(\Sigma)\subset \mathcal{A}_{cusp}(\OO(V))$ is a nonzero (irreducible) cuspidal representation of $\OO(V)$.
In this case, $\Theta^{\rm Aut}(\Sigma)$ is globally $\OO(v_1^{\perp})$-distinguished.  We would like to factor the global $\OO(v_1^{\perp})$-period of $\Theta^{\rm Aut}(\Sigma)$  as a product of the local functionals constructed in the earlier part of the paper.  To carry this out, we need to set things up precisely and systematically.

\vskip 5pt
On the side of $\SL_2$, we begin by fixing a decomposition of the Tamagawa measures
\[   dg = \prod_v dg_v \]
To be concrete, we take an invariant differential $\omega_{\SL_2}$ of top degree on $\SL_2$ over $\Z$ (which is well-defined up to $\pm 1$), which together with the $d_{\psi_v}x$ on $k_v$ gives a measure  
\[ dg_v :=  |\omega_{\SL_2, \psi_v}|_v  \quad \text{on $\SL_2(k_v)$,} \]
for which
\[  \int_{\SL_2(\mathcal{O}_{k_v})} \, dg_v  = \zeta_{k_v}(2)^{-1}. \]
This is the measure we used on $\SL_2(k_v)$ in \S \ref{S:L} when we computed the local L-factor $L_X^{\#}$. 
The product of these measures is then equal to the Tamagawa measure $dg$. 
\vskip 5pt
We also fix an isomorphism
\begin{equation}  \label{E:isom}
 \Sigma \cong \otimes'_v \Sigma_v  \end{equation}
and a decomposition of the Petersson inner product
\begin{equation} \label{E:decomp} 
  \langle -, - \rangle_{\Sigma} = \prod_v  \langle -, - \rangle_{\Sigma_v}. \end{equation}
This equips $\Sigma_v$ with a unitary structure $\langle-, - \rangle_{\Sigma_v}$. 
Alternatively, we could work with the tautological measurable field of unitary representations provided by the local Whittaker-Plancherel theorem (so that each $\Sigma_v$ comes equipped with a unitary structure already), in which case one would require the isomorphism in (\ref{E:isom}) to be an isometry, so that one has (\ref{E:decomp}) as a consequence. 
These are two viewpoints with no difference in content.  The restricted tensor product in (\ref{E:isom}) is with respect to a family of unit vectors $v_{0,v}$ which is fixed by $K_v = \SL_2(\mathcal{O}_{k_v})$.  
\vskip 5pt

For the Weil representation, we have already seen in \S \ref{SS:globalweil} the decompositions:
\begin{equation} \label{E:decompweil}
    S(V_{\A}) = S(V_{\infty} )\otimes \left(\otimes_{v< \infty}' S(V_v)\right) \end{equation}
   and
   \[ dv_{\A} = \prod_v d_{\psi_v}v \quad \text{ of Haar measures on $V_{\A}$,} \]
 inducing compatible unitary structures on the global and local Weil representations. In particular,  
  the restricted direct product in (\ref{E:decompweil}) is with respect to the family of unit vectors $\Phi_{0,v} = 1_{L_v}$ for almost all $v$ and 
 \[  \langle-,- \rangle_{\Omega_{\psi}} = \prod_v \langle  -, - \rangle_{\Omega_{\psi_v}}. \]
\vskip 5pt

Once these decompositions are fixed as above, we see that for each place $v$,  we have the local big theta lift
\[  \Theta_{\psi_v}(\Sigma_v) := (\Omega_{\psi_v} \otimes \Sigma_v^{\vee})_{\SL_2(k_v)}  \]
and its unique irreducible quotient 
\[ \theta_{\psi_v}(\Sigma_v)  \in {\rm Irr}(\OO(V_v)) \]
Moreover, $\theta_{\psi_v}(\Sigma_v)$  inherits an inner product
$\langle-, - \rangle_{\theta(\Sigma_v)}$ defined via the local doubling zeta integral. We let $w_{0,v} \in \theta_{\psi_v}(\Sigma_v)$ be a unit vector which is fixed by $K'_v$ for almost all $v$. Then we may form the abstract global theta lift
\[  \Theta^{\A}(\Sigma) = \otimes'_v \theta_{\psi_v}(\Sigma_v) \] 
where the restricted tensor product is with respect to the family of unit vectors $w_{0,v}$. The abstract global theta lift  $\Theta^{\A}(\Sigma)$ inherits a unitary structure 
\[ \langle-, -\rangle_{\Theta^{\A}(\Sigma)} = \prod_v \langle-, -\rangle_{\theta(\Sigma_v)} \]
from that of its local components.  Hence we may fix an  isometric isomorphism
\[  \Theta^{\rm Aut}(\Sigma) \cong \Theta^{\A}(\Sigma)  \]
so that
\[  \langle -, - \rangle_{\Theta^{\rm Aut}(\Sigma)} =  \prod_v \langle-, -\rangle_{\theta(\Sigma_v)} \]
where the inner product on the left is that defined by the Petersson inner product.
\vskip 10pt

\subsection{\bf Adelic periods}
Having fixed the various decompositions in the previous subsection, we can now introduce the adelic versions of various period maps. 
We have introduced various global or automorphic quantities associated to $\Sigma$ and $\Theta^{\rm Aut}(\Sigma)$, namely
\vskip 5pt

\begin{itemize}
\item the maps $\alpha^{\rm Aut}_{\Sigma}$, $\beta^{\rm Aut}_{\Sigma}$, $P_{N, \psi, \Sigma}$ and $J^{\rm Aut}_{\Sigma}$ related to the global Whittaker period with respect to $(N, \psi)$;
\vskip 5pt

\item the maps $\alpha^{\rm Aut}_{\Theta(\Sigma)}$, $\beta^{\rm Aut}_{\Theta(\Sigma)}$, $P_{\OO(v_1^{\perp}), \Theta(\Sigma)}$ and $J^{\rm Aut}_{\Theta(\Sigma)}$ related to the
$\OO(v_1^{\perp})$-period; 
\vskip 5pt

\item the maps $A^{\rm Aut}_{\Sigma}$, $B^{\rm Aut}_{\Theta(\Sigma)}$ related to global theta lifting.
\end{itemize}
\vskip 5pt
 \vskip 5pt

All the above global objects have local counterparts, relative to the decompositions fixed in the previous subsection. Namely, 
for each place $v$ of $k$,  we have:
\vskip 5pt

\begin{itemize}
\item the maps $\alpha_{\Sigma_v}$, $\beta_{\sigma_v}$,  $\ell_{\Sigma_v}$ and $J_{\Sigma_v}$ given by the Whittaker-Plancherel theorem;
\vskip 5pt

\item the maps $\alpha_{\Theta(\Sigma_v)}$, $\beta_{\Theta(\sigma_v)}$,  $\ell_{\Theta(\Sigma_v)}$ and $J_{\Theta(\Sigma_v)}$ given by the spectral decomposition of $L^2(X_v, |\omega_v|)$;

\vskip 5pt

\item the maps $A_{\Sigma_v}$ and $B_{\Theta(\Sigma_v)}$ given by the spectral decomposition of the Weil representation $\Omega_{\psi_v}$. 
\end{itemize}
\vskip 5pt

We may take the Euler product of the above local quantities. As an example, we set:
\[   \beta_{\Sigma}^{\A}  := \prod_v^{\ast}  \beta_{\Sigma_v}  \quad \text{and} \quad \ell_{\Sigma}^{\A} := \prod_v^{\ast} \ell_{\Sigma_v}. \]
Here, the Euler product has to be understood  as a regularized product, via meromorphic continuation if necessary, as discussed in the introduction. Let us illustrate this in three instances, assuming that $\dim (V). =2n$ and $\disc (V) = 1 \in k^{\times}/ k^{\times 2}$ (for simplicity):
\vskip 5pt
\begin{itemize}
\item  Suppose we want to define $\ell_{\Sigma}^{\A}$. Given a decomposable vector $u = \otimes_v u_v \in \Sigma$, with $u_v = v_{0,v}$ for almost all $v$, we need to examine the convergence of the product
\[  \prod_v \ell_{\Sigma_v}( u_v). \]
This is determined by the value $\ell_{\Sigma_v}(v_{0,v})$ for almost all $v$. In Lemma \ref{L:unram}(i), we have noted that 
\[   |\ell_{\Sigma_v}(v_{0,v})|^2 = \frac{\zeta_{k_v}(2)}{L(1, \Sigma_v, Ad)}. \] 
Thus the Euler product may not converge because of the denominator.  However, we may normalize $\ell_{\Sigma_v}$ by setting
\[   \ell_{\Sigma_v}^{\flat}   =   \lambda_v^{-1}  \cdot \ell_{\Sigma_v}   \quad \text{for each place $v$} \]
with
\[ |\lambda_v|^2 = \frac{\zeta_{k_v}(2)}{L(1, \Sigma_v, Ad)} \]
and 
\[ \ell_{\Sigma_v}^{\flat}(v_{0,v}) = 1 \quad \text{for almost all $v$.} \]
Then the product $\prod_v \ell_{\Sigma_v}^{\flat}(u_v)$ certainly converges and we set
\[  \ell_{\Sigma}^{\A} := \prod_v^{\ast} \ell_{\Sigma_v} := \left( \frac{|\zeta_{k}(2)|}{|L(1, \Sigma, Ad)|}\right)^{1/2} \cdot \prod_v \ell_{\Sigma_v}^{\flat} \]
where the global L-value is defined by meromorphic continuation of the global L-function. 
\vskip 5pt

\item Suppose we want to define 
\[ A_{\Sigma}^{\A} : \Omega_{\psi} \otimes \overline{\Sigma} \longrightarrow \Theta^{\A}(\Sigma). \]
Hence we need to consider the infinite product
\[  \prod_{v \notin S} A_{\Sigma_v}(\Phi_{0,v}, v_{0,v})  \]
for some finite set $S$ of places of $k$. By Lemma \ref{L:unram}(i), we see that for almost all $v$, 
\[    A_{\Sigma_v}(\Phi_{0,v}, v_{0,v})  = \frac{L(n-1, \Sigma_v,  std)}{\zeta_{k_v}(2) \cdot \zeta_{k_v}(n) \cdot \zeta_{k_v}(2n-2) } \cdot w_{0,v} \in \theta_{\psi_v}(\Sigma_v)  \]
where $\dim V = 2n$ and $w_{0,v}$ is a unit vector.
Since $\Sigma$ is tempered, the relevant Euler product actually converges absolutely when $n > 2$, in which case we can simply define
\[  A^{\A}_{\Sigma} = \prod_v A_{\Sigma_v} . \]
In general, we would set
\[  A_{\Sigma_v}^{\flat} = \lambda_v^{-1} \cdot A_{\Sigma_v}  \]
with 
\[ | \lambda_v|^2 =  \frac{|L(n-1, \Sigma_v,  std)|}{|\zeta_{k_v}(2)| \cdot |\zeta_{k_v}(n)| \cdot |\zeta_{k_v}(2n-2)|} \quad \text{ for all $v$,} \]
and 
\[ A_{\Sigma_v}^{\flat}(\Phi_{0,v}, v_{0,v}) = w_{0,v} \quad \text{ for almost all $v$.} \]
Then we set
\[  A^{\A}_{\Sigma} = \left( \frac{|L(n-1, \Sigma,  std)|}{|\zeta_k(2)| \cdot |\zeta_k(n)|\cdot |\zeta_k(2n-2)| }   \right)^{1/2} \cdot \prod_v A_{\Sigma_v}^{\flat}. \]
\vskip 5pt

\item Suppose we want to define $\ell_{\Theta(\Sigma)}^{\A} : \Theta^{\A}(\Sigma) \longrightarrow \C$. Then we need to consider the Euler product
\[  \prod_{v \notin S} \ell_{\theta(\Sigma_v)} ( w_{0,v})     \quad \text{for some finite set $S$.} \]
In Proposition \ref{P:Lfactor}, we have seen that
\[  |\ell_{\theta(\Sigma_v)}(w_{0,v}) |^2 =  \frac{L(n-1, \Sigma_v, std)}{ L(1, \sigma, Ad)}  \cdot  \frac{\zeta_{k_v}(n)}{\zeta_{k_v}(2n-2) }\]
for almost all $v$ (with $\dim V = 2n$). If $n =2$, the right hand side is equal to $1$, so the relevant Euler product converges. For $n > 2$, we set
\[  \ell_{\theta(\Sigma_v)}^{\flat} = \lambda_v^{-1} \cdot \ell_{\theta(\Sigma_v)} \]
with 
\[  |\lambda_v|^2 =  \frac{L(n-1, \Sigma_v, std)}{ L(1, \Sigma_v, Ad)}  \cdot  \frac{\zeta_{k_v}(n)}{\zeta_{k_v}(2n-2)}  \quad \text{for all $v$} \]
 and
 \[  \ell_{\theta(\Sigma_v)}^{\flat} (w_{0,v}) =1 \quad \text{ for almost all $v$.} \]
 Then we set
 \[  \ell^{\A}_{\Theta^{\A}(\Sigma)}  =\left(  \frac{|L(n-1, \Sigma, std)|}{ |L(1, \Sigma, Ad)|}  \cdot  \frac{|\zeta_k(n)|}{|\zeta_k(2n-2)| } \right)^{1/2}   \cdot \prod_v \ell^{\flat}_{\theta(\Sigma_v)}. \]
 \end{itemize}

After these three illustrative examples, we leave the precise definition of other adelic period maps to the reader. 

\vskip 10pt

\subsection{\bf Comparison of automorphic and adelic periods}
We can now compare the various adelic period maps with their automoprhic counterparts, using the decompositions
\[ \Omega_{\psi} \cong \otimes_v' \Omega_{\psi} , \quad \Sigma \cong \otimes'_v \Sigma_v \quad \text{and} \quad \Theta^{\rm Aut}(\Sigma) \cong \Theta^{\A}(\Sigma):= \otimes'_v \theta_{\psi_v}(\Sigma_v) \]
 fixed in \S \ref{SS:dec}.
\vskip 5pt

Since both
$\ell_{\Sigma}^{\A}$ and $P_{\Sigma, N, \psi}$ are nonzero elements of  the 1-dimensional space $\Hom_{N(\A)}(\Sigma, \psi)$, 
 there is a constant $c(\Sigma) \in \C^{\times}$ such that
 \[   P_{\Sigma, N, \psi} =  c(\Sigma) \cdot \ell_{\Sigma}^{\A}, \]
 so that  
  \[   \beta^{\rm Aut}_{\Sigma} = c(\Sigma) \cdot  \beta_{\Sigma}^{\A}  \]
 Likewise, we have $c(\Theta(\Sigma)) \in \C^{\times}$ such that 
 \[  P_{\Theta(\Sigma), \OO(v_1^{\perp})} = c(\Theta(\Sigma)) \cdot \ell_{\Theta(\Sigma)}^{\A}   \]
 so that
 \[   \beta^{\rm Aut}_{\Theta(\Sigma)} = c(\Theta(\Sigma)) \cdot  \beta_{\Theta(\Sigma)}^{\A}, \]
   Similarly, we have $a(\Sigma)$ and $b(\Sigma)  \in \C^{\times}$ such that
 \[  A^{\rm Aut}_{\Sigma}  = a(\Sigma) \cdot A^{\A}_{\Sigma} \quad \text{and} \quad B^{\rm Aut}_{\Theta(\Sigma)} = b(\Sigma) \cdot B^{\A}_{\Theta(\Sigma)} \]
    
 \vskip 10pt

\subsection{\bf Global result}
 The main global problem is to determine the constant $|c(\Theta(\Sigma))|^2$. We shall resolve this by relating $c(\Theta(\Sigma))$ to the other constants $c(\Sigma)$, $a(\Sigma)$, and  $b(\Sigma)$.  
\vskip 5pt

\begin{prop}  \label{P:cb}
We have:
\[  \overline{c(\Theta(\Sigma))}  =  c(\Sigma) \cdot b(\Sigma).  \]
\end{prop}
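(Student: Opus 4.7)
The plan is to compute the quantity
\[
P_{N,\psi}\bigl(B^{\rm Aut}_{\Theta(\Sigma)}(\Phi,\phi)\bigr)
\]
for $\Phi \in \Omega_\psi$ and $\phi \in \Theta^{\rm Aut}(\Sigma)$ in two different ways, and to read off the desired identity by comparison. This mirrors at the global level the local computation behind Corollary \ref{C:com2}, together with Proposition \ref{P:gperiod}.

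First I will expand along the ``spectral'' side. Using $P_{N,\psi,\Sigma} = c(\Sigma)\cdot \ell^{\A}_{\Sigma}$ together with $B^{\rm Aut}_{\Theta(\Sigma)} = b(\Sigma)\cdot B^{\A}_{\Theta(\Sigma)}$, I obtain
\[
P_{N,\psi}\bigl(B^{\rm Aut}_{\Theta(\Sigma)}(\Phi,\phi)\bigr)
= c(\Sigma)\cdot b(\Sigma)\cdot \ell^{\A}_{\Sigma}\bigl(B^{\A}_{\Theta(\Sigma)}(\Phi,\phi)\bigr).
\]
The local Corollary \ref{C:com2} asserts, at each place $v$, the identity $\ell_{\Sigma_v}\circ B_{\theta(\Sigma_v)} = \langle (-)|_{X_v},\beta_{\theta(\Sigma_v)}(-)\rangle_{X_v}$. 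Taking the (regularized) Euler product over all places, using that the normalizing L-factor ratios used to define $\ell^{\A}_\Sigma$ and $\beta^{\A}_{\Theta(\Sigma)}$ are the same on both sides, I will obtain the adelic version
\[
\ell^{\A}_{\Sigma}\bigl(B^{\A}_{\Theta(\Sigma)}(\Phi,\phi)\bigr)
= \bigl\langle \Phi|_X,\; \beta^{\A}_{\Theta(\Sigma)}(\phi)\bigr\rangle_{X_{\A}}.
\]

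Next I will expand along the ``automorphic'' side. Proposition \ref{P:gperiod} gives directly
\[
P_{N,\psi}\bigl(B^{\rm Aut}_{\Theta(\Sigma)}(\Phi,\phi)\bigr)
= \bigl\langle \Phi|_X,\; \beta^{\rm Aut}_{\Theta(\Sigma)}(\phi)\bigr\rangle_{X_{\A}}.
\]
Substituting $\beta^{\rm Aut}_{\Theta(\Sigma)} = c(\Theta(\Sigma))\cdot \beta^{\A}_{\Theta(\Sigma)}$ and using sesquilinearity of the inner product in the second slot, this equals
\[
\overline{c(\Theta(\Sigma))}\cdot \bigl\langle \Phi|_X,\; \beta^{\A}_{\Theta(\Sigma)}(\phi)\bigr\rangle_{X_{\A}}.
\]

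Equating the two expressions yields
\[
c(\Sigma)\cdot b(\Sigma)\cdot \bigl\langle \Phi|_X,\; \beta^{\A}_{\Theta(\Sigma)}(\phi)\bigr\rangle_{X_{\A}}
= \overline{c(\Theta(\Sigma))}\cdot \bigl\langle \Phi|_X,\; \beta^{\A}_{\Theta(\Sigma)}(\phi)\bigr\rangle_{X_{\A}}.
\]
Choosing $\Phi$ and $\phi$ so that the common factor on the right is nonzero (which is possible since $\beta^{\A}_{\Theta(\Sigma)}$ is a nonzero embedding and restriction to $X$ is surjective onto $S(X_1)$ in the appropriate sense), I can cancel it to conclude $\overline{c(\Theta(\Sigma))} = c(\Sigma)\cdot b(\Sigma)$.

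The main obstacle will be justifying the adelic identity $\ell^{\A}_{\Sigma}\circ B^{\A}_{\Theta(\Sigma)} = \langle (-)|_X,\beta^{\A}_{\Theta(\Sigma)}(-)\rangle_{X_{\A}}$ as a bona fide equality of (regularized) Euler products, since the individual infinite products may only converge after the normalization factors of \S \ref{SS:dec} are inserted. This amounts to checking that the L-factor ratios that enter the definitions of $\ell^{\flat}_{\Sigma_v}$, $B^{\flat}_{\theta(\Sigma_v)}$ and $\beta^{\flat}_{\theta(\Sigma_v)}$ at unramified places cancel consistently on both sides of the local identity, so that the two Euler products converge to the same meromorphic continuation. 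Given Proposition \ref{P:Lfactor} and Lemma \ref{L:unram}, this bookkeeping is the only nontrivial point, and once verified the rest of the argument is formal.
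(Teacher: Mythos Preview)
Your proposal is correct and follows essentially the same approach as the paper's own proof, which is the one-line ``This follows by combining the global Proposition \ref{P:gperiod} and the local Corollary \ref{C:com2}.'' You have simply written out in detail what that line means, and your flagged concern about the regularized Euler products matching up is a genuine bookkeeping point that the paper leaves implicit.
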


\vskip 5pt

\begin{proof}
This follows by combining the global Proposition \ref{P:gperiod} and the local Corollary \ref{C:com2}.
\end{proof}
\vskip 5pt

It remains then to compute $c(\Sigma)$ and $b(\Sigma)$.   
\vskip 5pt

\begin{prop} \label{P:ab}
 We have:
\[  a(\Sigma)  = b(\Sigma)  \quad \text{and} \quad  |a(\Sigma)|  =1. \] 
Moreover, 
\[  c(\Sigma)  = \begin{cases}
1, \text{  if $\Sigma$ is non-endoscopic;} \\
1/2,  \text{  if $\Sigma$ is an endoscopic lift from $\OO_2 \times \OO_1$;} \\ 
1/4,  \text{  if $\Sigma$ is an endoscopic lift from $\OO_1 \times \OO_1 \times \OO_1$.}
\end{cases} \]
\end{prop}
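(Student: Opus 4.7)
The plan is to handle the three assertions separately. The first two follow from general properties of theta correspondence, while the third requires a finer global input via the Labesse--Langlands description of the cuspidal spectrum of $\SL_2$.

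\vskip 5pt

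For $a(\Sigma) = b(\Sigma)$: I would apply the global adjunction (\ref{E:gAB}) to a decomposable $(\Phi, f, \phi)$ and insert the definitions $A^{\rm Aut}_\Sigma = a(\Sigma) A^\A_\Sigma$ and $B^{\rm Aut}_{\Theta(\Sigma)} = b(\Sigma) B^\A_{\Theta(\Sigma)}$. Under our compatible isometric identification $\Theta^{\rm Aut}(\Sigma) \cong \Theta^\A(\Sigma)$, both sides decompose into the same Euler product of local pairings by the local adjunction (\ref{E:AB}), forcing $a(\Sigma) = b(\Sigma)$. For $|a(\Sigma)| = 1$: the input is the global Rallis inner product formula for $\SL_2 \times \OO(V)$. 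Unfolding
\[ \langle A^{\rm Aut}_\Sigma(\Phi_1, f_1), A^{\rm Aut}_\Sigma(\Phi_2, f_2)\rangle_{\Theta^{\rm Aut}(\Sigma)} \]
as a doubling zeta integral factorizes into an Euler product of local doubling zeta integrals $Z_{\Sigma_v}$; by (\ref{E:zeta}) each equals the local inner product $\langle A_{\Sigma_v}(\Phi_{1,v}, f_{1,v}), A_{\Sigma_v}(\Phi_{2,v}, f_{2,v})\rangle_{\theta(\Sigma_v)}$. Comparing with the isometric decomposition of $\Theta^{\rm Aut}(\Sigma)$ and $A^{\rm Aut}_\Sigma = a(\Sigma) A^\A_\Sigma$ forces $|a(\Sigma)|^2 = 1$.

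\vskip 5pt

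For $c(\Sigma)$: the strategy is to embed $\Sigma$ as a constituent of $\tilde\Sigma|_{\SL_2}$ for a cuspidal representation $\tilde\Sigma$ of $\GL_2$. By Shalika's multiplicity-one theorem for $\GL_2$, the global $\psi$-Whittaker coefficient of $\tilde\Sigma$ factorizes exactly into local ones, and Proposition~\ref{P:SL2} expresses the $\SL_2$-Whittaker functionals on $\Sigma_v$ via the restrictions of the $\tilde\Sigma_v$-Whittaker functionals summed over the local square classes $k_v^\times/k_v^{\times 2}$. Restricting the $\GL_2$-factorization to $\Sigma \subset \tilde\Sigma|_{\SL_2}$ and tracing through the unitary structures produces $c(\Sigma) = |S_\Sigma|^{-1}$, where $S_\Sigma$ is the centralizer of the A-parameter of $\Sigma$ inside $\SO_3(\C)$: the values $|S_\Sigma| = 1, 2, 4$ in the stable, dihedral, and fully split cases yield the claimed constants $1, 1/2, 1/4$.

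\vskip 5pt

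The main obstacle is the third step. The delicate point is reconciling the global Whittaker expansion of $\phi \in \Sigma$, which involves a sum over the global square classes $k^\times/k^{\times 2}$, with the local Plancherel-normalized factorization summing over the local square classes $k_v^\times/k_v^{\times 2}$ at each place. The weighting produced by the $\GL_2$-to-$\SL_2$ restriction via Proposition~\ref{P:SL2} must then be matched with the Labesse--Langlands multiplicity $|S_\Sigma|^{-1}$; this requires a careful bookkeeping of how global and local square classes interact and a precise understanding of the A-packet structure of $\Sigma$ predicted by Arthur.
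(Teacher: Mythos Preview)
Your treatment of $a(\Sigma) = b(\Sigma)$ and $|a(\Sigma)| = 1$ is exactly the paper's argument: the global adjunction (\ref{E:gAB}) combined with the local one (\ref{E:AB}) gives the first, and the Rallis inner product formula together with (\ref{E:zeta}) gives the second.

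For $c(\Sigma)$, the paper simply invokes \cite[Cor.~4.3 and \S6.1]{LM} rather than reproving it. Your proposed strategy --- pass to a cuspidal $\tilde\Sigma$ on $\GL_2$, use the exact Whittaker factorization there, and descend to $\SL_2$ via the Labesse--Langlands packet structure to pick up the factor $|S_\Sigma|^{-1}$ --- is in fact the method Lapid--Mao carry out in that reference. So you are not giving a different proof; you are sketching the content of the citation. Your outline is correct in spirit, and you have correctly identified the delicate point (reconciling the sum over global square classes $k^\times/k^{\times 2}$ with the local products over $k_v^\times/k_v^{\times 2}$ and the packet multiplicity). But the actual execution of that bookkeeping is precisely what \cite{LM} supplies, so in practice one cites it as the paper does rather than redoing it inline.
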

\vskip 5pt

\begin{proof}
The equality of $a(\Sigma)$ and $b(\Sigma)$ follows by the global equation (\ref{E:gAB}) and the local equation (\ref{E:AB}). 
On the other hand, the Rallis inner product formula \cite{GQT} gives:
\[  \langle A^{\rm Aut}_{\Sigma}( \Phi, f) ,   A^{\rm Aut}_{\Sigma}( \Phi, f)  \rangle_{\Theta(\Sigma)}   =  Z_{\Sigma}(\Phi, \Phi, f,f) 
 = \langle A^{\A}_{\Sigma}(\Phi,f),  A^{\A}_{\Sigma}(\Phi,f) \rangle \]
where $Z_{\Sigma}(-)$  is the global doubling zeta integral (evaluated at the point $s = (\dim V -3)/2$, where it is holomorphic).     Combining this with the local equation (\ref{E:zeta}), we deduce that $|a(\Sigma)|  =1$.
\vskip 5pt

Finally, the value of $c(\Sigma)$ (for the group $\SL_2$)  was determined in \cite[Cor. 4.3 and \S 6.1]{LM}.
\end{proof} 
\vskip 5pt

As a consequence, we have:

\begin{thm}  \label{T:global}
Let $\Sigma$ be a globally $\psi$-generic cuspidal representation of $\SL_2$ such that
$\Pi = \Theta(\Sigma) \subset \mathcal{A}_{cusp}(\OO(V))$. Then  $|c(\Theta(\Sigma))|  = | c(\Sigma)|$, so that
\[  |P_{\Pi, \OO(v_1^{\perp})} (\phi)|^2  =  | c(\Sigma)|^2 \cdot   |\ell_{\Pi}^{\A}(\phi)|^2\]
for all $\phi \in \Pi$.  
\end{thm}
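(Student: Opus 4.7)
The plan is to obtain the theorem as an essentially formal consequence of Propositions \ref{P:cb} and \ref{P:ab}, which have already been established. By the definition of $c(\Theta(\Sigma))$, one has
\[ P_{\Pi, \OO(v_1^{\perp})} = c(\Theta(\Sigma)) \cdot \ell^{\A}_{\Theta(\Sigma)}, \]
so the desired identity $|P_{\Pi, \OO(v_1^{\perp})}(\phi)|^2 = |c(\Sigma)|^2 \cdot |\ell^{\A}_{\Pi}(\phi)|^2$ is equivalent to the scalar identity $|c(\Theta(\Sigma))| = |c(\Sigma)|$.

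First I would take the absolute value of the identity in Proposition \ref{P:cb} to obtain
\[ |c(\Theta(\Sigma))| = |c(\Sigma)| \cdot |b(\Sigma)|. \]
Then I would invoke Proposition \ref{P:ab}, which gives both $b(\Sigma) = a(\Sigma)$ and $|a(\Sigma)| = 1$, so that $|b(\Sigma)| = 1$. Combining these immediately yields $|c(\Theta(\Sigma))| = |c(\Sigma)|$.

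Substituting into the definition of $c(\Theta(\Sigma))$ then gives the advertised formula. There is no real obstacle here since all the analytic and representation-theoretic content has been absorbed into the preceding two propositions: Proposition \ref{P:cb} packages the local--global period transfer (the global Proposition \ref{P:gperiod} combined with the local Corollary \ref{C:com2}), while Proposition \ref{P:ab} packages the Rallis inner product formula (which controls $|a(\Sigma)|$) together with Lapid--Mao's explicit computation of the Whittaker proportionality constant $c(\Sigma)$ for $\SL_2$. The only conceptual point worth emphasizing in the write-up is that the various factors of the L-function $L^{\#}_X(1/2,\Sigma)$ arising in the regularized Euler products defining $\ell^{\A}_{\Sigma}$, $\ell^{\A}_{\Theta(\Sigma)}$, $A^{\A}_{\Sigma}$, $B^{\A}_{\Theta(\Sigma)}$ conspire to cancel, leaving only the elementary constant $|c(\Sigma)|^2 \in \{1, 1/4, 1/16\}$ governed by the endoscopic nature of $\Sigma$.
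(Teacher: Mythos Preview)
Your proposal is correct and matches the paper's approach exactly: the paper states Theorem~\ref{T:global} immediately after Proposition~\ref{P:ab} with the words ``As a consequence, we have,'' treating it as the formal combination of Propositions~\ref{P:cb} and~\ref{P:ab} that you spell out. Your closing editorial remarks about L-factor cancellation and the endoscopic values of $|c(\Sigma)|^2$ are accurate commentary but not part of the argument proper.
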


 \vskip 10pt

 \subsection{\bf Avoiding Rallis inner product} 
 In proving Theorem \ref{T:global}, we have pulled  the Rallis inner product formula out of the hat to deduce that $|a(\Sigma)|  =1$ in Proposition \ref{P:ab}. In fact, it is possible 
to avoid the Rallis inner product formula, as we briefly sketch in this subsection.  
\vskip 5pt

Just as Proposition \ref{P:gperiod} is the global analog of the local Corollary \ref{C:com2}, one can establish a global analog of Corollary \ref{C:othercom2}, namely:
\vskip 5pt

  \begin{prop} \label{P:gperiod2}
  For $\Phi \in \Omega_{\psi}$ and $f \in \Sigma$, one has
  \[  P_{\OO(v_1^{\perp})}(A^{\rm Aut}_{\Sigma}(\Phi,f))   = \langle p(\Phi),     \beta^{\rm Aut}_{\Sigma}(f) \rangle_{N \backslash \SL_2}. \]
 \end{prop}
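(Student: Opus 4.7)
The plan is to mirror the unfolding computation of Proposition \ref{P:gperiod} with the roles of the Whittaker and orthogonal periods reversed; the identity to be proved is the adelic analog of Corollary \ref{C:othercom2}. First, set
\[ F(g) := \int_{[\OO(v_1^\perp)]} \theta(\Phi)(g, h)\, dh, \]
an automorphic form on $[\SL_2]$, so that the left-hand side becomes $\int_{[\SL_2]} \overline{f(g)}\, F(g)\, dg$. For the right-hand side, using that $p(\Phi)$ is $(N,\psi)$-equivariant together with the definition $\beta^{\rm Aut}_\Sigma(f)(g) = \int_{[N]}\overline{\psi(n)} f(ng)\, dn$, one collapses the $[N]$-integration appearing in $W_f$ against the $N(\A)\backslash \SL_2(\A)$-integration to obtain
\[ \langle p(\Phi), \beta^{\rm Aut}_\Sigma(f)\rangle_{N\backslash \SL_2} = \int_{N(k)\backslash \SL_2(\A)} (g\Phi)(v_1)\, \overline{f(g)}\, dg. \]
The proposition thus reduces to showing the equality of these two double integrals.

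The key step is to compute $F(g)$ explicitly. By linearity one may take $\Phi = \Phi_1 \otimes \Phi_2$ under the orthogonal splitting $V = V_1 \oplus V_2$ with $V_1 = \langle v_1\rangle$ and $V_2 = v_1^\perp$, so that the theta kernel factorizes accordingly. Since $h \in \OO(v_1^\perp)$ acts trivially on $V_1$, this yields
\[ F(g) = \theta_{V_1}(\Phi_1)(g) \cdot I(g, \Phi_2), \qquad I(g, \Phi_2) := \int_{[\OO(V_2)]} \theta_{V_2}(\Phi_2)(g, h)\, dh, \]
where $\theta_{V_i}$ denotes the theta function for the dual pair $\SL_2 \times \OO(V_i)$. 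By the Siegel-Weil formula (applicable in the appropriate range of $\dim V$), $I(g, \Phi_2)$ equals $\tau(\OO(V_2)) = 2$ times a Siegel Eisenstein series $E(g, s_0, \Phi_2)$ on $\SL_2$ at the point $s_0$ determined by $V_2$. Thus $F$ is expressed as a product of a theta function and an Eisenstein series.

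The left-hand side then becomes a Rankin-Selberg integral $2 \int_{[\SL_2]} \overline{f(g)}\, \theta_{V_1}(\Phi_1)(g)\, E(g, s_0, \Phi_2)\, dg$. Unfolding the Eisenstein series via the Bruhat decomposition reduces this to an integral over $B(k)\backslash \SL_2(\A)$, and a subsequent Fourier expansion along $[N]$ using the $\psi$-Whittaker coefficient of $f$ collapses it further to an integral on $N(\A)\backslash \SL_2(\A)$ against $\overline{W_f}$; the combined factor arising from $\theta_{V_1}(\Phi_1)$ and the unfolded Eisenstein section is then to be identified, after change of variables, with $(g\Phi)(v_1)$. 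The main obstacle lies in this final identification: since the Whittaker expansion for $\SL_2$ involves multiple square-class characters $\psi_a$, one must carefully track which Fourier components of $\theta_{V_1}(\Phi_1)$ pair nontrivially with $\overline{W_f}$, using that the Fourier spectrum of $\theta_{V_1}(\Phi_1)$ is supported precisely on $k^{\times 2} \cup \{0\}$ (an artifact of $V_1$ being one-dimensional, cf.\ the formulas of \S 5) together with the uniqueness of $\psi$-Whittaker models for $\Sigma$. Alternatively, more in the spirit of Proposition \ref{P:gperiod}, one may argue directly by partitioning $V(k)$ into $\OO(v_1^\perp)(k)$-orbits: the fixed orbit $\{v_1\}$ supplies the main term $(g\Phi)(v_1)$, and the contributions of all other orbits should vanish after pairing against the cuspidal form $\overline{f}$.
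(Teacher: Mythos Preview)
Your approach is essentially the same as the paper's: both proceed via the see-saw identity (your factorization $F(g)=\theta_{V_1}(\Phi_1)(g)\cdot I(g,\Phi_2)$), invoke the Siegel--Weil formula to replace the theta integral $I(\cdot,\Phi_2)$ by an Eisenstein series, recognize the resulting expression as the global Gelbart--Jacquet zeta integral of \S\ref{SS:adjoint}, and then unfold. The paper phrases this explicitly as a see-saw argument and cites the zeta integral $Z(f,\Phi_1,\phi_s)$ at $s=(\dim V-3)/2$, but the substance is identical.

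Your worry about the final identification and the multiple square classes $\psi_a$ is unnecessary. After the standard unfolding of the Gelbart--Jacquet integral one arrives at
\[
\int_{N(\A)\backslash \SL_2(\A)} \overline{P_{N,\psi}(g\cdot f)}\cdot (g\cdot\Phi_1)(v_1)\cdot \phi_{\Phi_2}(g)\,dg,
\]
and the integrand is precisely $p(\Phi)(g)\cdot\overline{\beta^{\rm Aut}_\Sigma(f)(g)}$ by the factorization $p(\Phi)(g)=(g\Phi_1)(v_1)\cdot\phi_{\Phi_2}(g)$ established in \S\ref{SS:adjoint} (Proposition~\ref{P:adjoint}); the $T(k)$-summation absorbed during the Eisenstein unfolding is exactly what matches the $\psi$-Whittaker coefficient with the $(g\Phi_1)(v_1)$ term, so no separate tracking of square classes is needed. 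Your proposed ``alternative'' direct orbit-partition approach would, if carried out, reduce to the same Siegel--Weil computation and is not a genuinely different route. One minor point: your factor $\tau(\OO(V_2))=2$ is absent in the paper's statement $I(\Phi')=E(\phi_{\Phi'})$; this is a normalization choice in the Siegel--Weil formula and should be checked against the conventions of \cite{GQT}.
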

  \vskip 5pt

\begin{proof}
The proof  relies on the see-saw diagram
\[
 \xymatrix{
  {\Mp_2 \times \Mp_2}  \ar@{-}[dr] \ar@{-}[d] & { \OO(V)} 
     \ar@{-}[d] \\
 { \SL_2}  \ar@{-}[ur] &  {\OO(v_1) \times \OO(v_1^{\perp})}}
\] 
which gives rise to a global see-saw identity. More precisely, if we take 
\[  \Phi  =    \Phi_1 \otimes\Phi'  \in \mathcal{S}( \A v_1) \otimes \mathcal{S}((v_1^{\perp})_{\A}), \]
then the see-saw identity reads:
 \[    P_{\Pi, \OO(v_1^{\perp})} ( A^{\rm Aut}_{\Sigma}(\Phi, f) )  =   \int_{[\SL_2]}  \overline{f(g)} \cdot   \theta(\Phi_1) (g)  \cdot I(\Phi')(g)  \, dg   \]
 where
 \vskip 5pt
 \begin{itemize}
 \item[-] $\theta(\Phi_1)$ is the theta function associated to $\Phi_1 \in \mathcal{S}(\A v_1)$ which affords the Weil representation (associated to $\psi$) for $\Mp_2 \times \OO(v_1)$; 
\item[-] $I(\Phi')$ is the theta integral
 \[  I(\Phi')(g)  :=  \int_{[\OO(v_1^{\perp})]}   \theta(\Phi')(gh)  \, dh  \]
 which belongs to the global theta lift of the trivial representation of $\OO(v_1^{\perp})$ to $\Mp_2$.  
 \end{itemize}
 The theta integral converges absolutely when $\dim V > 4$ (it is in the so-called Weil's convergence range) or when $\OO(v_1^{\perp})$ is anisotropic. 
When $\dim V  = 4$ and $\OO(v_1^{\perp})$ is split, one needs to regularise the theta integral following Kudla-Rallis  (see \cite[\S 3]{GQT}).  Since our intention here is to indicate an alternative approach to a result which we have shown, we will ignore this analytic complication in the following exposition.
 \vskip 5pt
 
 In \S \ref{SS:adjoint}, we have seen that the map $\phi' \mapsto  \phi_{\Phi'}$, where 
 \[  \phi_{\Phi'}(g)  = (g \cdot \Phi')(0),  \]
gives an isomorphism
\[  \mathcal{S}((v_1^{\perp})_{\A})_{\OO(v_1^{\perp})}  \longrightarrow  I_{\psi}( \chi_{{\rm disc}(v_1^{\perp})}, (\dim V-3)/2)   \]
 of the $\OO(v_1^{\perp})$-coinvariant of the Weil representation of $\OO(v_1^{\perp}) \times \Mp_2$  to a principal series representation of $\Mp_2$.   
  Now  the Siegel-Weil formula shows that 
 \[  I(\Phi')  = E( \phi_{\Phi'})  \]
 where $E(\phi_{\Phi'})$ is the Eisenstein series associated to $\phi_{\Phi'}$.  Again, when $\dim V > 4$, the sum defining the Eisenstein series is convergent, but when $\dim V = 4$, it is defined by meromorphic continuation. Further, if $\OO(v_1^{\perp})$ is also split, then the Eisenstein series does have a pole at the point of interest, and we need to invoke the second term identity of the Siegel-Weil formula \cite{GQT}. As mentioned before, we omit these extra (though interesting) details in this proof. 
 
 \vskip 5pt
 
 Hence, we have the following identity: 
 \[    P_{\Pi, \OO(v_1^{\perp})} ( A^{\rm Aut}_{\Sigma}(\Phi, f) )  =    \int_{[\SL_2]}    \overline{f(g)} \cdot   \theta(\Phi_1) (g)  \cdot  E( \phi_{\Phi'}) \, dg. \]
 Now the right hand side is the value at $s = (\dim V-3)/2$ of the global zeta integral
 \[  Z( f, \Phi_1, \phi_s)  =   \int_{[\SL_2]}    \overline{f(g)} \cdot   \theta(\Phi_1) (g)  \cdot  E(\phi_s) \, dg \]
 for $\phi_s \in I_{\psi}(\chi_{{\rm disc}(v_1^{\perp})}, s)$. 
 This is the global analog of the local zeta integrals we discussed in \S \ref{SS:adjoint} and represents the (twisted) adjoint L-function of $\Sigma$.  
 The unfolding of this global zeta integral, for ${\rm Re}(s)$ sufficiently large, gives:
 \[  Z(s, f , \Phi_1, \phi)  =   \int_{N(\A) \backslash \SL_2(\A)}  \overline{P_{N, \psi}(g \cdot f)}   \cdot  (g \cdot \Phi_1)(v_1)  \cdot \phi_s(g)   \, dg. \]
 Specializing to $s = (\dim V - 3)/2$ gives the Proposition.
  \end{proof}
  \vskip 5pt

  By combining Proposition \ref{P:gperiod2} and Corollary \ref{C:othercom2}, we deduce:
  
  \begin{cor}  \label{C:ca} 
  One has:
  \[   \overline{c(\Sigma)}    = c(\Theta(\Sigma)) \cdot a(\Sigma). \]
   \end{cor}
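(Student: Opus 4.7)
\medskip

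\noindent\textbf{Proof proposal for Corollary \ref{C:ca}.}
The plan is to apply Proposition \ref{P:gperiod2} to a decomposable pair $(\Phi, f) \in \Omega_\psi \otimes \Sigma$, expand both sides using the defining proportionality constants relating automorphic and adelic period maps, and then match the resulting expressions via the Euler product of the local identity in Corollary \ref{C:othercom2}. The uniqueness of the relevant $\Hom$ spaces will then force the equality of scalars.

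First, I would choose a decomposable pair $\Phi = \otimes_v \Phi_v \in \Omega_\psi$ and $f = \otimes_v f_v \in \Sigma$ such that the quantity
\[ \ell^{\A}_{\Theta(\Sigma)}\bigl(A^{\A}_{\Sigma}(\Phi,f)\bigr) = \prod_v \ell_{\theta(\Sigma_v)}\bigl(A_{\Sigma_v}(\Phi_v,f_v)\bigr) \]
is nonzero (such pairs exist because each local map is nonzero and the relevant spaces are one-dimensional). Apply Proposition \ref{P:gperiod2} to obtain
\[ P_{\OO(v_1^{\perp})}\bigl(A^{\rm Aut}_{\Sigma}(\Phi,f)\bigr) = \langle p(\Phi),\, \beta^{\rm Aut}_{\Sigma}(f) \rangle_{N\backslash \SL_2}. \]

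Next, I would rewrite each side using the definitions from \S\ref{SS:dec}. The left-hand side becomes
\[ a(\Sigma)\cdot c(\Theta(\Sigma))\cdot \ell^{\A}_{\Theta(\Sigma)}\bigl(A^{\A}_{\Sigma}(\Phi,f)\bigr), \]
using $A^{\rm Aut}_\Sigma = a(\Sigma) \cdot A^{\A}_\Sigma$ and $P_{\Theta(\Sigma),\OO(v_1^\perp)} = c(\Theta(\Sigma))\cdot \ell^{\A}_{\Theta(\Sigma)}$. For the right-hand side, since the inner product on $N(\A)\backslash\SL_2(\A)$ is conjugate-linear in its second argument and $\beta^{\rm Aut}_\Sigma = c(\Sigma)\cdot\beta^{\A}_\Sigma$, we get
\[ \overline{c(\Sigma)} \cdot \langle p(\Phi),\, \beta^{\A}_{\Sigma}(f) \rangle_{N\backslash \SL_2}. \]
Finally, apply Corollary \ref{C:othercom2} place-by-place and take the product over all $v$: this yields
\[ \langle p(\Phi),\, \beta^{\A}_{\Sigma}(f) \rangle_{N\backslash \SL_2} = \ell^{\A}_{\Theta(\Sigma)}\bigl(A^{\A}_{\Sigma}(\Phi,f)\bigr), \]
where the Euler product on the right matches the factorization of the global Whittaker pairing on the left. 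Equating the two expressions and dividing by the nonzero common factor gives $a(\Sigma)\cdot c(\Theta(\Sigma)) = \overline{c(\Sigma)}$.

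The main obstacle I anticipate is bookkeeping: one must verify the compatibility of the various local-global decompositions (of the Weil representation, the Petersson pairing on $\Sigma$, and the unitary structures on $\theta_{\psi_v}(\Sigma_v)$ fixed in \S\ref{SS:dec}) so that the Euler product of local identities really does reconstruct the global identity without stray constants, and in particular that the convergence/regularization of $\prod_v^{\ast}$ used to define $\beta^{\A}_\Sigma$ and $A^{\A}_\Sigma$ is compatible with pairing under $\langle -, - \rangle_{N\backslash\SL_2}$. Once the normalizations in \S\ref{SS:dec} are invoked so that $\prod_v \langle -,-\rangle_{\Sigma_v} = \langle -, - \rangle_\Sigma$ and the local unit vectors assemble into global ones, this reduces to a direct verification. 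The only other mild point is to ensure that the space of decomposable pairs with nonvanishing $\ell^{\A}_{\Theta(\Sigma)} \circ A^{\A}_\Sigma$ is large enough to justify cancellation, which follows from the one-dimensionality of $\Hom_{N_v}(\Sigma_v, \psi_v)$ and of $\Hom_{\OO(v_1^\perp)(k_v)}(\theta_{\psi_v}(\Sigma_v),\C)$.
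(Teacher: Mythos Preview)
Your proof is correct and follows exactly the approach indicated in the paper, which simply states that the corollary follows by combining Proposition \ref{P:gperiod2} with Corollary \ref{C:othercom2}. You have carefully unpacked this combination: expanding both sides of Proposition \ref{P:gperiod2} via the defining constants $a(\Sigma)$, $c(\Sigma)$, $c(\Theta(\Sigma))$ and then matching them using the Euler product of the local identity in Corollary \ref{C:othercom2}.
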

  \vskip 5pt
  
  Combining Corollary \ref{C:ca} with Propositions \ref{P:cb} and \ref{P:ab}, we see that
  \[   \overline{c(\Sigma)}  =  c(\Theta(\Sigma)) \cdot a(\Sigma)= \overline{c(\Sigma)} \cdot \overline{b(\Sigma)} \cdot a(\Sigma) =  \overline{c(\Sigma)} \cdot |a(\Sigma)|^2 \]
  from which we deduce that
  \[  |a(\Sigma)| =1. \]
  \vskip 5pt
  
 There is a good reason for avoiding the use of the Rallis inner product formula in the treatment of the global problem. Indeed, the viewpoint and techniques developed in this paper  should carry over to essentially all the low rank spherical varieties treated in \cite{GG}. Many of these (such as ${\rm Spin}_9 \backslash F_4$, $G_2 \backslash {\rm Spin}_8$ or $F_4 \backslash E_6$ to name a few) would involve the exceptional theta correspondence. Unfortunately, in the setting of the exceptional theta correspondence, an analog of the Rallis inner product formula is not known. The argument in this subsection, however, shows that this lack need not be an obstruction in the exceptional setting. 
 
 \vskip 10pt
  
 \subsection{\bf Global Relative Character Identity}  
 We can also establish the global analog of the relative character identity. One has the diagram
      \[
 \xymatrix{ &  S(V_{\A})
 \ar[dl]_p \ar[dr]^q & \\
 \mathcal{S}(N(\A), \psi \backslash \SL_2(\A))
  & &
 S(X_{\A})  
  }
\]
which is the adelic analog of the diagram (\ref{E:diagram}).
 The space $ \mathcal{S}(N(\A), \psi \backslash \SL_2(\A))$ (which is the image of $p$) is the restricted tensor product of the local spaces $\mathcal{S}(N(k_v),\psi_v \backslash \SL_2(k_v))$ of test functions defined in (\ref{E:test}), where the restricted tensor product is with respect to the family of basic functions $\{ f_{0,v} \}$ given in Definition \ref{D:basic}.  Likewise, the space $ S(X_{\A})$ is the restricted tensor product of $S(X_{k_v})$ (which is $C^{\infty}_c(X_{k_v})$ at finite places) with respect to the family of basic functions $\{ \phi_{0,v}\}$ in Definition \ref{D:basic}. 
 As in the local case, one says that $f \in \mathcal{S}(N(\A), \psi \backslash \SL_2(\A))$ and $\phi \in  S(X_{\A})$ are in correspondence or are transfers of each other if there exists $\Phi \in S(V_{\A})$ such that $f = p(\Phi)$ and $\phi = q(\Phi)$. The fundamental Lemma  \ref{L:fund} ensures that every $f$ has a transfer $\phi$ and vice versa.  
 \vskip 5pt
 
Before formulating the global relative character identity, 
 we need to address an additional  subtle point here. In our general discussion in \S \ref{S:aabb}, we have considered the maps 
 \[ \alpha^{\rm Aut}_{\Sigma}: C^{\infty}_c(N(\A) , \psi \backslash \SL_2(\A)) \longrightarrow \Sigma  \subset \mathcal{A}_{cusp}(\SL_2) \]
 and 
 \[ \alpha^{\rm Aut}_{\Theta^{\rm Aut}(\Sigma)}: C^{\infty}_c(X(\A)) \longrightarrow \Theta^{\rm Aut}(\Sigma) \subset \mathcal{A}_{cusp}(\OO(V)). \] 
 The point here is that their domains consist of smooth compactly supported functions on the adelic points of the relevant spherical varieties. Likewise, in \S \ref{SS:GRC}, 
 the global relative character is given as a distribution on the space of smooth compactly supported functions. Now in the case of the hyperboloid $X$, 
 this is fine  since the basic function $\phi_{0,v}$ belongs to $C^{\infty}_c(X(k_v))$. However, this is not sufficient for the case of the Whittaker variety $(N ,\psi) \backslash \SL_2$ since the basic function $f_{0,v}$  is not compactly supported. In particular, while it is true that
 \[ C^{\infty}_c(N(k_v), \psi_v \backslash \SL_2(k_v)) \subset \mathcal{S}(N(k_v), \psi_v \backslash \SL_2(k_v)) \quad \text{ for each place $v$,} \]
 one has in the adelic setting:
  \[ C^{\infty}_c(N(\A), \psi \backslash \SL_2(\A)) \nsubseteq \mathcal{S}(N(\A), \psi \backslash \SL_2(\A)).\]
 Indeed, these adelic spaces have nothing much to do with each other. 
 \vskip 5pt
 
 To define global relative characters for the Whittaker variety, we thus need to ensure 
 that the map $\alpha^{\rm Aut}_{\Sigma} = {\rm pr}_{\Sigma} \circ \theta$ can be defined on $\mathcal{S}(N(\A), \psi \backslash \SL_2(\A))$.
 \vskip 5pt
 
  The main issue is to ensure that the formation of theta series $\theta$ can be applied to $f  = \otimes'_v f_v\in \mathcal{S}(N(\A), \psi \backslash \SL_2)$. Recall that
  \begin{equation} \label{E:thetaf}
   \theta(f)(g) = \sum_{\gamma \in N(k) \backslash \SL_2(k)} f(\gamma g) \quad \text{for $g \in \SL_2(\A)$.}  \end{equation}
Hence, if $f = p(\Phi)$ with $\Phi \in S(V_{\A})$,  we would like to show the convergence of the sum
 \[   \sum_{\gamma \in B(k) \backslash \SL_2(k)}  \sum_{\lambda  \in k^{\times}}  | f( t(\lambda) \gamma g) | =  \sum_{\gamma \in B(k) \backslash \SL_2(k)}  \sum_{ \lambda \in k^{\times}} | ( \gamma g \cdot \Phi) (\lambda \cdot v_1)|. \]
 Let us denote the inner sum by
 \[   F_{\Phi} (g) :=  \sum_{\lambda \in k^{\times}}  |(g \cdot \Phi) ( \lambda \cdot v_1)|. \] 
which is certainly convergent since $\Phi$ is a Schwatz function on $V_{\A}$. Then we need to show the convergence of
\begin{equation} \label{E:Eis}
      \sum_{\gamma \in B(k) \backslash \SL_2(k)}  F_{\Phi}( \gamma g). \end{equation}
 This looks very much like the sum defining an Eisenstein series on $\SL_2$. Indeed, observe that $F_{\Phi}$ is a function on $T(k) \cdot N(\A) \backslash \SL_2(\A)$  and  for $a \in \A^{\times}$ and $k \in K = \prod_v K_v$, we have:
 \[   F_{\Phi}(t(a)) k) =  |a|_{\A}^{\frac{\dim V}{2}} \cdot  \sum_{\lambda \in k^{\times}} | (k \cdot \Phi)(a \lambda \cdot v_1)|.   \]
 To understand the asymptotic of $F_{\Phi}$ as $|a|_{\A}$ tends to $0$ or $\infty$, we  note:
   
   \begin{lem}
   Let $\phi \in S(\A)$ and define  a function $\Psi$ on $\A^{\times}$ by
  \[  \Psi(a) = \sum_{\lambda \in k^{\times}} \phi (a \lambda). \]
  Then $\Psi$ is rapidly decreasing as $|a|_{\A} \to \infty$. Moreover, as $|a|_{\A} \to 0$, 
  \[  |\Psi(a)| \leq C \cdot |a|_{\A}^{-1}  \quad \text{for some constant $C$.} \]
    \end{lem}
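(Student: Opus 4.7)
The strategy is to prove rapid decrease as $|a|_{\A} \to \infty$ directly from the Schwartz nature of $\phi$, and then to deduce the $|a|_{\A}^{-1}$ bound near zero via Poisson summation. So the proof has three steps.

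\emph{Step 1 (Poisson reduction).} Since $\phi \in S(\A)$ is Schwartz, so is its Fourier transform $\hat{\phi}$ with respect to $\psi$, and the Poisson summation formula yields
\[
  \sum_{\lambda \in k} \phi(a\lambda) \;=\; \frac{1}{|a|_{\A}} \sum_{\mu \in k} \hat{\phi}(a^{-1}\mu).
\]
Isolating the $\lambda = 0$ and $\mu = 0$ terms and writing $\Psi_{\hat{\phi}}(b) := \sum_{\mu \in k^{\times}} \hat{\phi}(b\mu)$, this rearranges to
\[
  \Psi(a) \;=\; -\phi(0) + \frac{\hat{\phi}(0)}{|a|_{\A}} + \frac{1}{|a|_{\A}} \cdot \Psi_{\hat{\phi}}(a^{-1}).
\]
Thus the bound $|\Psi(a)| \leq C \cdot |a|_{\A}^{-1}$ as $|a|_{\A} \to 0$ will follow once we know that $\Psi_{\hat{\phi}}(b)$ stays bounded (in fact is rapidly decreasing) as $|b|_{\A} \to \infty$. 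Since $\hat{\phi}$ is again Schwartz, it suffices to prove the first assertion of the lemma, namely the rapid decrease of $\Psi_{\phi}$ at infinity, for every $\phi \in S(\A)$.

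\emph{Step 2 (Rapid decrease at infinity).} Since $\Psi(ua) = \Psi(a)$ for $u \in k^{\times}$ and $k^{\times} \backslash \A^1$ is compact (where $\A^1 = \ker|\cdot|_{\A}$), it suffices to fix a compact fundamental set $K \subset \A^1$ and estimate $\Psi(a)$ uniformly for $a = k \cdot t$ with $k \in K$ and $t \in \R_{>0}$ (embedded at an archimedean place) as $t \to \infty$. By linearity we may assume $\phi = \otimes_v \phi_v$ is factorizable, with $\phi_v = 1_{\mathcal{O}_v}$ for $v$ outside a finite set $S$ containing all archimedean places. For $\lambda \in k^{\times}$ with $\phi(a\lambda) \neq 0$, the condition at the places $v \notin S$ forces $\lambda$ to lie in a fractional ideal $\mathfrak{a}$ depending only on $K$, while the conditions at finite places in $S$ confine $\lambda$ to a fixed compact subset of $\prod_{v \in S_f} k_v$; by discreteness of $k$, the set of admissible $\lambda$ is of polynomial density in the archimedean norm. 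At the archimedean places, for each integer $N$ the Schwartz estimate gives a constant $C_N$ with
\[
  |\phi_{\infty}(t \cdot x_{\infty})| \;\leq\; C_N \cdot (1 + t \|x_{\infty}\|)^{-N}
\]
uniformly for $x_{\infty}$ in the projection of $K \cdot \lambda$, and summing this bound over admissible $\lambda$ (which are lattice points of controlled density) yields the desired rapid decay of $\Psi(kt)$ as $t \to \infty$, uniformly in $k \in K$.

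\emph{Step 3 (Combination).} Applying Step 2 to $\hat{\phi}$ shows that $\Psi_{\hat{\phi}}(b)$ is rapidly decreasing as $|b|_{\A} \to \infty$, hence in particular bounded. Substituting $b = a^{-1}$ into the identity of Step 1 gives
\[
  |\Psi(a)| \;\leq\; |\phi(0)| + \frac{|\hat{\phi}(0)|}{|a|_{\A}} + \frac{\|\Psi_{\hat{\phi}}\|_{\infty}}{|a|_{\A}} \;\leq\; \frac{C}{|a|_{\A}}
\]
for $|a|_{\A} \leq 1$, as required.

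\emph{Main obstacle.} The analytic core is Step 2: one has to convert the mixed adelic support constraints at finite places into a concrete lattice-type count for $\lambda$, and then combine this count with the archimedean Schwartz decay. The bookkeeping is delicate because the fractional ideal controlling $\lambda$ at finite places depends (via $k$) on the point in $K$, so one needs the estimates to be uniform in $k$; the $k^{\times}$-invariance of $\Psi$ and the compactness of $K$ are what make this uniformity possible.
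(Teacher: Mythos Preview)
Your proof is correct and follows essentially the same approach as the paper: both use the Poisson summation identity
\[
\Psi(a) = -\phi(0) + |a|_{\A}^{-1}\hat\phi(0) + |a|_{\A}^{-1}\sum_{\lambda\in k^\times}\hat\phi(a^{-1}\lambda)
\]
to reduce the $|a|_{\A}\to 0$ bound to the rapid decrease at infinity applied to $\hat\phi$. The only difference is cosmetic: the paper asserts the rapid decrease at infinity in one line (``follows from the fact that $\phi\in S(\A)$''), whereas your Step~2 spells out the standard argument via the compactness of $k^\times\backslash\A^1$ and reduction to factorizable test functions.
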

\begin{proof}
The rapid decrease of $\Psi(a)$ as $|a|_{\A} \to \infty$ follows from the fact that $\phi \in S(V_{\A})$. For the asymptotic as $|a|_{\A} \to 0$, we need to apply the Poisson summation formula to the sum defining $\Psi$. Writing $\widehat{\phi}$ for the Fourier transform of $\phi$ (with respect to $d_{\psi}(x)$), we have:
\begin{align}
  \Psi(a) &=   - \phi(0) +  \sum_{\lambda \in k} \phi (a \lambda) \notag \\
 &= - \phi(0) + |a|_{\A}^{-1} \cdot \sum_{\lambda \in k}   \widehat{\phi} (a^{-1} \cdot \lambda) \notag \\
 &= - \phi(0) +  |a|_{\A}^{-1} \cdot \widehat{\phi}(0) +  |a|_{\A}^{-1} \cdot  \sum_{\lambda \in k^{\times}}   \widehat{\phi} (a^{-1} \cdot \lambda) \notag
 \end{align}
 As $|a|^{-1}_{\A} \to \infty$, the last sum tends to $0$ rapidly, so the third  term is bounded. Hence we see that the asymptotic of $\Psi(a)$ as $|a|_{\A} \to 0$ is governed by the second term in the last expression.  
  \end{proof}
 
 \vskip 5pt
 
 Applying the lemma to $\phi(a) = (k \cdot \Phi)(a\cdot v_1)$, we deduce that $F_{\Phi}(t(a) k)$ is rapidly decreasing as $|a|_{\A} \to \infty$ and 
 \[   |F_{\Phi}(t(a) k)| \leq C \cdot |a|_{\A}^{\frac{\dim V}{2} -1} \quad \text{ as $|a|_{\A} \to 0$} \]
for some $C$ which can be taken to be independent of $k \in K$.
  In other words, the sum in (\ref{E:Eis}) is dominated by the sum defining a spherical Eisenstein series associated to the principal series representation $I(\frac{\dim V}{2} -2)$ of $\SL_2$.   Hence, when $\dim V > 6$, the above sum does converge to give a smooth function on $[\SL_2]$ of moderate growth (c.f. \cite[Thm. 11.2]{Bor}),  so that (\ref{E:thetaf}) defines a $\SL_2(\A)$-equivairant map 
  \[   \theta: \mathcal{S}(N(\A), \psi \backslash \SL_2) \longrightarrow C_{mod}^{\infty}([\SL_2]). \]
  One then has the map $\alpha^{\rm Aut}_{\Sigma} = {\rm pr}_{\Sigma} \circ \theta$ such that one still has the adjunction formula
  \[   \langle \alpha^{\rm Aut}_{\Sigma}(f) , \phi \rangle_{\Sigma} = \langle f, \beta^{\rm Aut}_{\Sigma}(\phi) \rangle_{N(\A) \backslash \SL_2(\A)}, \]
  for $f \in \mathcal{S}(N(\A), \psi \backslash \SL_2(\A))$ and $\phi \in \Sigma$.
    
 \vskip 5pt
 
 Presumably, one can show that $\theta$ and $\alpha^{\rm Aut}_{\Sigma}$ are also defined when $3 \leq \dim V \leq 6$ by a more careful analysis, involving the meromorphic continuation of pseudo-Eisenstein series,  but we have not pursued this further. 
 One now has the following global relative character identity.
\vskip 5pt

\begin{thm}  \label{T:global2}
Assume that $\dim V > 6$.
If $f \in \mathcal{S}(N(\A), \psi \backslash \SL_2(\A))$ and $\phi \in  S(X_{\A})$ are  transfer of each other, then for a cuspidal representation $\Sigma$ of $\SL_2$ with cuspidal theta lift $\Theta^{\rm Aut}(\Sigma)$ on $\OO(V)$, one has:
\[  \mathcal{B}^{\rm Aut}_{\Sigma}(f)  = \mathcal{B}^{\rm Aut}_{\Theta(\Sigma)}(\phi). \] 
\end{thm}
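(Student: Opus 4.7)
The proof will parallel the local Theorem \ref{T:main}: in place of the local commutative diagrams (Proposition \ref{P:com} and Proposition \ref{P:othercom}), we invoke their global counterparts (Proposition \ref{P:gperiod} and Proposition \ref{P:gperiod2}); in place of the local Bernstein center argument, we exploit the fact that the projection of the theta kernel $\theta(\Phi)$ onto the irreducible cuspidal subrepresentation $\Sigma \otimes \Pi \subset \mathcal{A}_{cusp}(\SL_2 \times \OO(V))$ is a well-defined smooth vector that admits two natural orthonormal expansions.

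First, choose $\Phi \in S(V_{\A})$ with $p(\Phi) = f$ and $q(\Phi) = \phi$; this is possible because $p$ and $q$ are surjective. Write $\Pi = \Theta^{\rm Aut}(\Sigma)$, which is nonzero cuspidal by hypothesis and hence irreducible. Expanding each relative character via the formula of \S\ref{SS:GRC} with an orthonormal basis, then applying Proposition \ref{P:gperiod2} to substitute $\langle p(\Phi), \beta^{\rm Aut}_\Sigma(v)\rangle_{N\backslash\SL_2} = P_{\OO(v_1^\perp)}(A^{\rm Aut}_\Sigma(\Phi,v))$ and Proposition \ref{P:gperiod} to substitute $\langle q(\Phi), \beta^{\rm Aut}_\Pi(w)\rangle_X = P_{N,\psi}(B^{\rm Aut}_\Pi(\Phi,w))$, the desired identity reduces to
\[
\sum_{v \in {\rm ONB}(\Sigma)} P_{N,\psi}(v) \cdot P_{\OO(v_1^\perp)}(A^{\rm Aut}_\Sigma(\Phi, v)) \; = \; \sum_{w \in {\rm ONB}(\Pi)} P_{N,\psi}(B^{\rm Aut}_\Pi(\Phi, w)) \cdot P_{\OO(v_1^\perp)}(w).
\]

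Next, recognize both sides as $(P_{N,\psi} \otimes P_{\OO(v_1^\perp)})$ applied to one and the same element
\[
\Xi(\Phi) \; := \; {\rm pr}_{\Sigma \otimes \Pi}(\theta(\Phi)) \; \in \; \Sigma \otimes \Pi,
\]
namely the projection of the theta kernel $\theta(\Phi) \in \mathcal{A}(\SL_2 \times \OO(V))$ onto the $(\Sigma \otimes \Pi)$-isotypic component of the cuspidal spectrum (a single irreducible summand, by multiplicity one for $\SL_2$). Projecting first onto $\Sigma$ in the $\SL_2$-variable and unfolding the Petersson inner product against $v \in \Sigma$ yields the expansion $\Xi(\Phi) = \sum_v v \otimes A^{\rm Aut}_\Sigma(\Phi, v)$, while projecting first onto $\Pi$ in the $\OO(V)$-variable yields $\Xi(\Phi) = \sum_w B^{\rm Aut}_\Pi(\Phi, w) \otimes w$. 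The two sides of the displayed identity are therefore the values of $P_{N,\psi} \otimes P_{\OO(v_1^\perp)}$ on two orthonormal expansions of the same vector, so they agree.

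The main obstacle is analytic, namely the justification of the termwise application of the period functionals to the infinite orthonormal sums. The hypothesis $\dim V > 6$ is needed to ensure that $\theta(p(\Phi))$, and hence $\alpha^{\rm Aut}_\Sigma(p(\Phi))$, are well-defined via the Eisenstein-majorization argument sketched just before the theorem; the $\Pi$-side is automatic since $q(\Phi)$ is compactly supported on $X_{\A}$. Cuspidality of $\Sigma$ and $\Pi$ ensures that $P_{N,\psi}$ and $P_{\OO(v_1^\perp)}$ are absolutely convergent on smooth vectors, and since $\Xi(\Phi)$ is fixed by an open compact subgroup of the finite-adelic points and smooth at infinity, a dominated-convergence argument controls the tails of each orthonormal expansion, legitimizing the interchange and completing the proof.
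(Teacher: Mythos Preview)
Your argument is correct in outline but takes a genuinely different route from the paper. The paper's proof factors each global relative character as $|c(\Sigma)|^2$ (resp.\ $|c(\Theta(\Sigma))|^2$) times its adelic counterpart, and then invokes two prior results: the local relative character identity (Theorem~\ref{T:main}) to match the adelic characters, and Theorem~\ref{T:global} (i.e.\ $|c(\Sigma)|=|c(\Theta(\Sigma))|$, proved via the Rallis inner product and Lapid--Mao) to match the constants. By contrast, you argue purely globally: after using Propositions~\ref{P:gperiod} and~\ref{P:gperiod2} to rewrite both sides, you recognise them as two orthonormal expansions of $(P_{N,\psi}\otimes P_{\OO(v_1^\perp)})(\Xi(\Phi))$, related through the adjunction (\ref{E:gAB}). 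This bypasses Theorem~\ref{T:main} and the explicit determination of $c(\Sigma)$ entirely; indeed, combining your identity with Theorem~\ref{T:main} would \emph{recover} $|c(\Sigma)|=|c(\Theta(\Sigma))|$ without Rallis or Lapid--Mao. The price is the analytic step you flag at the end: the period functionals $P_{N,\psi}$ and $P_{\OO(v_1^\perp)}$ are not $L^2$-continuous, so termwise application to orthonormal expansions of $\Xi(\Phi)$ requires showing that these expansions converge in a Fr\'echet topology on smooth cuspidal vectors for which the periods are continuous. This is standard lore but not a one-liner, and your ``dominated convergence'' sketch is thinner than the rest of the argument; the paper's route avoids this issue by reducing to purely local statements that have already been established rigorously.
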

\vskip 5pt

\begin{proof}
We have defined $c(\Sigma)$ by
\[   \beta^{\rm Aut}_{\Sigma} = c(\Sigma) \cdot \beta^{\A}_{\Sigma}. \]
By the adjunction formulae for the pairs $( \alpha^{\rm Aut}_{\Sigma}, \beta^{\rm Aut}_{\Sigma})$ and $(\alpha^{\A}_{\Sigma}, , \beta^{\A}_{\Sigma})$, we deduce that
\[  \alpha^{\rm Aut}_{\Sigma} = \overline{c(\Sigma)} \cdot \alpha^{\A}_{\Sigma}. \]
Hence, one has
 \[  \mathcal{B}^{\rm Aut}_{\Sigma}(f)  =( \beta^{\rm Aut}_{\Sigma}  \alpha^{\rm Aut}_{\Sigma} (f))(1)  =
  |c(\Sigma)|^2  \cdot ( \beta^{\A}_{\Sigma}  \alpha^{\A}_{\Sigma} (f))(1) =
  |c(\Sigma)|^2 \cdot \mathcal{B}^{\A}_{\Sigma}(f).  \]
 Likewise, we have
 \[   \mathcal{B}^{\rm Aut}_{\Theta(\Sigma)}(\phi) = |c(\Theta(\Sigma))|^2 \cdot  \mathcal{B}^{\A}_{\Theta(\Sigma)}(\phi). \]
 Since $|c(\Sigma)|  = |c(\Theta(\Sigma))|$, the desired result follows from the local relative character identity of Theorem \ref{T:main}.
\end{proof}
\vskip 10pt

 \vskip 10pt
 
\subsection{\bf End remarks}
  We end this  paper with some comparisons with the relative trace formula approach.  The spectral side of a 
relative trace formula is essentially a sum of the relevant global relative characters over all cuspidal representations. One then hopes to separate the different spectral contributions by using the action of the spherical Hecke algebra at almost all places.   The main global output of a comparison of (the geometric side of) two such relative trace formulae  
is typically a global relative character identity as in Theorem \ref{T:global2}, as a consequence of which one deduces Proposition \ref{P:gperiod} and the local relative character identities in Theorem \ref{T:main}, which in turn implies Proposition \ref{P:smoothp}.  It is interesting to compare this with the approach  via theta correspondence which we have pursued in this paper.

\vskip 15pt

\end{document}